\title[Regularity of minimizing currents at boundaries with multiplicity $Q$]{On the regularity of area minimizing currents at boundaries with arbitrary multiplicity}
\author{Ian Fleschler and Reinaldo Resende}
\thanks{\tiny{\emph{I. Fleschler: Dept of Mathematics, Princeton University, Princeton, USA. Email: imf@princeton.edu.}}}
\thanks{\tiny{\emph{R. Resende: Dept of Mathematics, Carnegie Mellon University, Pittsburgh, USA. Email: rresende@andrew.cmu.edu.}}}
\begin{document}

\begin{abstract}
In this paper, we consider an area-minimizing integral $m$-current $T$ within a submanifold $\Sigma$ of $\mathbb{R}^{m+n}$, taking a boundary $\Gamma$ with arbitrary multiplicity $Q \in \mathbb{N} \setminus \{0\}$, where $\Gamma$ and $\Sigma$ are $C^{3, \kappa}$. We prove a \emph{sharp} generalization of Allard's boundary regularity theorem to a higher multiplicity setting, precisely, we prove that the set of density $Q/2$ singular boundary points of $T$ is $\mathcal{H}^{m-3}$-rectifiable. As a consequence, we show that the entire boundary regular set, without any assumptions on the density, is open and dense in $\Gamma$ which is also \emph{dimensionally sharp}. Moreover, we prove that if $p \in \Gamma$ admits an open neighborhood in $\Gamma$ consisting of density $Q/2$ points with a tangent cone supported in a half $m$-plane, then $p$ is regular. 

Furthermore, we show that if the convex barrier condition is satisfied—namely, if $\Gamma$ is a closed manifold that lies at the boundary of a uniformly convex set and $\Sigma = \mathbb{R}^{m+n}$—then the entire boundary singular set is $\mathcal{H}^{m-3}$-rectifiable. Additionally, we investigate certain assumptions on $\Gamma$ that enable us to provide further information about the singular boundary set.
\end{abstract}

\maketitle

\vspace{-1cm}

\setcounter{tocdepth}{2} 
\tableofcontents

\section{Introduction}

The present work, together with \cite{ian2024uniqueness} and \cite{ian2024example}, provides a sharp answer to a long-standing open question posed in Allard’s 1969 PhD thesis \cite{AllPhD}, on the boundary regularity of area minimizing $m$-currents in arbitrary dimension, codimension, and boundary multiplicity, under a convexity assumption. Our work sharply extends Allard’s celebrated 1975 boundary regularity theorem \cite{AllB} to the much more delicate and intricate case of higher boundary multiplicity, in the minimizing setting. Before our work, even the existence of a single regular boundary point was not known in this setting. 

We develop a sharp regularity theory that answers Allard’s question in full generality by proving the sharp $\mathcal{H}^{m-3}$-rectifiability of the boundary singular set. The challenge of understanding higher multiplicity boundaries, which we address in Allard’s original form, was later highlighted in a broader framework by White in the famous collection of open problems from the 1984 AMS Summer Institute in Geometric Measure Theory \cite[Problem 4.19]{GMT_prob} and was revisited in the ICM 2022 survey by De Lellis \cite{de2022regularity}.

It is worth emphasizing that establishing the rectifiability of the singular set in geometric problems is usually significantly more difficult than obtaining the dimensional bound—and in this work, we tackle and resolve both problems simultaneously.

Moreover, we also establish a dimensionally sharp result showing that the boundary regular set is open and dense in the boundary, without the convexity assumption.

\subsection{Historical overview.} Consider an area minimizing integral $m$-currents $T$ within a $(m+\bar{n})$-submanifold $\Sigma\in C^{3,\kappa}$ of $\R^{m+n}, n\geq \bar{n}\geq 0$, and $m\geq 2$. The boundary $\Gamma \subset \Sigma$ is also of class $C^{3,\kappa}$ and $T$ takes the boundary $\Gamma$ with multiplicity $Q\in\N\setminus \{0\}$. We refer the reader to \Cref{S:preliminaries} for precise definitions. 

In the masterpiece \cite{HS}, Hardt and Simon prove that there are \emph{no} boundary singular points in the context of codimension and multiplicity $1$, i.e., $n=Q=1$. By a decomposition argument (which only works for $n=1$), in \cite{white1983regularity}, White shows that all boundary points are regular in the setting of codimension $1$ and arbitrary multiplicity, i.e., $n=1$ and $Q\geq 1$.

It is well known that the theory becomes significantly more involved when one takes a step forward and considers the higher codimension setting, i.e., $n>1$. In fact, in \cite[Theorem 1.8 (b)]{DDHM}, De Lellis, De Philippis, Hirsch, and Massaccesi, provide an example of a $2$ dimensional area-minimizing integral current $T$ in $\R^4$ with a boundary $\Gamma\in C^{\infty}$ (consisting of two disjoint closed curves) of multiplicity $Q=1$, such that the Hausdorff dimension of the boundary singular set $\Singb(T)$ is $1$. However, in this example the singular set has $\cH^1$-measure zero, leaving open the question of whether or not it is possible to prove that $\cH^{m-1}(\Singb(T))=0$ in general.

Assuming $\partial T = Q\a{\Gamma}$, we define a point $p\in\Gamma$ as \emph{one-sided} if the density of $T$ at $p$ is $Q/2$, and a point $q\in\Gamma$ as \emph{two-sided} if the density of $T$ restricted to an open neighborhood of $q$ in $\Gamma$ and intersected with $\Gamma$ is constant equal to $Q/2 + Q^\star$ for some $Q^\star\in\N\setminus \{0\}$. We refer the reader to \Cref{D:1-2-sided} and the accompanying pictures.

In the celebrated paper \cite{AllB}, Allard proves that if only points of density $1/2$ are allowed and $Q=1$, i.e., one-sided points in the multiplicity $1$ case, then all boundary points of $T$ are regular for any codimension $n \geq 1$. In particular, when the boundary lies on a convex barrier, defined as the boundary of a strictly convex open set, all boundary points are regular. However, the situation becomes more intricate if other types of boundary points are also allowed—for instance, \emph{two-sided} points. Indeed, the example in \cite{DDHM} mentioned above highlights how much less is expected (compared to \cite{AllB} and the case $n=1$) in terms of describing the boundary regular set of $T$. Nonetheless, in the groundbreaking work \cite{DDHM}, the authors adapt all the machinery introduced in \cite{Alm,DS1,DS2,DS3,DS4,DS5} (see also \cite{nardulli2024interior} for $\Sigma\in C^{2,\kappa}$) to prove that the boundary regular set of $T$ is open and dense in $\Gamma$, provided that the boundary multiplicity is $Q=1$ (cf. \Cref{T:open-dense}). In light of the example in \cite{DDHM}, this result is close to the best one can hope for.

When studying the case of arbitrary boundary multiplicity $Q\geq 1$ and arbitrary codimension $n\geq 1$, recent progress has been made in the context of $2$-dimensional currents. Specifically, in \cite{delellis2021allardtype}, for $m=2$ and considering \emph{only one-sided} points, De Lellis, Nardulli, and Steinbr\"{u}chel, showed that all boundary points of $T$ are regular (cf. \Cref{T:Rectifiability-1sided}). Their work relies fundamentally on two key results proven by them: (1) the uniqueness of tangent cones in this dimension $m=2$ setting, and (2) the regularity theory for $Q$-valued functions minimizing the Dirichlet energy with a prescribed $C^3$ boundary. Furthermore, in \cite{nardulli2022density}, the second-named author and Nardulli allow \emph{two-sided} points to exist in $\Gamma$ and rely on a regularity theory for $(Q-Q^\star/2)$-valued Dirichlet minimizers, along with the uniqueness of tangent cones in \cite{delellis2021uniqueness}, to prove that the boundary regular set of $T$ is open and dense in the setting $Q\geq 1$, $n\geq 1$, and $m=2$.

\subsection{Description of the main results}

This work aims at better understanding the general case, that is, $Q, m,$ and $n$, all arbitrary. One of the novelties here is that there is no boundary regularity theory developed for $Q$-valued Dirichlet minimizers for $m>2$ and also \emph{one-sided singular} points of $T$ may exist (see \Cref{T:ian-example}). This differs dramatically from every other previous investigation due to the fact that \emph{one-sided} points are always regular in the settings described above (i.e., $m=2$ or $Q=1$). Meanwhile, the uniqueness of tangent cones for $m=2$ obtained in \cite{delellis2021allardtype} is now known for general dimensions $m$, as well as an excess decay-type estimate with exponential decay rate, due to \cite{ian2024uniqueness} by the first named author (see \Cref{T:Uniqueness 1sided,T:excessdecay}). Such facts are crucial for the present article and will be exhaustively used in this work. 

Summarizing, for $m,n,Q$ arbitrary, the following results are proved in this article: 

\begin{enumerate}[\upshape (1)]
    \item the set of regular boundary points is open and dense in $\Gamma\in C^{3,\kappa}$, see \Cref{T:open-dense},

    \item the set of singular boundary points with density $Q/2$ of $T$ is $\cH^{m-3}$-rectifiable, see \Cref{T:Rectifiability-1sided},

    \item the set of boundary singular points of $Q$-valued Dirichlet minimizers is $\cH^{m-3}$-rectifiable, see \Cref{T:LinearProblem:Rectifiability},

    \item if $p$ is a density $Q/2$ flat point which has an open neighborhood in the boundary of density $Q/2$ flat points, then $p$ is regular, see \Cref{T:1sided-flat-regular}.
\end{enumerate}

Item (1) is \emph{optimal} in the sense that there are examples of boundary singular sets with the same Hausdorff dimension as the boundary (see \Cref{T:4names example}). The results (2) and (3) are \emph{optimal} with respect to the Hausdorff dimensional bound (see \Cref{T:ian-example}) and, in addition to proving that the regular set has Hausdorff dimension bounded by $m-3$, we also establish a structural property (rectifiability) of the singular set. Item (4) is likewise \emph{optimal} in the sense that the assumption of an open neighborhood of \emph{flat} one-sided points is essential and cannot be removed (cf. \Cref{R:1sided-flat-assumption}).

The investigation of rectifiability of singular sets has been recently developed in the context of higher codimensions $n>1$ to tackle \emph{interior} regularity in \cite{de2023fineI, de2023fineII, de2023fineIII}, see also \cite{krummel2023analysisI,krummel2023analysisII}. The approach used in this work resembles that of \cite{de2023fineI, de2023fineII}, which, while inspired by the seminal work \cite{NV} by Naber and Valtorta, differs \emph{significantly} from it.

For the first time, this machinery is applied to study boundary regularity and it introduces significant conceptual differences compared to the interior setting.

The estimates derived through the Naber-Valtorta framework are identical for both linear (Dirichlet minimizing) and non-linear (area minimizing) problems. In fact, the $\cH^{m-3}$-rectifiability for the linear problem is also established in this article. However, we will not provide an extensive treatment of the linear problem before presenting the proofs for the non-linear case.

It must be remarked that the novelty of this work, which the authors find surprising, is that in this framework proving the $(m-3)$-Hausdorff dimensional bound for the one-sided singular set does not seem significantly easier to prove than proving the $\cH^{m-3}$-rectifiability of the boundary singular set.

The results, loosely stated in this Introduction, are described in a precise manner in what follows.

\subsection{Definition of area minimality and one- and two-sided regular points}

The following assumptions will always be in place: $\Sigma$ and $\Gamma$ are a $(m+\bar{n})$-submanifold and a $(m-1)$-submanifold of $\R^{m+n}$, respectively. They are both of class $C^{3,\kappa}$, for some $\kappa>0$, and satisfy $\Gamma\subset\Sigma$ and $\bar{n}\geq 1$. The space of integral $k$-dimensional currents in $\R^{m+n}$ will be denoted by $\mathbf{I}_k(U)$ for $k\in \{0,\ldots, m+n\}$, the mass of a current $T$ is denoted by $\mathbf{M}(T)$, and $\spt(T)$ denote its support.

\begin{definition}\label{D:AreaMin}
Let A current $T \in\mathbf{I}_m(U)$ with $\spt(T)\subset\Sigma$ is \emph{area minimizing in $\Sigma\cap U$}, if it holds that $\mathbf{M}(T) \leq \mathbf{M}(T+\partial \tilde{T})$, for all $\tilde{T}\in\mathbf{I}_{m+1}(U)$ with $\spt(\tilde{T})\subset\Sigma\cap U$. 
\end{definition}

The definitions of one-sided and two-sided points are provided below. Here, for a smooth orientable $k$-manifold, we denote the naturally induced $k$-current by $\a{M}$, and $\Theta^m(T, p)$ denotes the density of $\|T\|$ (the total variation of $T$) at $p$ with respect to the $m$-dimensional Hausdorff measure. For precise basic definitions, we refer the reader to \Cref{S:preliminaries}.

\begin{definition}\label{D:1-2-sided}
Let $T \in\mathbf{I}_m(U)$ with $\spt(T)\subset\Sigma$, $\partial T={Q}\a{\Gamma}, {Q}\in\N\setminus\{0\}$, then $p\in\Gamma$ is called 
\begin{enumerate}[\upshape (A)]
    \item an \emph{one-sided point for $T$} if $\Theta^m(T,p) = Q/2$,

    \item a \emph{two-sided point for $T$} if there exist $Q^\star\in \N\setminus\{0\}$ and an open neighborhood $Z$ of $p$ such that $\Theta^m(T,q) = Q/2 + Q^\star$ for every $q\in \Gamma\cap Z$.
\end{enumerate}
\end{definition}

\begin{figure}[H]
\centering
\begin{subfigure}{.5\linewidth}
\centering
\captionsetup{width=.9\linewidth}
\includegraphics[width=.99\linewidth]{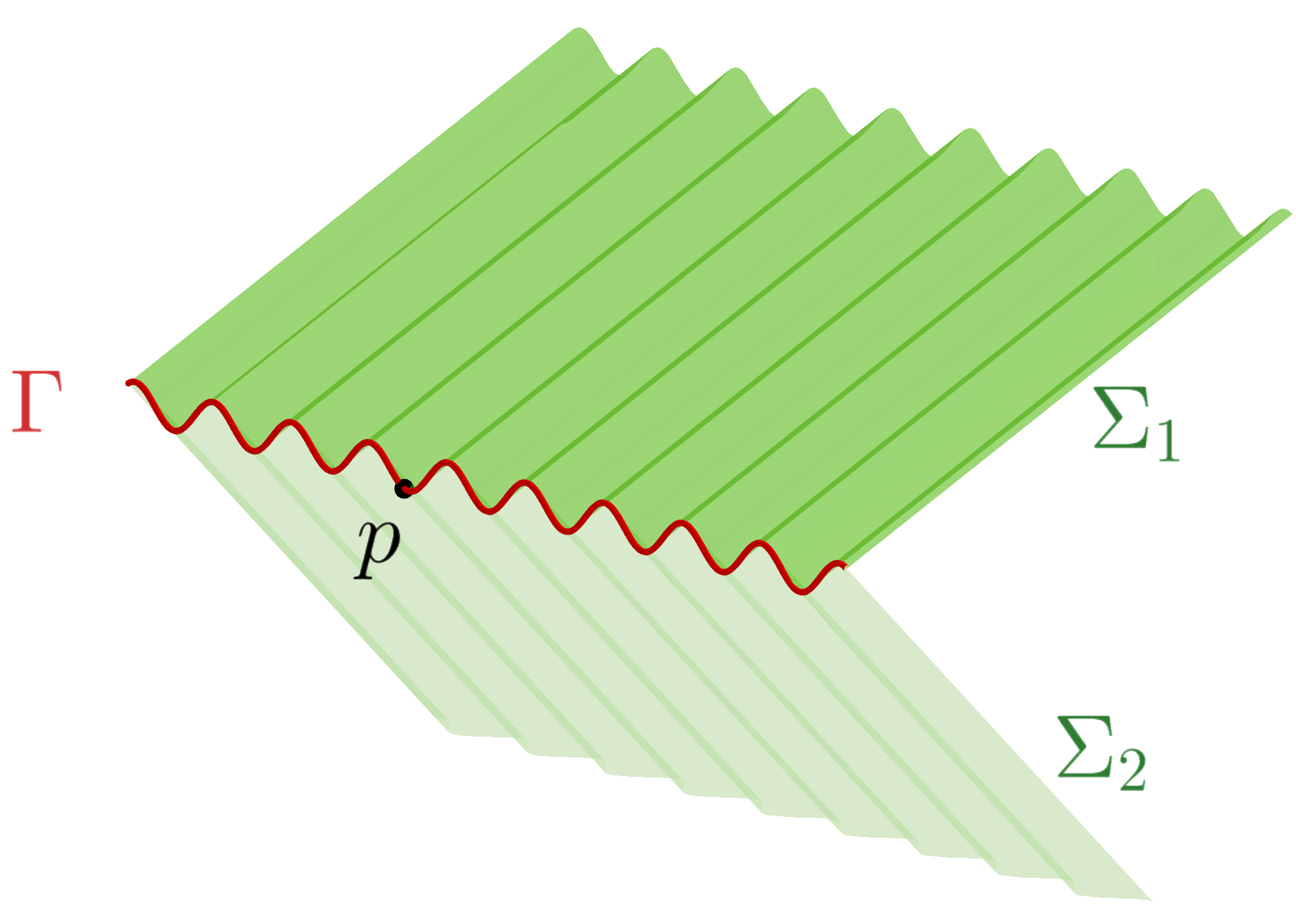}
\caption{If $T$ is given by $\a{\Sigma_1} + \a{\Sigma_2}$, then $p\in\Gamma$ is a \emph{one-sided}.}
\label{F: 1sided}
\end{subfigure}
\begin{subfigure}{.4\linewidth}
\centering
\captionsetup{width=.9\linewidth}
\includegraphics[width=.99\linewidth]{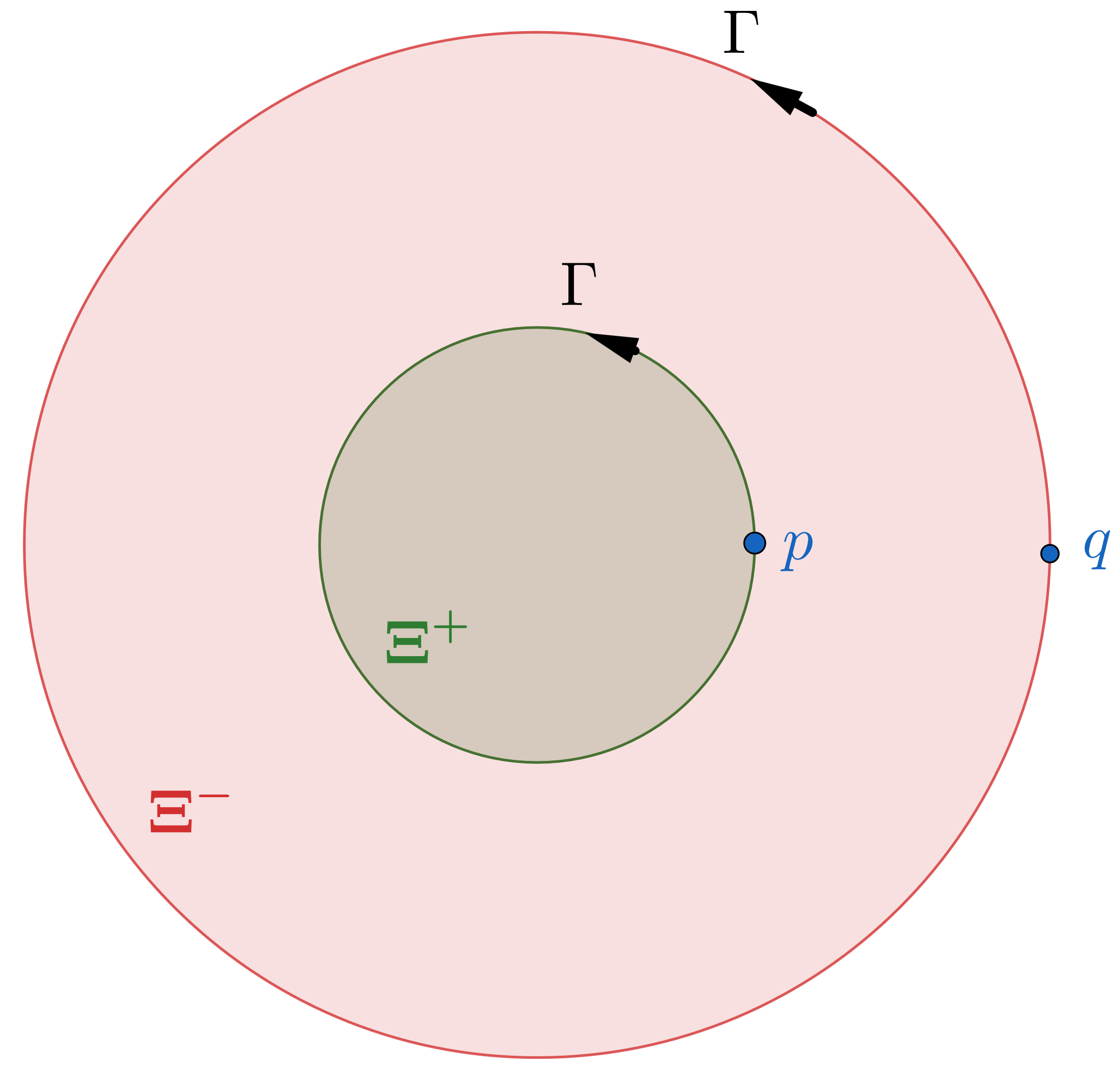}
\caption{If $T$ is given by $2\a{\Xi^+} + \a{\Xi^-}$, then $p\in\Gamma$ is \emph{two-sided} and $q$ is \emph{one-sided}.}
\label{F: 1 and 2sided}
\end{subfigure}
\end{figure} 

\begin{figure}[H]
    \centering
    \includegraphics[width=0.5\linewidth]{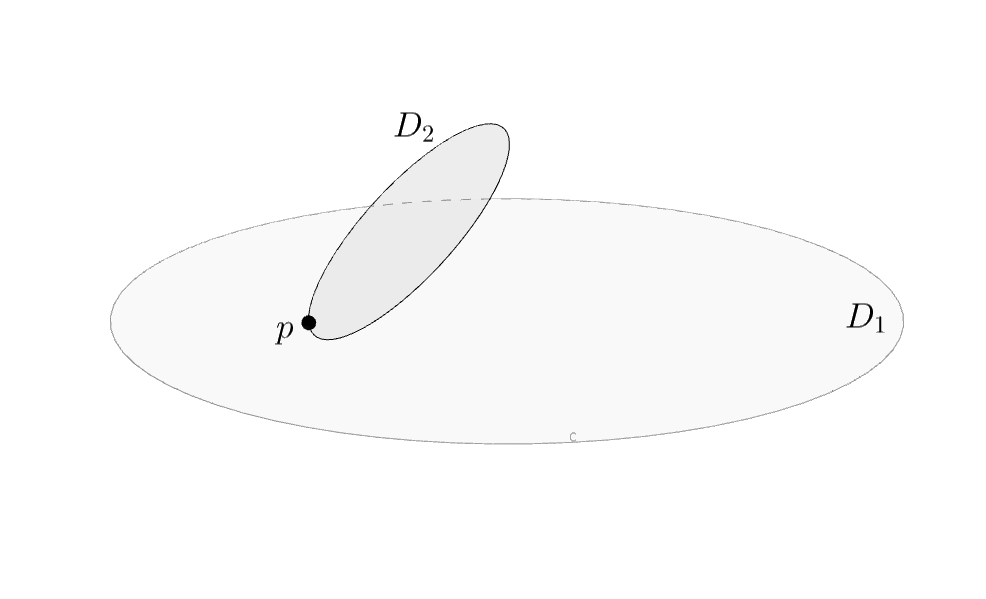}
    \caption{If $T$ is given by $\a{D_1} + \a{D_2}$, then $p$ is neither a one-sided nor a two-sided point.}
\end{figure}

Regular one-sided points have been studied in \cite{AllB} for $Q=1$ and in \cite{delellis2021allardtype} for $m=2$ and regular two-sided points have been explored in \cite{DDHM} for $Q=1$ and in \cite{nardulli2022density}. The two notions of regular boundary points in an arbitrary boundary multiplicity setting are defined below.

\begin{definition}[Regular one-sided boundary points]\label{D:Reg-1sided}
Let $T \in\mathbf{I}_m(U)$ with $\spt(T)\subset\Sigma$, $\partial T={Q}\a{\Gamma}, {Q}\in\N\setminus\{0\}$. Then $p\in\Gamma$ is called an {\emph{one-sided regular boundary point}} if there are:
\begin{enumerate}[(i)]
    \item a finite number $\Sigma_{1}, \ldots, \Sigma_{J}$ of oriented embedded $m$-submanifolds of $U\cap\Sigma$,
    
    \item and a finite number of positive integers $k_{1}, \ldots, k_{J}$ such that:
        \begin{enumerate}
            \item $\partial \Sigma_{j} \cap U=\Gamma \cap U=\Gamma_{i} \cap U$ (in the sense of differential topology) for every $j$,
            
            \item $\Sigma_{j} \cap \Sigma_{l}=\Gamma \cap U$ for every $j \neq l$,
            
            \item for all $j \neq l$ and at each $q \in \Gamma$ the tangent cones to $\Lambda_{j}$ and $\Lambda_{l}$ are distinct,
            
            \item $T \res U=\sum_{j} k_{j} \a{\Sigma_{j}}$ and $\sum_{j} k_{j}=Q$.            
        \end{enumerate}
\end{enumerate}
The set $\Regb^1(T)$ of {\emph{one-sided regular boundary points}} is a relatively open subset of $\Gamma$ and the set of \emph{one-sided singular boundary points} is defined as 
\[\Singb^1(T) := \Singb(T) \cap \left\{q\in\Gamma:\Theta^m(T,q)=Q/2 \right \}.\]
\end{definition}

\begin{definition}[Regular two-sided boundary points]\label{D:2sided regular}
Let $T \in\mathbf{I}_m(U)$ with $\spt(T)\subset\Sigma$, $\partial T={Q}\a{\Gamma}, {Q}\in\N\setminus\{0\}$. Then we say that $p\in\Gamma$ is a {\emph{two-sided regular boundary point for $T$}} if there exist a neighborhood $U \ni p$ and a smooth manifold $\Xi \subset U \cap \Sigma$ such that $\spt(T) \cap U = \Xi$. The set of such points will be denoted by $\Regb^2(T)$. 
\end{definition} 

\begin{remark}\label{R:2sided regular definition}
We record the following consequence of the Constancy Lemma (\cite[Theorem 2.34]{simon2014introduction}) and the definition above (see also Picture (B) in \Cref{D:1-2-sided}). Let $p\in\Regb^2(T)\subset \Gamma$, and let $U$ and $\Xi$ be as in the definition above. Then, there exists $Q^\star\in \N\setminus \{0\}$ such that $\Gamma\cap U$ divides $\Xi$ into two disjoint regular submanifolds, $\Xi^+$ and $\Xi^-$, of $\Xi$ with boundaries $(Q+Q^\star)\a{\Gamma}$ and $-Q^\star\a{\Gamma}$, respectively. Moreover, we have
\[T\res U = (Q+Q^\star)\a{\Xi^+} + Q^\star\a{\Xi^-}.\]    
\end{remark}

We know collect all singular and regular points in the definition below.

\begin{definition}[Singular points]
Let $T \in\mathbf{I}_m(U)$ with $\spt(T)\subset\Sigma$, $\partial T={Q}\a{\Gamma}, {Q}\in\N\setminus\{0\}$. We denote the set of regular boundary points by $\Regb(T):=\Regb^1(T)\mathring{\cup}\Regb^2(T)$ and the set of singular boundary points by $\Singb(T):= \Gamma\setminus (\Regb^1(T)\mathring{\cup}\Regb^2(T))$.
\end{definition}
\begin{remark}
Notice that we only have $\Singb^1(T)\subset \Singb(T)$.
\end{remark}

\subsection{Main results of this work}

Next, in this subsection, the central assumptions and main results of this work will be stated.

\begin{assumption}\label{A:general}
Let $\kappa \in (0,1]$ and integers $m,n\geq 2$, $\bar{n}\geq 1$, $Q\geq 1$. Consider $\Sigma$ an embedded $(m+\bar{n})$-manifold of class $C^{3,\kappa}$ in $\R^{m+n}$, $\Gamma$ a $C^{3,\kappa}$ embedded oriented $(m-1)$-submanifold of $\Sigma$ without boundary, and assume $p\in\Gamma$. Let $T$ be an integral $m$-dimensional area minimizing current in $\Sigma\cap\ball{p}{R}$ with boundary $\partial T \res \ball{p}{R}=Q\a{\Gamma \cap \ball{p}{R}}$.
\end{assumption}

We will prove a version of the boundary regularity theorem obtained in by Allard in \cite{AllB} (for $Q=1$ and only allowing for \emph{one-sided} points) on a higher multiplicity boundary, that is, $Q>1$. We call $q \in \Gamma$ a \emph{flat} boundary point if the tangent cone to $T$ at $q$ is supported in a half $m$-plane (cf. \Cref{D:flat bdr points}).

\begin{resultado}\label{T:1sided-flat-regular}
Under \Cref{A:general}, assume that $q\in\Gamma$ is a one-sided \emph{flat} point of $T$ and there is a neighborhood of $q$ in $\Gamma\cap\ball{p}{r}$ containing \emph{only} one-sided \emph{flat} points, then $q$ is a regular point.
\end{resultado}

\Cref{T:1sided-flat-regular} is close to the best one can hope for, i.e., it is not possible to get an entire counterpart of \cite{AllB} for $Q>1$, since one-sided singular points may exist. In fact, in \cite{ian2024example}, the first named author provides an example of this behavior which is stated below.

In order to state it, define a point $p\in\Gamma$ to be an \textit{essential one-sided singularity} for $T$ if $\Theta(T,0)=Q/2$ and $T$ cannot be written as $T \res \bB_{\rho}=\sum Q_i \a{\Sigma_i}$, where $\Sigma_i$ are smooth minimal surfaces and $\partial \Sigma_i=\Gamma \cap \bB_{\rho}$, for any radius $\rho>0$.

\begin{theorem}\label{T:ian-example}
There exists a $3$d area minimizing current $T$ in $\bB_1\subset\R^5$ (i.e., $n=2$) with $T$ and $\Gamma$ satisfying \Cref{A:general} with $\Gamma$ analytic (i.e. $\Gamma$ is a real-analytic submanifold of $\R^5$), and $Q=2$, such that $\Theta^3(T,p)= Q/2 = 1$ for every $p \in \Gamma \cap \bB_1$ and $0$ is an essential one-sided boundary singularity. Moreover, the tangent cone to $T$ at $0$ is flat.
\end{theorem}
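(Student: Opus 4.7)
The plan is to exhibit the desired current explicitly. Working in $\R^5\cong\R\times\R^4$ (with a symmetry-friendly coordinate choice), I would construct a real-analytic embedded $2$-submanifold $\Gamma\subset\bB_1$ together with two \emph{distinct} real-analytic minimal $3$-disks $\Sigma_1,\Sigma_2\subset\bB_1$, each with boundary $\Gamma$, so that along $\Gamma\setminus\{0\}$ the two surfaces meet transversely (distinct tangent half $3$-planes), whereas at the origin they are tangent, sharing a common tangent half $3$-plane $H$. Setting $T:=\a{\Sigma_1}+\a{\Sigma_2}$ then yields $\partial T=2\a{\Gamma}$, so $Q=2$ is forced.

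The heart of the argument is to show that $T$ is area minimizing and not merely the sum of two stationary pieces. I would build a smooth closed $3$-form $\omega\in\Omega^3(\bB_1)$ of comass $\le 1$ that simultaneously calibrates both $\Sigma_1$ and $\Sigma_2$ — i.e., $\omega|_{T_p\Sigma_i}$ equals the oriented volume form of $\Sigma_i$ at each regular $p\in\Sigma_i$, for $i=1,2$. Once such an $\omega$ is in hand, Stokes' theorem immediately gives $\mathbf{M}(T)\le \mathbf{M}(T+\partial \tilde T)$ for every admissible $\tilde T$, verifying \Cref{D:AreaMin}.

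Assuming the previous two steps, the remaining assertions follow from the explicit structure. At every $p\in\Gamma\setminus\{0\}$, transversality of $\Sigma_1,\Sigma_2$ gives $\Theta^3(T,p)=1/2+1/2=1=Q/2$, and at $0$, tangency gives tangent cone $2\a{H}$, hence flat and again of density $1$. To see that $0$ is an \emph{essential} one-sided singularity, suppose toward contradiction that $T\res\bB_\rho=\sum_j Q_j\a{\tilde\Sigma_j}$ with $\tilde\Sigma_j$ smooth minimal and $\partial\tilde\Sigma_j=\Gamma\cap\bB_\rho$. On $\bB_\rho\setminus\{0\}$ the decomposition $\a{\Sigma_1}+\a{\Sigma_2}$ is unique up to reordering, forcing $\tilde\Sigma_j=\Sigma_j$ there; but the arrangement at $0$ will be engineered so that $\Sigma_1$ and $\Sigma_2$ cannot both be completed to smooth submanifolds across $0$, for instance by taking their graphs over $H$ to have matching $1$-jets but distinct analytic higher-order expansions, producing genuine branching of $\Sigma_1\cup\Sigma_2$.

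The main obstacle will be the construction of the pair $(\Sigma_1,\Sigma_2)$ together with a \emph{simultaneous} calibration: the calibration form must agree with the two \emph{different} area forms at points of $\Gamma$ where $\Sigma_1,\Sigma_2$ carry distinct tangent half-planes, all while respecting the comass constraint $\|\omega\|\le 1$. The natural route is to exploit enough symmetry (for instance a circle action or a dilation invariance) so that both the minimal surface system and the calibration conditions reduce to ODEs admitting two branches of solutions with the prescribed tangency at the origin. A secondary difficulty is verifying the essential-singularity property in earnest, which would rely on controlling unique continuation of analytic minimal surfaces up to the tangency point $0$ and ruling out any alternative smooth splitting of $T$ across $0$.
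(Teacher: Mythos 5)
The theorem you are trying to prove is not established in this paper; it is imported from the first author's companion work \cite{ian2024example}, which supplies the actual construction. More importantly, your sketch is internally inconsistent and, as written, cannot produce the claimed example. You begin by defining $T:=\a{\Sigma_1}+\a{\Sigma_2}$ where $\Sigma_1,\Sigma_2$ are two real-analytic minimal $3$-disks in $\bB_1$ with common boundary $\Gamma$. If such a decomposition exists with $\Sigma_i$ smooth minimal surfaces and $\partial\Sigma_i=\Gamma\cap\bB_{\rho}$, then by the very definition of an \emph{essential one-sided singularity} quoted just before the theorem, the origin is \emph{not} an essential singularity: you have exhibited exactly the smooth splitting that the definition says must not exist. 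You notice this tension at the end and try to escape it by stipulating that ``$\Sigma_1$ and $\Sigma_2$ cannot both be completed to smooth submanifolds across $0$,'' but that contradicts your opening step, in which they were already assumed to be two real-analytic $3$-disks with boundary $\Gamma$, hence smooth across $0$ in particular.

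The obstruction is not a technicality. If $\Gamma$ is real-analytic and the $\Sigma_i$ are $C^{1}$-up-to-the-boundary minimal disks with $\partial\Sigma_i=\Gamma$, then Allard-type boundary regularity upgrades them to real-analytic up to $\Gamma$; any $C^{1}$ two-sheet splitting of $T$ near $0$ therefore automatically makes $0$ a regular boundary point, and ``matching $1$-jets with distinct higher-order analytic expansions'' still gives two honest smooth sheets, which is a regular (non-essential) configuration. The whole point of the example is that $T$ does \emph{not} decompose near $0$ into individually smooth sheets: its support must be a genuinely branched minimal variety, typically with a Puiseux-type fractional-power expansion at $0$ over the common tangent half-plane, so that the two ``sheets'' merge at $0$ in a way no smooth completion can undo. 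This is conceptually quite different from two distinct smooth calibrated disks that merely kiss at the origin. The simultaneous calibration you propose (a single closed $3$-form $\omega$ calibrating both $T_p\Sigma_1$ and $T_p\Sigma_2$ along $\Gamma$) is designed precisely for the configuration you must avoid, and it buys minimality of a two-sheet union rather than the required branching. Any correct construction must start from a branched calibrated (e.g.\ holomorphic-type) variety in $\R^5$ with the boundary $\Gamma$ threaded through the branch locus, and then verify that no radius $\rho>0$ admits a smooth splitting, rather than from a pair of smooth disks glued along $\Gamma$.
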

\begin{remark}
As observed in \Cref{R:1sided set is open}, the set of one-sided points is open. This example then demonstrates that, in \Cref{T:1sided-flat-regular}, it is not possible to remove the assumption that \emph{all} points in the open neighborhood of $q$ in $\Gamma$ are \emph{flat}.
\end{remark}

In light of this example, the natural follow-up question to be asked is: What can we say about the boundary singular set? In fact, this question is posed in \cite{de2022regularity} and is fully answered in the subsequent theorems.
 
\begin{resultado}\label{T:Rectifiability-1sided}
Under \Cref{A:general}. If $m\geq 3$, the following holds:
\begin{equation*}
    \Singb^1(T) \mbox{ is }\cH^{m-3}\mbox{-rectifiable}.
\end{equation*}
In particular, if $m=3$, we have that $\Singb^1(T)$ is countable. Moreover, if $m=2$, we have $\Singb^1(T) = \varnothing$.
\end{resultado}
\begin{remark}
The fact that there are \emph{no} one-sided singular points in the two dimensional case is proved in \cite{delellis2021allardtype}.
\end{remark}

Recalling that in the interior setting, the Hausdorff dimension of the singular set is bounded by $m-2$, we observe that in the first dimension where singularities appear, i.e., $m=2$, the singular set is discrete. As proved in the series of papers \cite{DSS1,DSS2,DSS3,DSS4}, addressing this problem is \emph{considerably} more intricate, requiring the introduction of several new techniques.

In our boundary setting, singularities first appear for $m=3$, as shown in \Cref{T:ian-example}, and we establish the countability of the one-sided singular set in \Cref{T:Rectifiability-1sided}. The problem that \Cref{T:Rectifiability-1sided} leaves open is the following.

\begin{oq}
Under \Cref{A:general}. If $m= 3$, is $\Singb^1(T)$ a discrete set? 
\end{oq}

We do not pursue further details on this question here, but it is worth noting that even in the linearized setting (i.e., for $Q$-valued Dirichlet minimizers with collapsed and regular boundary data), the problem remains entirely open.

A consequence of \Cref{T:Rectifiability-1sided}, along with streamlined versions of the theorems by the second named author and Nardulli in \cite{nardulli2022density}, is that the whole regular boundary set is open and dense.

\begin{resultado}\label{T:open-dense}
Under \Cref{A:general}, it follows that
\begin{equation*}
    \Regb(T)\mbox{ is an open and dense subset of }\Gamma\cap\ball{p}{R}.
\end{equation*}
\end{resultado}

As mentioned in the introduction, the theorem above is optimal in the sense that there cannot be a Hausdorff dimensional estimate for the singular set in this general context. Indeed, as shown in \cite[Theorem 1.8 (b)]{DDHM}, we have the following example.

\begin{theorem}\label{T:4names example}
There exists a $2d$ area minimizing current $T$ in $\oball{1}\subset \R^4$ with a boundary $\Gamma$ (consisting of two disjoint closed $C^\infty$ curves) and $\partial T = \a{\Gamma}$, i.e., $Q=1$, such that the Hausdorff dimension of the boundary singular set equals $1$ and $\cH^1(\Singb(T)) =0$.
\end{theorem}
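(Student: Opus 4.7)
Since the statement is an existence claim, my approach would be constructive, exploiting the fact that holomorphic $1$-chains in $\mathbb{C}^2\cong \mathbb{R}^4$ are automatically area minimizing because they are calibrated by the K\"ahler form (Wirtinger's inequality). This gives a rich and flexible class of area minimizing $2$-currents in $\mathbb{R}^4$ whose singular structure we can try to prescribe. The plan is therefore to build a holomorphic $1$-chain $T$ in $\bB_1\subset \mathbb{C}^2$ whose boundary consists of two disjoint smooth closed curves and whose boundary singular set has exactly the stated size.

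\textbf{Step 1: Combinatorial skeleton.} I would first fix a smooth closed arc $\alpha\subset \partial B_1$ and a closed subset $S\subset \alpha$ with $\dim_{\mathcal{H}}(S)=1$ and $\mathcal{H}^1(S)=0$, e.g.\ a Cantor-like set of appropriate gauge. The set $S$ is the intended boundary singular set. On the complement $\alpha\setminus S$, which is an at most countable union of open arcs, I would designate which pairs of smooth branches of $T$ should meet along $S$, in such a way that after gluing the branches coherently along $S$ the \emph{combined boundary} $\Gamma$ becomes a pair of disjoint smooth simple closed curves (rather than a single curve with self-intersections on $S$). Ensuring that $\Gamma$ remains smooth across the scales of the Cantor construction is the core combinatorial step.

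\textbf{Step 2: Holomorphic realization.} I would then realize this configuration as an honest holomorphic $1$-chain. Concretely, I would aim to write $T = \sum_i \llbracket V_i\rrbracket$ where each $V_i$ is (the graph of) a holomorphic map on a domain in $\mathbb{C}$, chosen so that the $V_i$ meet tangentially (or transversally in a crossing-type way) precisely on $S$. By the K\"ahler calibration argument, $T$ is then automatically area minimizing in $\mathbb{R}^4$, and $\partial T = \llbracket \Gamma\rrbracket$ by Stokes' theorem applied to each sheet. The two-componentness of $\Gamma$ is forced by the identification scheme from Step~1.

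\textbf{Step 3: Identification of the singular set.} Finally, I would show $\Singb(T)=S$. Away from $S$, the construction exhibits $T$ locally as a sum of smooth holomorphic graphs meeting $\Gamma$ in the manner required by \Cref{D:Reg-1sided}/\Cref{D:2sided regular}, so those boundary points are regular. At points of $S$, the tangent cone consists of (at least) two distinct half-planes with a shared boundary line, and this configuration cannot be written as a decomposition $\sum k_j \llbracket \Sigma_j\rrbracket$ with the $\Sigma_j$ having transverse tangent cones across $\Gamma$, so they are singular. The Hausdorff size of $\Singb(T)$ is then exactly that of $S$.

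\textbf{Main obstacle.} The principal difficulty is Step~1--2: making the boundary curves $C^\infty$ while forcing the boundary singular set to be \emph{precisely} $1$-dimensional with vanishing $\mathcal{H}^1$-measure. A naive construction tends to create either interior singularities (from generic transverse intersections of holomorphic curves, which are isolated points) or non-smooth kinks in $\Gamma$ at the accumulation points of crossings. Overcoming this requires a careful choice of holomorphic branches whose tangential behavior along $\alpha$ is compatible with a smooth limit of $\Gamma$; this is exactly the delicate gluing worked out in \cite{DDHM}.
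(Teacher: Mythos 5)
The paper itself does not prove this statement; it is quoted directly from \cite[Theorem~1.8(b)]{DDHM}, which is where the construction appears. Your high-level strategy—holomorphic $1$-chains in $\mathbb{C}^2\cong\R^4$ calibrated by the K\"ahler form, engineered so that the boundary singular set is a Cantor-like subset of the boundary with Hausdorff dimension $1$ and $\cH^1$-measure zero—is exactly the framework used there, and you are right that the delicate part (realizing the configuration by smooth holomorphic data with $C^\infty$ boundary curves) is the technical heart of \cite{DDHM}. So the overall route is correct.

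There is, however, a genuine conceptual error in Step~3. You describe the tangent cone at a point of $S$ as ``(at least) two distinct half-planes with a shared boundary line.'' Since $Q=1$, the boundary multiplicity of any tangent cone at a point of $\Gamma$ is exactly $1$; a cone consisting of two half-planes with the same spine and positive integer multiplicities has boundary multiplicity $\geq 2$, so such a cone simply cannot arise here. Moreover, even in the higher-multiplicity setting such an open book would be a \emph{regular} one-sided configuration, not a singular one. The singularities in the DDHM example are crossing-type (compare the paper's figure captioned ``$T=\a{D_1}+\a{D_2}$, then $p$ is neither a one-sided nor a two-sided point''): one holomorphic sheet $\Sigma_1$ has its boundary curve passing through \emph{interior} points of the other sheet $\Sigma_2$ along a thin Cantor set $E$, so the density jumps from $1/2$ off $E$ to $3/2$ on $E$; because $E$ has empty interior in $\Gamma$, no neighborhood has constant density $Q/2+Q^\star$, and so these points are singular. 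The tangent cone there is a half-plane plus a full plane, not two half-planes. This also clarifies the worry in your Step~1: the two boundary curves never touch each other—it is the boundary of one sheet meeting the \emph{interior} of the other that produces the singular set, and this is what lets $\Gamma$ stay a disjoint pair of smooth curves while the singular set has full Hausdorff dimension.
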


Although this example has a ``large" singular set, it remains an open problem to understand, even when $Q=1$, the following question.

\begin{oq}
Under \Cref{A:general}, is it true that $\Singb(T)$ is $\cH^{m-1}$-null?
\end{oq}

This was originally conjectured in \cite[Conjecture 2.8]{DDHM}; however, it remains a widely open question, as \Cref{T:4names example} is known not to generalize in a way that produces a singular set of positive measure. Even the counterpart of this question in the linearized problem (i.e., for $(Q-Q^\star/2)$-valued Dirichlet minimizers with regular interface) remains completely open.

\subsection{Acknowledgments} Both authors are grateful to Camillo De Lellis for his support throughout the project, for encouraging their collaboration, and for providing feedback on an earlier version of this manuscript.

Parts of this work were carried out during a two-week visit by I. Fleschler to the Center for Nonlinear Analysis at Carnegie Mellon University and during a one-week visit by R. Resende to the Institute for Advanced Study, with the support of Paul Minter's Clay Research Fellowship.

\section{Preliminaries}

\subsection{Basic definitions and notation}\label{S:preliminaries}

Denote \(\ball{p}{r}: = \{x\in\R^{m+n}:|x-p|<r\}\), fix \(\pi_0 := \R^m\times\{0\}\subset\R^{m+n}\) and \(\bp := \bp_{\pi_0}\). For any linear subspace \(\pi \subset \R^{m+n}, \pi^{\perp}\) is its orthogonal complement in \(\R^{m+n}\), \(\bp_\pi\) is the orthogonal projection on \(\pi\). The tilted disk is defined as \(\mathrm{B}_r(p,\pi) := \ball{p}{r}\cap(p+\pi)\) and the tilted cylinder as the set \[\mathbf{C}_r(p,\pi):= \left\{(x+y): x \in \mathrm{B}_r(p,\pi), y \in \pi^{\perp}\right\}.\]

Set \(\cyl{p}{r}:= \mathbf{C}_r(p,\pi_0)\) and \(\bball{p}{r}:= \mathrm{B}_r(p,\pi_0)\). The half-ball will be defined as
\[ \mathrm{B}^+_r(p) := \bball{p}{r}\cap \{x\in \R^m: x_m\geq 0\}. \]

Moreover, $\obball{r}:= \bball{0}{r}$, $\mathrm{B}_r^+:= \mathrm{B}^+_r(0)$, and \(\ocyl{r}:=\mathbf{C}_r(0)\). For any vector subspace of $\R^{m+n}$, define its $\varepsilon$-neighborhood by \[\ball{V}{\varepsilon}:=\{x\in\R^{m+n}: \dist(x,V) <\varepsilon\}.\]

It will always be assumed that each linear $k$-dimensional subspace \(\pi\) of \(\R^{m+n}\) is oriented by a \(k\)-vector \(\vec{\pi}:=v_1 \wedge \cdots \wedge v_k\), where \((v_i)_{i\in\{1,...,k\}}\) is an orthonormal base of \(\pi\) and, with abuse of notation, we write \(\left|\pi_2-\pi_1\right|\) standing for \(\left|\vec{\pi}_2-\vec{\pi}_1\right|\), where \(|\cdot|\) is the norm associated with the canonical inner product of \(k\)-vectors. 

Denote the $k$-dimensional Hausdorff measure in $\R^{m+n}$ by $\cH^k$. For notation and basic results on multivalued functions, the reader is referred to \cite{DS1}, here $\cA_Q(\R^{m+n})$ denotes the set of $Q$-points (i.e., the sum of $Q$ Dirac masses) in $\R^{m+n}$ for $Q\geq 1$.

Let us fix some notation that will be used throughout this work regarding currents and refer the reader to the treatise \cite{Fed}. The space of $k$-dimensional integral currents in $U\subset\R^{m+n}$ is denoted by $\mathbf{I}_k(U)$ for $k\in\{1,\ldots, m+n\}$. Define the upper and lower $d$-density of a nonnegative Borel measure at $p\in\R^{m+n}$ as follows:
\begin{equation*}
    \Theta^{d,*}(\mu,p) := \limsup_{r\to 0}\frac{\mu(\ball{p}{r})}{\omega_d r^d},\mbox { and }\Theta^{d}_*(\mu,p) := \liminf_{r\to 0}\frac{\mu(\ball{p}{r})}{\omega_d r^d}.
\end{equation*}
Whenever the limit exists, the $d$-density of $\mu$ at $p$ is $\Theta^{d}(\mu,p) = \Theta^{d,*}(\mu,p) = \Theta^{d}_*(\mu,p)$, where $\omega_m := \cH^m(\obball{1})$. If $T\in\mathbf{I}_m(U)$, $p\in U$, and $\|\cdot\|$ denotes the mass of the current $T$, we denote $\Theta^{m}(T,p) := \Theta^{m}(\|T\|,p)$. 

A measure $\mu$ is said to be $d$-upper regular if for some universal constant $C$, $\mu(\ball{p}{r}) \leq Cr^d$ for every ball $\ball{p}{r}.$  

Given a $k$-rectifiable set $M\subset\R^{m+n}$, it naturally induces a $k$-current which is denoted by $\a{M}$. The boundary of a $m$-current $T$ is taken with multiplicity $Q^{\star}$ if $T=Q^{\star}\a{\Gamma}$ for some $(m-1)$-rectifiable set $\Gamma$. 

We denote by $\bA_M$ the $C^0$-norm of the second fundamental form of a smooth submanifold $M$ of $\R^{m+n}$.

The classical definitions for the excess and height that will be throughly used in this work are given below. Let $T\in\mathbf{I}_m(U)$ with $\spt(T)\subset\Sigma$, $\Gamma\subset\Sigma$, $\partial T = Q\a{\Gamma}$, $\pi$ a $m$-dimensional plane, $p\in \R^{m+n}$, and $r>0$. The \emph{cylindrical excess relative to the plane $\pi$} is the quantity 
\begin{equation*}
    \bE (T, \cyl{p}{r}, \pi) := \frac{1}{\omega_m r^m} \int_{\cyl{p}{r}} \frac{|\vec{T} (x) - \vec{\pi}|^2}{2} \, d\|T\| (x),
\end{equation*} 
and, for $p\in\Gamma$, the {\emph{cylindrical excess}} the quantity 
\begin{equation*}
    \bE^{\flat} (T, \cyl{p}{r}) := \min \{\bE (T, \cyl{p}{r}, \pi): T_p\Gamma\subset \pi\subset T_p \Sigma\}   .
\end{equation*}

Define \emph{spherical excess relative to the plane $\pi$} to be the quantity 
\begin{equation*}
    \bE (T, \ball{p}{r}, \pi) := \frac{1}{\omega_m r^m} \int_{\ball{p}{r}} \frac{|\vec{T} (x) - \vec{\pi}|^2}{2} \, d\|T\| (x),
\end{equation*}
and, for $p\in\Gamma$, the {\emph{spherical excess}} to be the number
\begin{equation*}
    \bE^{\flat} (T, \ball{p}{r}) := \min \{\bE (T, \ball{p}{r}, \pi): T_p\Gamma\subset \pi\subset\R^{m+n}\} .   
\end{equation*}

The \emph{height of $T$ in a set $G \subset \mathbb{R}^{m+n}$ with respect to a plane $\pi$} is defined as 
\begin{equation*}
    \mathbf{h}(T, G, \pi):=\operatorname{diam}(\bp_{\pi}^\perp(\spt(T)\cap G)) = \sup \left\{\left|\mathbf{p}_{\pi}^{\perp}(q-p)\right|: q, p \in \operatorname{spt}(T) \cap G\right\},
\end{equation*}
where $\mathbf{p}_{\pi}^{\perp}$ denotes the orthogonal projection onto $\pi^\perp$.

\subsubsection{Tangent cones.} For $x\in\R^{m+n}$ and $r>0$, define $\iota_{x, r}(y):= \frac{y-x}{r}$. For any current $T\in\mathbf{I}_m(U)$, let us define the rescaled current $T$ at $x$ at scale $r$ as ${\iota_{x, r}}_{\sharp} T=:T_{x,r}$ and $T_r =: {\iota_{0,r}}_{\sharp} T$. The current $T_{x}$ is called a blow-up of $T$ at $x$, if there exists a sequence of radii $r_j\to0$ such that $T_{x,r_j}\to T_x$ in the weak topology. A well-known consequence of the monotonicity formula tells us that if $T$ is area minimizing in $\Sigma$ and $\partial T = Q\a{\Gamma}$ for $\Gamma\in C^{1,\kappa}$, then $T_{p,r_j}$ converges to an oriented cone about $p$ for any $r_j\downarrow 0$. An oriented cone about $p$ is understood as a current $C$ such that $C_{p,r} = C$ for any $r>0$. 

Some classes of cones obtained by limits of area-minimizing currents will be of special interest in what follows; essentially, they are: one-sided flat cones (half $m$-planes), two-sided flat cones ($m$-planes), and open books (a union of half $m$-planes). Their precise definitions are given below.

\begin{definition}[Open books]\label{D:open-book}
We say that $C$ is an {\emph{open book with boundary $\a{V}$ and multiplicity ${Q}$}}, if $\partial C = {Q}\a{V}$ and there exist $N\in\N\setminus\{0\}$, ${Q}_1,\dots, {Q}_N\in\N\setminus\{0\}$ and $\pi_1,\dots, \pi_N$ distinct $m$-dimensional half-planes such that 
\begin{enumerate}[\upshape (a)]
\item $\partial\a{\pi_i} = \a{V}, \forall i\in\{1, \dots, N\}$,

\item $T_0 = \sum_{i=1}^{N}{Q}_i\a{\pi_i}$ with ${Q} = \sum_{i=1}^{N}{Q}_i$.
\end{enumerate}
If there exist $i\neq j$ such that $\pi_i\neq\pi_j$, we say that $C$ is a {\emph{genuine open book with boundary $\a{V}$ and multiplicity $Q$}}.
\end{definition}

One-sided boundary flat cones will also be called \emph{non-genuine open books}.

\begin{definition}[Flat cones]\label{def:flat-cone}
We say that $C$ is a {\emph{flat cone with boundary $\a{V}$ and multiplicity ${Q}$}}, if $\partial C = {Q}\a{V}$ and there exist a $m$-dimensional closed plane $\pi$, $Q^{\textup{int}}\in\N$ and $Q,Q^{\star}\in\N\setminus\{0\}, Q\le Q^\star$, such that 
\begin{enumerate}[\upshape (a)]
    \item $\spt(C) = \pi$ is an $m$-dimensional subspace,

    \item $\partial\a{\pi^+} = -\partial\a{\pi^-} = \a{V}$,

    \item $C = Q^{\textup{int}}\a{\pi} + Q^\star\a{\pi^+} + (-1)^k({Q^\star}-Q)\a{\pi^-}$, $k\in\{0,1\}$.
\end{enumerate}
If $Q^{\textup{int}}=0$ and either $Q=Q^\star$ or $k=1$, we call $C$ an {\emph{one-sided boundary flat cone with multiplicity $Q$}}. If $k=0$ and $C$ is not a one-sided boundary flat cone, we say that $C$ is a {\emph{two-sided boundary flat cone with multiplicity $Q$}}.
\end{definition}

For $x\in\R^{m+n}$ and $r>0$, define $\iota_{x, r}(y):= \frac{y-x}{r}$ and, for any $T\in\mathbf{I}_m(U)$, the rescaling of $T$ at $x$ at scale $r$ as ${\iota_{x, r}}_{\sharp} T=:T_{x,r}$ and $T_r =: {\iota_{0,r}}_{\sharp} T$.

\begin{definition}\label{D:flat bdr points}
Let $T\in\mathbf{I}_{m}(\R^{m+n})$ with $\spt(T)\subset\Sigma$, $\partial T=Q\a{\Gamma}$, and $p\in\Gamma$. Assume that $r_j\downarrow 0$ and $T_{p,r_j}$ weakly converge to an oriented cone $C$ with $\partial C = Q\a{T_p \Gamma}$. We say that 
\begin{enumerate}[\upshape (a)]
    \item $p$ is a {\emph{boundary flat point of $T$}} provided $C$ is a flat cone,
    
    \item $p$ is a {\emph{one-sided boundary flat point of $T$}} provided $C$ is an open book which is non genuine,
    
    \item $p$ is a {\emph{two-sided boundary flat point of $T$}} provided $C$ is a two-sided boundary flat cone.
\end{enumerate}
\end{definition}

\subsubsection{Jones' $\beta_2$ coefficients.}\label{S:Jones beta numbers} We define the Jones' $\beta_2$ coefficient that measures how close the support of $\mu$ is to a plane in $L^2(\mu)$ as follows.

Given a finite non-negative Radon measure $\mu \in \RR^{m+n}$ and $k \in \left\{1,...,m+n-1 \right\}$, we define the \emph{$k$-dimensional Jones' $\beta_2$ coefficient of $\mu$} as 
\begin{equation*}
    \beta_{2,\mu}^{k}(x,r):= \inf_{\textup{affine k-planes }L} \left[r^{-k-2} \int _{\ball{x}{r}}  \dist(y,L)^2 d\mu(y)\right]^{1/2}.
\end{equation*}

We will often omit $\mu$ from the notation.

\subsection{Outline of the proofs}

The results established by the first-named author in \cite{ian2024uniqueness} are pivotal for the development of this article, and they are presented here for completeness. Although these results were originally proven under more general assumptions, we have opted to state them in a specific form that aligns with our current application.

\begin{theorem}[Uniqueness of the tangent cone]\label{T:Uniqueness 1sided}
Assume \Cref{A:general} and, in addition, $\Theta(T,p)=Q/2$. The tangent cone at $p$ is an open book and is unique, i.e. it does not depend on the blow-up sequence. Moreover, $T_{p,r}$ converges (in flat distance, cf. \cite[\textsection 7, Chapter 6]{simon2014introduction}) as $r\to 0$ to the unique open book faster than $r^\alpha$ for some $\alpha\in (0,1)$. 
\end{theorem}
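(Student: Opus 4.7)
The plan is to first classify tangent cones at density $Q/2$ points as open books, and then establish uniqueness through an epiperimetric inequality yielding polynomial excess decay.

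\textbf{Classification of tangent cones.} Any tangent cone $C$ at $p$ satisfies $\partial C = Q\a{T_p\Gamma}$, is area minimizing in $T_p\Sigma$, and by the boundary monotonicity formula is an oriented cone with $\Theta^m(C,0) = Q/2$. Since $Q/2$ is the minimum possible density at a boundary point of multiplicity $Q$, equality in the boundary monotonicity inequality forces rigidity: combining the constancy theorem applied to the slices by half-spaces with the minimizing property, $C$ must decompose as $\sum_{i=1}^{N} Q_i \a{\pi_i^+}$, where each $\pi_i^+$ is a half-$m$-plane through $T_p\Gamma$ and $\sum_i Q_i = Q$. This is precisely the definition of an open book.

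\textbf{Epiperimetric inequality.} The core step is to prove that there exist $\eta,\delta_0>0$ such that whenever $T$ is area minimizing in $\oball{1}$ with $\partial T = Q\a{V}$ and the flat distance from $T$ to some open book $C$ with $\partial C = Q\a{V}$ is less than $\delta_0$, there exists a competitor $Z$ agreeing with $T$ on $\partial\oball{1}$, with $\partial Z = \partial T$, satisfying
\[\mathbf{M}(Z)-\tfrac{Q}{2}\omega_m \leq (1-\eta)\bigl(\mathbf{M}(T)-\tfrac{Q}{2}\omega_m\bigr).\]
The competitor is built by graphically parametrizing $T$ over the open book $C$ in a tubular neighborhood of $T_p\Gamma$, using a multivalued boundary Lipschitz approximation in the spirit of \cite{DDHM}, and then replacing the interior parametrization by an explicit homogeneous extension blended against its boundary trace. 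This reduces the estimate to a linear epiperimetric inequality for $Q$-valued Dirichlet minimizers on half-balls with prescribed collapsed boundary data, which is proved via spherical-harmonic decomposition on the hemispheres and an eigenvalue gap argument.

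\textbf{Decay and uniqueness.} Combined with minimality $\mathbf{M}(T) \leq \mathbf{M}(Z)$, the epiperimetric inequality yields polynomial decay of the spherical boundary excess, $\bE^\flat(T,\ball{p}{r}) \leq Cr^{2\alpha}$ for some $\alpha\in(0,1)$ and all small $r$. Standard ``height controlled by excess'' estimates at the boundary then produce a flat-distance bound of the form
\[\mathbf{F}\bigl(T_{p,r} - C_r\bigr) \leq Cr^\alpha\]
for a best-approximating open book $C_r$ at scale $r$. A telescoping argument over dyadic radii shows that $\{T_{p,r}\}_{r>0}$ is Cauchy in flat distance with rate $r^\alpha$, producing a unique limit open book independent of the chosen blow-up sequence.

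\textbf{Main obstacle.} The principal difficulty is the epiperimetric inequality at \emph{genuine} open books with several half-planes meeting along $T_p\Gamma$. The linearized problem is a $Q$-valued Dirichlet problem on the half-ball whose boundary data has a specific structure across $T_p\Gamma$, and the usual interior argument fails because the current-level competitor must rigidly preserve the boundary multiplicity $Q$, which rules out naive interpolation. Producing a valid competitor requires a delicate boundary-adapted Almgren approximation together with a careful choice of gluing annulus near $T_p\Gamma$ so that the boundary trace is preserved exactly while realizing the energy gain predicted by the linear estimate.
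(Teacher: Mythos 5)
This theorem is not proved in the present paper: it is imported verbatim from the first author's companion work \cite{ian2024uniqueness}, and the surrounding text makes clear that the uniqueness of one-sided tangent cones (together with the excess-decay rate of \Cref{T:excessdecay} and the density-jump rigidity of \Cref{T:density-jump}) is treated there as external input. So there is no internal proof to compare your proposal against.

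Evaluating your sketch on its own merits, there are two substantive gaps. First, the classification step is essentially circular: you assert that ``$Q/2$ is the minimum possible density'' and that equality plus ``constancy theorem applied to the slices by half-spaces'' forces the cone to be $\sum_i Q_i\a{\pi_i^+}$. The density lower bound $\Theta^m(T,p) \geq Q/2$ and, crucially, the rigidity statement that \emph{equality forces an open book} are precisely the content of \Cref{T:density-jump}, which is itself a nontrivial result of \cite{ian2024uniqueness}, not a consequence of the constancy theorem. A genuine density-$Q/2$ cone $C$ could a priori be a complicated minimal cone with $\partial C = Q\a{V}$; ruling out such behavior is a real theorem, not a two-line observation, and your proposal uses it as a black box while claiming to be proving it.

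Second, and this is the ``main obstacle'' you yourself flag, the epiperimetric inequality at a genuine open book is the true difficulty, and your sketch does not engage with the feature that makes it hard: the constants in any epiperimetric inequality for a $Q$-valued competitor over an open book degenerate as two or more of the half-planes $\pi_i^+$ coalesce. Because a priori the tangent cone is not unique, the approximating open book $C_r$ at scale $r$ could have half-planes collapsing onto one another as $r\downarrow 0$, and an estimate with non-uniform $\eta$ cannot be iterated. Any serious proof along these lines must either establish a separation/gap estimate for the half-planes first (which is again in the territory of \Cref{T:density-jump} and \Cref{T:decomposition-1sided}), or adopt an entirely different strategy. Given that \Cref{T:excessdecay} in the paper is stated \emph{only} for the flat (non-genuine) open book, the likely route in \cite{ian2024uniqueness} is to first decompose the current into sheets with distinct tangent half-planes at a fixed small scale, reduce inductively to the flat case, and then handle the flat case by a center-manifold/frequency argument rather than a current-level epiperimetric inequality; your proposal instead tries to run the epiperimetric inequality directly at the genuine open book, which is where the argument would stall.
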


The following excess decay, whose proof is essentially contained in \cite{ian2024uniqueness}, will be repeatedly used. 

\begin{theorem}\label{T:excessdecay}
Assume \Cref{A:general} and, in addition, $\Theta(T,p)=Q/2$. Assume that $T$ has a flat one-sided tangent cone $Q\a{\pi}$ at $p$. There exists $\varepsilon(Q,m,n)>0$ such that if
\begin{equation*}
\max\left\{\mathbf{E}(T,\bB_R(p),\pi), \mathbf{A}_{\Gamma},\mathbf{A}_{\Sigma}^2\right\}<\varepsilon
\end{equation*}
then there exists $C(Q,m,n)$ and $\alpha(Q,m,n)>0$ such that
\begin{equation*}
\mathbf{E}(T,\bB_r(p),\pi) \leq C(Q,m,n) r^{\alpha}, \mbox{ for every }r\in (0,R).
\end{equation*}
\end{theorem}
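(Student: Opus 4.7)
The plan is to combine \Cref{T:Uniqueness 1sided} with an iteration of one-step excess decay estimates at the one-sided flat point $p$.

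\emph{Step 1 (one-step decay).} Following the boundary harmonic approximation scheme of \cite{ian2024uniqueness}, I would first establish the following. There exist constants $\theta, \eta \in (0,1)$ and $\varepsilon_1, C_1 > 0$, depending only on $Q, m, n$, such that whenever the hypotheses hold on $\bB_\rho(p)$ with $\bE(T,\bB_\rho(p),\pi) + (\bA_\Gamma^2 + \bA_\Sigma^2)\rho^2 < \varepsilon_1$, there is a plane $\pi'$ with $T_p\Gamma \subset \pi' \subset T_p\Sigma$ for which
\begin{align*}
\bE(T,\bB_{\theta\rho}(p),\pi') &\leq \eta\,\bE(T,\bB_\rho(p),\pi) + C_1 (\bA_\Gamma^2 + \bA_\Sigma^2)\rho^2, \\
|\pi - \pi'|^2 &\leq C_1 \bigl[\bE(T,\bB_\rho(p),\pi) + (\bA_\Gamma^2 + \bA_\Sigma^2)\rho^2\bigr].
\end{align*}
This is proven by constructing a Lipschitz $Q$-valued approximation of $T$ near $\pi$, comparing it with a collapsed $Q$-valued Dirichlet minimizer with boundary datum $Q\a{0}$ along $T_p\Gamma$, and exploiting that such a minimizer splits into $Q$ smoothly harmonic sheets vanishing on $T_p\Gamma$, whose Taylor expansions furnish the better tangent plane $\pi'$.

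\emph{Step 2 (iteration and plane convergence).} Applying Step 1 at scales $\rho_k := \theta^k R$ yields planes $\pi_k$ (with $\pi_0 := \pi$) such that $E_k := \bE(T,\bB_{\rho_k}(p),\pi_k)$ satisfies $E_{k+1} \leq \eta E_k + C_1 (\bA_\Gamma^2 + \bA_\Sigma^2) \rho_k^2$. A standard discrete iteration then gives $E_k \leq C \mu^k$ with $\mu := \max\{\eta,\theta^2\} < 1$, equivalently $E_k \leq C \rho_k^\alpha$ for $\alpha := \log(1/\mu)/\log(1/\theta)$. The tilt bound implies that $\sum_k |\pi_{k+1} - \pi_k|$ is a geometrically convergent series, so $\pi_k \to \pi_\infty$ for some plane $\pi_\infty$. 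Uniqueness of the tangent cone (\Cref{T:Uniqueness 1sided}) forces $\pi_\infty = \pi$, and telescoping the tilts yields $|\pi - \pi_k|^2 \leq C \rho_k^\alpha$, possibly after reducing $\alpha$.

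\emph{Step 3 (from varying planes back to $\pi$, and main obstacle).} For arbitrary $r \in (0,R)$, pick $k$ with $\rho_{k+1} \leq r < \rho_k$. Using $|\vec{T} - \vec{\pi}|^2 \leq 2|\vec{T} - \vec{\pi}_k|^2 + 2|\vec{\pi}_k - \vec{\pi}|^2$ together with the monotonicity bound $\|T\|(\bB_r(p)) \leq C r^m$, one obtains
\begin{equation*}
\bE(T,\bB_r(p),\pi) \leq C\bigl[\bE(T,\bB_r(p),\pi_k) + |\pi_k - \pi|^2\bigr] \leq C r^\alpha,
\end{equation*}
which is the desired conclusion. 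The principal difficulty lies in Step 1: a one-step excess decay at a flat one-sided boundary point in the higher-multiplicity regime requires a boundary harmonic approximation compatible with collapsed boundary data, together with the boundary regularity theory for $Q$-valued Dirichlet minimizers with datum $Q\a{0}$; both are precisely the technical innovations of \cite{ian2024uniqueness}. Once Step 1 and the uniqueness of the tangent cone are in hand, the iteration and the transfer from varying to fixed optimal planes in Steps 2--3 are comparatively standard.
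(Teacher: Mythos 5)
The paper itself does not supply a proof of this statement: it is taken verbatim from \cite{ian2024uniqueness} with the remark that the proof is ``essentially contained'' there. So the comparison can only be against the paper's own framework and the properties it attributes to that reference.

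Your iterative scheme---a one-step excess decay at the fixed boundary point $p$, followed by geometric summability of the tilts and a transfer from the optimal planes $\pi_k$ back to the fixed plane $\pi$ via uniqueness of the tangent cone---is the standard structure for such estimates, and Steps 2 and 3 are fine once Step 1 is granted. The problem is the mechanism you propose inside Step 1. You assert that the limiting Dirichlet minimizer with boundary datum $Q\a{0}$ on $T_p\Gamma$ ``splits into $Q$ smoothly harmonic sheets'' whose Taylor expansions produce the improved plane $\pi'$. For $m>2$ this is false, and the paper emphasizes it repeatedly: the introduction states that ``there is no boundary regularity theory developed for $Q$-valued Dirichlet minimizers for $m>2$,'' \Cref{T:ian-example} exhibits an essential one-sided boundary singularity whose tangent cone \emph{is} flat, and \Cref{T:LinearProblem:Rectifiability} only gives $\cH^{m-3}$-rectifiability of the boundary singular set of such minimizers---it is in general non-empty. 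A $Q$-valued Dirichlet minimizer vanishing on a flat boundary can and does develop singularities on the boundary when $m>2$, so there is no decomposition into $Q$ classical harmonic functions near $0$, and no Taylor expansion of individual sheets. The ingredient \cite{ian2024uniqueness} actually supplies (and which the paper quotes, e.g. in the proof of \Cref{L:Freq-geq-than-1} as \cite[Lemma 8.16]{ian2024uniqueness}) is a uniform frequency lower bound $I_u(0)\geq 1$ for such minimizers at a flat boundary point, which is weaker than smoothness but is what powers the one-step decay via a compactness/dichotomy argument. Your proposal also implicitly needs the degree-one part of the blowup at $0$ to be realized by a single plane rather than a genuine open book; this is where the hypothesis that the tangent cone at $p$ is the flat cone $Q\a{\pi}$ enters in an essential way and should be used explicitly, not hidden inside a smoothness claim.

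In short: the outer iteration is right, but the inner mechanism as you wrote it would fail precisely in the regime $m>2$, $Q>1$ that this paper is designed to treat; the correct replacement is the frequency-based compactness argument from \cite{ian2024uniqueness}, not boundary smoothness of the linear problem.
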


To investigate the $\cH^{m-3}$-rectifiability of the singular set and prove \Cref{T:Rectifiability-1sided}, the first step is to employ a suitable decomposition of $T$ (see \Cref{T:decomposition-1sided}), which allows us to split the current into different sheets. Each of these sheets has a boundary multiplicity smaller than that of $T$ and does not share the same tangent cone.

Using this decomposition, the problem of proving the $\cH^{m-3}$-rectifiability of the set of one-sided singular points (those with density $Q/2$) is reduced, in \Cref{S:main-results-proofs}, to establishing the $\cH^{m-3}$-rectifiability of the subset of one-sided \emph{flat} singular points (which possess at least one tangent cone supported on a plane, cf. \Cref{D:flat bdr points}). Relying on the result of the first named author and De Lellis \cite{iancamillorectifiability}, one can reduce the problem of proving rectifiability of a set to showing the rectifiability of Frostman (upper-regular) measures supported on it. Then, an application of Azzam and Tolsa's main theorem of \cite{azzam2015characterization} (see \Cref{T:NaberValtorta-AzzaamTolsa})  reduces the proof of $\HH^{m-3}$-rectifiability to establishing a Dini-type condition for the Jones' $\beta_2^{m-3}$ coefficient for Frostman measures supported on pieces of the set of one-sided flat singular points.

To analyze one-sided flat singular boundary points, we utilize the standard machinery to address regularity in this setting—namely, the study of Lipschitz approximations, center manifolds, and normal approximation (cf. \Cref{P:1sided-StrongLA} and \Cref{T:CM,T:LocalNA})—adapting the approach used for $m=2$ in \cite{delellis2021allardtype}. As observed in \cite{de2023fineI,de2023fineII,de2023fineIII} (for flat interior singular points), the ``classical" estimates in \cite{DS4,DS5} are insufficient. The same holds in the boundary case: the estimates in \cite{delellis2021allardtype} are not sharp enough to yield the necessary bounds. To address this, we provide finer estimates in \Cref{S:UniversalFreqDef} for quantities related to the universal frequency function (\Cref{S:UniversalFreqDef}), improving on those in \cite{delellis2021allardtype,DDHM,nardulli2022density}.

When working with the universal frequency function, we must consider the intervals of flattening, following the approach in \cite{DS5,delellis2021allardtype}. In contrast, in boundary settings considered in \cite{DDHM} and in the work of the second named author and Nardulli \cite{nardulli2022density}—where they study collapsed two-sided points—there is no need to introduce intervals of flattening, as prior results ensure an almost quadratic excess decay rate. However, for one-sided points, the excess decay rate established in \cite{ian2024uniqueness} is certainly not almost quadratic, and there is no reason to expect such a rate in general.

The analysis of the fine properties of the universal frequency function begins by establishing the almost monotonicity of the frequency (see \Cref{T:AlmMonotFreq}), with errors quantitatively controlled by the excess and the norms of both the second fundamental form of the boundary and the second fundamental form of the ambient manifold. In contrast to \cite{de2023fineI}, by \cite{ian2024uniqueness}, it is already known that every boundary one-sided point has a unique tangent cone, which serves as the starting point for the regularity theory. The frequency values $\bI(T,p)$ are then established in \Cref{T:UniversalLowerBound-FreqFnc} to be always bounded below by a universal geometric constant $1+ \alpha/2 >1$, where $\alpha$ is the exponent obtained from the excess decay result in \cite{ian2024uniqueness}.

Although the presence of the boundary introduces both technical and conceptual challenges, we manage to follow the approach in \cite{de2023fineII} in an analogous way. Taking into account the lower bound for the frequency values of $1+\alpha/2 > 1$, the procedure from \cite{de2023fineII} can be implemented to bound Jones' coefficient $\beta_2^{m-3}$ (for certain Frostman measures) in terms of frequency pinching, up to an error (see \Cref{T:JonesBeta-LowFreq,T:JonesBeta-HighFreq}).

The bounds for $\beta_2^{m-3}$ mentioned above and established \Cref{T:JonesBeta-LowFreq,T:JonesBeta-HighFreq} are a consequence of the classification of homogeneous $2$-dimensional solutions for the linear problem, proved in \cite{delellis2021allardtype}. This classification asserts that every $2$-dimensional homogeneous multivalued Dirichlet minimizer with zero boundary data on a flat boundary must be linear (see \Cref{t:homogeneous2d}).

In fact, if the set of flat one-sided singular points is not sufficiently close to an $(m-3)$-dimensional plane (i.e., $\beta_2^{m-3}$ is ``large"), we can construct a solution to the linear problem that depends only on \emph{two} variables and has a frequency function at least $1+\alpha/2>1$ (see \Cref{T:JonesBeta-HighFreq}). This contradicts the aforementioned classification, which implies that the frequency must be exactly $1$ (\Cref{t:homogeneous2d}). Consequently, we obtain the desired control on the Jones' $\beta_2^{m-3}$ coefficient (up to an error) via the frequency pinching (\Cref{T:JonesBeta-LowFreq,T:JonesBeta-HighFreq}).

Another key ingredient in obtaining the bound mentioned above in $\beta^{m-3}_2$ is achieving a precise understanding of the frequency values along different center manifolds—i.e., the fine monotonicity formula for the universal frequency—and, moreover, being able to compare the frequency values at different points at the same scale. These properties have been absent in previous works that addressed boundary regularity, namely \cite{DDHM,delellis2021allardtype,nardulli2022density}.

\subsection{Convex barrier}

The convex barrier condition in Euclidean spaces, defined in the following, ensures the applicability of \Cref{T:Rectifiability-1sided} (see \Cref{T:ConvexBarriers}) for the whole boundary singular set. 

\begin{assumption}[Convex barrier condition] \label{convexbarrier} Let $\Omega \subset \R^{m+n}$ be a domain such that $\partial \Omega$ is a $C^2$ uniformly convex submanifold of $\RR^{m+n}$. The set \emph{$\partial \Omega$ is a convex barrier for $T$} if $\partial T = \sum_{i=1}^N Q_i \a{\Gamma_i}$, $Q_i$ are positive integers, and $\Gamma_i \subset \partial \Omega$ are disjoint $C^2$ closed oriented submanifolds of $\partial \Omega$. 
\end{assumption}

The following notation for wedges will be used from now on.

\begin{definition}
Given an $(m-1)$-dimensional plane $V \subset \R^{m+n}$, we denote by $\mathbf{p}_V$ the orthogonal projection onto $V$. Given a unit vector $\nu$ normal to $V$ and an angle $\vartheta \in\left(0, \pi/2\right)$, the \emph{wedge with spine $V$, axis $\nu$, and opening angle $\vartheta$}, is defined as the set
\begin{equation*}
W(V, \nu, \vartheta):=\left\{y\in \R^{m+n}:\left|y-\mathbf{p}_V(y)- \langle y, \nu\rangle \nu\right| \leq (\tan \vartheta )\langle y, \nu\rangle \right\}.
\end{equation*}
\end{definition}

A local version of the convex barrier condition is defined below and is actually implied by the global version defined above in \Cref{convexbarrier}. 

\begin{assumption}[Local convex barrier condition]\label{a:convexbarrierlocal}
Let $T, \Sigma$ and $\Gamma$ be as in \Cref{A:general}. Assume that there exists $\nu: \Gamma \rightarrow \mathbb{S}^{n+1} \cap T\Sigma$ a map such that $\nu(q) \perp T_q \Gamma$ and $\nu(q) \in T_q \Sigma$. Then \emph{$T$ satisfies the local convex barrier assumption} if 
\begin{equation*}
 \operatorname{spt}(T) \subset \bigcap_{q \in \Gamma}\left(q+W\left(T_q \Gamma, \nu(q), \vartheta\right)\right) \cap \Sigma.
\end{equation*}
\end{assumption}

A much more detailed discussion of both convex barrier conditions has been explored in \cite[Section 4]{ian2024uniqueness} where the first named author proved \Cref{T:ConvexBarriers}, which shows that the theory in the present work can be applied to the convex barrier setting.

\begin{theorem}\label{T:ConvexBarriers}
Let $T$ be as in \Cref{convexbarrier} (convex barrier) or \Cref{a:convexbarrierlocal} (local convex barrier), then every tangent cone to $T$ at a boundary point $q$ is an open book. Moreover, it holds:
\begin{itemize}
    \item if $q \in \Gamma_i$ in the first case, then $\Theta(T,q)=Q_i/2$,

    \item if $q \in \Gamma$ in the second case, then $\Theta(T,q)=Q/2$.
\end{itemize}
\end{theorem}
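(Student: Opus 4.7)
The plan is to (i) reduce the global barrier assumption to the local wedge, (ii) propagate the wedge containment to every blow-up, (iii) classify the resulting minimizing cones (with linear boundary and support in a wedge) as open books, and (iv) read off the density.

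First I would show that \Cref{convexbarrier} implies \Cref{a:convexbarrierlocal} near each $q\in\Gamma_i$. By the maximum principle for area minimizing currents against convex barriers, $\spt(T)\subset\overline{\Omega}$. Uniform convexity of $\partial\Omega$ supplies a constant $c_0>0$ so that for every $q\in\partial\Omega$ and every $x\in\overline{\Omega}$ close to $q$,
\[
\langle x-q,\,\nu_{\partial\Omega}(q)\rangle \;\geq\; c_0\,|x-q|^2,
\]
with $\nu_{\partial\Omega}(q)$ the inward unit normal. For $q\in\Gamma_i$ the vector $\nu(q):=\nu_{\partial\Omega}(q)$ is tangent to $\Sigma$ and perpendicular to $T_q\Gamma_i$, and the quadratic bound above yields the wedge inclusion of \Cref{a:convexbarrierlocal}, with opening $\vartheta(r)\downarrow 0$ as $r\downarrow 0$. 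Since the wedge $W(V,\nu(q),\vartheta)$ is invariant under rescalings about $q$, the inclusion $\spt(T_{q,r})\subset W$ passes, by Hausdorff convergence of supports (standard for area minimizing currents via the monotonicity formula), to every tangent cone $C=\lim_j T_{q,r_j}$; in the global case, the quadratic improvement of $\vartheta(r)$ sharpens this to $\spt(C)\subset V+\mathbb{R}_{\geq 0}\,\nu(q)$, where $V:=T_q\Gamma_i$ (resp.\ $T_q\Gamma$ in the purely local case).

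Next I would classify the cone $C$. It is area minimizing in the blow-up of the ambient, namely $\R^{m+n}$ (since $\Sigma$ blows up to an $(m+\bar n)$-plane containing $V$), has $\partial C = Q\a{V}$ (resp.\ $Q_i\a{V}$), and support in the wedge. The rigidity case of the boundary monotonicity formula, applied at each $v\in V$ (the density is pinned to $Q/2$ along $V$), forces $C$ to be invariant under translations along $V$; equivalently $C=\a{V}\times C'$, where $C'$ is a $1$-dimensional area minimizing cone in $V^\perp$ with $\partial C'=Q\a{0}$ and $\spt(C')\subset V^\perp\cap W$. The constancy theorem then decomposes $C'=\sum_{i=1}^N Q_i\a{R_i}$ into finitely many oriented rays from the origin, with multiplicities $Q_i\in\N\setminus\{0\}$ summing to $Q$. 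Setting $\pi_i^+:=V\oplus R_i$ gives
\[
C \;=\; \sum_{i=1}^N Q_i\,\a{\pi_i^+},
\]
an open book in the sense of \Cref{D:open-book}. A direct computation yields $\mathbf{M}(C\res\obball{r})=\tfrac{Q}{2}\omega_m r^m$, hence $\Theta^m(T,q)=\Theta^m(C,0)=Q/2$ (resp.\ $Q_i/2$).

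The delicate step is the $V$-translation invariance of $C$: it requires bootstrapping from boundary-density information at the vertex to rigidity along the whole of $V$, via the (boundary) monotonicity formula together with the fact that the density of $C$ at every $v\in V$ must equal $\Theta^m(C,0)$ because $C$ is a cone with linear boundary $Q\a{V}$. In the purely local case one must additionally verify that the wedge opening $\vartheta\in(0,\pi/2)$ already suffices to exclude rays ``opposing'' the axis $\nu(q)$; this follows because every $y\in\spt(C)\cap V^\perp\subset W$ must satisfy $\langle y,\nu(q)\rangle\geq 0$ by the very definition of $W(V,\nu(q),\vartheta)$.
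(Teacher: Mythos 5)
The paper does not prove \Cref{T:ConvexBarriers} itself; it defers to \cite[Section~4]{ian2024uniqueness}. So I can only assess your argument on its own merits, and I find two interconnected gaps, both at what you yourself flag as the crux.

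First, the claim that uniform convexity yields a wedge inclusion with opening $\vartheta(r)\downarrow 0$, and hence $\spt(C)\subset V+\R_{\geq 0}\nu(q)$, is false. Uniform convexity of $\partial\Omega$ gives a fixed-opening wedge at best, not a shrinking one. Decomposing $y=x-q=u+s\nu+w$ with $u\in V$, $s=\langle y,\nu\rangle$, $w\in(V\oplus\R\nu)^\perp$, the quadratic bound $s\geq c_0|y|^2$ only forces $|w|\lesssim\sqrt{s/c_0}$, so $|w|/s\lesssim (c_0 s)^{-1/2}$ can blow up as $s\to 0$; the paraboloidal region $\overline\Omega$ near $q$ is not inside any wedge at all. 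What does work is that each $x\in\Gamma$ near $q$ satisfies $|w|=O(|x-q|^2)$ and $s\geq c_0|x-q|^2$, so $\Gamma\subset q+W(V,\nu(q),\vartheta)$ for a fixed $\vartheta=\vartheta(c_0)$; combining with the convex hull property $\spt(T)\subset\mathrm{conv}(\Gamma)$ and convexity of the wedge gives \Cref{a:convexbarrierlocal} with a fixed $\vartheta\in(0,\pi/2)$. This is the correct content of the global-to-local reduction, and it is genuinely weaker than what you assert: the tangent cone need not be contained in the half $m$-plane $V+\R_{\geq 0}\nu(q)$. A concrete counterexample: for $\Gamma$ a great circle on $\partial\Omega=S^2\subset\R^3$, $Q=1$, and $T$ a hemisphere, the tangent cone at $q$ is a half-plane whose axis direction is tangent to $S^2$, orthogonal to $\nu_{\partial\Omega}(q)$, not equal to it.

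Second, and more seriously, the $V$-translation invariance of $C$ is not justified. You assert that ``the density of $C$ at every $v\in V$ must equal $\Theta^m(C,0)$ because $C$ is a cone with linear boundary $Q\a{V}$.'' This does not follow. For a cone $C$ with vertex $0$ and $\partial C=Q\a{V}$, the boundary monotonicity formula gives $\Theta(C,v)\leq\Theta(C,0)$ (the ratio at $v$ increases to the density of the blow-down, which is $\Theta(C,0)$), while Allard's lower bound gives $\Theta(C,v)\geq Q/2$. Equality $\Theta(C,v)=\Theta(C,0)$ holds for all $v\in V$ precisely when the latter equals $Q/2$ — which is the conclusion of the theorem you are trying to prove. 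So your argument uses the desired density identity as an input to obtain the cylindrical splitting, and is therefore circular. Closing this gap is exactly where the wedge containment with $\vartheta<\pi/2$ must be used nontrivially (e.g.\ to exclude, via a maximum principle or a dimension-reduction argument, cones with $\Theta(C,0)>Q/2$ whose support lies in $W$). As it stands, the step is missing, and the reduction to a $1$-dimensional cone $C'=\sum Q_i\a{R_i}$ — which is where the open book structure and the density value $Q/2$ both come from — is not established.
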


\subsection{Structural properties}

Although, in general, the regular boundary set is only dense in $\Gamma$ (\Cref{T:open-dense}), certain assumptions may allow us to derive additional information about boundary points. A question originally posed by Almgren and answered, for $Q=1$ in \cite[Theorem 1.9 and Corollary 1.10]{DDHM}, is: is it possible for the set of density $1/2$ points to be empty when $\Gamma$ is connected?

Here, we present the counterpart of these results in the arbitrary boundary multiplicity setting $Q>1$, with proofs provided at the end of this subsection.

\begin{proposition}\label{P: conn components of bdr have good dim bound}
Let $T$, $Q$, $\Sigma$, and $\Gamma$, be as in \Cref{A:general} and $\Gamma^{\prime} \subset \Gamma$ be a connected component of $\Gamma$. In addition, suppose that $\Gamma$ and $\spt(T)$ are compactly supported in $\ball{p}{R}$. If
\[\mbox{there exists }p\in \Gamma^{\prime} \cap \Regb(T)\mbox{ with }\Theta(T, p)>\frac{Q}{2}, 
\]
then there is no one-sided point in $\Gamma^\prime$ and $\dim_H\left(\Singb(T) \cap \Gamma^{\prime}\right)\leq m-2$. Moreover, if $m=2$, then $\Singb(T) \cap \Gamma^{\prime}$ consists of finitely many points.
\end{proposition}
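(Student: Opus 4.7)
The plan is to use a clopen/connectedness argument on the boundary density to propagate the two-sided structure at $p$ along $\Gamma'$, and then to invoke the two-sided boundary regularity theory from \cite{nardulli2022density} together with Almgren's interior partial regularity.

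First, I would observe that $p$ is necessarily \emph{two-sided} regular: since $p\in\Regb(T)$ and $\Theta(T,p)>Q/2$, \Cref{D:Reg-1sided} excludes $p\in\Regb^1(T)$ (one-sided regular points have density exactly $Q/2$), so $p\in\Regb^2(T)$. Then \Cref{R:2sided regular definition} furnishes a neighborhood $U\ni p$ and an integer $Q^\star\geq 1$ such that
\[ T\res U=(Q+Q^\star)\a{\Xi^+}+Q^\star\a{\Xi^-}, \]
and in particular $\Theta(T,q)=Q/2+Q^\star$ for every $q\in\Gamma\cap U$.

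Next, I would show that in fact $\Theta(T,\cdot)\equiv Q/2+Q^\star$ on all of $\Gamma'$, which in particular rules out one-sided points on $\Gamma'$. The upper semicontinuity of $q\mapsto\Theta(T,q)$ combined with the quantization of boundary densities $\Theta(T,\cdot)\in\{Q/2+k:k\in\N\}$ on $\Gamma$ (a consequence of classifying the admissible area-minimizing boundary tangent cones with boundary $Q\a{T_q\Gamma}$ as integer combinations of half-planes and full planes; note also that the level $k=0$ is open by \Cref{R:1sided set is open}) implies that each level set $\{q\in\Gamma':\Theta(T,q)=Q/2+k\}$ is relatively open in $\Gamma'$. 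These level sets thus give an open partition of $\Gamma'$. Since $p$ sits in the level $k=Q^\star$, connectedness of $\Gamma'$ forces $\Gamma'$ to equal this level, so every $q\in\Gamma'$ is a two-sided point with the same $Q^\star$.

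Finally, I would invoke a streamlined version of the two-sided boundary regularity theory of \cite{nardulli2022density} to continue the smooth $m$-submanifold $\Xi$ along $\Gamma'$, producing $\tilde\Xi$ in a neighborhood of $\Gamma'$ together with a decomposition of the form $T=T_1+Q^\star\a{\tilde\Xi}$ there. Under such a decomposition, boundary singularities of $T$ on $\Gamma'$ are converted into interior singularities of an auxiliary area-minimizing current, so Almgren's interior partial regularity theorem yields $\dim_H(\Singb(T)\cap\Gamma')\leq m-2$. When $m=2$, interior singularities of $2$-dimensional area-minimizing currents are discrete (by the classical results of Chang and White), and compactness of $\spt(T)$ then gives the claimed finiteness. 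The main obstacle is this last global propagation of $\Xi$ past the two-sided boundary singular set, which is exactly the continuation step that the streamlined machinery of \cite{nardulli2022density} is designed to handle.
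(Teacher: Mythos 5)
Your proof has genuine gaps, and it diverges significantly from the paper's route, which instead invokes the global irreducible decomposition \Cref{T:decomp thm 4names} (itself a consequence of the density of the regular set, \Cref{T:open-dense}).

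The main problem is your clopen argument aimed at showing $\Theta(T,\cdot)\equiv Q/2+Q^\star$ on $\Gamma'$. First, the asserted quantization $\Theta(T,\cdot)\in\{Q/2+k:k\in\N\}$ at every boundary point is not available: there is no classification of tangent cones at boundary \emph{singular} points as integer combinations of half-planes and full planes. Such a classification is known only at regular points (or under extra structure, e.g.\ in codimension one or when $m=2$); at singular points the tangent cone can be an arbitrary area-minimizing cone with boundary $Q\a{T_q\Gamma}$, and there is no known density quantization for these. Second, even granting quantization, openness of the level sets $\{\Theta=Q/2+k\}$ for $k\geq 1$ does not follow from upper semicontinuity: u.s.c.\ gives that $\{\Theta<\lambda\}$ is open, so $\{\Theta=Q/2+k\}$ is only locally closed, and density can jump up at limits. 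The density-jump argument (\Cref{T:density-jump} plus u.s.c.) only yields openness of the level $k=0$, which is why \Cref{R:1sided set is open} is phrased that way. Third, the conclusion you aim at is actually false in general: $\Theta(T,\cdot)$ need not be constant along $\Gamma'$ — at a point $q\in\Gamma'$ that is a singular point of the two-sheeted current $T_{p(i)}+T_{n(i)}$ the density of $T$ will typically be strictly larger than $Q/2+Q^\star$. The proposition only claims there is \emph{no one-sided point}, i.e.\ $\Theta>Q/2$ on $\Gamma'$, which is weaker.

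Finally, you yourself flag that the key step — continuing $\Xi$ past the boundary singular set on $\Gamma'$ — is an ``obstacle.'' This is precisely the content of \Cref{T:decomp thm 4names}, which is proved globally following \cite[Theorem 2.1]{DDHM} and relies on the density of $\Regb(T)$, not on a local continuation argument through singular points; the two-sided regularity theory of \cite{nardulli2022density} applies at collapsed/regular points and does not, by itself, propagate a single smooth sheet through singularities. The paper's proof uses \Cref{T:decomp thm 4names}(iv) directly: the existence of a two-sided regular point on $\Gamma'$ rules out alternative (iii), so $\Gamma'$ is two-sided; then $S:=T_{p(i)}+T_{n(i)}$ is area minimizing, has no boundary along $\Gamma'$, all of $\Gamma'$ is interior to $S$, and the interior regularity theory gives $\dim_H\leq m-2$ (discrete when $m=2$). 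This also immediately gives $\Theta(T,\cdot)\geq Q/2+Q_{n(i)}>Q/2$ on $\Gamma'$, settling the ``no one-sided point'' claim without any clopen or quantization argument. Your instinct about what the final decomposition should look like is roughly right, but obtaining it is the whole work, and the missing input is the global decomposition theorem.
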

\begin{remark}
In the case $\Sigma=\R^{m+n}$, a portion of $\Gamma$ is contained within the real analytic minimal surface $\Regi(T)$. This condition is quite rigid, and indeed, under reasonable definitions for generic $C^\infty$ boundaries, a generic boundary data will not have any ``two-sided" regular point.
\end{remark}

Next, we establish that the connectedness of the boundary $\Gamma$ guarantees only the existence of one-sided points.

\begin{proposition}\label{P: conn bdr implies one-sidade info}
Let $T$, $Q$, $\Sigma$, and $\Gamma$, be as in \Cref{A:general}. In addition, suppose that 
\[ \Gamma\mbox{ is connected and both }\Gamma\mbox{ and }\spt(T)\mbox{ are compactly supported in }\ball{p}{R}.\]
Then $\Gamma$ only contains one-sided points and, in particular, $\Regb(T)$ is contained in the set of density $\frac{Q}{2}$ points.
\end{proposition}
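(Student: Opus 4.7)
I argue by contradiction, combining \Cref{P: conn components of bdr have good dim bound} with a global decomposition/peeling argument. First observe that at any $q \in \Gamma$ the tangent cone to $T$ has boundary $Q\a{T_q\Gamma}$ and (by standard arguments exploiting translation invariance along the spine $T_q\Gamma$) splits as an integer combination of half $m$-planes and full $m$-planes all containing $T_q\Gamma$. In particular $\Theta^m(T, q) \in \{Q/2, Q/2+1, Q/2+2, \dots\}$. Consequently the set $A := \{q \in \Gamma : \Theta^m(T, q) = Q/2\}$ of one-sided points is open (by \Cref{R:1sided set is open}) and its complement $B := \Gamma \setminus A = \{q : \Theta^m(T, q) \geq Q/2 + 1\}$ is closed in $\Gamma$ by upper semi-continuity of $\Theta^m$. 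Suppose for contradiction that $B \neq \emptyset$.

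Consider first the case $B \cap \Regb(T) \neq \emptyset$. Any regular point of density $> Q/2$ is two-sided regular, so $\Regb^2(T) \neq \emptyset$; \Cref{P: conn components of bdr have good dim bound} applied to the connected $\Gamma' = \Gamma$ then forces $A = \emptyset$. By \Cref{T:open-dense}, $\Regb(T) = \Regb^2(T)$ is open and dense in $\Gamma$, and by \Cref{R:2sided regular definition}, in a neighborhood of every such point $T$ decomposes as $(Q + Q^\star)\a{\Xi^+} + Q^\star\a{\Xi^-}$ with the ``two-sided'' piece $Q^\star\a{\Xi^+ \cup \Xi^-}$ locally boundaryless. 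Using the decomposition \Cref{T:decomposition-1sided} to patch these local pieces (via real-analyticity of minimal surfaces and connectedness of $\Gamma$), I obtain an integral current $S$ with $\partial S = 0$, $\spt(S) \subseteq \spt(T)$, and $\mathbf{M}(S) > 0$. Then $\partial(T - S) = Q\a{\Gamma}$ and $\mathbf{M}(T - S) < \mathbf{M}(T)$, contradicting area-minimality.

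In the remaining case $B \subseteq \Singb(T)$ one has $\Regb^2(T) = \emptyset$, so by \Cref{T:open-dense} $\Regb(T) = \Regb^1(T) \subseteq A$ is open and dense. Pick any $p \in B$: since $\Theta^m(T, p) \geq Q/2 + 1$, the tangent cone at $p$ contains a nontrivial full $m$-plane summand with integer multiplicity at least $1$. A local peeling construction at $p$---subtracting from $T$ restricted to a small ball the boundaryless piece supported near that full plane---yields a competitor of strictly smaller mass with the same boundary, again contradicting area-minimality. Hence $B = \emptyset$, every point of $\Gamma$ is one-sided, and the second statement $\Regb(T) \subseteq \{q : \Theta^m(T, q) = Q/2\}$ is immediate.

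The principal obstacle is the global assembly of the boundaryless current $S$ in the first case: the local multiplicities $Q^\star$ must patch consistently across $\Regb^2(T)$ (which may be disconnected in $\Gamma$ when the $(m-2)$-dimensional singular set separates $\Gamma$) and $S$ must extend as an integral current across $\Singb(T) \cap \Gamma$. This is precisely the content of \Cref{T:decomposition-1sided} together with standard unique-continuation and removability properties of area-minimizing currents. The local peeling of the second case is structurally analogous but requires a finer tangent cone analysis at singular points of density $> Q/2$ that are accumulated only by one-sided regular points.
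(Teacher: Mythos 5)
Your approach diverges from the paper's and contains genuine gaps in both branches. The paper's proof is a single short application of \Cref{T:decomp thm 4names} (the irreducible-components decomposition): connectedness forces $L_0 = 1$, so the whole boundary falls under either alternative (iii) or (iv) of that theorem; if (iv) held, $S := (Q_{p(1)}-Q_{n(1)})T_{p(1)}$ would have boundary $Q\a{\Gamma}$ and, since the $\Lambda_j$ are disjoint and $Q_{n(1)}>0$, strictly smaller mass than $T$, contradicting minimality; hence (iii) holds and the conclusion follows. There is no case distinction on whether the density-$>Q/2$ points are regular or singular --- the decomposition theorem absorbs that entirely.

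Your Case 1 invokes \Cref{T:decomposition-1sided}, but that theorem only decomposes $T$ near a \emph{one-sided} (density $Q/2$) point, and in your Case 1 you have just argued via \Cref{P: conn components of bdr have good dim bound} that there are \emph{no} one-sided points. What you actually need is the global patching encoded in \Cref{T:decomp thm 4names}; your ``real-analyticity plus connectedness'' sketch of the gluing of the local cycles $Q^\star\a{\Xi^+\cup\Xi^-}$ is essentially reproducing that theorem's proof, but it is not a proof: you must show the local $Q^\star$ is the same at every two-sided regular point and that the resulting $m$-rectifiable set closes up as an \emph{integral current with zero boundary} across the $(m-2)$-dimensional interior singular set. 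That requires the decomposition into connected components of $\Regi(T)$ and the Constancy Lemma, i.e. exactly the content of \Cref{T:decomp thm 4names}.

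Your Case 2 is not repairable as stated. Subtracting ``the boundaryless piece supported near the full plane'' from $T\res\ball{p}{r}$ does \emph{not} yield a competitor with the same boundary: the piece you remove, restricted to $\ball{p}{r}$, has boundary on $\partial\ball{p}{r}$, so the difference has the wrong boundary. The area-minimizing property is precisely the statement that no local modification matching $\partial T$ on a ball has smaller mass, so a purely local construction cannot produce a contradiction; you would need the full-plane summand of the tangent cone to globalize into a compactly supported cycle, which is again the global statement you did not establish. The cleaner route is the one the paper takes: apply \Cref{T:decomp thm 4names} once, rule out alternative (iv) by the explicit mass comparison, and read off the conclusion from alternative (iii).
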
 

Both of the above theorems will follow as consequences of the following structural decomposition argument. This decomposition follows from the density of the boundary regular set (cf. \Cref{T:open-dense}) and provides a decomposition of the area-minimizing current $T$ into its \emph{irreducible} components.

\begin{theorem}[Irreducible components of $T$]\label{T:decomp thm 4names}
Let $T$, $Q$, $\Sigma$, and $\Gamma$, be as in \Cref{A:general}. In addition, suppose that 
\[ \Gamma\mbox{ and }\spt(T)\mbox{ are compactly supported in }\ball{p}{R}.\]
Let $\Gamma_1, \cdots, \Gamma_{L_0}$ be the connected components of $\Gamma$. Then there exist $L\in\N$, $Q_j\in \N\setminus\{0\}$, and \emph{area minimizing} integral $m$-currents $T_j$ satisfying $T = \sum_{j=1}^{L} Q_j T_j$ where, for each $j\in\{1,\ldots,L\}$, we have
\begin{enumerate}[\upshape (i)]
    \item $\partial T_j = \sum_{i=1}^{L_0}\sigma_{ij}\a{\Gamma_i}$ with $\sigma_{ij}\in\{-1,0,1,\cdots, Q\}$,

    \item  $T_j = \a{\Lambda_j}$ with $\Lambda_1, \cdots, \Lambda_L$ being the connected components (accordingly oriented) of $\Regi(T)$. 
\end{enumerate}
Moreover, for each $i\in\{1,\cdots, L_0\}$, only one of the two following alternatives holds:
\begin{enumerate}
    \item[\upshape (iii)]
    $\Gamma_i$ is one-sided, i.e. there is $J_i \subset \{1, \dots, L\}$ such that $\sigma_{ij} \geq 1$ for all $j \in J_i$, $\sigma_{ij} = 0$ for all $j \notin J_i$, and $\sum_{j \in J_i} \sigma_{ij}Q_j = Q$. Moreover, it holds
    \[ \Gamma_i\cap \Regb(T)\subset \Regb^1(T).\]
    
    \item[\upshape (iv)] 
    $\Gamma_i$ is two-sided, i.e. there exist $p(i), n(i)\in\{1,\cdots, L\}$ such that $\sigma_{ip(i)} = - \sigma_{in(i)} = 1$ and $\sigma_{ij}=0$, for all $j\notin \{ p(i),n(i) \}$. 
    
    In this case, $Q_{p(i)} - Q_{n(i)} = Q$ and $T_{p(i)}+T_{n(i)}$ is area minimizing. Moreover, it holds
    \[ \Gamma_i\cap \Regb(T)\subset \Regb^2(T).\]
\end{enumerate}
\end{theorem}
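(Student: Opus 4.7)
The plan is to build the decomposition directly from the connected components of the interior regular set, apply the constancy lemma twice (once on each $\Lambda_j$ and once along each $\Gamma_i$), and then use the density of the boundary regular set from \Cref{T:open-dense} to promote local structural information at regular points to a global statement on each $\Gamma_i$.

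\textbf{Step 1 (constructing the $T_j$).} I would begin by letting $\Lambda_1,\ldots,\Lambda_L$ denote the connected components of $\Regi(T)$, each oriented by the restriction of the orienting $m$-vector $\vec T$. By Almgren's interior regularity theorem and real analyticity of interior regular points, $\Regi(T)$ is a real-analytic minimal submanifold of $\Sigma\setminus\Gamma$. Finiteness of $L$ I would establish by combining the compactness of $\spt(T)$, the interior monotonicity formula (giving uniform local mass lower bounds), the nonexistence of closed minimal $m$-submanifolds of $\R^{m+n}$ (which forces each $\Lambda_j$ to accumulate on some $\Gamma_i$), and the local finiteness of sheets at each boundary regular point guaranteed by \Cref{D:Reg-1sided}. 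Setting $T_j:=\a{\Lambda_j}$, the constancy lemma on each connected $\Lambda_j$ produces a positive integer multiplicity $Q_j$ with $T\res\Lambda_j = Q_j T_j$, giving $T=\sum_j Q_j T_j$.

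\textbf{Step 2 (boundary coefficients and area-minimality).} The boundary $\partial T_j$ is supported in $\overline{\Lambda_j}\setminus\Lambda_j \subset \Gamma\cup\Singi(T)$, and $\Singi(T)$ has $\cH^{m-1}$-measure zero by Almgren's interior dimension bound, so the constancy lemma along each connected $\Gamma_i$ yields $\partial T_j = \sum_i \sigma_{ij}\a{\Gamma_i}$ with $\sigma_{ij}\in\Z$, and matching with $\partial T=Q\a{\Gamma}$ forces $\sum_j Q_j\sigma_{ij}=Q$. For the minimality of each $T_j$, I would use the standard replacement argument: if a competitor $\tilde T_{j_0}$ satisfied $\partial\tilde T_{j_0}=\partial T_{j_0}$ and $\mathbf{M}(\tilde T_{j_0})<\mathbf{M}(T_{j_0})$, then $T+Q_{j_0}(\tilde T_{j_0}-T_{j_0})$ would have the same boundary as $T$ but strictly smaller mass, contradicting the area-minimality of $T$. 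Here one uses that the open $\Lambda_j$'s are pairwise disjoint, so $\mathbf{M}(T)=\sum_k Q_k\mathbf{M}(T_k)$.

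\textbf{Step 3 (the dichotomy via propagation).} The heart of the argument is the propagation of the local dichotomy at regular points to a global dichotomy on each $\Gamma_i$. At a one-sided regular $p\in\Gamma_i$, \Cref{D:Reg-1sided} forces every $\Lambda_j$ touching $p$ to satisfy $\sigma_{ij}\ge +1$ (all boundary orientations aligning with $+\a{\Gamma}$); at a two-sided regular $p\in\Gamma_i$, \Cref{R:2sided regular definition} gives exactly two $\Lambda_j$'s touching $p$, with signs $+1$ and $-1$ and multiplicities $Q+Q^\star$ and $Q^\star$, so $Q_{p(i)}-Q_{n(i)}=Q$. The key global propagation is that, since $\spt(\partial T_j)\subset\spt(T_j)=\overline{\Lambda_j}$, whenever $\sigma_{ij}\ne 0$ one has $\Gamma_i\subset\overline{\Lambda_j}$, so $\Lambda_j$ approaches every point of $\Gamma_i$. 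Consequently, the existence of some $\sigma_{ij_0}<0$ rules out all one-sided regular points on $\Gamma_i$ by sign incompatibility, forcing $\Gamma_i$ to be two-sided with $p(i),n(i)$ globally and uniquely determined; conversely, if all $\sigma_{ij}\ge 0$, no two-sided regular points can exist on $\Gamma_i$, making it one-sided. Area-minimality of $T_{p(i)}+T_{n(i)}$ in the two-sided case follows from the replacement argument of Step 2 applied to this pair, whose boundary vanishes on $\Gamma_i$.

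\textbf{Main obstacle.} The chief conceptual difficulty is this propagation step: bridging from purely local information at a dense but possibly disconnected subset of $\Gamma_i$ to a global statement for the entire component. The bridge I rely on is exactly the sign incompatibility between one-sided and two-sided local decompositions, combined with the support inclusion $\Gamma_i\subset\overline{\Lambda_j}$ whenever $\sigma_{ij}\ne 0$. A secondary but non-trivial technical point is the finiteness of $L$, which rests on geometric rigidity (absence of closed minimal submanifolds in $\R^{m+n}$) together with local finiteness of sheets near each boundary regular point, rather than on any purely measure-theoretic argument.
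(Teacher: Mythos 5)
Your proposal reconstructs essentially the same argument as the paper, which omits the proof and invokes \cite[Theorem 2.1]{DDHM} ``mutatis mutandis,'' replacing the density of the boundary regular set by \Cref{T:open-dense}. The three steps — decompose $\Regi(T)$ into connected components $\Lambda_j$, apply the constancy lemma on each $\Lambda_j$ and again along each $\Gamma_i$, then propagate the local dichotomy at regular boundary points using the support inclusion $\Gamma_i \subset \overline{\Lambda_j}$ whenever $\sigma_{ij}\neq 0$ — are precisely the skeleton of that proof, and your Step 2 replacement device and Step 3 sign-incompatibility argument are the right ones.

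The one place where your sketch has a genuine gap is the finiteness of $L$, and the tool you invoke is the wrong one. ``Nonexistence of closed minimal $m$-submanifolds of $\R^{m+n}$'' does not apply here: $T$ minimizes in an arbitrary $C^{3,\kappa}$ submanifold $\Sigma$, which may well carry closed area-minimizing cycles, so this rigidity fact cannot by itself force every $\Lambda_j$ to accumulate on $\Gamma$. (Similarly, ``real analyticity'' of $\Regi(T)$ is not available for $\Sigma\in C^{3,\kappa}$; only $C^{3,\kappa}$-type regularity is, which is all you need.) The correct way to rule out components with $\partial T_j = 0$ is the very replacement argument you already wrote in Step 2: if $T_j\neq 0$ and $\partial T_j = 0$, then $T - Q_j T_j$ has the same boundary as $T$ but strictly smaller mass, contradicting minimality. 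Once every nontrivial $T_j$ satisfies $\sigma_{ij}\neq 0$ for some $i$, the inclusion $\Gamma_i\subset\overline{\Lambda_j}$ combined with the \emph{boundary} monotonicity formula (giving $\Theta(T_j,q)\geq 1/2$ at every $q\in\Gamma_i$, hence $\|T_j\|(\ball{q}{r})\gtrsim r^m$ for a fixed small $r$ depending only on $\Gamma$) yields a uniform positive lower bound on $\mathbf{M}(T_j)$, and finiteness follows from $\sum_j Q_j\mathbf{M}(T_j)=\mathbf{M}(T)<\infty$. Note that this finiteness argument is logically a posteriori — you first fix the collection of components, establish the constancy-lemma boundary structure and the mass lower bound for each, and only then conclude there are finitely many — so there is no circularity with Step 2, but the order of presentation should make this explicit.
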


\begin{proof}[Proof of \Cref{T:decomp thm 4names}]
We omit the proof of this theorem, as it follows \cite[Theorem 2.1]{DDHM} mutatis mutandis, but now relying on \Cref{T:open-dense}.
\end{proof}

\begin{proof}[Proof of \Cref{P: conn bdr implies one-sidade info}]
The proof follows \cite[Corollary 1.10]{DDHM} with a minor modification that we describe now. Since $\Gamma$ is connected, we have $L_0 = 1$ in \Cref{T:decomp thm 4names} and, thus, the decomposition therein consists of \emph{exactly} two currents $T_{p(i)}$ and $T_{n(i)}$ in case $\Gamma$ is two-sided (as in \Cref{T:decomp thm 4names}(iv)). However, in this case, $S:= (Q_{p(i)} - Q_{n(i)})T_{p(i)}$ would satisfy $\partial S = Q\a{\Gamma}$ and, by \Cref{T:decomp thm 4names}(ii), the mass of $S$ would be strictly smaller than the mass of $T$ which contradicts the minimality of $T$. Hence, the only possibility is that $\Gamma$ is one-sided (as in \Cref{T:decomp thm 4names}(iii)).
\end{proof}
\begin{proof}[Proof of \Cref{P: conn components of bdr have good dim bound}]
The proof is the same as in \cite[Theorem 1.9]{DDHM}. By \Cref{T:decomp thm 4names}(iv), $\Gamma^\prime$ is necessarily two-sided, therefore $S:=T_{p(1)}+T_{n(1)}$ is area-minimizing. Since all points of $\Gamma^\prime$ are interior points of $S$, by the interior regularity theory, the interior singular set of $S$ has Hausdorff dimension at most $m-2$ (and is discrete if $m=2$, since $\Gamma$ is compactly supported, it has to be finite). The boundary regularity is then a straightforward consequence of \Cref{D:Reg-1sided} and \Cref{R:2sided regular definition}.
\end{proof}

\section{Rectifiability for the linear problem}\label{S:linear-problem}

The goal of this section is to present the rectifiability criteria based on the Jones' $\beta_2$ coefficients (\Cref{T:NaberValtorta-AzzaamTolsa}), establish key results on the linear problem (i.e., Dirichlet minimizers) that are fundamental for the development of the rest of the paper (\Cref{S:dir-min}), and state the rectifiability of the boundary singular set of Dirichlet minimizers (\Cref{T:LinearProblem:Rectifiability}).

The novelty in the Jones' $\beta_2$ coefficient (see \Cref{S:Jones beta numbers} for the definition) lies in the fact that its finiteness is equivalent to the rectifiability of the measure $\mu$, this is a celebrated result proved in \cite{azzam2015characterization}.

\begin{theorem}[Theorem 1.1 of \cite{azzam2015characterization}]\label{T:NaberValtorta-AzzaamTolsa}
Let $\mu$ be a finite Borel measure in $\R^{m+n}$ such that $0<\Theta^{k, *}(\mu,x)<\infty$ for $\mu$-a.e. $x \in \R^{m+n}$. If
\begin{equation*}
    \int_0^1 \beta^k_{2,\mu}(x, r)^2 \frac{\mathrm{d} r}{r}<\infty \quad \text { for } \mu \mbox{-a.e. } x \in \R^{m+n},    
\end{equation*}
then $\mu$ is $d$-rectifiable. 
\end{theorem}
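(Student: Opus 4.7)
The plan is to reduce, through measure-theoretic truncations, to producing a single Lipschitz $k$-parametrization that captures a definite fraction of a given piece of $\mu$, and then iterate this construction to exhaust $\mu$-almost every point. The central tool will be a Reifenberg-type parametrization theorem driven by the square-Dini control on $\beta_2^{k}$.

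\textbf{Reductions.} First I would invoke Egoroff together with countable exhaustion to replace $\mu$ by $\mu\res E$ for a $\mu$-measurable set $E$ on which, for uniform constants $M,N,r_0$, one has $M^{-1}\leq \Theta^{k,*}(\mu,x)\leq M$ and $\int_0^{r_0}\beta^k_{2,\mu}(x,r)^2\,\frac{dr}{r}\leq N$ for every $x\in E$. Because $k$-rectifiability is countably stable, proving it for each such $E$ suffices. A further localization restricts attention to a ball of bounded size.

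\textbf{Corona decomposition.} Next I would set up a David--Mattila dyadic lattice $\{Q\}$ on $\R^{m+n}$ adapted to $\mu$ (so that $\mu(\partial Q)=0$ and all but a sparse family of cubes enjoy a local doubling property), and define the discrete coefficient $\hat\beta(Q):=\beta^k_{2,\mu}(x_Q,C_0\ell(Q))$ at the center $x_Q$. Fubini converts the pointwise Dini hypothesis into the global packing bound $\sum_{Q}\hat\beta(Q)^2\mu(Q)<\infty$, which can be made arbitrarily small on $E$ after a further truncation. A stopping-time argument on $\hat\beta$, on the oscillation of the best approximating $k$-planes $\pi(Q)$, and on the ``density normality'' of $Q$ organizes the cubes meeting $E$ into a forest of trees rooted at cubes $R_j$, inside each of which $\hat\beta(Q)$ stays below a preassigned threshold and $\pi(Q)$ tilts only mildly from parent to child.

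\textbf{Parametrization and exhaustion.} Within a single tree, an angle-tilting lemma yields $|\pi(Q)-\pi(Q')|^{2}\lesssim \hat\beta(Q)^2+\hat\beta(Q')^2$ for neighboring cubes, with constants depending on the lower density bound; the $L^{2}$-summability of $\hat\beta$ along branches then forces $\pi(Q)$ to converge quantitatively. Feeding these tilts into a David--Toro Reifenberg parametrization produces a bi-Lipschitz map from a subset of $\R^{k}$ whose image covers at least a fixed fraction $c_0\mu(R_j)$ of the root mass. Summing these graphs across the forest, and iterating the whole construction on the residual mass (which decays geometrically by a good-$\lambda$ inequality coupling the total $\mu$-mass inside a tree to its $\beta^{2}$-energy), covers $\mu$-almost every point of $E$ and hence establishes the $k$-rectifiability of $\mu\res E$.

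\textbf{Main obstacle.} The hardest part is the absence of any a priori doubling or Ahlfors regularity for $\mu$: the classical David--Semmes and David--Toro machinery is tailored to AR measures, whereas here both the tilting estimate and the Chebyshev-type packing bound $\sum_{Q\subset R_j,\,\hat\beta(Q)>\tau}\mu(Q)\lesssim \tau^{-2}\sum_{Q}\hat\beta(Q)^2\mu(Q)$ must be reproved using only the pointwise density hypotheses. In particular, cubes where $\mu(Q)\ll \ell(Q)^{k}$ break the tilt estimate and must be excised by a separate covering argument, at the cost of shrinking $E$. Reconciling this excision with the quantitative guarantee that the Reifenberg graph produced on each tree captures a uniform fraction of the root mass is the technical novelty of \cite{azzam2015characterization} relative to earlier corona constructions, and is what makes the $\beta_2$-Dini criterion sharp.
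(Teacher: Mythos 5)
This statement is not proved in the paper at all: it is quoted verbatim as Theorem~1.1 of \cite{azzam2015characterization} and used as a black box. There is therefore no ``paper's own proof'' to compare against, and your task here would have been simply to observe that the result is being imported.

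That said, your sketch is a reasonable top-level outline of the actual Azzam--Tolsa argument, and you correctly identify the main technical obstruction: the absence of Ahlfors regularity, so that the standard David--Semmes corona machinery and the David--Toro Reifenberg theorem cannot be invoked off the shelf. A few imprecisions worth flagging. The parametrization in \cite{azzam2015characterization} is not a direct application of David--Toro (which is stated for sets); they prove a Reifenberg-type theorem tailored to measures and track approximating planes along branches with the low-density cubes excised, exactly the subtlety you note. Also, the stopping-time decomposition in their paper stops on three conditions simultaneously — high density (HD), low density (LD), and accumulated $\beta^2$-energy (BA) — and the geometric-decay (``good-$\lambda$'') step is a delicate accounting that the total $\mu$-mass landing in HD and LD cubes is controlled by the Carleson packing of $\hat\beta^2\mu$ together with the pointwise upper-density bound; calling it a Chebyshev bound undersells it. Finally, the tilting lemma you state, $\left|\pi(Q)-\pi(Q')\right|^{2}\lesssim \hat\beta(Q)^{2}+\hat\beta(Q')^{2}$, needs a comparable-scales hypothesis \emph{and} a quantitative lower bound on $\mu(Q)/\ell(Q)^k$; preserving that lower bound after the LD excision while still capturing a definite fraction of the root mass is the heart of the proof, not a side issue. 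In short: reasonable skeleton, but as a standalone proof of Azzam--Tolsa it is far from complete; for the purposes of this paper, the right answer is simply that this is a cited external theorem.
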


Our main interest is to apply \Cref{T:NaberValtorta-AzzaamTolsa} for Frostman measures $\mu$, as in \cite[Theorem 1.1]{iancamillorectifiability}, supported on pieces of the boundary singular set of an area minimizing current $T$ or a $Q$-valued function. 

Next, we define the frequency function as introduced in \cite[Chapter 4]{DDHM} for boundary points, see also \cite{delellis2021allardtype} and \cite{nardulli2022density}. Let $u:\Omega\to\cA_Q(\R^n)$ and $\Gamma := \partial\Omega$, then define  $I_{u,p}(r)$ is the value of the smoothed frequency function of $u$ at $p\in\Gamma$ at scale $r>0$, i.e., 
\begin{equation*}
\begin{aligned}
    I_{u,p}(r) := \frac{rD_{u,p}(r)}{H_{u,p}(r)},& \ D_{u,p}(r):= \int_{\Omega} |Du(y)|^2\phi\left(\frac{d(y,p)}{r}\right), \\
    &\mbox{ and } H_{u,p}(r) := - \int_{\Omega} \frac{|u(y)|^2}{d(y,p)}\phi^\prime\left(\frac{d(y,p)}{r}\right)|\nabla d(y,p)|^2,
\end{aligned}
\end{equation*}
where the cut-off $\phi$ is chosen as before, i.e.,
\begin{equation*}
    \phi(t) := \begin{cases}
        1, & t\in [0,1/2],\\
        2-2t, & t\in [1/2,1],\\
        0, &\mbox{otherwise},
    \end{cases}
\end{equation*}
and the function $d$ is a good distorted distance function with good asymptotics and such that $\nabla_y d$ is tangent to $\Gamma$, as given in \cite[Lemma 4.25]{DDHM}. Set $I_{u,p}(0):=\lim_{r\downarrow 0}I_{u,p}(r)$ whenever the limit exists.

\subsection{\texorpdfstring{$Q$}{Lg}-valued Dir-minimizers with zero boundary data on a flat boundary}\label{S:dir-min}

We develop a set of tools for the case where the Dirichlet minimizer has zero boundary data on a flat boundary. These results will be utilized throughout the paper. In particular, we establish the following counterpart (to the boundary setting) of \cite[Lemma 5.4]{de2018rectifiability}.

\begin{lemma}\label{l:twovariables}
Let $u$ be a Dir minimizing function in $\obball{s}^+$ with zero boundary data on $\obball{s}\cap\{x\in\R^m:x_m=0\}$ and $\mathrm{B}^+_r(p)\subset \obball{s}^+$, $p\in\obball{s}^+$
. If $u|_{\mathrm{B}^+_r(p)}$ is a function of only the two variables in $\textup{span}(v, x_m)$ with
$v \in \R^{m-1}$, then $u$ is a function of only $\textup{span}(v, x_m)$ in the whole ball $\obball{s}^{+}$.
\end{lemma}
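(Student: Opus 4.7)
The strategy is to reduce the lemma to a strong unique continuation property for $Q$-valued Dirichlet minimizers, applied to the one-parameter family of translates of $u$ in directions orthogonal to $\textup{span}(v, x_m)$.

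Fix a unit vector $w \in \R^m$ with $w \perp \textup{span}(v, x_m)$; in particular $w$ is tangent to the hyperplane $\{x_m = 0\}$. For $|t|$ sufficiently small, set $u^t(x) := u(x + tw)$; this is again a Dir-minimizer on its (translated) domain, and since $w$ is tangent to the flat boundary, $u^t$ also has vanishing boundary data on the flat portion of its boundary. The hypothesis that $u$ depends only on the variables of $\textup{span}(v, x_m)$ on $\mathrm{B}^+_r(p)$ gives $u \equiv u^t$ on a slightly shrunk half-ball $\mathrm{B}^+_{r/2}(p)$.

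To propagate this identity to all of $\obball{s}^+$, I would invoke strong unique continuation for $Q$-valued Dir-minimizers, in the spirit of \cite[Lemma 5.4]{de2018rectifiability} and the analytic machinery of \cite{DS1}. A concrete route is to odd-reflect $u$ across $\{x_m = 0\}$ (using the zero boundary data) to obtain a $Q$-valued Dir-minimizer $\tilde u$ on the full ball $\obball{s}$; since $w$ is tangent to the reflection hyperplane, $\tilde u^t(x) := \tilde u(x + tw)$ is also a Dir-minimizer on the translated domain, and $\tilde u \equiv \tilde u^t$ on the open set obtained from $\mathrm{B}^+_{r/2}(p)$ together with its mirror image. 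On the (open, dense, and path-connected, since the singular set has codimension at least one) regular set of $\tilde u$ the map decomposes locally into real-analytic harmonic branches; analytic continuation along paths propagates the pointwise identity, and since monodromy around the singular set merely permutes the branches, the $Q$-point-valued identity is preserved globally. Restricting back to $\obball{s}^+$ yields $u(x) = u(x+tw)$ for every admissible $x$. Varying $w$ over an orthonormal basis of $\textup{span}(v, x_m)^\perp$ and letting $t$ range in a small interval then shows that $u$ is constant in every direction orthogonal to $\textup{span}(v, x_m)$, which is the desired conclusion.

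The main obstacle is the unique continuation step: for $Q$-valued Dir-minimizers one must carefully handle the monodromy of branches when propagating the identity across the singular set. An alternative approach, avoiding the analytic continuation argument altogether, is to derive a suitable elliptic differential inequality for the scalar function measuring the (Hausdorff-type) distance between $\tilde u$ and $\tilde u^t$, and then run a Carleman or frequency-function unique continuation argument; both routes fundamentally exploit that both $\tilde u$ and $\tilde u^t$ minimize the Dirichlet energy, together with the flatness of the boundary that makes the reflection clean.
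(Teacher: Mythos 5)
Your plan shares the key idea with the paper — propagate the two‑variable dependence via unique continuation along the path‑connected regular set, with monodromy only permuting branches — but the odd‑reflection step is false, and your codimension claim is too weak. On the reflection: take $m=2$, $Q=2$, $n=1$, and $u=\a{x_2}+\a{-x_2}$ on $\obball{1}^+$. This is Dir‑minimizing with zero data on $\{x_2=0\}$ (for any competitor $v$, the max/min selections $\varphi_\pm$ equal $\pm x_2$ on $\partial\obball{1}^+$, so $\mathrm{Dir}(v)=\mathrm{Dir}(\varphi_+)+\mathrm{Dir}(\varphi_-)\ge\mathrm{Dir}(u)$). Its odd reflection is $\tilde u=\a{x_2}+\a{-x_2}$ on the full disk, and $\tilde u$ is \emph{not} Dir‑minimizing: its interior singular set is the entire line $\{x_2=0\}$, whereas a planar $Q$‑valued Dir‑minimizer has a $0$‑dimensional interior singular set; concretely, $\a{h}+\a{-h}$ with $h$ the harmonic extension of $|x_2|$ from $\partial\obball{1}$ has the same boundary data and strictly lower energy. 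So $\tilde u$ need not carry the codimension bound on $\Singi(\tilde u)$ nor the local analytic‑branch structure that your propagation argument requires. Separately, ``codimension at least one'' does not give path‑connectedness of the regular set — a hyperplane disconnects a ball; what is needed, and what holds by \cite{DS1}, is $\mathrm{dim}_H(\Singi(u))\le m-2$.

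Both issues disappear if you simply never reflect, which is what the paper does: $\mathrm{dim}_H(\Singi(u))\le m-2$ already makes $\obball{s}^+\setminus\Singi(u)$ path‑wise connected, and on that set $u$ decomposes locally into real‑analytic harmonic branches, so $\nabla_{V^\perp}u=0$ (the infinitesimal form of your $u\equiv u^t$) propagates from $\halfbball{p}{r}$ by analytic continuation along paths. Since $u\in W^{1,2}$, this yields $\nabla_{V^\perp}u\equiv0$ on all of $\obball{s}^+$. Your translate formulation is equivalent and works just as well once the reflection is dropped.
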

\begin{proof}
We denote by $\Singi(u)$ the interior singular set of $u$. By interior regularity theory, see \cite{DS1}, we know that $\mathrm{dim}_H(\Singi(u)) \leq m-2$. Thus, $\obball{s}^+ \setminus \Singi(u)$ is a path-wise connected set. Let $V=\textup{span}(v,x_m)$. By assumption, we deduce that $\nabla_{V^{\perp}} u (x) = 0$ for any $x \in \halfbball{p}{r}$. Since $u$ is given by $Q$ harmonic sheets on $\obball{s}^+\setminus \Singi(u)$, the gradient $\nabla_{V^{\perp}} u(x)$ vanishes for any $x\in \obball{s}^+ \setminus \Singi(u)$. Since $u \in W^{1,2}(\obball{s}^+)$, we easily obtain that $\nabla_{V^{\perp}}u \equiv 0$ in $\obball{s}^+$ which ensures that $u$ is a $Q$-valued function which only depends on the variables in $V$.
\end{proof}

We will need to use that, in the $2$-dimensional case, the only possible frequency value for the linear problem is $1$. We state this result in a slightly different form than \cite[Theorem 5.9]{delellis2021allardtype}, assuming homogeneity in our formulation. 

\begin{theorem}[Theorem 5.9 of \cite{delellis2021allardtype}]\label{t:homogeneous2d}
Let $u$ be a nontrivial homogeneous Dirichlet minimizing function on $\obball{s}^+\subset\R^2$ with zero boundary value on $[-1,1]\times \{0\}$ and assume that $\etaa \circ u \equiv 0$. Then $I_u(r) \equiv 1$ and $u$ is linear, i.e., $u = \sum_iQ_i\a{L_i}$ where each $L_i:\R^2\to\R^n$ is linear, $L_i|_{\{x_2=0\}}\equiv 0$, $\sum_iQ_i=Q$, and $L_i\neq L_j$ on $\R^2\setminus\{x_2=0\}$.
\end{theorem}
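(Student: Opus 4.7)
The plan is to combine the strong interior regularity of $Q$-valued Dirichlet minimizers in dimension $m=2$ with an explicit ODE analysis in polar coordinates, and then rule out higher frequency values via a Dir-minimality argument. The end goal is an explicit description of each sheet.

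First, I would invoke the interior regularity theory (as in \cite{DS1}) to observe that the interior singular set $\Singi(u)$ satisfies $\dim_{\cH}(\Singi(u)) \leq m - 2 = 0$; homogeneity with respect to the origin then forces $\Singi(u)\subseteq\{0\}$. Since the punctured half-disk $\obball{s}^+\setminus\{0\}$ is simply connected, $u$ admits a single-valued decomposition $u = \sum_{i=1}^{Q}\a{u_i}$ on this set, where each $u_i$ is harmonic, homogeneous of degree $\alpha := I_u(r)$, and extends continuously to $0$ along the flat boundary $[-1,1]\times\{0\}\cap\obball{s}^+$ (the zero boundary value of $u$, as a $Q$-point, forces every sheet to vanish on the flat boundary). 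Writing $u_i(r,\theta) = r^{\alpha} g_i(\theta)$, harmonicity gives $g_i'' + \alpha^{2} g_i = 0$ with Dirichlet conditions $g_i(0) = g_i(\pi) = 0$, and the non-trivial solutions are precisely $g_i(\theta) = A_i \sin(\alpha\theta)$ with $A_i\in\R^n$ and $\alpha\in\{1,2,3,\ldots\}$; the normalization $\etaa\circ u\equiv 0$ yields $\sum_i A_i = 0$.

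The heart of the argument is to exclude $\alpha\geq 2$. For such an $\alpha$, each sheet $u_i = A_i r^{\alpha}\sin(\alpha\theta)$ vanishes not only on the flat boundary but also along every interior ray $\theta = j\pi/\alpha$, $j = 1,\ldots,\alpha-1$, creating a $1$-dimensional interior set where all $Q$ sheets meet at $0$. One may then construct a strict competitor by re-gluing the sheets across such an interior ray via a non-trivial permutation (which preserves continuity and energy) followed by a small single-valued harmonic perturbation on one sub-wedge that strictly decreases Dirichlet energy; the existence of such an improvement is made transparent by the conformal change of variables $z\mapsto z^{\alpha}$, which sends $\{0<\theta<\pi/\alpha\}$ onto the half-disk and reduces the sub-wedge problem to a genuinely multi-valued, non-homogeneous minimization that admits a lower-energy configuration. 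Equivalently, and more cleanly, one can odd-reflect $u$ across $\{x_2=0\}$ (the reflection preserves Dir-minimality because competitors on the full disk split into two halves, each dominated by $u$ via minimality on the upper half and its reflected counterpart on the lower half), obtaining a homogeneous interior $Q$-valued Dir-minimizer on the full disk with $\etaa = 0$ and integer frequency $\alpha$; the two-dimensional interior classification of such minimizers as branches of $Q$-th roots of holomorphic polynomials, combined with $\etaa = 0$ and integrality of $\alpha$, forces $\alpha = 1$.

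Once $\alpha = 1$ is established, each $u_i(x) = A_i x_2$ is linear and vanishes on $\{x_2=0\}$; grouping identical sheets with their multiplicities yields the stated decomposition $u = \sum_{i} Q_i \a{L_i}$ with $\sum_i Q_i = Q$, $L_i|_{\{x_2=0\}}\equiv 0$, and $L_i\neq L_j$ on $\R^2\setminus\{x_2=0\}$, as well as the frequency identity $I_u(r)\equiv 1$. The main obstacle is precisely the exclusion of $\alpha\geq 2$: the ODE analysis discretizes the problem to integer frequencies, but distinguishing $\alpha = 1$ from $\alpha\geq 2$ genuinely requires exploiting the multi-valued Dir-minimality (single-valued, sheet-by-sheet competitors never improve energy, since each $A_i r^{\alpha}\sin(\alpha\theta)$ is the unique harmonic extension of its own boundary values), which is where the sharp two-dimensional linear theory enters in an essential way.
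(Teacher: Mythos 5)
The paper itself does not prove this statement; it is quoted verbatim from \cite{delellis2021allardtype}, so your proposal must stand or fall on its own. Your preparatory work is correct and clean: the interior regularity theory together with homogeneity gives $\Singi(u)\subseteq\{0\}$, the punctured half-disk is simply connected so $u$ splits into single-valued sheets, the permutation induced by dilations is the identity so each sheet is itself $\alpha$-homogeneous, and the ODE in polar coordinates forces $\alpha\in\N$ and $u_i = A_i r^\alpha\sin(\alpha\theta)$ with $\sum_i A_i = 0$. The core idea you identify for excluding $\alpha\geq 2$ — that the function collapses to $Q\a{0}$ along interior rays, which permits a regluing competitor — is also the right one.

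Your two routes for the exclusion, however, both have genuine gaps. In route (a) the description is imprecise: ``a small single-valued harmonic perturbation on one sub-wedge'' does not pin down a competitor, and the conformal map $z\mapsto z^\alpha$ does not by itself produce a lower-energy configuration (it simply normalizes the frequency of the restriction to $1$, which is consistent and gives no contradiction). The competitor that actually works is the following: pick $A_1\neq A_2$ (possible since $\sum_i A_i=0$ and $u$ is nontrivial), swap the labels $A_1\leftrightarrow A_2$ on one side of an interior ray $\theta=\pi/\alpha$ to obtain continuous selections $w_1,w_2$ which as a $Q$-point coincide with $u$, observe that $w_1,w_2$ are harmonic on each sub-wedge but have a jump in their normal derivative across $\theta=\pi/\alpha$ (since $A_1\neq A_2$ and $\partial_\theta\phi\neq 0$ there), and replace $w_1,w_2$ by the harmonic extensions $\widetilde{w}_1,\widetilde{w}_2$ of their boundary traces. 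This preserves boundary data, strictly decreases the Dirichlet energy, and yields the contradiction. Your prose gestures at this but never names the jump in the normal derivative, which is what makes the harmonic replacement \emph{strictly} better.

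Route (b) is incorrect as stated, for two independent reasons. First, your justification that odd reflection preserves Dir-minimality — ``competitors on the full disk split into two halves, each dominated by $u$ via minimality on the upper half'' — fails: the restriction of a full-disk competitor $v$ to $B_1^+$ has no reason to vanish on the diameter, so it is not a valid competitor for the half-disk problem with Dirichlet data $Q\a{0}$. The single-valued remedy (replace $v$ by its odd part $\tfrac12(v(x,y)-v(x,-y))$) has no $Q$-valued analogue, since one cannot average $Q$-points; if a reflection principle holds in the $Q$-valued setting it requires a genuinely different argument. Second, and independently, even granting the reflection the interior classification does \emph{not} force $\alpha=1$ for integer $\alpha$ and $\etaa\circ u\equiv 0$: for instance $u=\a{z^\alpha}+\a{-z^\alpha}$ with $z^\alpha\in\C\cong\R^2$ is an interior $2$-valued Dir-minimizer on the full disk with $\etaa\circ u\equiv 0$ and integer frequency $\alpha$ for every $\alpha\geq 1$. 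What rules out $\alpha\geq 2$ after reflection is not the interior classification but the specific $1$-dimensional structure of the sheets $A_i\Im(z^\alpha)$ inherited from the odd symmetry — and the competitor that exploits this is the same regluing competitor as in route (a). So route (b) either fails or collapses back into route (a); it is not a shortcut.
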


We use \Cref{t:homogeneous2d} to show a crucial lower bound for the derivatives of Dirichlet minimizing functions with zero boundary data on a flat boundary that will play a crucial role within our program to prove rectifiability.

\begin{lemma}\label{l:linearproblemderivativebound}
Let $u$ be a nontrivial Dir minimizing function in $\obball{s}^+$. Let $\alpha>0$ and $\Lambda>0$. 
Assume that  
\begin{equation*}
    I_u(s)<\Lambda, \ \int_{\obball{s}^+} |u|^2 = 1,\mbox{ and }I_{u}(0) \geq 1+\alpha/2.
\end{equation*}
If $\{v_1,...,v_{m-1}\}$ is an orthonormal basis of $\R^{m-1}$ and $s>r>0$, we obtain that
\begin{equation*}
    \sum_{i=1}^{m-2}\int_{\obball{s}^+\setminus \overline{\mathrm{B}_r^+}}|\partial_{v_i} u|^2 \geq c(\Lambda)>0.
\end{equation*}
\end{lemma}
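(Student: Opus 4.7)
The plan is to argue by contradiction via a compactness / blow-up argument, reducing to the two-dimensional classification result \Cref{t:homogeneous2d}. Suppose the conclusion fails. Then there is a sequence of Dir minimizers $u_k$ on $\obball{s}^+$ with $\int_{\obball{s}^+}|u_k|^2=1$, $I_{u_k}(s)<\Lambda$, $I_{u_k}(0)\geq 1+\alpha/2$, together with orthonormal bases $\{v_1^k,\ldots,v_{m-1}^k\}$ of $\R^{m-1}$ and radii $r_k\in(0,s)$ (or $r$ fixed) such that
\[
\sum_{i=1}^{m-2}\int_{\obball{s}^+\setminus\overline{\mathrm{B}_{r_k}^+}}|\partial_{v_i^k}u_k|^2 \longrightarrow 0.
\]
The bound $I_{u_k}(s)<\Lambda$ together with the $L^2$-normalization gives uniform $W^{1,2}$ bounds, so by standard compactness for Dir-minimizers (see \cite{DS1}) we may pass to a subsequence with $v_i^k\to v_i$ orthonormal, $r_k\to r_\infty\in[0,s)$ (the case $r_\infty=s$ is excluded since we fix $r$), and $u_k\to u_\infty$ strongly in $W^{1,2}_{\textup{loc}}(\obball{s}^+)$ with $u_\infty$ Dir-minimizing and with zero boundary data on $\{x_m=0\}\cap\obball{s}$.

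From strong $W^{1,2}$ convergence together with the vanishing of the displayed integral, we deduce $\partial_{v_i}u_\infty\equiv 0$ on $\obball{s}^+\setminus\overline{\mathrm{B}_{r_\infty}^+}$ for $i=1,\ldots,m-2$, so on this open set $u_\infty$ depends only on the two variables in $\textup{span}(v_{m-1},x_m)$. By \Cref{l:twovariables}, $u_\infty$ depends only on these two variables on all of $\obball{s}^+$. Strong $L^2$-convergence and the normalization $\int |u_k|^2=1$ ensure $u_\infty\not\equiv 0$. Next, monotonicity of the frequency function at boundary points (see \cite[Chapter 4]{DDHM}) upgrades $I_{u_k}(0)\geq 1+\alpha/2$ to $I_{u_k}(\rho)\geq 1+\alpha/2$ for all $\rho\in(0,s)$; passing to the limit in $k$ (using strong convergence so $D_{u_\infty,0}$ and $H_{u_\infty,0}$ are limits of the corresponding quantities for $u_k$) and then letting $\rho\downarrow 0$ yields $I_{u_\infty}(0)\geq 1+\alpha/2$.

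Take now a homogeneous tangent map $v_\infty$ of $u_\infty$ at the origin obtained as a blow-up $v_\infty=\lim_j \rho_j^{-I_{u_\infty}(0)/2}u_\infty(\rho_j\,\cdot\,)$; by \cite{DS1}, $v_\infty$ is a nontrivial homogeneous Dir-minimizer with frequency $I_{v_\infty}\equiv I_{u_\infty}(0)\geq 1+\alpha/2$, with zero boundary data on $\{x_m=0\}$, and (since the property is preserved under blow-up) still depending only on $v_{m-1}$ and $x_m$. Viewing $v_\infty$ as a function on the 2-dimensional half-ball $\textup{span}(v_{m-1},x_m)\cap\obball{s}^+$, after subtracting the average part (which for a homogeneous Dir-minimizer on a 2d half-ball with vanishing trace on the diameter is either zero or a classical harmonic $Q$-sheet of integer frequency, a case we can dispose of separately) we may assume $\etaa\circ v_\infty\equiv 0$, so that \Cref{t:homogeneous2d} applies and forces $I_{v_\infty}\equiv 1$, contradicting $I_{v_\infty}\geq 1+\alpha/2>1$.

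The main obstacle in implementing this plan is the passage of the frequency lower bound to the blow-up limit (which requires care because $I_{u,p}$ depends on the cut-off $\phi$ and the distorted distance $d$ from \cite[Lemma 4.25]{DDHM}, and strong $W^{1,2}$-convergence of $u_k$ must be combined with the monotonicity formula to ensure $I_{u_\infty}(0)$ is well-defined and inherits the bound). The reduction to the framework of \Cref{t:homogeneous2d} (in particular, handling the $\etaa\circ v_\infty\equiv 0$ hypothesis and the ambient dimension reduction to $\R^2$) is routine but must be done explicitly, noting that the classical harmonic average in 2D with zero trace on a segment is $\textup{Im}(z^{k})$ with $k\in\mathbb N_{\geq 1}$, which in the nontrivial case still has integer frequency $\geq 1$ and can be ruled out by the same homogeneity/frequency comparison.
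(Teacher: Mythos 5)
Your argument is the same compactness-plus-blow-up route the paper takes: extract a strongly $W^{1,2}$-converging subsequence $u_k\to u_0$, use \Cref{l:twovariables} to conclude $u_0$ depends on only two variables, blow up at the origin, and invoke \Cref{t:homogeneous2d} to get frequency $1$, contradicting the lower bound $1+\alpha/2$. Your extra care in passing the frequency lower bound to the limit (via monotonicity of $I_{u_k}$ at positive radius combined with strong convergence) is a detail the paper omits but which is genuinely needed, and you handle it correctly.

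One remark on the $\etaa\circ v_\infty\equiv 0$ hypothesis of \Cref{t:homogeneous2d}, which you rightly flag: the way you try to ``dispose of separately'' the case $v_\infty=Q\a{h}$ with $h=\mathrm{Im}(z^k)$ does not actually close, since such an $h$ with $k\geq 2$ is a legitimate homogeneous Dir-minimizer with zero trace and frequency $k\geq 1+\alpha/2$, so no homogeneity/frequency comparison rules it out. The real resolution is that the lemma is implicitly stated for $u$ with $\etaa\circ u\equiv 0$ (as holds in every application, where $u$ arises as a limit of normalized normal approximations to a center manifold), and this property is inherited by $u_0$ and by any blow-up $v_\infty$. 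Note the paper's own proof also applies \Cref{t:homogeneous2d} without verifying this hypothesis, so you have correctly identified a small gap in the lemma's statement as written; the fix is to add the hypothesis rather than to argue it away.
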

\begin{proof}
We argue by contradiction, take a contradiction sequence $u_k$ as in the statement and assume that
\begin{equation}\label{e:contradiction-normal-derivatives}
    \sum_{i=1}^{m-2}\int_{\obball{s}^+\setminus \overline{\mathrm{B}_r^+}}|\partial_{v_i} u_k|^2 \leq 1/k.
\end{equation}

Up to subsequences, $\{u_k\}$ converge in $W^{1,2}$ to $u_0$ such that
\begin{equation*}
    \int_{\obball{s}^+} |u_0|^2=1, \int_{\obball{s}^+} |Du_0|^2 \leq C\Lambda, \mbox{ and }I_{u_0}(0) \geq 1+\alpha/2.
\end{equation*}

Additionally, from \eqref{e:contradiction-normal-derivatives}, we also have
\begin{equation*}
    \sum_{i=1}^{m-2}\int_{\obball{s}^+\setminus \overline{\mathrm{B}_r^+}}|\partial_{v_i} u_0|^2 = 0,
\end{equation*}
which, in turn, implies that $u_0$ is a function of only $\textup{span}(v_{m-1},x_{m})$ in $\obball{s}^+\setminus \overline{\mathrm{B}_r^+}$. Noting that the proof of \Cref{l:twovariables} goes through with $\obball{s}^+\setminus \overline{\mathrm{B}_r^+}$ in place of $\halfbball{p}{r}$, we obtain that $u_0$ must be a function of only two variables, namely $\textup{span}(v_{m-1},x_m)$, on $\obball{s}^{+}$. 

To conclude, we take $h$ as a blow-up of $u_0$ at $0$. The function $h$ is a homogeneous Dirichlet minimizing function in $\obball{s}^+$ that is still a function of only the two variables in $\textup{span}(v_{m-1},x_{m})$. Thus, $h$ is a homogeneous Dirichlet minimizer on $\textup{span}(v_{m-1},x_{m})$ with zero boundary value on $\{x_m=0\}$. By \Cref{t:homogeneous2d}, $I_h(0) = 1$. However, since $I_{u_0}(0) \geq 1+\alpha/2$, we obtain $I_h(0)\geq 1+\alpha/2$, which gives a contradiction.  
\end{proof}

We state a lemma that will be useful in some later arguments.

\begin{lemma}\label{L:constantfrequencyannulus}
Let $u$ be a nontrivial Dirichlet minimizer in $\obball{1}^+$ with $u \equiv Q\a{0}$ on $\obball{1} \cap \left\{x_m=0 \right\}$. Suppose further that $I_{u}(s)=I_{u}(t)$ for $0<s<t<1$. Then, the function $u$ is $I_u(s)$-homogeneous in $\obball{1}^+$.
\end{lemma}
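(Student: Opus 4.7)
The plan is to combine the standard frequency monotonicity formula for Dirichlet minimizers with zero boundary data on a flat face, the rigidity of the Cauchy--Schwarz equality case, and unique continuation along harmonic sheets, to pass from ``$I_u$ constant on $[s,t]$'' to ``$u$ is $\alpha$-homogeneous on all of $\obball{1}^+$'' with $\alpha := I_u(s)$.

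First I would set up the monotonicity. Because $\obball{1}\cap\{x_m=0\}$ is flat and $u$ vanishes there in the sense $u\equiv Q\a{0}$, the good distorted distance $d(\cdot,0)$ from \cite[Lemma~4.25]{DDHM} coincides with the Euclidean norm $|y|$, and all boundary contributions arising in the integration-by-parts identities for $D_{u,0}$ and $H_{u,0}$ vanish. The standard derivation (cf.\ \cite[Chapter~4]{DDHM} and \cite{delellis2021allardtype}) then gives
\[
\tfrac{d}{dr}\log I_u(r) \;=\; \tfrac{2}{r H_u(r)} \bigl( F_u(r)\,G_u(r) - E_u(r)^2 \bigr),
\]
where $F_u,G_u,E_u$ are the weighted integrals of $|u|^2/|y|$, $|\partial_r u|^2|y|$, and $\langle u,\partial_r u\rangle$ against the nonnegative weight $-\phi'(|y|/r)$, and $E_u^2 \leq F_u G_u$ by Cauchy--Schwarz; hence $I_u$ is nondecreasing.

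Second, the hypothesis $I_u(s)=I_u(t)$ combined with monotonicity forces $I_u\equiv \alpha$ on $[s,t]$ and therefore $F_u G_u - E_u^2\equiv 0$ on $(s,t)$. Tracing through the equality case of Cauchy--Schwarz (sheet-by-sheet on the regular set) yields the pointwise proportionality $\partial_r u(y) = \lambda(y)\,u(y)$ in the multivalued sense on the weighted support $\{r/2<|y|<r\}\cap \obball{1}^+$ for a.e.\ $r\in(s,t)$. Letting $r$ range over $(s,t)$ covers the annular region $A := \{y\in \obball{1}^+ : s/2 < |y| < t\}$, and reinserting the identity into the definition $I_u(r) = \alpha$ pins $\lambda(y) = \alpha/|y|$. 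Thus $y\cdot \nabla u(y) = \alpha\,u(y)$ throughout $A$; equivalently $u$ is $\alpha$-homogeneous on $A$.

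Finally, I would extend homogeneity from $A$ to all of $\obball{1}^+$ by unique continuation. By the interior regularity of \cite{DS1}, $\Singi(u)$ has Hausdorff dimension at most $m-2$, so $\obball{1}^+\setminus \Singi(u)$ is open and path-connected, and on this set $u$ decomposes locally as $Q$ real-analytic harmonic sheets $u_1,\dots,u_Q$. Each sheet satisfies $y\cdot \nabla u_i = \alpha u_i$ on an open subset of its chart that meets $A$; by real-analyticity this identity propagates to the entire connected component of the sheet. Patching over local charts via the multivalued structure and extending continuously across $\Singi(u)$, one obtains $y\cdot \nabla u = \alpha u$ weakly on $\obball{1}^+$, which is precisely $\alpha$-homogeneity. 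The main obstacle is the Cauchy--Schwarz-to-pointwise passage in the multivalued setting: one must carefully justify that equality in the weighted Cauchy--Schwarz genuinely forces the sheet-wise ODE $\partial_r u_i = (\alpha/|y|) u_i$, rather than only an averaged or slice-wise statement; once this is in hand, the analytic-continuation step through a codimension-$\geq 2$ singular set is routine.
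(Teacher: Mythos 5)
Your proposal is correct in its overall architecture and reaches the same conclusion, but it takes a slightly different path than the paper at the crucial step of pinning the proportionality constant. You stay entirely within the smoothed frequency $I_u$: after extracting the Cauchy--Schwarz equality $u = \lambda\,|y|\,\partial_\nu u$ on the weighted annulus, you feed this back through the outer-variation identity $D_u = E_u$ (which is exact when the boundary is flat and $u$ vanishes there) to get $\lambda = 1/\alpha$, hence $y\cdot\nabla u = \alpha u$. The paper instead passes from the smoothed frequency to the \emph{classical} frequency $I$ (with $\phi\equiv 1$), invoking \cite{ian2024uniqueness} for its monotonicity at a flat boundary, and then uses the explicit formula $I(r) = r\lambda_r^{-1}$ on spheres to pin $\lambda_r = r/I_0$ and obtain the radial ODE. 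Your route is arguably a bit more self-contained, since it does not require the separate monotonicity statement for the unsmoothed frequency; the paper's route isolates the rigidity at the level of spheres rather than annuli, which makes the $|y|$-dependence of the proportionality constant manifest without further manipulation. Both then conclude by unique continuation across the codimension-$\geq 2$ interior singular set, and your write-up of that step (path-connectedness of $\obball{1}^+\setminus\Singi(u)$, local decomposition into real-analytic harmonic sheets, the label-independence of the homogeneity ODE) is actually more explicit than the paper's one-liner.

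One point worth sharpening, which you yourself flag at the end: the phrase ``pointwise proportionality $\partial_r u(y) = \lambda(y)\,u(y)$'' risks suggesting that only a pointwise (hence $y$-arbitrary) parallelism is being used, which would \emph{not} by itself force the scaling $\lambda(y)=\alpha/|y|$. The integrated Cauchy--Schwarz inequality $E_u^2 \leq F_u\,G_u$, with the weights $|u|^2/|y|$ and $|\partial_r u|^2|y|$ you correctly identified, has as its equality case the proportionality of $u/\sqrt{|y|}$ and $\sqrt{|y|}\,\partial_r u$ with a \emph{constant} factor $\lambda$ across the whole annulus $\{r/2<|y|<r\}$, i.e., $u = \lambda|y|\partial_r u$ with $\lambda$ independent of $y$ --- and the same $\lambda$ across all the sheets, since the Cauchy--Schwarz pairing is on the $Q$-tuple $(u_1,\dots,u_Q)$, which makes the conclusion label-independent. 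Reinserting into $I_u(r)=\alpha$ (using $D_u=E_u$) then gives $\lambda=1/\alpha$. If you spell this out, the ``main obstacle'' you flag dissolves, and your argument is complete.
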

\begin{proof}
Since $I_{u}$ is monotone, it must be constant in $[s,t]$. As in \cite[Corollary 3.16]{DS1}, we then have, for almost every $r \in [s,t]$ and almost every $y$, that 
\begin{equation*}
    |y|= u_i(y)=\lambda_r\partial_{\nu}u_i(y),
\end{equation*} 
with $\lambda_r$ being a constant. We recall that the classical frequency $I$ (that is, given the choice $\phi \equiv 1$) is monotone with a flat boundary as in \cite{ian2024uniqueness}. The above equality implies $I(s)=I(t)$, but as in \cite[Corollary 3.16]{DS1}, we obtain that
\begin{equation*}
    I(r)=\frac{r\int_{\partial \obball{r}}\sum_i \langle \partial_{\nu}u_i, u_i\rangle}{\int_{\partial \obball{r}}\sum_i |u_i|^2}= r\lambda_r^{-1}.
\end{equation*}

Thus, $\lambda_r$ must be the same constant for almost every $r$ in $[s,t]$. Set $I_0 := I(s)$. This in turn leads to $I_0 u_i(y)=r\partial_{\nu}u_i(y)$. Defining $\sigma_y:=\left\{ry: s \leq y  \leq t\right\}$, we get  
\begin{equation*}
    (u_i|_{\sigma_y})'(r)= \frac{I_0}{r }u_i|_{\sigma_y}(r).
\end{equation*}

This implies that $u_i|_{\sigma_y}$ is $I_0$-homogeneous in $[s,t]$. By unique continuation in the interior regular set of $u$ (where it agrees with a classical harmonic function), the function $u$ must be $I_0$-homogeneous in $\obball{1}^+$.
\end{proof}

\subsection{Rectifiability for the linear problem}

We now state the boundary rectifiability result for Dirichlet minimizers. The counterpart of this result for the \emph{interior} singular set is stated in \cite{de2018rectifiability} and in \cite{krummel2017fine}.

\begin{theorem}\label{T:LinearProblem:Rectifiability}
Let $u : \Omega \subset \R^m \to \cA_Q(\R^n)$ be a Dirichlet minimizer, $ u|_{\partial\Omega} = Q\a{g}$, $\Omega\in C^2$, and $g\in C^2$. Then, the boundary singular set of $u$ is $\cH^{m-3}$-rectifiable. 
\end{theorem}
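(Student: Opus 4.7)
The plan is to mirror the strategy described for \Cref{T:Rectifiability-1sided}, but simplified by the fact that $u$ is already a Dirichlet minimizer, so no center-manifold construction is needed and $u$ itself plays the role of the normal approximation. Using the $C^2$ regularity of $\partial\Omega$ and $g$, a standard diffeomorphic straightening near each boundary point, combined with subtracting a smooth $Q$-valued extension of $g$, reduces the problem to the model setting of \Cref{S:dir-min}: Dirichlet minimizers on $\obball{s}^+\subset\R^m$ with zero boundary value on $\obball{s}\cap\{x_m=0\}$. I would then decompose the boundary singular set into countably many pieces on which the frequency is uniformly bounded above and the density is bounded below, and on each such piece apply \cite[Theorem 1.1]{iancamillorectifiability} to reduce the $\cH^{m-3}$-rectifiability claim to rectifiability of arbitrary Frostman upper-regular measures $\mu$ supported on the piece.

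For such a $\mu$, the Azzam--Tolsa criterion \Cref{T:NaberValtorta-AzzaamTolsa} reduces the problem to verifying the Dini-type bound
\begin{equation*}
\int_0^1 \beta^{m-3}_{2,\mu}(x,r)^2 \,\frac{dr}{r} < \infty \qquad \text{for $\mu$-a.e. } x.
\end{equation*}
The central estimate to prove is a pointwise comparison of the form
\begin{equation*}
\beta^{m-3}_{2,\mu}(x,r)^2 \;\lesssim\; I_{u,x}(c_1 r) - I_{u,x}(c_0 r) + (\text{geometric error}),
\end{equation*}
after which the monotonicity of $r\mapsto I_{u,x}(r)$, summed over dyadic scales, makes the integral finite because the total oscillation of the frequency on $(0,1)$ is bounded. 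A necessary preliminary is a uniform frequency gap $I_{u,x}(0)\geq 1+\alpha/2$ at every singular point $x$, for a universal $\alpha>0$: any blow-up of $u$ at a boundary point with $I_{u,x}(0)<1+\alpha/2$ would, by compactness and \Cref{t:homogeneous2d}, be forced to be linear, so $x$ would be a regular boundary point.

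The technical core is a Naber--Valtorta-type cone-splitting argument adapted to the boundary, following \cite{de2023fineII}: if $\beta^{m-3}_{2,\mu}(x,r)$ were large compared to the frequency pinching on $(c_0 r, c_1 r)$, then by orthogonalizing against an approximating plane one could extract, in the limit, a nontrivial homogeneous Dirichlet minimizer on $\obball{s}^{+}$ with zero boundary data that depends only on two variables in $\textup{span}(v,x_m)$ for some $v\in\R^{m-1}$. \Cref{l:linearproblemderivativebound} provides exactly the quantitative control required to pass to this limit: its $c(\Lambda)>0$ lower bound on the tangential derivatives, valid precisely when $I_{u,x}(0)\geq 1+\alpha/2$, prevents the compactness argument from collapsing unless a proportional frequency drop occurs; moreover, \Cref{l:twovariables} combined with \Cref{L:constantfrequencyannulus} reduces the limit to a $2$-dimensional homogeneous solution, and \Cref{t:homogeneous2d} then forces $I\equiv 1$, contradicting the frequency gap. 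I expect this quantitative $\beta^{m-3}_2$-to-frequency-pinching comparison to be the main obstacle, since it requires a clean boundary analogue of the covering/pinching estimates of \cite{de2023fineII} that respects the flat boundary $\{x_m=0\}$. Once the estimate is established, integrating against $\mu$ and in $r$ against $dr/r$ yields the required Dini condition, and \Cref{T:NaberValtorta-AzzaamTolsa} concludes that the boundary singular set is $\cH^{m-3}$-rectifiable.
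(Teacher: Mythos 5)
Your proposal reproduces, at the level of an outline, exactly what the paper intends: since for the linear problem the map $u$ itself serves as the normal approximation (no center manifold, no intervals of flattening with jumps), one replays the chain \emph{reduce to Frostman measures via \cite[Theorem 1.1]{iancamillorectifiability} $\to$ Azzam--Tolsa (\Cref{T:NaberValtorta-AzzaamTolsa}) $\to$ bound $\beta^{m-3}_{2,\mu}$ by frequency pinching $\to$ sum the dyadic pinchings using monotonicity}, and the pinching-controls-$\beta_2$ step uses the same cone-splitting compactness argument as \Cref{T:JonesBeta-HighFreq}, with \Cref{l:linearproblemderivativebound}, \Cref{l:twovariables}, \Cref{L:constantfrequencyannulus}, and \Cref{t:homogeneous2d} doing the work. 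This is precisely what the paper means by ``repeating the arguments for the nonlinear problem with many of the error terms absent.''

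One step you take for granted does not come for free. You assert a universal frequency gap $I_{u,x}(0)\geq 1+\alpha/2$ at boundary singular points of a Dirichlet minimizer, justified ``by compactness and \Cref{t:homogeneous2d}.'' But \Cref{t:homogeneous2d} classifies homogeneous Dirichlet minimizers \emph{in $\R^2$}; it says nothing directly about a degree-$\lambda$ homogeneous $Q$-valued Dirichlet minimizer on $\obball{1}^+\subset\R^m$ with $m>2$, which is what a blow-up of $u$ at a singular boundary point actually is. In the nonlinear setting the gap in \Cref{T:UniversalLowerBound-FreqFnc}\eqref{I:FreqFnc:UniversalLowerBound} is not a soft compactness consequence; it is the specific rate $\alpha$ coming from the excess decay of \cite{ian2024uniqueness}, fed into the center-manifold splitting estimates. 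For the linear problem you would need a corresponding input: either a rigidity statement (any degree-$1$ homogeneous $Q$-valued Dirichlet minimizer with zero data on a flat boundary and $\etaa\circ u\equiv 0$ is linear, in every dimension $m$, followed by an $\varepsilon$-regularity theorem near linear cones), or a linear excess-decay estimate with a quantitative rate. This is a genuine piece of content that the phrase ``by compactness and \Cref{t:homogeneous2d}'' does not cover, and without it \Cref{l:linearproblemderivativebound} has no hypothesis to act upon.

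Two smaller remarks. First, ``subtracting a smooth $Q$-valued extension of $g$'' is not quite what preserves minimality; you should subtract the (classical, harmonic) mean $\etaa\circ u$, which forces $\etaa\circ u\equiv 0$ and keeps $u$ Dir-minimizing. Second, the $C^2$ straightening of $\partial\Omega$ replaces the Euclidean Dirichlet energy by a perturbed one, so you are \emph{not} exactly in the flat model of \Cref{S:dir-min}: those curvature-type errors are still there (they are what the paper controls through the distorted distance of \Cref{L:DistFnc} and the inner/outer variation remainders), and ``the model setting'' should be understood as a first-order normalization rather than an exact reduction.
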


The proof of this result follows by repeating the arguments used to establish the rectifiability of the nonlinear problem (\Cref{T:Rectifiability-1sided}), but with many of the error terms absent. Since this proof represents a simpler case of the analysis we carry out later and proceeds in an entirely analogous manner, we omit its details.

\section{Reduction of main theorems to rectifiability of one-sided flat singular points}

\subsection{Excess decay and decomposition for two-sided points with trapped density}

For two-sided points, we establish an excess decay at an almost-quadratic rate and, as a consequence, the uniqueness of tangent cones. This result holds under a density condition (see \eqref{E:density-trapped}) that is satisfied in a relatively \emph{open} neighborhood in $\Gamma$, which is, a priori, more general than the condition assumed in \cite{DDHM,nardulli2022density}. This refinement strengthens previously known results in \cite[Theorems 4.3 and 4.5]{nardulli2022density} and, for $Q=1$, in \cite[Theorem 6.3]{DDHM}. 

Building on this, we derive a decomposition of the current at two-sided points under the same density assumption; see \Cref{P:2sided-decomposition}.

\begin{proposition}[Excess decay and uniqueness of tangent cones at two-sided points]\label{P:2sided-excessdecay+uniqueness}
Let $T,\Sigma$, and $\Gamma$ be as in \Cref{A:general}, $Q^\star\in\N\setminus\{0\}$, and $p_0\in\Gamma$. For any $\varepsilon>0$, there exists a constant $\delta = \delta(\varepsilon,Q,Q^*,m,n,\overline{n}) >0$ with the following property. If, for some $r_0>0$, the following holds
\begin{equation}\label{E:density-trapped}
     \left| \frac{\|T\|\left(\ball{q}{\rho}\right)}{\omega_m \rho^m} - \left( \frac{Q}{2} + Q^\star\right)\right| < \delta, \quad \forall q\in\Gamma\cap\ball{p_0}{r_0}, \forall \rho\in (0,r_0),
\end{equation}
then, for any $q\in\Gamma\cap\ball{p_0}{r_0}$ and any $\rho\in (0,r_0/2)$, the following is true:
\begin{enumerate}[\upshape (i)]
    \item there exists a $m$-dimensional plane $\pi(q)$ with $T_q\Gamma\subset\pi(q)\subset T_q\Sigma$ and 
    \begin{equation*}
        \bE^\flat (T, \ball{q}{\rho}, \pi(q)) = \bE^\flat (T, \ball{q}{\rho}) \leq C \left(\frac{\rho}{r_0}\right)^{2-2\varepsilon}  \bE^\flat (T, \ball{p_0}{r_0}) + C \rho^{2-2\varepsilon}r^{2\varepsilon} \bA^2,
    \end{equation*}

    \item $T_q:=(Q+Q^\star)\a{\pi(q)^+} + Q^\star\a{\pi(q)^-}$ is the \emph{unique} tangent cone to $T$ at $q$ where $\pi(q)^+$ and $\pi(q)^-$ are the two connected components of $\pi(q)\setminus T_q\Gamma$ oriented such that $\partial\a{\pi(q)^+} = -\partial\a{\pi(q)^-} = \a{T_q\Gamma}$.
\end{enumerate}
\end{proposition}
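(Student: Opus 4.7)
The plan is to reduce the statement to \cite[Theorem 6.3]{DDHM} (for $Q=1$) and \cite[Theorems 4.3 and 4.5]{nardulli2022density} (for general $Q$), both of which prove statements (i) and (ii) under the \emph{stronger} hypothesis that $\Theta(T,q)$ is \emph{constantly} equal to $Q/2 + Q^\star$ on a full neighborhood of $p_0$ in $\Gamma$. Therefore, the only new content is to show that the apparently weaker open trapping \eqref{E:density-trapped} in fact forces $\Theta(T,q)$ to be exactly $Q/2 + Q^\star$ at every $q \in \Gamma \cap \ball{p_0}{r_0}$.

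First, for any fixed $q \in \Gamma \cap \ball{p_0}{r_0}$, letting $\rho \downarrow 0$ in \eqref{E:density-trapped} gives the weak inclusion $\Theta(T,q) \in [Q/2 + Q^\star - \delta,\; Q/2 + Q^\star + \delta]$. I would then upgrade this inclusion to equality by compactness and contradiction. If the conclusion failed, there would exist $\delta_k \downarrow 0$ and currents $T_k$ with marked points $q_k \in \Gamma_k$ satisfying \eqref{E:density-trapped} with $\delta_k$, but with $\Theta(T_k, q_k) \neq Q/2 + Q^\star$. Translating $q_k$ to the origin and rescaling $\Sigma_k$ to flat, the usual compactness theorems for area-minimizing integer currents produce a subsequential blow-up limit $T_\infty$ in $\R^{m+n}$ which is an area-minimizing cone with $\partial T_\infty = Q \a{V}$ for a linear $(m-1)$-plane $V$, and which, by upper semicontinuity of density together with \eqref{E:density-trapped} and $\delta_k \downarrow 0$, satisfies $\Theta(T_\infty, q) = Q/2 + Q^\star$ at every $q \in V$. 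The classification of area-minimizing cones with boundary $Q\a{V}$ and constant density $Q/2 + Q^\star$ along $V$ then forces $T_\infty = (Q+Q^\star)\a{\pi^+} + Q^\star\a{\pi^-}$ to be a two-sided flat cone; in particular $\Theta(T_\infty, 0) = Q/2 + Q^\star$, contradicting $\Theta(T_k, q_k) \to \Theta(T_\infty,0)$.

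With the exact density equality $\Theta(T,\cdot) \equiv Q/2 + Q^\star$ on $\Gamma \cap \ball{p_0}{r_0}$ in hand, the excess decay (i) with rate $2-2\varepsilon$ is a direct application of \cite[Theorem 6.3]{DDHM} and \cite[Theorem 4.3]{nardulli2022density}. The idea there is to decompose $T$ locally as $T = T^+ + T^-$, with $\partial T^+ = (Q+Q^\star)\a{\Gamma}$ and $\partial T^- = -Q^\star\a{\Gamma}$, so that after reversing orientations on $T^-$ the combined current behaves as an \emph{interior} area-minimizing current supported on a smooth background manifold $\Xi$ passing across $\Gamma$. The almost-quadratic excess decay is then the interior Allard--Almgren--De~Lellis--Spadaro regularity applied to this interior current and transferred back to $T$. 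Statement (ii) is then extracted from (i) by summing the decay across dyadic scales: the optimal tilted planes $\pi(q,\rho)$ form a Cauchy family with unique limit $\pi(q)$, and the resulting tangent cone is identified as the two-sided flat cone $(Q+Q^\star)\a{\pi(q)^+} + Q^\star\a{\pi(q)^-}$ since this is the only minimizing cone with boundary $Q\a{T_q\Gamma}$ on $\pi(q)$ and density $Q/2+Q^\star$.

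The main obstacle is the rigidity classification used in the second paragraph: that an area-minimizing cone with boundary $Q\a{V}$ and density exactly $Q/2 + Q^\star$ at \emph{every} point of $V$ must be a two-sided flat cone. Genuine open books with positively oriented half-planes have density $Q/2$ along the spine, which is strictly smaller than $Q/2 + Q^\star$; more generally, a cone that is $(m-1)$-symmetric along $V$ reduces by slicing transverse to $V$ to a two-dimensional boundary minimizing cone, for which the constancy lemma combined with the uniqueness theory of \cite{ian2024uniqueness} and the structural results of \cite[\S 5]{DDHM} forces exactly two oppositely-oriented half-planes spanning a full $m$-plane with multiplicities $Q+Q^\star$ and $Q^\star$. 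Establishing this rigidity cleanly, with uniform constants in the compactness argument, is the geometric heart of the proof; once it is in place, the remainder is standard bookkeeping on previously established excess decay estimates.
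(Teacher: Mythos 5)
Your proposed reduction takes a genuinely different route from the paper, and unfortunately the key step in it does not close. The paper does \emph{not} first upgrade \eqref{E:density-trapped} to the exact equality $\Theta(T,q)\equiv Q/2+Q^\star$ on $\Gamma\cap\ball{p_0}{r_0}$. Instead it observes that the decay iteration of \cite[Lemma 4.2]{nardulli2022density} only requires the smallness hypothesis $\bA^2\sigma^2+\bE^\flat(T,\ball{q}{4\sigma})<\varepsilon_0$, and then proves \emph{that} smallness directly from \eqref{E:density-trapped} by a compactness argument: if $T'_k$ satisfy the trap with $\delta_k\downarrow 0$ but the excess at some scale $\sigma_k$ stays bounded below, then $(T'_k)_{0,4\sigma_k}$ converges to a current with constant mass ratio $Q/2+Q^\star$ in every ball, which is therefore a two-sided flat cone and has vanishing excess — a contradiction. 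Exact density rigidity is never invoked at this stage; it only emerges later (in \Cref{P:2sided-min-dens-regular}) as a \emph{consequence} of the excess decay, uniqueness of tangent cones, and the collapsed-point regularity theory.

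The gap in your argument is precisely the density rigidity step. In the second paragraph you take $\delta_k\downarrow 0$, $T_k$ with $\Theta(T_k,q_k)\neq Q/2+Q^\star$, blow up, and claim ``$\Theta(T_\infty,0)=Q/2+Q^\star$, contradicting $\Theta(T_k,q_k)\to\Theta(T_\infty,0)$.'' But by \eqref{E:density-trapped} and $\delta_k\downarrow 0$ you already have $\Theta(T_k,q_k)\to Q/2+Q^\star$, so $\Theta(T_\infty,0)=Q/2+Q^\star$ is perfectly consistent with $\Theta(T_k,q_k)\neq Q/2+Q^\star$ for every $k$; there is no contradiction. What would be needed to close the argument is a quantitative density gap at the value $Q/2+Q^\star$ (analogous to \Cref{T:density-jump} at $Q/2$), i.e., that approximate two-sided densities cannot take values in a punctured neighborhood of $Q/2+Q^\star$. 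But a density gap at $Q/2+Q^\star$ is not available \emph{a priori}; in this paper it is itself deduced from the regularity of collapsed two-sided points, which in turn rests on the excess decay you are trying to prove. In other words, your reduction is logically circular, and the compactness argument alone cannot substitute for it. The fix is to argue on the level of the excess rather than the density, as the paper does: show directly that \eqref{E:density-trapped} forces $\bE^\flat$ to be below the threshold needed for the decay iteration, and let density rigidity follow afterwards.
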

\begin{proof}
To prove \cite[Lemma 4.2]{nardulli2022density} the only property that is used is \eqref{E:density-trapped}, the full strength of the definition of collapsed points (i.e., lower bound for the density and existence of a flat tangent cone) is not used. We state here the consequence of \cite[Lemma 4.2]{nardulli2022density}: for every $\varepsilon>0$, there exist $\varepsilon_0=\varepsilon_0(\varepsilon, Q, Q^\star,m,n)>0$ and $M_0=M_0 (\varepsilon, Q, Q^\star,m,n)>0$ with the following property. If we define the function $\theta$ as $\theta(\sigma) := \max\{\bE^\flat (T, \ball{q}{\sigma}) ,M_0 \bA^2 \sigma^2\}$ for $\sigma\in (0,r_0)$, we obtain
\begin{equation}\label{eq:lemma-milder-decay:small-excess-assump}
    \bA^2 \sigma^2 + \bE^\flat (T, \ball{q}{4 \sigma}) < \varepsilon_0 \quad \Rightarrow \quad \theta(\sigma) \leq \max \{ 2^{-4+4\varepsilon} \theta (4\sigma), 2^{-2+2\varepsilon}  \theta(2\sigma)\},
\end{equation}
where $\bA := \max\{\bA_\Gamma^2, \bA_\Sigma^2\}$. 

We will show that \eqref{E:density-trapped} implies the smallness condition on the left-hand side of \eqref{eq:lemma-milder-decay:small-excess-assump}. Assume by contradiction that there is a sequence of currents $T_k^\prime$ such that \eqref{E:density-trapped} holds for $T_k^\prime$ with $\delta = 1/k$, for some $\sigma_k\in (0,r_0)$, and
\[\bA^2_k \sigma_k^2 + \bE^\flat (T_k^\prime, \ball{q_k}{4 \sigma_k}) \geq \varepsilon_0\mbox{ for some }\varepsilon_0>0\mbox{ and }q_k\in\Gamma_k\cap\ball{p_0}{r_0}.\]

Define $T_k = (T_k^\prime)_{0,4\sigma_k}$ and recall that $\bE^\flat (T_k, \oball{1}(q_k)) = \bE^\flat (T_k^\prime, \ball{q_k}{4 \sigma_k}) \geq \varepsilon_0$. Moreover, by \eqref{E:density-trapped} with $\rho = s\sigma_k$ and rescaling, we obtain
\begin{equation}\label{E:density-trapped_v2}
     \left| \frac{\|T_k\|\left(\ball{q}{s}\right)}{\omega_m s^m} - \left( \frac{Q}{2} + Q^\star\right)\right| < \frac{1}{k}, \quad \forall q\in\Gamma\cap\ball{p_0}{r_0}, \forall s\in \left(0,\frac14\right).
\end{equation}

We now take a limit of the sequence $T_k$, the boundaries $\Gamma_k$, and the ambient manifolds $\Sigma_k$. The currents will converge to a current $T_\infty$ and the boundaries $\Gamma_k$ converge to a flat boundary $L_\infty$. Since we have \eqref{E:density-trapped_v2}, the mass ratio of $T_\infty$ will be constantly equal to $Q/2+Q^*$ at every point of $L_\infty$ and in every radius $s<1$. Thus, it must be a two-sided flat cone. Up to a subsequence, $q_k \rightarrow q_\infty$ and then $\bE^\flat (T_\infty, \ball{q_\infty}{4r_0}) \geq \varepsilon_0$, which is a contradiction since $T_\infty$ is a flat two-sided cone, i.e., supported in a $m$-plane. 

Hence, for any $\varepsilon >0$, there is a choice of $\delta = \delta(\varepsilon)$ such that the left-hand side of \eqref{eq:lemma-milder-decay:small-excess-assump} holds. Thus, for any $\varepsilon>0$, there exists $\delta=\delta(\varepsilon)>0$ such that \eqref{E:density-trapped} imply
\begin{equation*}
     \theta(\sigma) \leq \max \{ 2^{-4+4\varepsilon} \theta (4\sigma), 2^{-2+2\varepsilon}  \theta(2\sigma)\}.
\end{equation*}

From this, the very same proof of \cite[Theorem 4.3]{nardulli2022density} provides (i) and (ii). 
\end{proof}

We establish a decomposition for $T$ at two-sided points as a direct consequence of the uniqueness of tangent cones at such points (\Cref{P:2sided-excessdecay+uniqueness}). Although this result \emph{is not} used elsewhere in this article, it is of independent interest. Notably, the significance of this decomposition lies in its elementary nature. Indeed, it does not rely on the full regularity theory at two-sided points with density close to $Q/2+Q^\star$ (i.e., constructing center manifolds, defining frequency functions, etc., see \Cref{P:2sided-min-dens-regular}); rather, it follows directly from an application of \Cref{P:2sided-excessdecay+uniqueness}.

\begin{proposition}[Decomposition at two-sided points]\label{P:2sided-decomposition}
Let $T,\Sigma$, and $\Gamma$ be as in \Cref{A:general}, $Q^\star\in\N\setminus\{0\}$, and $p_0\in\Gamma$. There exists $\delta>0$ with the following property. If, for some $r_0\in (0, r/4)$, the following holds
\begin{equation}
     \left| \frac{\|T\|\left(\ball{q}{\rho}\right)}{\omega_m \rho^m} - \left( \frac{Q}{2} + Q^\star\right)\right| < \delta, \quad \forall q\in\Gamma\cap\ball{p_0}{r_0}, \forall \rho\in (0,r_0),
\end{equation}
then $T \res \ball{p_0}{r_0} =T_1+T_2$ where $\partial T_1=(Q+Q^*)\a{\Gamma}$, $\partial T_2=-Q^* \a{\Gamma}$
with $\textup{spt}(T_1) \cap \textup{spt}(T_2)=\Gamma$, and every $q\in\Gamma\cap\ball{p_0}{r_0}$ is a one-sided flat boundary point for $T_1$ and $T_2$.
\end{proposition}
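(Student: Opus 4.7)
The strategy is to exploit the uniqueness of tangent cones and the H\"older excess decay from \Cref{P:2sided-excessdecay+uniqueness} to identify, inside $\spt(T)$, a ``positive'' and a ``negative'' side of $\Gamma$, and then define $T_1,T_2$ as the corresponding pieces of $T$. The key point is that all the hard analytic input has already been packaged in \Cref{P:2sided-excessdecay+uniqueness}, so no center manifold or frequency machinery is needed here.

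First I would promote \Cref{P:2sided-excessdecay+uniqueness} to H\"older continuity of the tangent-plane map $q \mapsto \pi(q)$ on $\Gamma \cap \ball{p_0}{r_0/2}$ via a standard triangle-inequality argument: using the excess decay at a common scale comparable to $|q_1-q_2|$ together with the subadditivity of the excess relative to different planes, one obtains $|\pi(q_1)-\pi(q_2)| \leq C|q_1-q_2|^{\beta}$ for some $\beta=\beta(\varepsilon)>0$, and consequently the inner unit normal $\nu(q)\in \pi(q)\cap T_q\Sigma$ to $T_q\Gamma$ pointing into $\pi(q)^+$ varies H\"older continuously. Combining this with the standard height bound for minimizing currents with small excess (adapted to the boundary situation, as in \cite{ian2024uniqueness}) and possibly shrinking $r_0$, I would conclude that $\spt(T)\cap\ball{p_0}{r_0}$ is a $C^{1,\alpha}$ embedded $m$-submanifold $M$ containing $\Gamma$ as a smooth codimension-$1$ hypersurface that splits $M$ into two connected components $M^+$ and $M^-$, with $M^+$ the side into which $\nu$ points.

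With this regularity at hand, the Constancy Lemma applied separately to $T\res M^+$ and $T\res M^-$ forces the multiplicities to be constant on each piece; matching them with the unique tangent cone $(Q+Q^\star)\a{\pi(q)^+}+Q^\star\a{\pi(q)^-}$ from \Cref{P:2sided-excessdecay+uniqueness}(ii) at any $q\in\Gamma$, the multiplicities must be $Q+Q^\star$ on $M^+$ and $Q^\star$ on $M^-$. Setting $T_1:=(Q+Q^\star)\a{M^+}$ and $T_2:=Q^\star\a{M^-}$, the identity $T\res \ball{p_0}{r_0}=T_1+T_2$ follows from matching multiplicities; the boundary relations $\partial T_1=(Q+Q^\star)\a{\Gamma}$ and $\partial T_2=-Q^\star\a{\Gamma}$ follow from the orientation convention $\partial\a{M^+}=-\partial\a{M^-}=\a{\Gamma}$; one has $\spt(T_1)\cap\spt(T_2)=M^+\cap M^-=\Gamma$; and the tangent cones of $T_1,T_2$ at any $q\in\Gamma$ are the one-sided flat cones $(Q+Q^\star)\a{\pi(q)^+}$ and $Q^\star\a{\pi(q)^-}$, so every such $q$ is one-sided flat for both.

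The main obstacle I foresee lies in the second step, namely justifying the $C^{1,\alpha}$ regularity of $\spt(T)$ near $\Gamma$; this is morally a boundary analogue of De Giorgi's flatness-implies-regularity theorem adapted to a two-sided flat tangent cone. The geometry is favorable because the tangent cone is supported in a full $m$-plane (so the local picture is essentially ``interior-like'' with only an integer jump in density across $\Gamma$), and the polynomial excess decay in \Cref{P:2sided-excessdecay+uniqueness}(i) already supplies the quantitative closeness; the care goes into turning this into a graphical representation over each tangent plane and patching the local graphs into a single $C^{1,\alpha}$ manifold using the H\"older continuity of $q\mapsto \pi(q)$ established in the first step.
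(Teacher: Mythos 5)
Your proposal takes a genuinely different route from the paper, but it contains a serious gap in the second step.

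The paper's actual proof is considerably more elementary and proves considerably less regularity. It never claims that $\spt(T)$ is a $C^{1,\alpha}$ manifold near $\Gamma$. Instead, it only establishes a \emph{separation} statement: using the height bound of \cite[Corollary 4.7]{nardulli2022density} and the excess decay from \Cref{P:2sided-excessdecay+uniqueness}, one shows by a direct contradiction argument that
\[
\left(W^{\pi/4}_{-\pi/4}\cup W^{3\pi/4}_{5\pi/4}\right)\cap\spt(T)=\varnothing,
\]
so that $\spt(T)$ is trapped in the union of two opposite wedges $W^{3\pi/4}_{\pi/4}\cup W^{7\pi/4}_{5\pi/4}$ around $\Gamma$. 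One then sets $T_1$ and $T_2$ to be the restrictions of $T$ to those two wedges; since $\spt(\partial T_i)\subset\Gamma$ and $\partial T=Q\a{\Gamma}$, the boundary multiplicities come out by the Constancy Lemma on $\Gamma$, and the one-sided flatness of each $q$ for $T_i$ follows from uniqueness of the tangent cone. No regularity of $\spt(T)$ away from $\Gamma$ is used, and none is claimed.

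Your step two — deducing that $\spt(T)\cap\ball{p_0}{r_0}$ is a $C^{1,\alpha}$ embedded $m$-manifold from the excess decay — is the gap. The intuition that, because the tangent cone is supported in a single plane, the situation is ``interior-like'' and a De Giorgi flatness-implies-regularity argument applies, is incorrect once $Q>1$ in higher codimension: small excess relative to a plane does \emph{not} imply regularity, since the $Q+Q^\star$ (or $Q^\star$) sheets can branch arbitrarily close to the boundary. Establishing that $\spt(T)$ is a classical manifold near such a two-sided point is precisely the content of \Cref{P:2sided-min-dens-regular}, which the paper proves separately and explicitly flags as requiring the full Almgren-type regularity machinery (Lipschitz/normal approximations, center manifolds, frequency functions), as developed for the two-sided case in \cite{nardulli2022density,DDHM}. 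The paper is explicit, in the sentence immediately preceding the proof of \Cref{P:2sided-decomposition}, that the decomposition is intended to be \emph{independent} of that machinery and to follow from \Cref{P:2sided-excessdecay+uniqueness} alone. If you grant yourself the $C^{1,\alpha}$ regularity, the rest of your argument (Constancy Lemma on each component of $M\setminus\Gamma$, matching multiplicities with the tangent cone, reading off the boundaries) is fine — but at that point you have already proven the stronger \Cref{P:2sided-min-dens-regular}, not a more elementary statement, and by a method (a boundary De Giorgi scheme for general $Q$) that is not available. To fix the argument, replace the regularity claim with the wedge-separation argument: the height bound $|\bp^\perp_{\pi(p)}(x-p)|\le C(r_0^{-1}\bE^\flat(T,\ball{p}{4r_0})^{1/2}+\bA)^{1/2}\rho^{3/2}$ together with $\rho=|x-p|$ rules out any point of $\spt(T)$ making an angle $\le\pi/4$ with the plane $\pi(p)$ at scale $\rho$, which already gives the desired splitting of $T$ without any knowledge of regularity off $\Gamma$.
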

\begin{proof}
For $\theta_1,\theta_2\in\R, \theta_2>\theta_1$, we define the wedge region:
\begin{equation*}
    W_{\theta_1}^{\theta_2} := \left\{x \in \ball{p_0}{r_0}:  \left\langle \frac{x-\bp_{\Gamma}(x)}{|x-\bp_{\Gamma}(x)|},N_{\bp_{\Gamma}(x)}\Gamma \right\rangle  \in [\theta_1,\theta_2] \right\},
\end{equation*}
where, for any $q\in\Gamma$, $N_q\Gamma := (T_p\Gamma)^\perp$. Since $\Gamma$ is $C^{1,1}$, we find that $\bp_{\Gamma}$ is well-defined and $\mathrm{dist}(\cdot,\Gamma)$ is differentiable provided that $r_0$ is small enough. We will show that 
\begin{equation}\label{empty-wedge}
    \left(W^{\pi/4}_{-\pi/4}\cup W^{3\pi/4}_{5\pi/4}\right)\cap\spt(T) = \varnothing,
\end{equation}
which then allow us to conclude the proof since this means that 
\[\spt(T)\cap\ball{p_0}{r_0} \subset W_{\pi/4}^{3\pi/4} \cup W^{7\pi/4}_{5\pi/4}\]
and, hence, we can write $T=T_1+T_2$ with 
\[\spt(T_1)\setminus \Gamma \subset W_{\pi/4}^{3\pi/4}\mbox{ and }\spt(T_2)\setminus \Gamma \subset W^{7\pi/4}_{5\pi/4}.\] 

Additionally, we obtain that $\spt(\partial T_1) \subset \Gamma$ and $\spt(\partial T_2) \subset \Gamma$ and thus $\partial T_1=(Q+Q^*) \a{\Gamma}$ and $\partial T_2=-Q^*\a{\Gamma}$. 

To prove \eqref{empty-wedge}, we proceed by contradiction as follows. Assume the existence of 
\[x \in \left(W^{\pi/4}_{-\pi/4}\cap W^{3\pi/4}_{5\pi/4}\right)\cap\spt(T).\] 

We set $p:=\bp_{\Gamma}(x)$, let $\rho:=|x-p|$ and denote $\pi(p)$ the unique tangent cone to $T$ at $p$ (given by \Cref{P:2sided-excessdecay+uniqueness}). By the height bound in \cite[Corollary 4.7]{nardulli2022density}, denoting the projection onto $\pi^\perp$ by $\bp^\perp_{\pi}$ for any $m$-plane $\pi$, we have that 
\begin{equation*}
    \abs{\bp^\perp_{\pi(q)}(x) - p } \le C (r_0^{-1} \bE^\flat(T,\ball{p}{4r_0})^\frac{1}{2} + \bA)^{\frac{1}{2}} \rho^{3/2}.
\end{equation*}

Since $x\in W^{\pi/4}_{-\pi/4}\cap W^{3\pi/4}_{5\pi/4}$, we know that $\abs{ x - \bp_{\pi(p)}(x) } \leq C\abs{ x - \bp_{\pi(p)}^\perp(x) }$. This together with the last displayed inequality imply that 
\begin{equation*}
    \rho = \abs{x - p} \leq C\abs{ x - \bp_{\pi(p)}(x) } \leq C (r_0^{-1} \bE^\flat(T,\ball{p}{4r_0})^\frac{1}{2} + \bA)^{\frac{1}{2}} \rho^{3/2},
\end{equation*}
which gives us a contradiction.
\end{proof}

\subsection{Regularity of two-sided points with trapped density}

The following theorem builds on the theory developed in \cite{nardulli2022density,DDHM}, where the authors study the notion of collapsed points and establish that such points are always regular for $T$. However, we provide a refined version of this result. Specifically, the definition of collapsed points in \cite{nardulli2022density} requires two conditions: (1) the existence of a flat tangent cone at the given point, say $p_0$, and (2) $p_0$ to attain the minimum density in some relatively \emph{open} neighborhood of $\Gamma$, with this density being $\frac{Q}{2}+Q^\star$ for integers $Q,Q^\star\in\N\setminus\{0\}$. Instead of assuming both (1) and (2), we \emph{only} require that the density is $\delta$-close to $\frac{Q}{2}+Q^\star$ to establish regularity.

\begin{proposition}[Regularity of two-sided points with density close to $(\frac{Q}{2}+Q^\star)$]\label{P:2sided-min-dens-regular}
Let $T,\Sigma$, and $\Gamma$ be as in \Cref{A:general}, $Q^\star\in\N\setminus\{0\}$, and $p_0\in\Gamma$. For any $\varepsilon>0$, there exists a constant $\delta = \delta(\varepsilon) >0$ with the following property. If, for some $r_0>0$, the following holds
\begin{equation*}
     \left| \frac{\|T\|\left(\ball{q}{\rho}\right)}{\omega_m \rho^m} - \left( \frac{Q}{2} + Q^\star\right)\right| < \delta, \quad \forall q\in\Gamma\cap\ball{p_0}{r_0}, \forall \rho\in (0,r_0),
\end{equation*}
then $p_0$ must be a two-sided boundary regular point for $T$.
\end{proposition}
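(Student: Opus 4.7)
The plan is to reduce the problem to Theorem~\ref{T:1sided-flat-regular} by means of the decomposition established in Proposition~\ref{P:2sided-decomposition}. First, I would choose $\delta$ at least as small as required by both Proposition~\ref{P:2sided-excessdecay+uniqueness} and Proposition~\ref{P:2sided-decomposition}, obtaining a splitting
\[
T \res \ball{p_0}{r_0} = T_1 + T_2, \quad \partial T_1 = (Q+Q^\star)\a{\Gamma}, \quad \partial T_2 = -Q^\star\a{\Gamma},
\]
with $\spt(T_1) \cap \spt(T_2) = \Gamma$ and every $q \in \Gamma \cap \ball{p_0}{r_0}$ a one-sided flat boundary point for both $T_1$ and $T_2$. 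Because the supports meet only on $\Gamma$, one has $\mathbf{M}(T) = \mathbf{M}(T_1) + \mathbf{M}(T_2)$, and a standard cut-and-paste argument (a competitor $\widetilde T_1$ for $T_1$ sharing its boundary produces the competitor $\widetilde T_1 + T_2$ for $T$) shows that both $T_1$ and $-T_2$ are area-minimizing in $\Sigma \cap \ball{p_0}{r_0}$, with one-sided flat boundary points at every point of $\Gamma \cap \ball{p_0}{r_0}$.

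Next I would apply Theorem~\ref{T:1sided-flat-regular} to $T_1$ (boundary multiplicity $Q+Q^\star$) and to $-T_2$ (boundary multiplicity $Q^\star$), concluding that $p_0$ is one-sided regular for each. By Definition~\ref{D:Reg-1sided} we may write $T_1 = \sum_j k_j \a{\Sigma_j^+}$ near $p_0$ with $\sum_j k_j = Q+Q^\star$. Proposition~\ref{P:2sided-excessdecay+uniqueness}(ii) ensures that the unique tangent cone of $T$ at $p_0$ is $(Q+Q^\star)\a{\pi(p_0)^+} + Q^\star\a{\pi(p_0)^-}$, so the tangent cone of $T_1$ at $p_0$ is the single half-plane $(Q+Q^\star)\a{\pi(p_0)^+}$. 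Since condition (c) of Definition~\ref{D:Reg-1sided} demands that the sheets of a one-sided regular current have pairwise distinct tangent planes at each boundary point, and here all such tangents must coincide with $\pi(p_0)$, the only possibility is $J = 1$. This yields $T_1 = (Q+Q^\star)\a{\Sigma^+}$ for a smooth embedded submanifold $\Sigma^+$ with $\partial \Sigma^+ = \Gamma$; identically, $-T_2 = Q^\star\a{\Sigma^-}$ for a smooth $\Sigma^-$ with $\partial \Sigma^- = \Gamma$. Setting $\Xi^- := -\Sigma^-$, this gives $T \res \ball{p_0}{r_0} = (Q+Q^\star)\a{\Sigma^+} + Q^\star\a{\Xi^-}$.

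To conclude in the sense of Definition~\ref{D:2sided regular}, it remains to verify that $\Xi := \Sigma^+ \cup \Xi^-$ is a single smooth embedded submanifold of $\Sigma$ across $\Gamma$. The key input is again Proposition~\ref{P:2sided-excessdecay+uniqueness}(ii): at every $q \in \Gamma \cap \ball{p_0}{r_0}$, the tangent planes of both $\Sigma^+$ and $\Xi^-$ at $q$ coincide with the unique $m$-plane $\pi(q)$ supporting the tangent cone of $T$. Combined with the $C^{1,\alpha}$ regularity of $\Sigma^\pm$ up to $\Gamma$ inherited from the one-sided regularity theory, this matching of tangents ensures that $\Xi$ is a $C^{1,\alpha}$ submanifold of $\Sigma$ across $\Gamma$. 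Classical interior elliptic regularity for area-minimizing currents, applied to $\Xi$ viewed as a smooth minimal surface in which $\Gamma$ sits as an \emph{interior} codimension-$1$ submanifold, then upgrades $\Xi$ to the required smoothness. I expect this final matching step to be the main obstacle: the uniqueness of the tangent plane across $\Gamma$ provided by Proposition~\ref{P:2sided-excessdecay+uniqueness} is essential to rule out a corner along $\Gamma$, which would leave $p_0$ one-sided regular for $T_1$ and $-T_2$ individually but fail the single-manifold requirement of Definition~\ref{D:2sided regular}.
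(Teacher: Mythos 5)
Your proof takes a genuinely different route from the paper's. The paper's proof is short and does not rely on any of the main theorems of this paper: it shows that $p_0$ is a \emph{collapsed point} in the sense of \cite{nardulli2022density} — first verifying via \Cref{P:2sided-excessdecay+uniqueness} and the stratification/upper-semicontinuity machinery that $\Theta^m(T,p_0) = Q/2 + Q^\star$ is the minimum density over $\Gamma$ in a neighborhood — and then invokes \cite[Theorem 2.21]{nardulli2022density} as a black box. Your approach instead decomposes $T = T_1 + T_2$ via \Cref{P:2sided-decomposition} (a result the paper explicitly flags as "not used elsewhere"), applies \Cref{T:1sided-flat-regular} to each of $T_1$ and $-T_2$ to obtain single smooth sheets $\Sigma^\pm$, and then glues them across $\Gamma$. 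This is conceptually illuminating — it shows the two-sided regularity as a corollary of one-sided regularity plus the decomposition — but it does make \Cref{P:2sided-min-dens-regular} depend on \Cref{T:1sided-flat-regular} and hence on the main rectifiability theorem \Cref{T:Rectifiability-1sided}. Since \Cref{T:Rectifiability-1sided} is proved through \Cref{P:Rectifiability-FlatSingular} without invoking \Cref{P:2sided-min-dens-regular} or \Cref{T:open-dense}, there is no circularity, but the dependency structure is considerably heavier than what the paper arranges.

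One step is under-argued: the gluing of $\Sigma^+$ and $\Sigma^-$ into a single manifold $\Xi$. Matching of tangent planes along $\Gamma$ (from \Cref{P:2sided-excessdecay+uniqueness}(ii)) plus one-sided $C^{1,\alpha}$ regularity of each sheet up to $\Gamma$ does give that the union is a $C^{1,\alpha}$ graph over $\pi(q)$ near each $q \in \Gamma$ — that part is a one-line real-analysis lemma. But passing from $C^{1,\alpha}$ to smooth requires more than "classical interior elliptic regularity applied to $\Xi$": you must first show the glued graph actually satisfies the minimal surface system weakly across the codimension-one interface $T_q\Gamma$, which does not follow from a removability argument (the interface has positive $(m-1)$-capacity). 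The correct argument is that integrating the Euler–Lagrange identity against a test function supported across $\Gamma$ produces two boundary terms on $T_q\Gamma$ that cancel precisely because $Dg^+ = Dg^-$ there and the outward conormals are opposite; equivalently, the multiplicity-one varifold $V(\Sigma^+) + V(\Sigma^-)$ is \emph{stationary} (not merely stationary on each half) and has flat tangent cones on $\Gamma$, so Allard's interior regularity applies. This is essentially what \cite[Theorem 2.21]{nardulli2022density} carries out; your proposal would need to make it explicit rather than gesture at it. As written the key insight is right, but the final paragraph overstates how automatic the $C^{1,\alpha} \Rightarrow C^\infty$ upgrade is.
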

\begin{proof}
Appealing to \Cref{P:2sided-excessdecay+uniqueness}, we can readily obtain that
\begin{equation*}
    \Theta^m(T,p_0) = \min\left\{ \Theta^m(T,q): q\in \ball{p}{r_p}\cap\Gamma\cap \mathscr{P}_{m-1}^* \right\} > \frac{Q}{2},
\end{equation*}
where we define $\mathscr{P}_{m-1}^*$ to be only the points in the top $(m-1)$-stratum of $\Gamma$ with respect to $T$, i.e., 
\begin{equation}\label{E:def top stratum}
    \mathscr{P}_{m-1}^* = \mathscr{P}_{m-1}(T,\Gamma) \setminus \bigcup_{k=1}^{m-2}\mathscr{P}_k(T,\Gamma),    
\end{equation} 
we refer the reader to \cite[Section 3]{nardulli2022density}. Since $p_0$ is two-sided and belongs to $\mathscr{P}_{m-1}^*$, by \cite[Lemma 2.12]{nardulli2022density}, we see that $T_{p_0,r}$ blows up to a two-sided boundary flat cone. Hence, 
\begin{equation*}
    \Theta^m(T,p_0) = \frac{Q}{2}+Q^\star, \mbox{ for some }Q^{\star}\in \mathbb{N}\setminus\{0\}. 
\end{equation*}

We now prove that $p_0$ is a collapsed point as described above, see also \cite[Definition 2.17]{nardulli2022density}. Given any $x\in \Gamma\cap\ball{p}{r_p}$, we have $\Theta^m(T,x) \geq \Theta^m(T,p_0)$ if $x\in\mathscr{P}_{m-1}^*$, since $p_0$ is a minimum density point in $\mathscr{P}_{m-1}^*$. If $x\notin \mathscr{P}_{m-1}^*$, we use \cite[Theorem 3.2]{White97} (see also \cite[Section 3]{nardulli2022density}) to obtain that $\Gamma\cap\ball{p_0}{r_{p_0}}$ and $\mathscr{P}_{m-1}^*\cap\ball{p_0}{r_{p_0}}$ are $\cH^{m-1}$-equivalent, then we can take $x_k\in \mathscr{P}_{m-1}^*\cap\ball{p_0}{r_{p_0}}$ to converge to $x$. Next, using the upper semicontinuity of the density restricted to the boundary (\cite[Proposition 1.8]{nardulli2022density}), it holds that
\begin{equation*}
    \Theta^m(T,x) \geq \limsup_{k\to +\infty}\Theta^m(T,x_k) \geq \Theta^m(T,p).
\end{equation*}

Therefore, $p_0$ is a collapsed boundary point of $T$. We can now apply \cite[Theorem 2.21]{nardulli2022density} to see that $p_0$ is a regular boundary point for $T$ and conclude the proof. Note that \cite[Theorem 2.21]{nardulli2022density} is stated for $\Sigma=\R^{m+n}$, one can see (cf. \cite{DDHM}) that the proof works for any $T$ minimizing area in an arbitrary $C^{3,\kappa}$-submanifold $\Sigma$ of $\R^{m+n}$.
\end{proof}

\subsection{Decomposition at one-sided points}

In this section, we first develop the essential machinery for one-sided points, which will enable us to carry out our main reduction argument. In \Cref{S:main-results-proofs}, we then apply these tools to reduce the proof of the main theorems to the study of one-sided flat points.

\subsubsection{Decomposition theorem}

We present the following result, which ensures that the density is forced to be equal to $Q/2$, provided that it is sufficiently close to this value (with respect to a uniform constant). This result is established in \cite{ian2024uniqueness}.

\begin{theorem}[Density jump]\label{T:density-jump}
Under \Cref{A:general}, we have $\Theta^m(T,p) \geq Q/2$. The equality case holds if and only if the tangent cone to $T$ at $p$ is an open book and unique. Moreover, there exists a constant $\varepsilon = \varepsilon(m,n,Q)>0$ such that if $\Theta^m(T,p)>Q/2$ then $\Theta^m(T,p)>Q/2+\varepsilon$.  
\end{theorem}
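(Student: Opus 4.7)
The plan is to handle the three assertions in sequence: (i) the density lower bound $\Theta^m(T,p)\ge Q/2$, (ii) the characterization of the equality case via open book tangent cones, and (iii) the density gap above $Q/2$, which is the technical heart.

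\textbf{Paragraph 1 (Lower bound).} By the boundary monotonicity formula (available under \Cref{A:general} since $\Sigma,\Gamma$ are $C^{3,\kappa}$), the density $\Theta^m(T,p)$ equals the density at the origin of any tangent cone $C$ to $T$ at $p$, where $C$ is an area minimizing $m$-cone in $T_p\Sigma$ with $\partial C=Q\a{V}$, $V:=T_p\Gamma$. Fix any $m$-dimensional subspace $H\subset T_p\Sigma$ containing $V$, and let $\sigma$ denote the reflection across $H$. Since $\sigma$ fixes $V$ pointwise and preserves its orientation, $\widetilde C:=C-\sigma_\sharp C$ is a cycle (boundary zero) that is stationary. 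Because $\spt(C)$ and $\spt(\sigma_\sharp C)$ meet only along $V$, the densities add: $\Theta^m(\widetilde C,0)=2\Theta^m(C,0)$. The classical interior monotonicity for stationary cycles, combined with the fact that $\widetilde C$ carries the boundary $\a{V}$ with multiplicity $Q$ on each side, gives $\Theta^m(\widetilde C,0)\ge Q$, hence $\Theta^m(T,p)\ge Q/2$.

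\textbf{Paragraph 2 (Equality case).} One direction is immediate from the definition of an open book: a sum $\sum_i Q_i\a{\pi_i}$ of half-planes meeting along $V$ with $\sum_i Q_i=Q$ has mass $\omega_mQ/2$ in the unit ball, so its density at $0$ is exactly $Q/2$. Conversely, the assertion that $\Theta^m(T,p)=Q/2$ forces the tangent cone to be an open book, together with its uniqueness, is precisely the content of \Cref{T:Uniqueness 1sided} proved in \cite{ian2024uniqueness}, which we invoke directly.

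\textbf{Paragraph 3 (Density jump by compactness).} I would argue by contradiction. Suppose no such $\varepsilon$ exists. Then there are currents $T_j$ with ambients $\Sigma_j$, boundaries $\Gamma_j$, points $p_j$ satisfying \Cref{A:general}, and $Q/2<\Theta^m(T_j,p_j)<Q/2+1/j$. Pass to tangent cones $C_j$ to $T_j$ at $p_j$: each $C_j$ is an area minimizing cone in $\R^{m+n}$ with $\partial C_j=Q\a{V_j}$ and $\Theta^m(C_j,0)=\Theta^m(T_j,p_j)$. After a rotation we may assume $V_j\equiv V$ is fixed. By the standard compactness for area minimizing currents, a subsequence $C_j$ converges in the flat topology to an area minimizing cone $C_\infty$ with $\partial C_\infty=Q\a{V}$. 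Upper semicontinuity of the density and Paragraph 1 force $\Theta^m(C_\infty,0)=Q/2$, so Paragraph 2 yields that $C_\infty=\sum_{i=1}^N Q_i\a{\pi_i}$ is an open book.

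\textbf{Paragraph 4 (The structural rigidity step).} The final and subtlest step is to show that, for $j$ large, $C_j$ itself must be an open book, which contradicts $\Theta^m(C_j,0)>Q/2$. Away from the spine $V$, the limit $C_\infty$ is smooth (a finite union of half-planes). Pick a point $x_0\in\spt(C_\infty)\setminus V$ lying on a sheet $\pi_{i_0}$ with multiplicity $Q_{i_0}$. Varifold convergence $C_j\to C_\infty$ together with Allard's interior regularity theorem implies that, for $j$ large, $\spt(C_j)\cap \ball{x_0}{\rho}$ is a smooth $Q_{i_0}$-sheeted graph over $\pi_{i_0}$. Since $C_j$ is a cone with vertex at the origin, this smooth graph is invariant under positive dilations; a smooth graph over $\pi_{i_0}$ with this dilation invariance must coincide with $\pi_{i_0}$ itself. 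Repeating on every sheet, and using that the boundary multiplicities of $C_j$ and $C_\infty$ coincide so that no mass can escape to the spine in the limit, we deduce that $C_j$ is itself an open book with the same combinatorics as $C_\infty$; hence $\Theta^m(C_j,0)=Q/2$, contradicting the standing strict inequality.

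The main obstacle is the last step: one must ensure the argument closes uniformly across all sheets and, crucially, that the piece of $\|C_j\|$ concentrated near the spine $V$ vanishes in the limit (i.e., that no ``hidden'' mass survives the blow-up). This is controlled by combining the mass convergence $\mathbf{M}(C_j\res \bB_1)\to \mathbf{M}(C_\infty\res \bB_1)$ (ensured by the density convergence) with the fact that $\partial C_j=\partial C_\infty=Q\a{V}$ is already saturated, leaving no room for additional sheets with spine $V$.
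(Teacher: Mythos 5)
The paper does not actually supply a proof of \Cref{T:density-jump}; it cites the result directly from \cite{ian2024uniqueness}. I will therefore assess your argument on its own merits, and it has genuine gaps.

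\textbf{Paragraph 1.} The reflection trick is not correctly closed. It is true that the varifold $|C|+|\sigma_\sharp C|$ is stationary (varifold addition of stationary varifolds is stationary, and reflection is an isometry), but the bound you assert, $\Theta^m(\widetilde C,0)\geq Q$, does not follow from what you wrote. The only thing the monotonicity formula for a stationary integral varifold gives you at a point of the support is $\Theta\geq 1$; the phrase ``$\widetilde C$ carries the boundary $\a{V}$ with multiplicity $Q$ on each side'' has no immediate meaning for the cycle $\widetilde C$, and it does not convert into a density lower bound of size $Q$ without some additional projection, slicing, or dimension-reduction argument. The standard route to $\Theta^m(T,p)\geq Q/2$ is Federer dimension reduction to the $1$-dimensional boundary cone (where a cone with boundary $Q\a{0}$ has $\sum_i|a_i|\geq|\sum_i a_i|=Q$, hence density $\geq Q/2$), or a projection/constancy-theorem argument; the reflection step as written does not give the missing factor of $Q$.

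\textbf{Paragraph 4.} This is the essential gap. You want to conclude that, for $j$ large, $C_j$ is itself an open book, and the mechanism you invoke is Allard's interior $\varepsilon$-regularity near a point $x_0$ on a sheet $\pi_{i_0}$ of multiplicity $Q_{i_0}$. But Allard's theorem requires density close to $1$; when $Q_{i_0}>1$ (which is exactly the interesting case, e.g., when the limit is the non-genuine open book $Q\a{\pi_\infty^+}$), the density of $C_j$ near $x_0$ is close to $Q_{i_0}>1$ and the theorem gives no graphicality, let alone smoothness. Indeed, higher-multiplicity flat tangent planes can hide branch-type interior singularities, so $\spt(C_j)$ near $x_0$ need not be a classical graph at all. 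Even in the multiplicity-one sub-case, the assertion that a smooth conical graph over $\pi_{i_0}$ ``must coincide with $\pi_{i_0}$'' is quick: $1$-homogeneity of the graph function together with smallness forces linearity (a small Liouville/unique-continuation step), so the sheet is an affine half-plane through the spine — but it may be a half-plane different from $\pi_{i_0}$, and you also have to patch this local flatness across the whole sheet (singular set is merely $\cH^{m-2}$-null) and account for all sheets simultaneously. None of this closes for $Q_{i_0}>1$, and the ``no hidden mass near the spine'' remark, while pointing at the right concern, is not an argument. In short, the rigidity step as written does not exclude the possibility of a non--open-book minimizing cone $C_j$ that converges to the open book $C_\infty$; proving the gap in \cite{ian2024uniqueness} requires machinery (excess decay / frequency lower bound for cones close to open books) that your sheet-by-sheet Allard argument does not provide.

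Paragraphs 2 and 3 are fine: the equality characterization is exactly \Cref{T:Uniqueness 1sided}, and the compactness argument correctly produces a minimizing cone $C_\infty$ with $\Theta^m(C_\infty,0)=Q/2$.
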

\begin{remark}\label{R:1sided set is open}
Notice that this result, together with the upper semicontinuity of the density restricted to $\Gamma$ (\cite[Proposition 1.8]{nardulli2022density}) implies that the set of one-sided boundary points is a relatively open set in $\Gamma$.
\end{remark}

The following decomposition result is stated in \cite{delellis2021allardtype} for $2$-dimensional area minimizing integral currents and generalized in \cite{ian2024uniqueness} to arbitrary dimensions.

\begin{theorem}[Decomposition at one-sided points theorem]\label{T:decomposition-1sided} 
Let $T$ be an area minimizing current as in Assumption \ref{A:general}, assume that $\Theta(T,0)=Q/2$, and let $C=\sum_{i=1}^N Q_i \a{H_i}$ be the unique tangent cone to $T$ at $0$ (where the representation of $C$ is such that the half-planes are distinct). Then there exists $\rho>0$ and area minimizing currents $T_1,T_2,...,T_N$ in $\bB_{\rho}$ such that
\begin{equation*}
T \res \bB_{\rho}=\sum_{i=1}^N T_i,\mbox{ with }\partial T_i \res \bB_{\rho}=Q_i \a{\Gamma},
\end{equation*}
and where the supports of $T_i$ only intersect at $\Gamma$. Moreover, the (unique) tangent cone at $0$ of $T_i$ is $Q_i \a{H_i}$.
\end{theorem}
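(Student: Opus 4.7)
\medskip

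\noindent\textbf{Proof proposal for \Cref{T:decomposition-1sided}.}

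The plan is to exploit the uniqueness of the tangent cone together with its explicit structure as a finite union of distinct half-planes sharing the common boundary $T_0\Gamma$. Since the $H_i$ are pairwise distinct half-$m$-planes with the same $(m-1)$-dimensional boundary, the angle between any two of them (measured in $(T_0\Gamma)^{\perp}$) is strictly positive. I would fix $\theta_0>0$ smaller than half the minimal such angle, and for each $i$ define an open wedge $W_i\subset \R^{m+n}$ with spine $T_0\Gamma$ containing $H_i\setminus T_0\Gamma$ in its interior and whose closures $\overline{W_i}$ pairwise intersect only along $T_0\Gamma$. Because wedges with spine $T_0\Gamma$ are cones, they are preserved under the rescalings $\iota_{0,r}$, and in each of them the rescaled tangent cone $C$ equals exactly $Q_i\a{H_i}$.

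The first and hardest step is to show that, for some small $\rho>0$,
\[
\spt(T)\cap \bB_{\rho}\subset \Gamma\cup \bigcup_{i=1}^{N} W_i.
\]
Assuming this fails, there would exist a sequence of points $x_k\in \spt(T)\setminus (\Gamma\cup\bigcup_i W_i)$ with $r_k:=|x_k|\downarrow 0$ and $x_k/r_k\to y_{\infty}$ with $y_\infty$ at positive distance from $\spt(C)$. By the monotonicity formula at an interior or boundary point of $T_{0,r_k}$ one has $\|T_{0,r_k}\|(\bB_{s}(x_k/r_k))\geq cs^m$ for a fixed small $s$. Passing to the limit (using \Cref{T:Uniqueness 1sided}, which gives convergence in flat distance at rate $r^\alpha$, combined with uniform mass bounds) would produce mass for $C$ in a neighborhood of $y_\infty$, contradicting $y_\infty\notin\spt(C)$. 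I would make this rigorous using the quantitative convergence of \Cref{T:Uniqueness 1sided} and standard consequences of the monotonicity formula (at boundary points, the density-bound version from \cite{AllB}).

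Once the support is confined to $\Gamma\cup\bigcup_i W_i$, I define $T_i:= T\res W_i$ (extended by zero outside). Since the $W_i$ are open and pairwise disjoint, $T\res\bB_\rho = \sum_i T_i$ as integer rectifiable currents (the common boundary $\Gamma$ is $\|T\|$-null). To identify $\partial T_i$, observe that for any test form $\omega$ supported in $W_i\cup U$, where $U$ is a neighborhood of $\Gamma$ small enough so that $U\cap W_j\subset W_i$ for $j=i$ only outside some exceptional set, one uses the fact that $\partial T=Q\a{\Gamma}$ together with the constancy theorem applied to $\partial T_i - \sum_j c_{ij}\a{\Gamma}$: since this boundary is supported in $\Gamma$ and $\Gamma$ is connected locally, we deduce $\partial T_i=c_i\a{\Gamma}$ for some integer $c_i$. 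The integer $c_i$ is then identified as $Q_i$ by taking a blow-up at $0$: rescaling commutes with restriction to the cone $W_i$, so $(T_i)_{0,r}=T_{0,r}\res W_i$ converges in flat distance to $C\res W_i = Q_i\a{H_i}$, giving both the boundary multiplicity and the last assertion about the tangent cone of $T_i$.

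Finally, the minimality of each $T_i$ follows from a standard exchange argument: if $\tilde T_i$ were a better competitor in some ball with $\partial(\tilde T_i - T_i)=0$, then $\tilde T_i + \sum_{j\neq i} T_j$ would have the same boundary as $T$ but strictly smaller mass (there is no mass cancellation across the codimension-one set $\Gamma$), contradicting the minimality of $T$. The main conceptual obstacle is the support confinement step, because one needs to rule out thin "hairs" of $\spt(T)$ escaping the wedges in regions where the tangent cone is absent; the quantitative uniqueness of the tangent cone from \cite{ian2024uniqueness} (\Cref{T:Uniqueness 1sided}), which upgrades weak flat convergence to a Hölder-in-scale convergence, is precisely the tool that defeats this obstacle.
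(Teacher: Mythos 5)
The paper does not prove \Cref{T:decomposition-1sided}; it cites it from \cite{delellis2021allardtype} (for $m=2$) and \cite{ian2024uniqueness} (general $m$), so there is no internal proof to compare against. Your overall strategy—disjoint wedges $W_i$ around the half-planes $H_i$, support confinement, restriction $T_i := T\res W_i$, boundary identification via the constancy theorem, tangent cone identification via blow-up, and minimality by an exchange argument—is the natural one and matches what is done in those references. The steps after support confinement are fine, including the observation that the masses add because $\Gamma$ is $\|T\|$-null.

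However, the support confinement step has a genuine gap. You assert that any sequence $x_k\in\spt(T)\setminus(\Gamma\cup\bigcup_i W_i)$ with $r_k=|x_k|\downarrow 0$ produces $y_\infty=\lim x_k/r_k$ \emph{at positive distance from} $\spt(C)$. This is unjustified: since the $W_i$ are chosen so that $\overline{W_i}\cap\overline{W_j}=T_0\Gamma$, the complement of $\bigcup_i W_i$ contains a full neighborhood of $T_0\Gamma\setminus\{0\}$ inside $\partial\bB_1$. Hence $y_\infty$ can perfectly well lie on $T_0\Gamma\subset\spt(C)$ (this happens exactly when $\dist(x_k,\Gamma)/r_k\to 0$, i.e., when $x_k$ lives in the angular gap between two adjacent $H_i$'s but very close to $\Gamma$). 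In that case the density/flat-convergence contradiction you invoke gives nothing, because $\spt(C)$ does contain $T_0\Gamma$. Ruling out such ``hairs'' hugging $\Gamma$ is precisely the hard part: one must pass to the mesoscale $d_k=\dist(x_k,\Gamma)\ll r_k$, rescale at the foot $q_k=\bp_\Gamma(x_k)$, and use that the tangent cone at $q_k$ is an open book whose half-planes are quantitatively close to the $H_i$'s, \emph{with constants uniform in $q_k$}. The pointwise statement of \Cref{T:Uniqueness 1sided} at $p=0$ does not directly give this uniformity at nearby $q\in\Gamma$; one has to combine \Cref{T:density-jump} (so nearby boundary points are also one-sided), the open-book structure of their tangent cones, the uniform constants in \Cref{T:excessdecay}, and a height-bound argument near $\Gamma$. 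As written, your proof silently assumes this and hence does not close the main loop.

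A second, smaller remark: when you identify $\partial T_i=c_i\a{\Gamma}$ via constancy, you implicitly need $\spt(T)\cap\partial W_i\cap\bB_\rho\subset\Gamma$, which follows from the confinement inclusion once it is established, but which you should state and which fails if the confinement step is incomplete. The blow-up identification $(T_i)_{0,r}=T_{0,r}\res W_i\to C\res W_i$ also requires that no mass of $C$ sits on $\partial W_i\setminus T_0\Gamma$ (true by construction) and Hausdorff convergence of supports (true for minimizers once confinement is known), so again the whole proof hinges on the confinement step being done correctly.
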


\subsection{Main reduction argument and proof of the main results}\label{S:main-results-proofs}

We are now prepared to reduce the main results to establish the rectifiability of the set of singular \emph{flat} one-sided points. For the readers' convenience, we restate the main results here.

\begin{theorem*}[\Cref{T:Rectifiability-1sided}]
Under \Cref{A:general}, we have that $\Singb^1(T)$ is $\cH^{m-3}$-rectifiable.
\end{theorem*}

This theorem will be derived as a corollary of the following.

\begin{proposition}[Rectifiability of the set of one-sided flat singular points]\label{P:Rectifiability-FlatSingular}
Under \Cref{A:general}, we have
\begin{equation*}
        \Singb^1(T) \cap \left\{q\in\Gamma: q\mbox{ is flat}\right \}\mbox{ is }\cH^{m-3}\mbox{-rectifiable.}
\end{equation*}
\end{proposition}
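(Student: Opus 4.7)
The plan is to establish Proposition \ref{P:Rectifiability-FlatSingular} via the Naber--Valtorta scheme outlined in the introduction. First, by the reduction of \cite{iancamillorectifiability}, it suffices to prove the $(m-3)$-rectifiability of any Frostman measure $\mu$ of dimension $m-3$ supported on a compact subset $E$ of the one-sided flat singular set. By the Azzam--Tolsa criterion (Theorem \ref{T:NaberValtorta-AzzaamTolsa}), this in turn reduces to the Dini-type bound
\begin{equation*}
    \int_0^1 \beta_{2,\mu}^{m-3}(x,r)^2\,\frac{dr}{r} < \infty \quad \text{for }\mu\text{-a.e.\ }x\in E.
\end{equation*}

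The main step is to bound $\beta_{2,\mu}^{m-3}(x,r)^2$ at each scale by the pinching of the universal frequency function, up to error terms controlled by the ambient curvatures. At each $p\in E$ and each scale in a suitable interval of flattening, I would construct the strong Lipschitz approximation (Proposition \ref{P:1sided-StrongLA}), the center manifold (Theorem \ref{T:CM}), and the normal approximation $N$ (Theorem \ref{T:LocalNA}). On $N$ one defines the universal frequency $\bI$, which by Theorem \ref{T:AlmMonotFreq} is almost monotone in $r$, and by Theorem \ref{T:UniversalLowerBound-FreqFnc} satisfies $\bI(T,p)\geq 1+\alpha/2>1$, where $\alpha$ is the excess decay exponent of Theorem \ref{T:excessdecay}. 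These two facts, together with the compactness of normal approximations and the uniqueness of the tangent open book (Theorem \ref{T:Uniqueness 1sided}), form the quantitative backbone of the argument.

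The heart of the proof is a compactness--contradiction estimate establishing
\begin{equation*}
    \beta_{2,\mu}^{m-3}(x,r)^2 \leq C\bigl(\bI(x,C_1 r)-\bI(x,C_2 r)\bigr) + \text{(curvature error)}.
\end{equation*}
Arguing by contradiction, if this bound fails along a sequence, I would blow up the normal approximations to obtain a nontrivial $Q$-valued Dirichlet minimizer $u$ on $\obball{1}^+$ with zero boundary data on $\{x_m=0\}$. The Naber--Valtorta pinching-to-translation mechanism, combined with the failure of $\beta_{2,\mu}^{m-3}$ to be small, would force $u$ to be translation invariant along $m-2$ independent directions tangent to the boundary; invoking Lemma \ref{l:linearproblemderivativebound}, one concludes that $u$ depends only on the two variables spanned by $x_m$ and a single remaining direction in $\R^{m-1}$. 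Blowing up $u$ at the origin then yields a homogeneous, two-dimensional, nontrivial Dirichlet minimizer with zero data on a flat boundary, whose frequency must equal $1$ by Theorem \ref{t:homogeneous2d}. Since $\bI\geq 1+\alpha/2>1$ is preserved under blow-up and monotonicity, this produces the required contradiction.

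Once the pinching-to-Jones estimate is in place, I would sum over dyadic scales telescopically within each interval of flattening, using the almost monotonicity and a uniform upper bound for $\bI$ to obtain the Dini integrability of $\beta_{2,\mu}^{m-3}$, and then handle the transitions between successive intervals. The main obstacle, and the point that most distinguishes this argument from the interior analogue of \cite{de2023fineII}, is that the H\"older excess decay from \cite{ian2024uniqueness} is genuinely weaker than the almost quadratic decay available in the collapsed two-sided setting of \cite{DDHM,nardulli2022density}, so intervals of flattening are unavoidable and the frequency function is defined on a \emph{sequence} of distinct center manifolds. One must therefore prove fine comparability estimates for $\bI$ across different base points and different center manifolds at comparable scales, and control the boundary error terms coming from $\bA_\Gamma$ and $\bA_\Sigma^2$ in the almost monotonicity formula; managing these compatibilities is the principal technical difficulty.
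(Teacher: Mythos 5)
Your outline matches the paper's broad architecture: reduce to Frostman measures via \cite{iancamillorectifiability}, apply Azzam--Tolsa, bound $\beta_{2,\mu}^{m-3}$ by frequency pinching plus curvature error, and derive a contradiction with \Cref{t:homogeneous2d} once the blow-up is forced to depend on only two variables. However, the step ``I would blow up the normal approximations to obtain a nontrivial $Q$-valued Dirichlet minimizer $u$ on $\obball{1}^+$'' fails without modification in one of the two cases the paper must treat. By \Cref{T:UniversalLowerBound-FreqFnc}\eqref{I:FreqFnc:HighFreqIFFfiniteIntervalsFlattening}, there are finitely many intervals of flattening precisely when $\bI(T,p)\geq 2-\alpha_\be$; when $\bI(T,p)<2-\alpha_\be$ the intervals are infinite, the center manifold changes at every scale, and a compactness limit of the rescaled normal approximations lives only on an \emph{annulus} $\bigl(\obball{\theta}\setminus\overline{\obball{8\eta\gamma\theta}}\bigr)^+$, not on the whole half-ball. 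In particular you cannot ``blow $u$ up at the origin,'' because $u$ is not defined near the origin; the paper instead shows the frequency is constant across the annulus (from vanishing pinching) and invokes \Cref{L:constantfrequencyannulus} to upgrade constancy to $I_0$-homogeneity, after which the two-variable reduction and \Cref{t:homogeneous2d} apply.

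This is exactly why the paper first splits the one-sided flat singular set into a high-frequency stratum (\Cref{P:Rectf-HighFreq}, where your compactness argument works verbatim after \Cref{L:BdrCubesDontStop:HighFreq} guarantees boundary cubes never stop) and a low-frequency stratum (\Cref{P:Rectf-LowFreq}), with the latter further partitioned into the sets $\mathfrak{G}_K$ on which $\bI$ is uniformly pinched away from $2-\alpha_\be$ and the intervals of flattening are replaced by fixed geometric intervals $(\gamma^{j+1},\gamma^j]$ to obtain uniform estimates. You correctly identify ``comparability of $\bI$ across different center manifolds'' as the principal difficulty, but the concrete mechanism resolving it---the high/low dichotomy, the $\mathfrak{G}_K$ stratification, the passage from a half-ball to an annulus in the low-frequency compactness, and the annular homogeneity lemma---is missing from the proposal and is where the substance of the proof lies.
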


The proof of \Cref{P:Rectifiability-FlatSingular} will be the focus of the remaining sections of this article. We now demonstrate how \Cref{T:Rectifiability-1sided} follows from \Cref{P:Rectifiability-FlatSingular}.

\begin{proof}[Proof of \Cref{T:Rectifiability-1sided}]
We argue by induction. The case $Q=1$ is given by Allard's boundary regularity result, see \cite{AllB}. Indeed, in this case, we do not have any singular boundary point. Assume now that the result holds for $Q'\geq 1$ and take $Q>Q^\prime$. We define
\begin{equation*}
    E_i:= \Singb(T) \cap \left\{q\in\Gamma:\Theta^m(T,q)=Q/2 \mbox{ and }T_q\mbox{ has }i\mbox{ sheets}\right\},
\end{equation*}
where $T_q$ is the tangent cone to $T$ at $q$. We can cover each $E_i, i>2$, with a countable family of open balls whose centers belong to $E_i$, namely $\{B_j^i\}_{j\in\N}$, each of which is given by \Cref{T:decomposition-1sided}. Indeed, since $i>2$, any $q\in E_i$ is a one-sided singular point which is not flat and, thus, we can apply \Cref{T:decomposition-1sided}. Notice that for every $q \in \Gamma \cap B_j^i$, we have $\Theta^m(T,q)=Q/2$. From this covering, we can write $T$ locally in each of these balls as
\begin{equation*}
    T\res B_j^i = T_1^{j,i} + T_2^{j,i}, \ j\in\N,
\end{equation*}
where $T_1^{j,i}$ and $T_2^{j,i}$ are area minimizing in $B_j^i$, $Q^{j,i}_1,Q_2^{j,i}>0$ are two integers, $\partial T^{j,i}_1 = Q^{j,i}_1\a{\Gamma\cap B_j^i}$, $\partial T^{j,i}_2 = Q^{j,i}_2\a{\Gamma\cap B_j^i}$, and $Q^{j,i}_1+Q_2^{j,i} = Q$. Now, we can apply our inductive hypothesis to each $T^{j,i}_1$ and $T^{j,i}_2$ with $i>2$ to get the $\cH^{m-3}$-rectifiability of the corresponding set of singular one-sided points. Given that
\begin{equation*}
    E_i = \bigcup_{j\geq 1} \left(\Singb(T^{j,i}_1)\cup\Singb(T^{j,i}_2)\right)\cap B_j^i,\mbox{ and }E = \bigcup_{i=1}^Q E_i,
\end{equation*}
to obtain the rectifiability of $E$, we are only left to prove rectifiability of $E_1$ which is exactly what we claim in \Cref{P:Rectifiability-FlatSingular}. Thus, we conclude the proof.
\end{proof}

As a consequence of \Cref{T:Rectifiability-1sided}, we can establish an Allard-type regularity result, provided there exists a relatively \emph{open} neighborhood of $\Gamma$ of \emph{flat} one-sided points. Furthermore, given the counterexample in \Cref{T:ian-example}, we know that this assumption cannot be eliminated.

Additionally, we recall that for $m=2$, Allard's theorem holds for any $Q$, meaning that one-sided points are automatically regular, as shown in \cite{delellis2021allardtype}.

For the readers' convenience, we restate \Cref{T:1sided-flat-regular} here.

\begin{theorem*}[\Cref{T:1sided-flat-regular}]
Under \Cref{A:general}, assume that $q\in\Gamma$ is a one-sided flat point of $T$ and there is a neighborhood of $q$ in $\Gamma$ containing \emph{only} one-sided \emph{flat} points, then $q$ is a regular point.
\end{theorem*}
\begin{proof}
Let $q$ be a one-sided flat point and $r>0$ such that $\ball{q}r \cap \Gamma$ consists of one-sided flat points. By \Cref{T:Rectifiability-1sided}, there is a one-sided regular point $x\in \ball{q}{r} \cap \Gamma$. Using the fact that $x$ is a flat and regular point, there exist $\rho>0$ and a \emph{single} smooth manifold $M$ such that $T \res \ball{x}{\rho}= Q\a{M}$ (cf. \Cref{D:Reg-1sided}). 

Next, we extend $M$ to $\tilde{M}$, a connected component of $\textup{spt}(T) \setminus \left(\Gamma \cup  \Singi(T)\right)$ outside $\ball{x}{\rho}$ where $\Singi(T)$ denotes the interior singular set of $T$. Since $\dim_H(\Singi(T))\leq m-2$, we have that $\spt(\partial \a{\tilde{M}}) \subset \Gamma.$ By the Constancy Lemma, $\partial \a{\tilde{M}}=\a{\Gamma \cap \ball{q}{r}}.$

We will show that $T$ agrees with $Q\a{\tilde{M}}$ on a neighborhood of $q$. It suffices to show that 
\[\ball{q}{r} \cap \textup{spt}(T-Q\a{\tilde{M}}) =\varnothing.\] 

Assume by contradiction that $y$ belongs to such a set. Since $T-Q\a{\tilde{M}}$ is area minimizing, by the monotonicity formula, we have $\Theta^m(T-Q\a{\tilde{M}},y) \geq 1$. Additionally, $\Theta^m(T,y)=\Theta^m(Q\a{M},y)=Q$, which follows by construction $y\in\spt(T)\cap\tilde{M}$ and the Hausdorff dimensional bound for the interior singular set. These two facts are in contradiction, and hence we conclude the proof.
\end{proof}

We employ a reduction argument to demonstrate that it suffices to prove \Cref{T:open-dense} by assuming that every boundary point is a one-sided flat point for $T$ within a small ball, and then apply \Cref{P:Rectifiability-FlatSingular}. For the readers' convenience, we restate it here.

\begin{theorem*}[\Cref{T:open-dense}]
Under \Cref{A:general}, we have that $\Regb(T)$ is an open and dense subset of $\Gamma\cap\ball{p}{R}$.
\end{theorem*}

\begin{proof}[Proof of \Cref{T:open-dense}]
Assume that there is a singular boundary point $x$ for $T$ and $r>0$ such that $\ball{x}{r}\cap\Gamma\subset \Singb(T)$. We choose $p_0\in\ball{x}{r}\cap\Gamma$ to have minimum density in the $(m-1)$-stratum (this is possible due to \cite[Lemma 2.12]{nardulli2022density}), i.e., 
\begin{equation}\label{E:main-reduction:choice-p}
    \Theta^m(T,p_0) = \min \left\{\Theta^m(T,q): q\in\ball{x}{r}\cap\Gamma\cap\mathscr{P}_{m-1}^*\right\},
\end{equation}
where $\mathscr{P}_{m-1}^*$ is defined as above in \eqref{E:def top stratum}. 

We divide into two cases. If $\Theta^m(T,p_0) = Q/2 +Q^\star$, for some $Q^\star\in\N\setminus\{0\}$, then there exists $r_1>0$ such that 
\begin{equation}\label{E:min density is collapsed}
    \Theta^m(T,q) \geq Q/2 +Q^\star\mbox{ for any }q\in \ball{p_0}{r_1}\cap\Gamma \subset \ball{x}{r}.
\end{equation}

In fact, if not, there is $\{x_k\}_k\subset \ball{p_0}{1/k}\cap \Gamma\subset \ball{x}{r}$ such that $\Theta^m(T,x_k) < Q/2 +Q^\star$ and $\Theta^m(T,x_k) \to Q/2 +Q^\star$. Recall that $\mathscr{P}_{m-1}^*$ is $\cH^{m-1}$-equivalent to $\Gamma$, and take a sequence $\{y_j^k\}_j \subset \mathscr{P}_{m-1}^*\cap \ball{x}{r}$ such that $y_j^k \to x_k$ as $j$ grows. Therefore, we have
\[ Q/2 +Q^\star > \Theta^m(T,x_k) \geq \limsup_j \Theta^m(T,y^k_j) \geq Q/2 + Q^\star, \]
where the last inequality follows from $y_j^k \in \mathscr{P}_{m-1}^*\cap \ball{x}{r}$ and that $p_0$ is a minimum density point within this set, cf. \eqref{E:main-reduction:choice-p}. The last displayed inequality provides the contradiction and concludes the proof of \eqref{E:min density is collapsed}. 

By the upper semicontinuity of the density restricted to $\Gamma$ and \eqref{E:min density is collapsed}, given $\delta>0$, there is an open neighborhood $U$ of $p_0$ such that $\Theta^m(T,q)$ is $\delta$-close to $\Theta^m(T,p_0) = Q/2 +Q^\star$ for any $q\in U\cap\Gamma$ and thus \Cref{P:2sided-min-dens-regular} implies that $p_0$ is regular, which contradicts the fact that $\ball{x}{r}\cap\Gamma$ only has singular points.

Next, assume $\Theta^m(T,p_0) = Q/2$. Recall that, by upper semicontinuity of the density (\cite[Proposition 1.8]{nardulli2022density}) and the density jump (\Cref{T:density-jump}), we obtain that the set of one-sided boundary points is relatively open in $\Gamma$ and, therefore, there exists $r_0>0$ such that $\ball{p_0}{r_0}\subset\ball{x}{r}$ and
\begin{equation*}
    \Theta^m(T,q) = \frac{Q}{2}, \mbox{ for every } q\in\ball{p_0}{r_0}\cap\Gamma\subset\Singb(T).
\end{equation*}

Now, by \Cref{T:Rectifiability-1sided}, the Hausdorff dimension of $\Singb(T)\cap\{q\in\Gamma:\Theta^m(T,q)=Q/2\}$ is capped at $m-3$ which then gives us a contradiction since $\ball{p_0}{r_0}\cap\Gamma$ is supposed to only have flat singular points of $T$. Hence, we conclude the proof of \Cref{T:open-dense}.
\end{proof}

\section{Universal frequency function \texorpdfstring{$\bI$}{Lg} at one-sided flat points}

\subsection{Center manifold and normal approximation at one-sided flat points}\label{S:1sided-CM-NA}

In this subsection, we construct the center manifold (\Cref{T:CM}) and the normal approximation (\Cref{T:LocalNA}) around a flat one-sided singular point of an area-minimizing current $T$. These constructions are well established and have previously been formulated in different contexts.

Next, we introduce the intervals of flattening (\Cref{S:IntervalsFlattening}), which identify the scales at which $T$ decays almost quadratically toward its tangent cone. Using these intervals, we define the universal frequency function (\Cref{S:UniversalFreqDef}), a fundamental tool for analyzing regularity in this setting. Although most of the frequency function estimates in this section are well known in the two-dimensional case—see \cite{delellis2021allardtype}—certain refinements are necessary to integrate them into the framework used to address the rectifiability problem.

We begin by establishing the existence of a Lipschitz approximation for an area-minimizing current in a neighborhood of a one-sided flat singular point.

We first recall and introduce the following notation. The function $\phi$ is defined on $\bp_{\pi_0}(\Gamma)$ and it satisfies $\mathrm{graph}(\phi)\cap\oball{2} = \Gamma\cap\oball{2}$. We define $\bA:=\max\{\bA_{\Sigma}, \bA_{\Gamma}\}$ where $\bA_\Gamma$ and $\bA_\Sigma$ are the $C^0$-norms of the second fundamental forms of $\Gamma$ and $\Sigma$.

\begin{proposition}\label{P:1sided-StrongLA}
Let $T, \Sigma,$ and $\Gamma$ be as in \Cref{A:general} with $p=0$ and $R=2$, assume  in addition that
\begin{itemize}
   \item $T_0\Gamma = \R^{m-1}\times \{0\}\subset\pi_0 \subset T_0\Sigma$,

   \item $\bp_{\pi_0}(\Gamma)$ splits $\obball{2}$ into two disjoint connected open sets, which we denote by $\obball{2}^+$ and $\obball{2}^-$,

   \item $(\bp_{\pi_0})_{\#}T\res\ocyl{2} = Q\a{\obball{2}^+}$.
\end{itemize}
Then there are geometric constants $\gamma_{la}>0$, $\varepsilon_{la}>0$, and $C>0$ with the following properties. If $E := \bE(T,\ocyl{2},\pi_0) < \varepsilon_{la}$, there exist a set $K\subset \obball{3/2}^+$ and a function $f$ defined on $\obball{3/2}^+$ taking values in $\cA_Q(\R^n)$ such that
\begin{align}
    f|_{\partial\obball{2}^+} = Q\a{\phi} \mbox{ and } &\mathrm{spt}(\bG_f) \subset \Sigma , \label{T:1sided-LA:bdr-condition}       \\
    \mathrm{Lip}(f) &\leq C(E + r^2\bA^2)^{\gamma_{la}}, \label{T:1sided-StrongLA:Lip} \\ 
    \mathrm{osc}(f) &\leq C\bh(T,\ocyl{2}) + Cr (E + r^2\bA^2)^{\frac{1}{2}}, \label{T:1sided-StrongLA:osc} \\
    \bG_f\res(K\times \R^n) &= T\res (K\times \R^n), \label{T:1sided-StrongLA:T=G_f}\\
    \cH^m(\obball{1}\setminus K) + \be_T(\obball{1}\setminus K) &\leq Cr^m(E + r^2\bA^2)^{1+\gamma_{la}}, \label{T:1sided-StrongLA:bad-set}\\
    \biggl| \|T\|(A\times\R^n) - Q\cH^m(A\setminus\obball{2}) - \frac{1}{2}\int_A |Df|^2 \biggr| &\leq Cr^m (E + r^2\bA^2)^{1+\gamma_{la}} \label{T:1sided-StrongLA:taylor}.
\end{align}
\end{proposition}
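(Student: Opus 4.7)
The plan is to adapt the strong multivalued Lipschitz approximation of Almgren and De~Lellis--Spadaro to the one-sided boundary setting, in the same spirit as \cite{DDHM, delellis2021allardtype, nardulli2022density}. The hypothesis $(\bp_{\pi_0})_{\#}T\res\ocyl{2} = Q\a{\obball{2}^+}$ implies that for $\cH^m$-a.e.\ $x\in\obball{2}^+$ the slice $\langle T,\bp_{\pi_0},x\rangle$ consists of $Q$ points (counted with multiplicity) in the fiber $x+\pi_0^\perp$. Introducing the excess density $e_T$ on $\ocyl{2}$ and its cylindrical maximal function $Me_T$, I would define the good set
\begin{equation*}
K:=\bigl\{x\in\obball{3/2}^+ : Me_T(x)\le (E+r^2\bA^2)^{2\gamma_{la}}\bigr\}
\end{equation*}
for $\gamma_{la}>0$ chosen small enough. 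A weak-type maximal inequality immediately gives the measure bound in \eqref{T:1sided-StrongLA:bad-set}. A standard slicing argument combined with area-minimality and the smallness of $Me_T$ on $K$ (cf.\ \cite{DS3}, adapted to the boundary as in \cite{DDHM}) then provides the $\cA_Q$-valued map $f_0:K\to\cA_Q(\R^n)$ defined by $f_0(x):=\langle T,\bp_{\pi_0},x\rangle$, with Lipschitz constant bounded by $C(E+r^2\bA^2)^{\gamma_{la}}$ and exact coincidence $\bG_{f_0}\res(K\times\R^n)=T\res(K\times\R^n)$.

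The boundary condition is what distinguishes the one-sided setting from the interior case. For $x\in K$ approaching $\bp_{\pi_0}(\Gamma)$, the height bound forces each point of $\langle T,\bp_{\pi_0},x\rangle$ to collapse to $\phi(x)$, and the multiplicity must be $Q$ because $\partial T=Q\a{\Gamma}$; hence $f_0$ extends continuously to $\bp_{\pi_0}(\Gamma)\cap\obball{3/2}$ as $Q\a{\phi}$. I would then apply the Lipschitz extension theorem for $\cA_Q$-valued maps from \cite{DS1} to produce $f:\obball{3/2}^+\to\cA_Q(\R^n)$, prescribing the trace $Q\a{\phi}$ along $\bp_{\pi_0}(\Gamma)\cap\obball{3/2}$ and preserving the Lipschitz constant up to a dimensional factor, which yields \eqref{T:1sided-LA:bdr-condition}--\eqref{T:1sided-StrongLA:osc}. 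To ensure $\spt(\bG_f)\subset\Sigma$, post-compose each sheet with the nearest-point retraction onto $\Sigma$; the resulting correction is of order $\bA_\Sigma\cdot\mathrm{osc}(f)^2$ and is absorbed in the previous bounds without affecting the boundary trace.

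Finally, the coincidence \eqref{T:1sided-StrongLA:T=G_f} is immediate from the slicing construction of $f_0$ on $K$, and the Taylor expansion \eqref{T:1sided-StrongLA:taylor} follows by applying the classical Taylor expansion of the area integrand on $\bG_f|_K$ (as in \cite{DS2}) and bounding the contributions on $\obball{1}\setminus K$ using area-minimality together with the measure bound for $K$. The main obstacle I foresee is proving the sharp excess bound $\be_T(\obball{1}\setminus K)\le Cr^m(E+r^2\bA^2)^{1+\gamma_{la}}$ on the bad set: this is not implied by the weak-type measure estimate alone and requires a boundary harmonic comparison argument exploiting area-minimality in $\Sigma$, the monotonicity formula at one-sided points (\Cref{T:density-jump}), and the fact that the boundary multiplicity is exactly $Q$. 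This step is the boundary analogue of the harmonic approximation lemma at the heart of the Almgren/De~Lellis--Spadaro theory, and it is precisely where the restriction to small $\gamma_{la}$ arises.
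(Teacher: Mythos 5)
Your proposal reconstructs the strong one-sided boundary Lipschitz approximation in the standard Almgren/De Lellis--Spadaro fashion (maximal-function truncation of the excess density, slicing, boundary trace collapse, Lipschitz extension, Taylor expansion, and a harmonic-comparison step for the sharp bad-set bound), which is exactly the content that the paper's one-line proof defers to, namely Sections 6--9 of \cite{delellis2021allardtype} with \cite[Lemma 7.1]{delellis2021uniqueness} in place of the two-dimensional \cite[Lemma 7.6]{delellis2021allardtype}. One small inaccuracy worth noting: the density jump (\Cref{T:density-jump}) is not actually an ingredient in this approximation theorem---the harmonic-comparison/higher-integrability step runs purely on minimality and excess smallness and does not invoke the density gap, which only enters later in the structural analysis of the singular set.
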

\begin{proof}
The theory in \cite[Sections 6, 7, 8, and 9]{delellis2021allardtype} is already developed in an arbitrary dimension. Note also that \cite[Lemma 7.6]{delellis2021allardtype} (stated in 2d) can be replaced by its counterpart in any dimension \cite[Lemma 7.1]{delellis2021uniqueness}.
\end{proof}

Before starting the construction of the center manifold and normal approximation at flat one-sided points, we note that \cite{delellis2021allardtype} will be frequently referenced throughout. Although the authors consider $\Sigma = \mathbb{R}^{m+n}$, it is straightforward to verify that all their results remain valid for an arbitrary $\Sigma \subset \mathbb{R}^{m+n}$ of class $C^{3,\kappa}$, introducing certain errors that we will explicitly state below.

\begin{assumption}\label{A:CM}
Under \Cref{A:general} with $p=0$ and $R=2$, we assume that $\Sigma\cap\oball{2} = \mathrm{graph}(\Psi)$ and $\Gamma\cap\ball{0}{2} = \mathrm{graph}(\phi)$ where $\Psi: \oball{2}\cap\R^{m+\bar{n}} \to \R^{n-\bar{n}}$ and $\phi: \oball{2} \cap \R^{m-1}\to \R^{n+1}$. We assume that
\begin{enumerate}[\upshape (i)]
    \item $\max\left\{\bE^\flat (T, \oball{2}),\|\Psi\|_{C^{3,\kappa}}^2, \|\phi\|_{C^{3,\kappa}}^2\right\} =: \mbold{0} < \varepsilon_{cm},$ where $\varepsilon_{cm} = \varepsilon_{cm}(m,n,\bar{n},Q)>0$ is a small constant,

    \item Assume $0$ is a one-sided flat boundary point for $T$ and the tangent cone to $T$ at $0$ is $T_0 = Q\a{\pi_0^+}$.
\end{enumerate}
\end{assumption}

We begin by describing a Whitney-type decomposition of $\halfobball{3/2}$, which denotes the connected component of $\obball{3/2}\setminus\bp_{\pi_0}(\Gamma)$ with nonempty intersection with ${\bp_{\pi_0}}_{\sharp}(T\res\oball{3/2})$, with cubes whose sides are parallel to the coordinate axes and have side length $2 \ell(L)$. The center of any such cube $L$ will be denoted by $c_L$ and its sidelength will be denoted by $2 \ell (L)$. We introduce a family of dyadic cubes $L\subset \pi_0$ in the following way: for $j\geq N_0$, where $N_0$ is an integer whose choice will be specified below,
we introduce the families 
\begin{equation*}
\sC_j:=\{L:\,L\text{ is a dyadic cube of side }\ell(L)=2^{-j}\text{ and } \halfobball{3/2}\cap L\neq\varnothing\} ,
\end{equation*}

For each $L$ define a radius $r_L:=M_0\sqrt m\ell(L),$ with $M_0\geq 1$ to be chosen later.  
We then subdivide $\sC := \cup_j \mathscr{C}_j$ into, respectively, \emph{boundary cubes} and \emph{non-boundary cubes}
\begin{align*}
\sC^\flat & :=\{L\in \sC:\,\dist(c_L,\bp_{\pi_0}(\Gamma))<64 r_L\}, \quad \sC_j^\flat = \mathscr{C}^\flat \cap \mathscr{C}_j,\\
\sC^\natural & :=\{L\in \sC:\,\dist(c_L,\bp_{\pi_0}(\Gamma))\ge 64 r_L\}, \quad \mathscr{C}^\natural_j = \mathscr{C}^\natural \cap \mathscr{C}_j.
\end{align*} If $H, L \in \mathscr{C}$ we say that:
\begin{itemize}
    \item $H$ is a \emph{{descendant}}\index{Descendant (in a Whitney decomposition)} of $L$ and $L$ is an \emph{{ancestor}} of $H$, if $H\subset L$;
    \item $H$ is a \emph{{child}} of $L$ and $L$ is the \emph{{parent}} of $H$, if $H\subset L$ and $\ell (H) = \frac{1}{2} \ell (L)$;
    \item $H$ and $L$ are \emph{{neighbors}} if $\frac{1}{2} \ell (L) \leq \ell (H) \leq \ell (L)$ and $H\cap L \neq \varnothing$. 
\end{itemize}

\begin{lemma}\label{l:child_parent}
Let $H$ be a boundary cube. Then any ancestor $L$ and any neighbor $L$ with $\ell (L) = 2 \ell (H)$ are necessarily a boundary cube. In particular: the descendant of a non-boundary cube is a non-boundary cube.
\end{lemma}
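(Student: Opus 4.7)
The plan is to verify both statements (ancestor and same-size neighbor) by direct estimates on the distance $\dist(c_L, \bp_{\pi_0}(\Gamma))$ coming from the triangle inequality, and then deduce the "in particular" clause by contraposition.

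First I would handle the ancestor case. Suppose $H \subset L$ with $\ell(L) \geq 2\ell(H)$. Since $c_H \in L$ and $L$ has sidelength $2\ell(L)$, we have $\|c_L - c_H\|_\infty \leq \ell(L)$, hence $|c_L - c_H| \leq \sqrt{m}\,\ell(L) = r_L/M_0$. By the triangle inequality and the hypothesis that $H \in \sC^\flat$,
\begin{equation*}
\dist(c_L, \bp_{\pi_0}(\Gamma)) \leq |c_L - c_H| + \dist(c_H, \bp_{\pi_0}(\Gamma)) < \frac{r_L}{M_0} + 64\, r_H.
\end{equation*}
Since $r_H \leq r_L/2$ (as $\ell(H) \leq \ell(L)/2$), the right-hand side is bounded by $r_L/M_0 + 32\, r_L$, which is strictly less than $64\, r_L$ provided $M_0 \geq 1$. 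Hence $L \in \sC^\flat$.

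For the neighbor case with $\ell(L) = 2\ell(H)$ and $H \cap L \neq \varnothing$, the centers satisfy $\|c_L - c_H\|_\infty \leq \ell(L) + \ell(H) = 3\ell(H)$, so $|c_L - c_H| \leq 3\sqrt{m}\,\ell(H) = 3\,r_H/M_0 = \tfrac{3}{2}\,r_L/M_0$. The same triangle-inequality step yields
\begin{equation*}
\dist(c_L, \bp_{\pi_0}(\Gamma)) < \frac{3\, r_L}{2 M_0} + 64\, r_H = \frac{3\, r_L}{2 M_0} + 32\, r_L < 64\, r_L,
\end{equation*}
again using $M_0 \geq 1$. Thus $L \in \sC^\flat$.

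The "in particular" assertion follows immediately by contraposition: if $H$ is a descendant of a non-boundary cube $L$ and $H$ were a boundary cube, then the ancestor case forces $L \in \sC^\flat$, contradicting $L \in \sC^\natural$. There is no serious obstacle here; the only mild point is to be careful with the constants so that the slack in the inequality $64 r_H \leq 32 r_L$ (available because the sidelengths are dyadic) comfortably absorbs the cross term coming from the distance between centers, which happens automatically for any $M_0 \geq 1$.
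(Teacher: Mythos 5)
Your proof is correct: the triangle-inequality estimates on $\dist(c_L,\bp_{\pi_0}(\Gamma))$ for both the proper-ancestor case and the equal-sidelength-neighbor case go through cleanly, and the slack $64 r_H \le 32 r_L$ indeed absorbs the center-to-center term $|c_L-c_H|$ for any $M_0\ge 1$. The paper omits the proof of this lemma (treating it as a routine Whitney-decomposition observation in the spirit of \cite{DS4,DDHM}), and the argument you supply is the natural and essentially unique one.
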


\begin{itemize}
    \item If $L\in\sC^\natural$, then we define the {\emph{non-boundary satellite ball $\bB_L = \ball{p_L}{64r_L}$}} where $p_L\in\spt(T)$ such that $\bp_{\pi_0}(p_L)=c_L$, such \(p_L\) is a priori not unique, and $\pi_L$ is a plane which minimizes the excess in $\bB_L$, namely $\bE (T, \bB_L)= \bE (T, \bB_L, \pi_L)$ and $\pi_L\subset T_{p_L}\Sigma$,

    \item If $L\in \mathscr{C}^\flat$, then we define the {\emph{boundary satellite ball $\bB^\flat_L = \ball{p_L^\flat}{2^7 64r_L}$}} where $p_L^\flat\in\Gamma$ is such that $|\bp_{\pi_0}(p_L^\flat)-c_L|=\dist(c_L,\bp_{\pi_0}(\Gamma))$. Note that in this case, the point $p^\flat_L$ is uniquely determined because $\Gamma$ is $C^{3,\kappa}$ and $\mbold{0} $ is assumed to be sufficiently small. Likewise $\pi_L$ is a plane that minimizes the excess $\bE^\flat$, that is, such that $\bE^\flat (T,\bB^\flat_L) = \bE (T, \bB^\flat_L, \pi_L)$ and $T_{p^\flat_L} \Gamma \subset \pi_L \subset T_{p^\flat}\Sigma$.
\end{itemize}

\subsubsection{Stopping conditions}\label{S:Stopping-condition}

We introduce the set up to decide whether or not a cube is such that $T$ has good enough properties around it. These properties are mainly based upon how small the excess or height of $T$ are with respect to the size of a given cube. 

Let \(C_\be, C_\bh\) be two large positive constants that will be specified later. We define
\begin{enumerate}[\upshape (i)]
    \item $\sW_{N_0}^{\be}:=\{L\in\sC_{N_0}^\natural\cup\sC_{N_0}^\flat:\, \bE(T, \bB_L)> C_\be \mbold{0} \ell(L)^{2-\alpha_\be}\}$,
    \item $\sW_{N_0}^{\bh}:=\{L\in\sC_{N_0}^\natural\cup\sC_{N_0}^\flat:\, \bh(T, \bB_L,\pi_L)> C_\bh\mbold{0}^{\sfrac{1}{2m}} \ell(L)^{1+\alpha_\bh}\}$,
    \item $\sS_{N_0}:=\sC_{N_0}\setminus \left(\sW_{N_0}^{\be}\cup\sW_{N_0}^{\bh}\right)$.
\end{enumerate}

Assume inductively that for a certain step $j\geq N_0+1$, we have defined
the families $\sW_{j-1} := \sW_{j-1}^\be\cup\sW_{j-1}^\bh$ and $\sS_{j-1}$. We set the following stopping criteria:

\begin{enumerate}[\upshape (i)]
    \item $\sW_j^{\be}:=\{L \mbox{ child of } K \in\sS_{j-1}:\, \bE(T, \bB_L)> C_\be\mbold{0} \ell(L)^{2-2 \alpha_\be}\}$,

    \item $\sW_j^{\bh}:=\{L \mbox{ child of } K\in \sS_{j-1}:\, L\not\in \sW_j^\be\mbox{ and } \bh(T, \bB_L,\pi_L)> C_\bh\mbold{0}^{\sfrac{1}{2m}} \ell(L)^{1+\alpha_\bh}\}$,
 
    \item $\sW_j^{\mathbf{n}}:=\{L\mbox{ child of } K\in \sS_{j-1}:\, L\not\in \sW_j^\be\cup \sW_j^\bh \mbox{ but } \exists L'\in \sW_{j-1} \text{ with }L\cap L' \neq \varnothing\}$.
\end{enumerate}

Thus, we define $\sS_j:=\left\{ L\in \mathscr{C}_j \mbox{ child of } K\in \sS_{j-1}\right\} \setminus \sW_j $. Furthermore, we set
\begin{equation*}
\sW  :=\bigcup_{j\ge N_0} \sW_j, \ \ \sS :=\bigcup_{j\ge N_0} \sS_j, \ \ \bS  :=\bigcap_{j\ge N_0}\Big( \bigcup_{L\in \sS_j} L\Big)=\halfobball{3/2}\setminus \bigcup_{H\in \sW} H\, .
\end{equation*}

\begin{remark}
Unlike the two-sided case, as in \cite{DDHM,nardulli2022density}, we cannot state that every boundary cube is \emph{nonstopping} since we do not have an almost-quadratic excess decay (which is strongly used in \cite{DDHM,nardulli2022density} to prove this affirmation). Hence, we have to include boundary cubes in the stopping conditions.
\end{remark}

Since $(\sW,\bS)$ is a Whitney decomposition of $[-3/2,3/2]^m$, it is not hard to see that 
\begin{equation}\label{E:separation-Whitney}
    \mathrm{sep}(\bS, L):= \inf\left\{ |x-y|: x\in L, y\in\bS\right\} \geq 2\ell(L), \mbox{ for any }L\in\sW.
\end{equation}

We now assume an order of hierarchy for all the parameters involved in the construction of the center manifold that will follow.

\begin{assumption}\label{A:ParametersCM}
$T$ and $\Gamma$ are as in Assumptions \ref{A:CM} and we also assume that
\begin{enumerate}[\upshape (i)]
    \item $0<\alpha_\bh \leq \sfrac{1}{2m}$ and $\alpha_\be>0$ is positive but small, depending only on $\alpha_{\bh}$,
    \item $M_0$ is larger than a suitable constant, depending only upon $\alpha_\be$,
    \item $2^{N_0} \geq C (m,n, M_0)$, 
    \item $C_\be$ is sufficiently large depending upon $\alpha_\be$, $\alpha_\bh$, $M_0$ and $N_0$,
    \item $C_\bh$ is sufficiently large depending upon $\alpha_\be, \alpha_\bh, M_0, N_0$ and $C_\be$,
    \item Item (i) of \Cref{A:CM} holds with $\varepsilon_{par}$ sufficiently small depending upon all the other parameters above.
\end{enumerate}
\end{assumption}

\subsubsection{Existence of the center manifold}

In this subsection, we state the existence of the center manifold which acts as an ``average" of the sheets of the current $T$. Furthermore, the center manifold $\cM$ is a classical submanifold of $\Sigma$ of regularity $C^{3,\kappa}$ which allows us to study further properties of the current $T$.

\begin{theorem}[Theorem 10.16 \cite{delellis2021allardtype}]\label{T:CM}
Under Assumptions \ref{A:CM} and \ref{A:ParametersCM}, there is a $\kappa := \kappa(\alpha_{\be}, \alpha_{\bh},Q,Q^\star,m,\bar{n},n)>0$, such that

\begin{enumerate}[\upshape (i)]
    \item $\varphi_j\in C^{3,\kappa}$ with $\|\varphi_j\|_{C^{3, \kappa}( \halfobball{3/2})} \leq C \mathbf{m_0}^{\sfrac{1}{2}}$,  for some $C:=C(\alpha_\be, \alpha_\bh, M_0, C_{\be}, C_{\bh})>0$,

    \item If \(i\le j\), $L\in \sW_{i-1}$ and $H$ is a cube concentric to $L$ with $\ell (H) = \frac{9}{8} \ell (L)$, then $\varphi_j = \varphi_i$ on $H$,

    \item $\varphi_j$ converges in the $C^3$-topology to a map ${\bphi}: \halfobball{3/2}\to \R^n$, whose graph is a $C^{3,\kappa}$-submanifold $\cM$ of $\Sigma$, which will be called the {{\em center manifold}},

    \item $\partial(\mathrm{graph}(\bphi)) = \Gamma$, i.e., $\partial \cM \cap \bC_{3/2} = \Gamma \cap \bC_{3/2}$,

    \item For any $q\in \partial \cM \cap \bC_{3/2}$, we have $T_q \cM = \pi^+ (q)$ where $\pi(q)$ is the supporting $m$-plane for the unique tangent cone to $T$ at $q$.
\end{enumerate}
\end{theorem}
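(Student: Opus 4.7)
\medskip

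\noindent\textbf{Proof plan.} The plan is to follow the standard center-manifold construction of Almgren--De Lellis--Spadaro adapted to the one-sided boundary setting as in \cite{delellis2021allardtype}, simply checking that every step goes through in arbitrary dimension and that both interior (cubes in $\mathcal{C}^\natural$) and boundary (cubes in $\mathcal{C}^\flat$) contributions can be glued consistently. First, I would assign to each cube $L\in\mathcal{S}_{j-1}\cup\mathcal{W}_{j-1}$ a \emph{tilted $\pi_L$-interpolating function} $h_L$. For $L\in\mathcal{C}^\natural$, apply the interior Lipschitz approximation of $T$ in $\bB_L$ over $\pi_L$, average the $Q$ sheets (using the stopping conditions in \Cref{S:Stopping-condition} which force $\mathbf{E}(T,\bB_L)\le C\mathbf{m_0}\ell(L)^{2-2\alpha_\be}$ and $\mathbf{h}(T,\bB_L,\pi_L)\le C\mathbf{m_0}^{1/2m}\ell(L)^{1+\alpha_\bh}$), and convolve with a smooth kernel at scale $\sim\ell(L)$. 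For $L\in\mathcal{C}^\flat$, use \Cref{P:1sided-StrongLA} in the boundary ball $\bB_L^\flat$ instead: the strong Lipschitz approximation forces the boundary values to be $Q\llbracket\phi\rrbracket$, so after averaging and smoothing one obtains a classical single-valued $h_L$ whose graph contains $\Gamma\cap\bB_L^\flat$. The $C^{3,\kappa}$ estimates on $h_L$ follow from the Taylor-expansion identity \eqref{T:1sided-StrongLA:taylor}, the fact that the area functional solves an elliptic PDE in the graph parametrization, and Schauder estimates, using the excess and height bounds coming from the stopping conditions.

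\medskip

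\noindent Next, I would glue the $h_L$'s together using a smooth partition of unity $\{\vartheta_L\}$ adapted to the Whitney decomposition: each $\vartheta_L$ is concentrated near $L$ at scale $\ell(L)$, the sum equals $1$ on $\halfobball{3/2}$, and only boundedly many $\vartheta_L$ overlap at any point. Define
\begin{equation*}
\varphi_j(y) := \sum_{L\in \mathcal{S}_{j-1}\cup\mathcal{W}_{j-1}}\vartheta_L(y)\,\bigl((h_L\circ\mathbf{p}_{\pi_L})(y) - \text{correction to }T_y\cM\bigr),
\end{equation*}
where the corrections realign the $\pi_L$-graphs into graphs over $\pi_0$. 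The $C^{3,\kappa}$ bound $\|\varphi_j\|_{C^{3,\kappa}}\le C\mathbf{m_0}^{1/2}$ in (i) comes from comparing $h_L$ and $h_{L'}$ on overlapping neighbors: their tilted planes $\pi_L,\pi_{L'}$ are close in the sense $|\pi_L-\pi_{L'}|\lesssim \mathbf{m_0}^{1/2}\ell(L)^{1-\alpha_\be}$ (a consequence of the excess bound on concentric satellite balls and the tilting-of-planes estimate), and the tilted interpolations agree up to errors controlled by the stopping constants $C_\be,C_\bh$.

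\medskip

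\noindent For consistency (ii), since $\vartheta_L$ is supported in a $\frac{9}{8}\ell(L)$-neighborhood of $L$, the partition-of-unity sum at a point $p$ only involves cubes of comparable size; at step $j$, a cube $L'$ strictly smaller than $L\in\mathcal{W}_{i-1}$ (with $i\le j$) cannot intersect a concentric $\frac{9}{8}\ell(L)$-enlargement of $L$ because the Whitney separation property \eqref{E:separation-Whitney} and \Cref{l:child_parent} keep descendants of ancestors of $L$ far away. For (iii), consistency plus the uniform $C^{3,\kappa}$ bound on each $\varphi_j$ yield $C^3$-precompactness, and the limit $\boldsymbol{\varphi}$ is well-defined pointwise: off $\mathbf{S}$ the sequence stabilizes, while at points of $\mathbf{S}$ the approximations converge by virtue of Arzel\`a--Ascoli and uniqueness of the limit (the non-stopping cubes provide progressively sharper excess decay). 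Property (iv) follows by construction because boundary cubes $L\in\mathcal{C}^\flat$ have $h_L|_{\Gamma\cap\bB_L^\flat}=\phi$, and the gluing preserves this: any cube contributing near a point of $\mathbf{p}_{\pi_0}(\Gamma)$ is necessarily a boundary cube by \Cref{l:child_parent}.

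\medskip

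\noindent The hardest step will be (v), the identification of $T_q\cM$ at each boundary point $q\in\partial\cM\cap\bC_{3/2}$ with the supporting half-plane $\pi^+(q)$ of the tangent cone to $T$ at $q$. By \Cref{T:Uniqueness 1sided} the tangent cone is unique, and by the excess decay \Cref{T:excessdecay} (in the flat case) one has $\mathbf{E}^\flat(T,\bB_r(q))\lesssim r^\alpha$ along any sequence of boundary satellite balls shrinking to $q$; hence the optimizing planes $\pi_L$ for boundary cubes $L\ni \mathbf{p}_{\pi_0}(q)$ converge to the unique supporting plane $\pi(q)$ as $\ell(L)\to 0$. Combined with the $C^{3,\kappa}$ estimate and the fact that $\boldsymbol{\varphi}$ agrees with $\phi$ on $\mathbf{p}_{\pi_0}(\Gamma)$, this yields $D\boldsymbol{\varphi}(q)$ precisely equal to the affine parametrization of $\pi(q)$ over $\pi_0$, giving (v). The main obstacle in this step is that the decay of $\mathbf{E}^\flat(T,\bB_r(q))$ at a flat one-sided point is only polynomial (not almost-quadratic as in the two-sided case of \cite{DDHM,nardulli2022density}), so one must carefully keep track of the quantitative rate from \Cref{T:excessdecay} throughout the tilting estimates; all the other steps are direct dimensional generalizations of \cite[Sections 6--10]{delellis2021allardtype}, invoking \cite[Lemma 7.1]{delellis2021uniqueness} in place of the two-dimensional \cite[Lemma 7.6]{delellis2021allardtype}.
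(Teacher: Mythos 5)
Your sketch follows the standard Almgren--De Lellis--Spadaro center-manifold construction adapted to the one-sided boundary setting, which is precisely the route the paper takes: the paper omits the proof entirely, citing that it is identical to \cite[Theorem~10.16]{delellis2021allardtype} and that the $m=2$ hypothesis there is never actually used (a point also emphasized via \cite{nardulli2022density}). You correctly identify the key substitution---replacing the two-dimensional Lipschitz approximation lemma by \cite[Lemma~7.1]{delellis2021uniqueness} as stated in the proof of \Cref{P:1sided-StrongLA}---and the rest of your expansion (interior versus boundary interpolating functions, gluing via partition of unity, tilting-of-planes and Schauder estimates for (i), Whitney separation for (ii)--(iii), and the boundary Dirichlet matching for (iv)) is a faithful description of the cited construction.

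The one place where your reasoning takes a slightly different (and not fully justified) turn is step (v). You invoke \Cref{T:excessdecay} to argue that the optimal planes $\pi_L$ converge at each boundary point $q$, but that theorem is stated only under the hypothesis that $T$ has a \emph{flat} one-sided tangent cone at the point in question, and not every $q\in\Gamma\cap\bC_{3/2}$ need be flat in the regime $\bE^\flat<\varepsilon_{cm}$ (genuine open books with arbitrarily small opening angle can occur, cf.\ \Cref{T:ian-example}). In the proof as carried out in \cite{delellis2021allardtype,nardulli2022density}, the convergence of the tilted planes $\pi_L$ along the chain of boundary cubes through $\bp_{\pi_0}(q)$ comes directly from the telescoping estimate $\sum_j|\pi_{L_j}-\pi_{L_{j+1}}|\lesssim \mbold{0}^{1/2}\sum_j\ell(L_j)^{1-\alpha_\be}$, which is a consequence of the stopping conditions in \Cref{S:Stopping-condition} and the satellite-ball excess bounds alone---no quantitative excess decay at $q$ is needed. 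The identification of the resulting limiting plane with the optimal plane for the tangent cone at $q$ then only uses the uniqueness from \Cref{T:Uniqueness 1sided}, not the rate in \Cref{T:excessdecay}. This is a minor detour rather than a gap, but as written your argument for (v) does not cover the non-flat boundary points.
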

\begin{proof}
We will omit the proof because it follows the same lines as in \cite[Theorem 10.16]{delellis2021allardtype}. In fact, the authors, in \cite{nardulli2022density}, do not use the fact that $m=2$ to prove the existence of $\cM$ as above. 
\end{proof}

\subsubsection{Normal approximations}

With the center manifold $\cM$ in hand, our objective is to construct approximations of $T$—akin to those in \Cref{P:1sided-StrongLA}—by functions defined on $\cM$ and taking values in its normal bundle. These approximations, known as $\cM$-normal approximations, are formally defined in \Cref{D:NA} and their existence is established in \Cref{T:LocalNA}.

Let us define the graph parametrization map of $\cM$ as $\bPhi(x):=(x,\bphi(x))$. We will call {\em contact set} the subset $\bPhi(\bS)$. For every cube $L\in \sW$ we associate a {\em Whitney region $\cL$ on $\cM$} as follows: 
\begin{itemize}
    \item $\cL:= \bPhi(H\cap \obball{1})$ where $H$ is the cube concentric to $L$ such that $\ell(H)=\frac{17}{16}\ell(L)$.
\end{itemize}

\begin{assumption}\label{A:NA}
Under Assumptions \ref{A:CM} and \ref{A:ParametersCM}. We possibly further reduce $\varepsilon_{cm}$, call it $\varepsilon_{na}$, and assume $\mbold{0} < \varepsilon_{na}$, to be such that, if
\begin{align*}
\bU := \{q\in \R^{m+n} :\,\exists! q' = \bp (q) \in \cM \mbox{ s.t. $|q-q'|<1$} \mbox{ and $q-q'\in T_{q'}^\perp\cM$}\}\, 
\end{align*}
where $T_{q'}^\perp\cM:=(T_{q'}\cM)^\perp$, then the map $\bp$ extends to a Lipschitz map to the closure $\overline{\bU}$ which is $C^{2,\kappa}$ on \(\bU\setminus \bp^{-1} (\Gamma)\) and $\bp^{-1} (q') = q' + \overline{\mathrm{B}_1 (0, (T_{q'} \cM)^\perp)}$ for all $q'\in \overline{\cM}$.
\end{assumption}

The following corollary states strong properties on the behavior of the current $T$ whenever we look at points on the contact set.

\begin{corollary}\label{C:2.2}
Under Assumptions \ref{A:CM} and \ref{A:ParametersCM},, we have
\begin{enumerate}[\upshape (i)]
    \item\label{C:2.2:i} $\spt(\partial(T\res\bU))\subset \partial_l\bU$, $\spt_\sharp (T\res [-\frac{7}{2},\frac{7}{2}]\times\mathbb{R}^n)\subset\bU$, and $\bp(T\res\bU) = Q\a{\cM}$,
    \item\label{C:2.2:ii} $\spt(\langle T, \bp, \boldsymbol{\Phi}(q)\rangle) \subset\left\{y:|\bPhi(q)-y| \leq C \mbold{0}^{1 / 2 m} \ell(L)^{1+\alpha_\bh}\right\}$ for every $q \in L \in \sW$,
    \item\label{C:2.2:iii} $\langle T, \bp, p\rangle=Q \a{ p }$ for every $p \in \bPhi(\bS)\cup\left(\Gamma\cap\partial\cM\right)$.
\end{enumerate}
\end{corollary}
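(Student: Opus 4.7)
\medskip
\noindent\textbf{Proof plan.} The three items follow from the Whitney stopping conditions of \Cref{S:Stopping-condition}, the $C^{3,\kappa}$-regularity of the center manifold \Cref{T:CM}, the Constancy Lemma, and the theory of slicing for integral currents. The technical core is item (ii); once established, it readily implies both (i) and (iii).

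\emph{Height bound (ii).} Fix $q\in L\in \sW$, so that its parent $K\in \sS$ did not trigger the $\bh$-stopping criterion. Hence $\bh(T,\bB_K,\pi_K)\le C_{\bh}\mbold{0}^{\sfrac{1}{2m}}\ell(K)^{1+\alpha_\bh}$, and since $\bB_L$ is comparable to $\bB_K$ in center and radius with $\ell(L)=\tfrac12\ell(K)$, the same estimate transfers to $\bB_L$ up to a multiplicative constant depending only on $M_0$. A standard tilt estimate (derived from the $\be$-bound along every ancestor of $L$, in the spirit of \cite{DS4}) controls $|\pi_L - T_{\bPhi(q)}\cM|\le C \mbold{0}^{\sfrac{1}{2}}\ell(L)^{1-\alpha_\be}$, so replacing $\pi_L$ by $T_{\bPhi(q)}\cM$ incurs an extra error of order $\mbold{0}^{\sfrac{1}{2}}\ell(L)^{2-\alpha_\be}$, which is absorbed by the right-hand side of (ii) provided $\alpha_\be\ll \alpha_\bh$. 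Boundary cubes $L\in \sW\cap \sC^\flat$ are treated identically with $\bB_L^\flat$ in place of $\bB_L$: the constraint $T_{p_L^\flat}\Gamma\subset \pi_L$ is compatible with the tilt comparison since $T_{p_L^\flat}\Gamma\subset T_{\bPhi(q)}\cM$ up to the same error. As $\bp^{-1}(\bPhi(q))\subset \bPhi(q)+ T_{\bPhi(q)}^{\perp}\cM$, the support of the slice $\langle T,\bp,\bPhi(q)\rangle$ lies within the stated distance of $\bPhi(q)$.

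\emph{Deduction of (i) and (iii).} For (i), item (ii) plus $\mbold{0}\ll 1$ forces $\spt(T\res [-\tfrac72,\tfrac72]^m\times\R^n)\subset \bU$, because heights over any Whitney cube $L$ are $o(\ell(L))$ and vanish as $\ell(L)\to 0$ over the contact set $\bPhi(\bS)$, while the tubular radius defining $\bU$ is $1$. Thus $\bp_{\sharp}(T\res\bU)$ is a well-defined integral $m$-current supported on $\cM$; its boundary is concentrated on $\partial\cM\cup\partial_l\bU$, and on $\partial\cM=\Gamma$ it equals $\bp_{\sharp}(\partial T)=Q\a{\Gamma}$ by $\bp|_\Gamma=\mathrm{Id}$ together with \Cref{T:CM}(iv), while the remainder yields $\spt(\partial(T\res\bU))\subset \partial_l\bU$. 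The Constancy Lemma then gives $\bp_{\sharp}(T\res\bU)=k\a{\cM}$ for some integer $k$, and matching the boundary multiplicity (or equivalently reading off the tangent cone $T_0=Q\a{\pi_0^+}$ at $0\in \partial\cM$) pins $k=Q$. For (iii), if $p=\bPhi(x)$ with $x\in \bS$ then $x$ avoids every Whitney cube, so applying (ii) on a sequence of shrinking balls around $x$ shrinks $\spt \langle T,\bp,p\rangle$ to $\{p\}$; combined with the total mass $Q$ guaranteed by (i), integrality of the slice forces $\langle T,\bp,p\rangle = Q\a{p}$. For $p\in \Gamma\cap\partial\cM$, the same shrinking-shell argument works using the uniqueness of the tangent cone at one-sided boundary points (\Cref{T:Uniqueness 1sided}) and $T_p\cM=\pi^+(p)$ from \Cref{T:CM}(v).

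The principal obstacle is the height estimate (ii): one must propagate the parent-to-child height bound and simultaneously perform the tilt comparison between $\pi_L$ and the true tangent plane of $\cM$ at $\bPhi(q)$, uniformly across the three Whitney families $\sW^{\be}$, $\sW^{\bh}$, and $\sW^{\mathbf{n}}$; the boundary case requires carrying the additional constraint $T_{p_L^\flat}\Gamma\subset \pi_L$ throughout the argument, which distinguishes the present setting from the interior constructions of \cite{DS4,DS5}.
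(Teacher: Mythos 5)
The paper states this corollary without a dedicated proof, treating it as a standard consequence of the center manifold construction following \cite{DS5,DDHM,delellis2021allardtype}. Your sketch correctly identifies the standard route: item (ii) is derived by transporting the height bound from the non-stopping parent $K\in\sS$ down to the child $L\in\sW$, then comparing $\pi_L$ to $T_{\bPhi(q)}\cM$ via the cumulative tilt estimates along ancestors, so the fiber $\bp^{-1}(\bPhi(q))$ intersects $\spt(T)$ only inside the claimed ball; items (i) and (iii) then follow from (ii) together with the Constancy Lemma and a shrinking-scale argument on the contact set and on $\Gamma$. A few points worth making more explicit: the containment $\bB_L\subset \bB_K$ used to inherit the parent's height bound requires a check that the centers $p_L,p_K\in\spt(T)$ are within $O(r_L)$ of each other, which itself relies on the parent's $\bh$-bound; the tilt error you quote, $\mbold{0}^{1/2}\ell(L)^{2-\alpha_\be}$, is indeed absorbed by the right-hand side precisely because $\alpha_\be$ is chosen small relative to $\alpha_\bh$ in \Cref{A:ParametersCM} and $\mbold{0}^{1/2}\le\mbold{0}^{1/2m}$; and in (iii), for $p\in\Gamma\cap\partial\cM$ one also needs that the slicing map behaves well at the boundary, which is guaranteed by \Cref{A:NA} (the projection $\bp$ is $C^{2,\kappa}$ off $\bp^{-1}(\Gamma)$ and Lipschitz up to the closure) together with \Cref{T:CM}(iv)--(v) as you note. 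Overall the proposal is correct and matches the implicit argument in the cited references.
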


Next, we define the $\cM$-normal approximations of $T$ defined on $\cM$ and taking values in its normal bundle.

\begin{definition}\label{D:NA}
Let $\cM$ be the center manifold under Assumption \ref{A:NA}. We say that $(\cK, F)$ is an {\emph{$\cM$-normal approximation of $T$}}, if
\begin{enumerate}[\upshape (i)]
    \item there exists a Lipschitz function $$\begin{array}{lrll}
    \cN: & \cM\cap\bC_1 & \to & \cA_{Q}(T^\perp\cM)   \\
    & x & \mapsto &\cN(p):=\sum_{i=1}^{Q}\a{\cN_i(p)},
    \end{array}$$ 
    where $(\cN_i:\cM\cap \bC_1\to T^\perp\cM)_{i\in\{1,\dots, Q\}}$ are measurable sections of the normal bundle satisfying $p + \cN_i(p) \in\Sigma$. We then define the Lipschitz function given by $$\begin{array}{lrll}
    F: & \cM\cap\bC_1 & \to & \cA_{Q}(T^\perp\cM_k)   \\
    & x & \mapsto &\left(\cN \oplus \mathrm{id}\right)(x).
\end{array}$$

    \item $\cK \subset \cM$ is closed and $\mathbf{T}_{F} \res \bp^{-1} (\cK\cap \cM) = T \res \bp^{-1} (\cK\cap \cM)$, where $\mathbf T_{F}:=(F)_\sharp \a{\cM}$, according to \emph{\cite[Definition 1.3]{DS2}},

    \item $\bPhi(\bS)\cap \Gamma \subset \cK$, $\cN |_{\cK}\equiv Q\a{0}$, and then $F(p) =  Q \a{p}$ on $\cK$.
\end{enumerate}
\end{definition}

We finally state the existence and local properties of a normal Lipschitz approximation of $T$. 

\begin{theorem}[Local behavior of $\cM$-normal approximations. Theorem 10.21 of \cite{delellis2021allardtype}]\label{T:LocalNA}
Under Assumption \ref{A:NA}, there is a constant $\alpha_\bL=\alpha_\bL(m, n, \bar{n},  Q)>0$ such that there exists an $\cM$-normal approximation $(\cK, F)$ satisfying the following estimates on any Whitney region $\cL\subset \cM$ associated to a cube $L \in \sW$:
\begin{align}
    \mathrm{Lip} ({\cN}|_{\cL})  \leq C\mbold{0}^{\alpha_\bL} \ell (L)^{\alpha_\bL} \mbox{ and }&
    \|{\cN}|_{\cL}\|_0 \leq C \mbold{0}^{\sfrac{1}{2m}} \ell (L)^{1+\alpha_\bh},\nonumber\\
    \cH^m(\cL\setminus \cK) + \|\mathbf{T}_F -T\| (\bp^{-1} (\cL)) & \leq C \mbold{0}^{1+\alpha_\bL} \ell(L)^{m+2+\alpha_\bL},\nonumber\\
    \int_{\cL} |D{\cN}|^2 & \leq C \mbold{0} \ell (L)^{m+2-2\alpha_\be}, \label{E:LocalNA:Dir}
\end{align}
for a constant $C = C (\alpha_\be,\alpha_\bh,M_0,N_0,C_\be,C_\bh)>0$. Moreover, for any $a>0$ and any Borel $\mathcal{V} \subset \cL$,
\begin{equation*}\label{e:cm_app5}
\begin{aligned}
    \int_{\mathcal{V}} |\eta\circ {\cN}|\leq C \mbold{0} \left(\ell(L)^{m+3+\frac{\alpha_\bh}{3}} + a \ell (L)^{2+\frac{\alpha_\bL}{2}} \mathcal{H}^m(\mathcal{V})\right)
    + \frac{C}{a} \int_{\mathcal{V}} \mathcal{G} ({\cN}, Q \a{\eta\circ {\cN}})^{2+\alpha_\bL}.
\end{aligned}
\end{equation*}
\end{theorem}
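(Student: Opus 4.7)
The plan is to adapt the normal-approximation construction of Almgren-De Lellis-Spadaro for the interior regularity theory (cf.\ \cite{DS4}) to the boundary setting, following the strategy implemented for $m=2$ in \cite{delellis2021allardtype} but now in arbitrary dimension. The approach is local-to-global: produce a Lipschitz approximation on each satellite ball, reparametrize it as a multi-valued section of the normal bundle of $\cM$, and glue the pieces via a partition of unity subordinate to the Whitney decomposition $\sW$.

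For every $L\in\sW$ I would apply \Cref{P:1sided-StrongLA} (in its interior or boundary form, depending on whether $L\in\sW^\natural$ or $L\in\sW^\flat$) in the satellite ball $\bB_L$ or $\bB_L^\flat$, with respect to the optimally tilted plane $\pi_L$. Since the parent of $L$ lies in $\sS_{j-1}$, the stopping criteria of \Cref{S:Stopping-condition} guarantee $\bE(T,\bB_L)\leq C_\be\mbold{0}\ell(L)^{2-2\alpha_\be}$ and $\bh(T,\bB_L,\pi_L)\leq C_\bh\mbold{0}^{1/(2m)}\ell(L)^{1+\alpha_\bh}$, while the smallness threshold $\varepsilon_{la}$ is absorbed by taking $\varepsilon_{na}$ sufficiently small in \Cref{A:NA}. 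This yields a Lipschitz $Q$-valued map $f_L$ on a tilted disk in $\pi_L$ satisfying \eqref{T:1sided-StrongLA:Lip}--\eqref{T:1sided-StrongLA:taylor}, with the boundary condition $f_L=Q\a{\phi}$ preserved whenever $L$ is a boundary cube. Next, I would use the regularity of $\bp:\bU\to\cM$ from \Cref{A:NA} and the tilt bound $|\pi_L-T_q\cM|\leq C\mbold{0}^{1/2}\ell(L)^{1-\alpha_\be}$, which is a standard byproduct of the construction of $\cM$ in \Cref{T:CM}, to reparametrize $f_L$ as a map $\cN_L:\cL\to\cA_Q(T^\perp\cM)$ losing only lower-order factors in the Lipschitz, $C^0$, Dirichlet, and non-graphical-set bounds. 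A partition of unity $\{\chi_L\}_{L\in\sW}$ with $\|D\chi_L\|_\infty\leq C/\ell(L)$ and $\chi_L$ vanishing near $\Gamma$ on boundary cubes then glues the $\cN_L$ into a single Lipschitz section $\cN$; since neighboring Whitney regions have comparable side length and comparable tilted planes, the Lipschitz and sup-norm bounds localize to the stated estimates on each Whitney region $\cL$.

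The contact set $\cK$ is taken to be the union of $\bPhi(\bS)\cap\Gamma$ with the set where all relevant local approximations agree with $T$ under $\bp$. The estimate on $\cH^m(\cL\setminus\cK)+\|\mathbf{T}_F-T\|(\bp^{-1}(\cL))$ follows directly from \eqref{T:1sided-StrongLA:bad-set} and \Cref{C:2.2}\eqref{C:2.2:iii}; the Dirichlet estimate \eqref{E:LocalNA:Dir} follows from \eqref{T:1sided-StrongLA:taylor} combined with the parent-cube excess bound; and the final estimate on $\int_\mathcal{V}|\eta\circ\cN|$ comes from testing the first variation of $T$ against constant normal vector fields, paired with the height bound and a Young-type inequality, mirroring the corresponding computation in \cite{DS4}. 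The main obstacle lies in the boundary cubes $L\in\sW^\flat$: the Lipschitz approximation there must simultaneously respect the boundary datum $\cN\equiv Q\a{0}$ on $\Gamma$, inherit the parent-cube improvements in both excess and height, and remain compatible with interior neighbors under tilting of $\pi_L$ relative to $\pi_0$. In two dimensions this rested on uniqueness of tangent cones and exponential excess decay, and in arbitrary dimension both ingredients are now available through \Cref{T:Uniqueness 1sided} and \Cref{T:excessdecay} from \cite{ian2024uniqueness}, so the argument of \cite{delellis2021allardtype} transplants to general $m$ without essential modification.
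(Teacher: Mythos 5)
Your high-level construction — apply the Lipschitz approximation \Cref{P:1sided-StrongLA} on satellite balls, reparametrize over the normal bundle of $\cM$ via the projection $\bp$ and the tilt bound, patch across Whitney regions, and read off the estimates from the stopping criteria — is the same route the paper (tacitly) follows, since its proof is simply the citation to [delellis2021allardtype, Theorem 10.21] together with the remark that the arguments there carry over to a curved ambient $\Sigma\in C^{3,\kappa}$ once the extra $\bA_\Sigma$-error terms are tracked. Two remarks, though. First, be careful with ``glue the $\cN_L$ via a partition of unity'': one cannot take convex combinations of $\cA_Q$-valued maps, so the actual mechanism in [DS4] and [delellis2021allardtype] is a \emph{selection} — define $\cN$ on each Whitney region $\cL$ from the reparametrized $f_L$ and show the definitions are compatible on overlaps (with $\cN\equiv Q\a{0}$ on the contact set), not a genuine partition-of-unity average. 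The partition of unity enters in the construction of $\cM$ itself (Theorem \ref{T:CM}), not $\cN$.

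Second, and more substantively, your closing sentence misidentifies the ingredient that makes the transplant from $m=2$ to general $m$ possible. Neither \Cref{T:Uniqueness 1sided} nor \Cref{T:excessdecay} is used in the proof of \Cref{T:LocalNA}: the hypotheses \Cref{A:CM}, \Cref{A:ParametersCM}, \Cref{A:NA} are assumed, and once they hold the construction is purely scale-local, driven by the stopping conditions, not by any tangent-cone uniqueness or decay theorem. The paper is explicit about this for the parallel Theorem \ref{T:CM} (``the authors, in [nardulli2022density], do not use the fact that $m=2$ to prove the existence of $\cM$'') and for \Cref{P:1sided-StrongLA} (``the theory in Sections 6--9 of [delellis2021allardtype] is already developed in arbitrary dimension''). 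Uniqueness of tangent cones and the excess decay of [ian2024uniqueness] are used later — to set up the intervals of flattening and the lower bound on the frequency — but they are not what makes \Cref{T:LocalNA} hold in general $m$; that was already dimension-free in the source.
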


Next, we state global estimates that are derived from summing up all the local estimates in the previous theorem over all cubes of the Whitney decomposition.

\begin{corollary}[Global estimates for $\cM$-normal approximations on Whitney regions. Corollary 10.22 of \cite{delellis2021allardtype}]\label{T:GlobalNA}
Under the same assumptions and notation of \Cref{T:LocalNA}, and setting $\cM^\prime = \bPhi(\halfobball{1}\cap [-\sfrac72 ,\sfrac72 ]^m)$, we have: 
\begin{align*}
    \mathrm{Lip} ({\cN}|_{\cM^\prime}) & \leq C\mbold{0}^{\alpha_\bL} \mbox{ and } \|{\cN}|_{\cM^\prime}\|_0 \leq C \mbold{0}^{\sfrac{1}{2m}},\\
    \cH^m(\cM^\prime\setminus \cK) + \|\mathbf{T}_F -T\| (\bp^{-1} (\cM^\prime)) & \leq C \mbold{0}^{1+\alpha_\bL} \mbox{ and }
    \int_{\cM^\prime} |D{\cN}|^2 \leq C \mbold{0}.
\end{align*}
for a constant $C = C (\alpha_\be,\alpha_\bh,M_0,N_0,C_\be,C_\bh)>0$. Moreover, $\cN \equiv Q\a{0}$ on $\bPhi(\bS)\cap\cM^\prime$ and $  \int_{\cM^\prime}|\cN|^2 \leq C \mbold{0}$.
\end{corollary}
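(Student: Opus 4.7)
The plan is to derive each of the four global bounds by summing the corresponding local estimate from \Cref{T:LocalNA} over the Whitney family $\sW$, using the packing inequality
\[ \sum_{L\in\sW} \ell(L)^m \le C\,\cH^m(\halfobball{3/2}) \le C, \]
which holds because the cubes in $\sW$ have pairwise disjoint interiors and sit in a fixed bounded set. Combined with $\ell(L) \le 2^{-N_0} \le 1$, this lets me absorb all positive powers of $\ell(L)$ into the constant.

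First I would handle the pointwise bounds. The local Lipschitz bound $\mathrm{Lip}(\cN|_\cL) \le C\mbold{0}^{\alpha_\bL}\ell(L)^{\alpha_\bL}$ yields $\mathrm{Lip}(\cN|_\cL) \le C\mbold{0}^{\alpha_\bL}$ on each Whitney region, and since adjacent Whitney regions correspond to cubes of comparable sidelength (within a factor of two by the Whitney structure), this passes to a global Lipschitz estimate on $\cM^\prime$. The $L^\infty$-bound follows the same way from $\|\cN|_\cL\|_0 \le C\mbold{0}^{\sfrac{1}{2m}}\ell(L)^{1+\alpha_\bh}$. The identity $\cN \equiv Q\a{0}$ on $\bPhi(\bS)\cap\cM^\prime$ is a direct consequence of \Cref{D:NA}(iii) together with \Cref{C:2.2}(iii), which ensures $\bPhi(\bS)\subset\cK$.

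The measure/mass and Dirichlet bounds are then obtained by direct summation of the local estimates:
\[ \cH^m(\cM^\prime\setminus\cK) + \|\mathbf{T}_F-T\|(\bp^{-1}(\cM^\prime)) \le \sum_{L\in\sW}\Big(\cH^m(\cL\setminus\cK) + \|\mathbf{T}_F-T\|(\bp^{-1}(\cL))\Big) \le C\mbold{0}^{1+\alpha_\bL}\sum_L \ell(L)^{m+2+\alpha_\bL} \le C\mbold{0}^{1+\alpha_\bL}, \]
and analogously $\int_{\cM^\prime}|D\cN|^2 \le C\mbold{0}\sum_L \ell(L)^{m+2-2\alpha_\be} \le C\mbold{0}$; in both cases the extra factor $\ell(L)^{2+\cdot}$ is bounded by $1$, so the packing inequality finishes the job.

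The subtlest point, and the main obstacle I would expect, is the $L^2$ bound $\int_{\cM^\prime}|\cN|^2 \le C\mbold{0}$. The naive combination of the pointwise bound $|\cN|^2 \le C\mbold{0}^{\sfrac{1}{m}}\ell(L)^{2+2\alpha_\bh}$ with packing only yields $C\mbold{0}^{\sfrac{1}{m}}$, which is weaker than the claim when $\mbold{0}$ is small. To sharpen this to the $\mbold{0}$-bound I would exploit that $\cN \equiv Q\a{0}$ on the contact set $\bPhi(\bS)\cap\cM^\prime$ (and on $\Gamma\cap\partial\cM$), invoking a Poincar\'e-type inequality for $Q$-valued maps: on a suitably enlarged Whitney region $\cL^\ast$ containing a contact point one has $\int_\cL|\cN|^2 \le C\ell(L)^2\int_{\cL^\ast}|D\cN|^2$, so that summing with the local Dirichlet estimate gives $\int_{\cM^\prime}|\cN|^2 \le C\mbold{0}\sum_L\ell(L)^{m+4-2\alpha_\be} \le C\mbold{0}$. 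The technical difficulty is handling Whitney regions that are far from the contact set; here one chains neighboring cubes along a path back to $\bS$, using the separation property \eqref{E:separation-Whitney} to control the telescoping, in the spirit of the analogous interior estimate in \cite{DS5}.
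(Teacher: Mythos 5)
Your derivation of the first three groups of estimates is correct and follows the expected route: sum the local bounds from \Cref{T:LocalNA} over $L\in\sW$, use the packing inequality $\sum_L\ell(L)^m\le C$ (disjoint interiors in a bounded set), absorb the remaining positive powers of $\ell(L)\le 2^{-N_0}$ into the constant, and for the Lipschitz and $L^\infty$ bounds use that neighboring Whitney regions have comparable side length to pass from local to global.

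The $L^2$ bound $\int_{\cM'}|\cN|^2\le C\mbold{0}$ is where your proposal has a real gap. You correctly observe that the pointwise bound only gives $\mbold{0}^{1/m}$ and that a Poincar\'e-type inequality is needed. But the local version you propose, $\int_{\cL}|\cN|^2\le C\ell(L)^2\int_{\cL^\ast}|D\cN|^2$ with $\cL^\ast$ an enlarged Whitney region ``containing a contact point,'' requires a contact point within a bounded multiple of $\ell(L)$ of every Whitney cube $L$. The stopping-time decomposition here is built from excess/height/neighbor conditions, not from a distance function to $\bS$, so that property is not automatic: the separation estimate \eqref{E:separation-Whitney} gives only the lower bound $\mathrm{sep}(\bS,L)\geq 2\ell(L)$ and no matching upper bound, and the contact set $\bS$ may be very small or even empty. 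In that case the chain from $L$ back to a vanishing point of $\cN$ must travel all the way to $\Gamma$, at a distance that need not be comparable to $\ell(L)$, and the telescoping you invoke (citing the interior case of \cite{DS5}) has no analogue there, since \cite{DS5} does not prove a sharp $L^2$ estimate of this form.

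The cleaner argument dispenses with the Whitney decomposition for this step entirely. The manifold $\cM'$ is uniformly bi-Lipschitz equivalent to the half-ball $\halfobball{1}\cap[-\sfrac{7}{2},\sfrac{7}{2}]^m$, and by \Cref{D:NA}(iii) together with \Cref{T:CM}(iv) one has $\cN\equiv Q\a{0}$ on $\Gamma\cap\partial\cM'$, a portion of the boundary of positive $\cH^{m-1}$-measure. Since $|\cN|$ is a scalar Lipschitz function with $|D|\cN||\le|D\cN|$ vanishing on that boundary portion, a single global Poincar\'e inequality with partial boundary data yields
\[
\int_{\cM'}|\cN|^2\le C\int_{\cM'}|D\cN|^2\le C\mbold{0},
\]
where the last inequality is the Dirichlet estimate already established. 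This is the natural route and is the one taken in the cited \cite[Corollary 10.22]{delellis2021allardtype}.
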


We now state some properties on the normal Lipschitz approximations concerning their ``splitting" behavior on stopping cubes.

\begin{proposition}[Splitting, Proposition 14.4 of \cite{delellis2021allardtype}]\label{P:SplitingNA}
Under the same assumptions and notation as \Cref{T:LocalNA}, there is a constant $C= C(m,n,\bar{n}, M_0, \delta_1)>0$ with the following property. If $L\in\sW^{e}$, $q\in \pi_0$ with $\dist(q,L) \leq 4\sqrt{m}\ell(L)$, $\bball{q}{\ell(L)/4}\subset\obball{2}$, and $\Omega:=\bPhi(\bball{q}{\ell(L)/4})$, then we have that
\begin{align}
    C\mbold{0} \ell(L)^{m+2-2\delta_1} \leq \ell(L)^m\bE(T,\oball{L}^{\flat}) \leq C\int_\Omega |D\cN|^2,\label{E:SplittingNA1}\\ 
    \int_{\cL} |D\cN|^2 \leq C\ell(L)^m\bE(T,\oball{L}^{\flat}) \leq C\ell(L)^{-m}\int_\Omega|\cN|^2.\label{E:SplittingNA2}
\end{align}
\end{proposition}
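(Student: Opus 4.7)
The statement is a union of four inequalities, which split into three bookkeeping bounds and one genuinely analytic reverse-Poincar\'e bound. The leftmost bound of \eqref{E:SplittingNA1} is the easiest: since $L \in \sW^{\be}$, its defining stopping condition is $\bE(T,\oball{L}^\flat) > C_\be \mbold{0} \ell(L)^{2-2\alpha_\be}$ (after a geometric comparison between the relevant satellite balls), and choosing $\delta_1 \geq \alpha_\be$ gives the claim immediately.

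The two ``Taylor-expansion'' inequalities---the right bound of \eqref{E:SplittingNA1} and the left bound of \eqref{E:SplittingNA2}---follow from the area formula for multivalued graphs applied to the $\cM$-normal approximation $(\cK,F)$. Using \Cref{T:LocalNA}, $\mathbf{T}_F \res \bp^{-1}(\cL)$ agrees with $T\res\bp^{-1}(\cL)$ off an error set of mass at most $C\mbold{0}^{1+\alpha_\bL}\ell(L)^{m+2+\alpha_\bL}$, and the graph-area Taylor expansion reads
\begin{equation*}
\bigl| \mathbf{M}(T\res\bp^{-1}\Omega) - Q\cH^m(\cM\cap\bp^{-1}\Omega) - \tfrac{1}{2}\int_\Omega|D\cN|^2 \bigr| \leq C\mbold{0}^{1+\alpha_\bL}\ell(L)^{m+2+\alpha_\bL}.
\end{equation*}
Subtracting the area of $\cM$ compared with a flat $m$-plane realizing $\bE^\flat(T,\oball{L}^\flat)$ and bounding the curvature of $\cM$ by $C\mbold{0}^{1/2}$ from \Cref{T:CM}(i) yields both directions; the stopping lower bound on the excess absorbs all remainder terms provided $\delta_1$ is chosen small with respect to $\alpha_\bL$.

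The remaining bound $\ell(L)^m\bE(T,\oball{L}^\flat) \leq C\ell(L)^{-m}\int_\Omega|\cN|^2$ is the actual content of the proposition, a reverse Poincar\'e-type inequality that must be proved by a blow-up/contradiction argument, analogous to the splitting estimate of \cite{DS4}. Supposing it fails along a sequence $\{T_k, L_k \in \sW^{\be}_k\}$, one translates each $L_k$ to the origin, dilates by $\ell(L_k)^{-1}$, and renormalizes $\cN_k$ by $\ell(L_k)\sqrt{\bE(T_k,\oball{L_k}^\flat)}$. The rescaled center manifolds flatten to a half $m$-plane by \Cref{T:CM}(i), and the rescaled normal approximations converge weakly in $W^{1,2}$ to a limit $u_\infty$ satisfying: (i) $u_\infty$ is Dirichlet-minimizing on a half-ball with vanishing boundary trace, from the boundary matching $\cN\equiv Q\a{0}$ on $\bPhi(\bS)\cap\Gamma$ (part (iii) of \Cref{C:2.2}) and the Taylor expansion above used to transfer competitors; (ii) $\int|Du_\infty|^2 \geq c > 0$ from the already-proved lower bound of Dirichlet energy by excess; (iii) $\int |u_\infty|^2 = 0$ from the contradictory hypothesis. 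But (iii) forces $u_\infty \equiv Q\a{0}$, contradicting (ii). The main obstacle is step (i): constructing admissible boundary competitors for $u_\infty$ via \Cref{P:1sided-StrongLA} so that they respect $\Gamma$ to leading order, and then transferring their Dirichlet-energy bounds back to mass bounds on currents through the Taylor expansion.
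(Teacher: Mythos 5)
The paper itself provides no proof of this proposition: it is imported verbatim from Proposition~14.4 of \cite{delellis2021allardtype}, which in turn is the boundary analogue of Proposition~3.4 in \cite{DS4}. So there is no argument inside the paper against which to match your reconstruction, and I can only assess it against the standard proof in those sources.

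Your split into a bookkeeping bound, two ``area-formula'' bounds, and one compactness bound is almost right, but you have miscategorized the right bound of \eqref{E:SplittingNA1}, namely $\ell(L)^m\bE(T,\oball{L}^{\flat}) \leq C\int_\Omega |D\cN|^2$. This is \emph{not} a consequence of the first-order Taylor expansion of the mass. Note that $\Omega=\bPhi(\bball{q}{\ell(L)/4})$ is a ball of radius $\ell(L)/4$ centred at a point $q$ that is allowed to sit up to $4\sqrt m\,\ell(L)$ away from $L$, so $\Omega$ need not intersect the Whitney region $\cL$ at all, and it is vastly smaller than the satellite ball $\oball{L}^\flat$ of radius $\sim M_0\ell(L)$. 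A Taylor expansion can only compare the mass excess and the Dirichlet energy \emph{over the same region}; it cannot tell you that the Dirichlet energy refuses to concentrate far from $\Omega$. The content of this inequality is precisely a quantitative unique-continuation statement for the blow-up Dir-minimizer, and it is established in \cite{DS4,delellis2021allardtype} in the \emph{same} compactness argument that handles the right bound of \eqref{E:SplittingNA2}: one extracts a Dir-minimizing limit $u_\infty$ with nontrivial Dirichlet energy on the rescaled Whitney region (this uses the stopping condition and the estimates of \Cref{T:LocalNA}, not any previously proved lower bound), and then observes that $\int_\Omega |Du_\infty|^2 = 0$ or $\int_\Omega |u_\infty|^2 = 0$ both force $u_\infty$ to be a constant $Q$-point on a ball, hence trivial everywhere by unique continuation of Dir-minimizers, contradiction. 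As written, your proposal is therefore circular: the step (ii) in your blow-up for the reverse Poincar\'e bound appeals to the ``already-proved lower bound of Dirichlet energy by excess'', but that is exactly the bound you misassigned to the Taylor expansion.

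Two further small remarks. First, to derive $C\mbold{0}\ell(L)^{m+2-2\delta_1} \leq \ell(L)^m\bE(T,\oball{L}^\flat)$ from the stopping condition $\bE(T,\bB_L) > C_\be\mbold{0}\ell(L)^{2-2\alpha_\be}$ when $\ell(L)<1$, you need $\delta_1 \le \alpha_\be$, not $\delta_1 \ge \alpha_\be$ (in \cite{delellis2021allardtype} they simply coincide). Second, the left bound of \eqref{E:SplittingNA2} does not need the area-formula Taylor expansion at all: it is an immediate combination of the energy estimate \eqref{E:LocalNA:Dir} with the same stopping condition.
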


\subsection{Intervals of flattening and definition of \texorpdfstring{$\bI$}{Lg}}

We begin by defining the intervals of flattening, which allow us to transition between center manifolds whenever they cease to be good approximations for the ``average" of the sheets of $T$. Before that, we make a few remarks that are important for comparison with the interior setting and that also highlight the significance of the excess decay available for one-sided points.

\begin{remark}
Our approach closely follows the strategy in \cite{delellis2021allardtype}, as they also do not have an \emph{a priori} almost-quadratic decay of the excess. In contrast, when dealing with two-sided boundary points, the authors of \cite{DDHM} and \cite{nardulli2022density} establish almost quadratic excess decay through a blow-up argument combined with strong properties of the limiting two-sided Dir-minimizing functions. The availability of such a decay allows them to bypass the need for intervals of flattening.
\end{remark}

\begin{remark}
In the interior case (\cite{DS5}), analogous to the boundary setting, the center manifold can only be constructed when the excess is below a certain threshold, namely $\varepsilon_{cm}$ given in \Cref{A:CM}; for the interior, see \cite[Eq. 2.4]{DS5}. This requirement explains the gaps that may exist between the intervals of flattening in \cite{de2023fineI}: At scales within these gaps, the excess may be too large to construct the center manifold. However, in our setting, this issue does not arise, as we rely on an excess decay theorem (with exponent $\alpha \in (0,1)$) established in \cite{ian2024uniqueness}).
\end{remark}

\begin{remark}
We note that \cite{delellis2021allardtype} will be frequently referenced, where the setting assumes $\Sigma = \mathbb{R}^{m+n}$. Nevertheless, it is straightforward to verify that all the results remain valid for an arbitrary $\Sigma \subset \mathbb{R}^{m+n}$ of class $C^{3,\kappa}$, taking into account the errors we explicitly state below.
\end{remark}

\subsubsection{Interval of flattening}\label{S:IntervalsFlattening}

Assume \Cref{A:CM}, \Cref{A:ParametersCM}, and \Cref{A:NA}, with $p\in\Gamma$ in place of $0$, and fix 
\begin{equation*}
    \max\left\{\bE^\flat (T, \ball{p}{2}),\bA_{p,\Sigma}^2, \bA_{p,\Gamma}^2\right\} =: \mbold{p} < \varepsilon_{cm}.
\end{equation*}

We set $t_1 = 2$, $\overline{\cM}_{p,1} = \cM_{p,1}$ and $\overline{\cN}_{p,1} = \cN_{p,1}$ to be the center manifold and the $\cM_{p,1}$-normal approximation given by \Cref{T:CM} and \Cref{T:LocalNA} when applied the translation of $T$ by $p\in\Gamma$, i.e., $(\iota_{p,1})_\sharp T$, respectively. 

Denote by $\sW^{p,1}$ the Whitney decomposition as described in \Cref{S:1sided-CM-NA}. Notice that the construction of the intervals of flattening below depend on $p$, we will however omit this dependence for the sake of notation. 

We now assume by induction that $t_{j-1}>0$ is defined with $t_{i-1} < t_i$, for any $i\leq j-1$ and consider:
\begin{itemize}
    \item $T_{p,j} := \left(\left( \iota_{p,t_j}\right)_\sharp T\right)\res\oball{2}$, $\overline{\Sigma}_{p,j}:=\iota_{p,t_j}\left( \Sigma\right)\cap\oball{2}$, $\overline{\Gamma}_{p,j} :=\iota_{p,t_j}\left( \Gamma\right)\cap\oball{2}$, $\overline{\cM}_{p,j}$ and $\overline{\cN}_{p,j}$ to be the center manifold and the normal approximation given by \Cref{T:CM} and \Cref{T:LocalNA} now applied to the current $T_{p,j}$, the boundary $\overline{\Gamma}_{p,j}$, and the ambient manifold $\overline{\Sigma}_{p,j}$. We also denote by $\sW^{p,j}$ the Whitney decomposition coupled with $\overline{\cM}_{p,j}$ as described above.
\end{itemize}

Next, we inductively define:
\begin{equation*}
    t_{j} = t_{j-1} \max\biggl(\left \{ 64\sqrt{m}\ell(L): L\in\sW^{p,j}\text{ and } 64\sqrt{m}\ell(L) \geq \dist(0,L)\right\}\cup \left\{0\right\}\biggr).
\end{equation*}

Observe that $\sfrac{t_j}{t_{j-1}} \leq 2^{-5}$ which ensures that $[t_{j-1}, t_j]$ is a nontrivial interval. If $t_j = 0$, we stop the induction; otherwise we keep going and end up with infinitely many intervals. We also fix the following notations:
\begin{equation*}
    \cM_{p,j} := t_j\overline{\cM}_{p,j}, \quad  \cN_{p,j}(q) := t_j \overline{\cN}_{p,j}\left(\frac{q}{t_j}\right),  \quad \Gamma_{p,j} := t_j\overline{\Gamma}_{p,j},  \mbox{ and } \Sigma_{p,j} := t_j\overline{\Sigma}_{p,j}.
\end{equation*}

Notice that $\Gamma_{p,j} = \left(\Gamma\cap \ball{p}{t_j}\right) - p$ and $\Sigma_{p,j} = \left(\Sigma\cap \ball{p}{t_j}\right) - p$.
\begin{lemma}
Let $L$ be a cube such that $L\in\sW^{p,j}\text{ and } 64\sqrt{m}\ell(L) \geq \dist(0,L)$ and $64\sqrt{m}\ell(L) = \max\left(\left \{ 64\sqrt{m}\ell(L): L\in\sW^{p,j}\text{ and } 64\sqrt{m}\ell(L) \geq \dist(0,L)\right\}\cup \left\{0\right\}\right)$. Then $L\in(\sW^{p,j})^{\be}$, i.e., it has to stop for the excess.
\end{lemma}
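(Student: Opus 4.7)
The plan is to exclude the two other stopping regimes $(\sW^{p,j})^{\bh}$ and $(\sW^{p,j})^{\mathbf{n}}$; since $L\in\sW^{p,j}$ stops and the three stopping types are mutually exclusive, this will force $L\in (\sW^{p,j})^{\be}$. First, I would verify that $L$ is a boundary cube. Since $0\in\Gamma$ (the rescaled boundary through $p$) and $\bp_{\pi_0}(0)\in\bp_{\pi_0}(\Gamma)$, one has $\dist(c_L,\bp_{\pi_0}(\Gamma))\leq |c_L|\leq \dist(0,L)+\tfrac{\sqrt{m}}{2}\ell(L)\leq (64+\tfrac{1}{2})\sqrt{m}\,\ell(L)<64M_0\sqrt{m}\,\ell(L)=64r_L$, using $M_0\geq 2$ from \Cref{A:ParametersCM}(ii). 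Hence $L\in\mathscr{C}^{\flat}$.

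Second, I would rule out $L\in (\sW^{p,j})^{\mathbf{n}}$ by exploiting the maximality of $\ell(L)$. If $L\in\sW_k^{\mathbf{n}}$, then by definition there exists $L'\in\sW_{k-1}$ with $L\cap L'\neq\varnothing$, forcing $\ell(L')=2\ell(L)$. We then have $\dist(0,L')\leq \dist(0,L)\leq 64\sqrt{m}\,\ell(L)=32\sqrt{m}\,\ell(L')<64\sqrt{m}\,\ell(L')$, so $L'\in\sW^{p,j}$ is a stopping cube satisfying the proximity hypothesis of the lemma with $\ell(L')>\ell(L)$, contradicting the maximality of $\ell(L)$.

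Third, I would rule out $L\in (\sW^{p,j})^{\bh}$ by combining the excess decay at the one-sided flat point $0$ with the boundary version of Almgren's height bound. By \Cref{T:Uniqueness 1sided} the unique tangent cone to $T_{p,j}$ at $0$ is $Q\a{\pi_0^+}$, and \Cref{T:excessdecay} in its multiplicative rescaled form yields $\bE^{\flat}(T_{p,j},\oball{r})\leq C\mbold{0}\,r^{\alpha}$ for $r$ small, with a universal $\alpha=\alpha(Q,m,n)>0$. Since $|p_L^{\flat}|\leq C\ell(L)$, the boundary satellite ball $\bB_L^{\flat}$ lies in a ball of radius $C\ell(L)$ around $0$, so $\bE(T_{p,j},\bB_L^{\flat},\pi_L)\leq C\mbold{0}\,\ell(L)^{\alpha}$. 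The boundary height bound, combined with the tilt estimate $|\pi_L-\pi_0|^2\lesssim\bE$, then yields $\bh(T_{p,j},\bB_L^{\flat},\pi_L)\leq C\,\mbold{0}^{1/(2m)}\,\ell(L)^{1+\alpha/(2m)}$. Choosing $\alpha_{\bh}<\alpha/(2m)$ and $C_{\bh}$ sufficiently large, both allowed by \Cref{A:ParametersCM}, the right-hand side is strictly less than $C_{\bh}\,\mbold{0}^{1/(2m)}\,\ell(L)^{1+\alpha_{\bh}}$ for $\ell(L)\leq 1$, contradicting the height stopping criterion.

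The main obstacle is Step 3: one must carefully derive from \Cref{T:excessdecay} the multiplicative excess decay $\bE\leq C\mbold{0}\,r^{\alpha}$, rather than the absolute form $Cr^{\alpha}$ literally stated, so that the height bound carries the right $\mbold{0}^{1/(2m)}$ factor to absorb the stopping threshold uniformly in $\mbold{0}$. The excess-to-height passage and the tilt comparison are standard, but the exponents must be threaded consistently through the parameter hierarchy of \Cref{A:ParametersCM}.
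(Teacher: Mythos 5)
Your proof structure matches the paper's for Steps 1 and 2: identifying $L$ as a boundary cube and eliminating $(\sW^{p,j})^{\mathbf{n}}$ by maximality are exactly what the paper does. The substantive difference is Step 3, where the paper simply cites \cite[Lemma 10.4]{delellis2021allardtype} for the fact that boundary cubes never stop for the height, while you reconstruct this from scratch via the excess decay theorem.

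Your Step 3 is a workable but noticeably less efficient route. The standard mechanism (and the one behind the cited Lemma 10.4) does not appeal to \Cref{T:excessdecay} at all: it exploits the fact that membership in $\sW_j^{\bh}$ presupposes $L\notin\sW_j^{\be}$, which already hands you the much sharper bound $\bE(T,\bB_L)\leq C_{\be}\mbold{0}\,\ell(L)^{2-2\alpha_{\be}}$. Feeding this into the boundary height estimate gives $\bh(T,\bB_L,\pi_L)\lesssim C_{\be}^{1/(2m)}\mbold{0}^{1/(2m)}\ell(L)^{1+(2-2\alpha_{\be})/(2m)}$, and since $(2-2\alpha_{\be})/(2m)>1/(2m)\geq\alpha_{\bh}$ is automatic under \Cref{A:ParametersCM}(i), the height threshold is beaten for $C_{\bh}$ large with no extra constraints. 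Your route instead invokes the excess decay with exponent $\alpha\in(0,1)$ from \cite{ian2024uniqueness}, which yields only $\bE\lesssim\mbold{0}\,\ell(L)^{\alpha}$ and hence forces the additional constraint $\alpha_{\bh}<\alpha/(2m)$. Since $\alpha<1$, this is strictly more restrictive than the hierarchy in \Cref{A:ParametersCM}(i), so it would amount to retroactively re-choosing $\alpha_{\bh}$; not impossible, but not required. You also correctly flag that you are reading a multiplicative form of \Cref{T:excessdecay} into the absolute form as stated, which is a further detour. In short: your argument can be made to work, but replacing your Step 3 with the direct consequence of $L\notin\sW_j^{\be}$ removes the dependence on \Cref{T:excessdecay} entirely, needs no re-tuning of $\alpha_{\bh}$, and is what the paper (via its citation) actually does. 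Also note a minor slip in Step 1: the distance from $c_L$ to a vertex of $L$ is $\sqrt{m}\,\ell(L)$, not $\tfrac{\sqrt{m}}{2}\ell(L)$, since the side length is $2\ell(L)$; the conclusion that $L\in\mathscr{C}^{\flat}$ is unaffected because $65<64M_0$ still holds for $M_0\geq 2$.
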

\begin{proof}
Observe that if $L$ is in $(\sW^{p,j})^{\mathbf{n}}$, then there is a neighboring cube $H$ of double side length that also belongs to $\sW^{p,j}$ and it is straightforward to check that it satisfies $H\in\sW^{p,j}\text{ and } 64\sqrt{m}\ell(H) \geq \dist(0,H)$ too, thus violating the maximility of $L$. Since $L$ satisfies $64\sqrt{m}\ell(L)\geq \dist(0,L)$, $L$ is a boundary cube and thus cannot belong to $(\sW^{p,j})^{\bh}$, see \cite[Lemma 10.4]{delellis2021allardtype}. 
\end{proof}

\subsubsection{Universal frequency function at one-sided flat points}\label{S:UniversalFreqDef}

Before defining the frequency function, we first introduce a well-suited distance function $d$ that accounts for the curvature of $\Gamma$ in our calculations. Specifically, $d$ serves as the direction of the vector field used in the inner and outer variations, requiring $\nabla d$ to be tangent to $\Gamma$. The function $d$ is constructed in the following lemma.

\begin{lemma}[Good distorted distance function]\label{L:DistFnc}
There exists a continuous function $d:\oball{1}\times\oball{1}\to \R^+$ which we will call \emph{good distorted distance function} satisfying, for any $q\in\Gamma_{p,1}$, the following:
\begin{enumerate}[\upshape (i)]
    \item $x\mapsto d(x, q)$ is $C^2$ on $\oball{1}\setminus\{q\}$,

    \item $d(x,q) = |x-q| +  O\left(\mbold{0} |x-q|^2\right)$,

    \item $\nabla_x d(x,q) = \frac{x-q}{|x-q|} +  O(\mbold{0} |x-q|)$,

    \item $D^2_x d(x,q) = |x-q|^{-1}\left(\mathrm{Id} - |x-q|^{-2}(x-q)\otimes( x-q)\right) + O(\mbold{0} )$,

    \item $\nabla_x d(x,q)$ is tangent to $\Gamma_{p,1}$ at $x\in \Gamma_{p,1}$.
\end{enumerate}
\end{lemma}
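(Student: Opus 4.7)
My plan is to build $d$ explicitly via a tubular-neighborhood decomposition of $\Gamma_{p,1}$. Since $\Gamma_{p,1}$ is $C^{3,\kappa}$ with $\bA_\Gamma^2\le \mbold{0}$ small, on a uniform tubular neighborhood $V$ of $\Gamma_{p,1}$ in $\oball{1}$ the nearest-point projection $\pi:V\to\Gamma_{p,1}$ is a $C^{2,\kappa}$ submersion, and each $x\in V$ decomposes uniquely as $x=\pi(x)+\xi(x)$ with $\xi(x)\in (T_{\pi(x)}\Gamma_{p,1})^\perp$. Writing $d_\Gamma(\cdot,\cdot)$ for the intrinsic Riemannian distance on $\Gamma_{p,1}$, define for $q\in\Gamma_{p,1}$ and $x\in V$
\[
\tilde d(x,q):=\sqrt{d_\Gamma(\pi(x),q)^2+|\xi(x)|^2},
\]
and extend $d$ to all of $\oball{1}\times\oball{1}$ by interpolating with the Euclidean distance via a smooth cutoff $\chi$ of $\dist(x,\Gamma_{p,1})$: set $d(x,q):=\chi\,\tilde d(x,q)+(1-\chi)\,|x-q|$. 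Property (i) is then immediate from the $C^{2,\kappa}$ regularity of $\pi$ and of the Riemannian distance on $\Gamma_{p,1}$ away from its singular locus (which is a positive distance from $q$ as long as we localize).

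To establish (v), observe that for $x\in\Gamma_{p,1}$ one has $\xi(x)=0$ and $\pi(x)=x$, so $d(x,q)=d_\Gamma(x,q)$, whose intrinsic gradient is automatically tangent to $\Gamma_{p,1}$. For the normal directions, a first-order variation $x\mapsto x+\varepsilon\nu$ with $\nu\perp T_x\Gamma_{p,1}$ yields $\pi(x+\varepsilon\nu)=x+O(\varepsilon^2)$ and $|\xi(x+\varepsilon\nu)|=\varepsilon|\nu|+O(\varepsilon^2)$, so
\[
\tilde d(x+\varepsilon\nu,q)=\sqrt{d_\Gamma(x,q)^2+\varepsilon^2|\nu|^2}+O(\varepsilon^2)=d_\Gamma(x,q)+O(\varepsilon^2),
\]
which kills the directional derivative at $\varepsilon=0$. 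This is the conceptually important part of the statement.

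For (ii)--(iv) I would Taylor expand each piece. The two basic inputs are: (a) $d_\Gamma(\pi(x),q)^2=|\pi(x)-q|^2+O(\bA_\Gamma^2|\pi(x)-q|^4)$ from the standard Jacobi-field expansion for the geodesic distance on a submanifold with second fundamental form bounded by $\bA_\Gamma$; and (b) the normal component of the chord $\pi(x)-q$ (measured at $\pi(x)$) is $O(\bA_\Gamma|\pi(x)-q|^2)$ by a second-order Taylor expansion of the graph of $\Gamma_{p,1}$ in local coordinates, so that
\[
|x-q|^2=|\pi(x)-q|^2+2\langle \pi(x)-q,\xi(x)\rangle+|\xi(x)|^2
\]
satisfies $\langle \pi(x)-q,\xi(x)\rangle=O(\bA_\Gamma|\pi(x)-q|^2|\xi(x)|)$. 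Combining (a) and (b) gives $\tilde d(x,q)^2-|x-q|^2=O(\mbold{0}|x-q|^4)+O(\sqrt{\mbold{0}}|x-q|^3)$, which after taking the square root (and absorbing $\sqrt{\mbold{0}}$ into $\mbold{0}$ in the paper's convention, since $\mbold{0}$ is small) yields (ii). Differentiating the same expansion once and twice, using the $C^{2,\kappa}$ dependence of $\pi$ and $\xi$ on $x$ (which themselves contribute at most $\sqrt{\mbold{0}}$-order corrections to first and second derivatives), gives (iii) and (iv). The extension region contributes only harmless additional error controlled by the $C^2$ norm of $\chi$ times the difference $\tilde d-|\cdot-\cdot|$ that was just estimated.

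\textbf{Main obstacle.} The conceptual step, property (v), is short. The bulk of the work is the bookkeeping in (iii)--(iv): one must differentiate a square root of a sum of two quantities (the geodesic distance squared and the squared normal distance), each of which already involves a non-trivial chain rule through $\pi$ and $\xi$, and then check that all cross terms are controlled by $\mbold{0}$ (or, where necessary, by $\sqrt{\mbold{0}}$ absorbed into the smallness of the excess). Uniformity of the tubular-neighborhood radius as $\mbold{0}\to 0$, needed so that the cutoff $\chi$ can be chosen independently of $p$, is a second minor technical point that relies on the uniform $C^{3,\kappa}$ bound on $\Gamma_{p,1}$.
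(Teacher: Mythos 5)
Your proposal takes a genuinely different route from the paper: the paper's proof of this lemma is a one-line citation of \cite[Lemma 4.25]{DDHM}, together with the observation that the construction there (a flattening diffeomorphism) already produces the improved $\mbold{0}$-dependent constants. You instead build $d$ from scratch via a Fermi/tubular decomposition $\tilde d=\sqrt{d_\Gamma(\pi(x),q)^2+|\xi(x)|^2}$, interpolated with the Euclidean distance away from $\Gamma_{p,1}$. The verification of the tangency property (v) is clean and correct, and the Taylor expansions (a) and (b) are the right ingredients.

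There is, however, a real error in the quantitative bookkeeping. The ``absorption of $\sqrt{\mbold{0}}$ into $\mbold{0}$'' is not legitimate: for small $\mbold{0}$ one has $\sqrt{\mbold{0}}\gg\mbold{0}$, so an $O(\sqrt{\mbold{0}}\,|x-q|^2)$ error cannot be demoted to $O(\mbold{0}\,|x-q|^2)$. Your construction actually produces error constants of order $\bA_\Gamma\sim\|\phi\|_{C^2}\lesssim \mbold{0}^{1/2}$, not $\mbold{0}$. Note that this is in fact forced: for $x,q\in\Gamma_{p,1}$ the normal component of $\frac{x-q}{|x-q|}$ at $x$ has size $\bA_\Gamma|x-q|$ and must be exactly cancelled by the correction in (iii) if (v) is to hold, so no construction can achieve an $O(\mbold{0}\,|x-q|)$ error in (iii) unless $\bA_\Gamma\lesssim\mbold{0}$, which Assumption \ref{A:CM} does not provide. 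The $\mbold{0}$ in the lemma should thus be read as $\mbold{0}^{1/2}$ (or some fixed positive power); with that reading your bound is sharp, but as written your argument overclaims.

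Separately, the cutoff $\chi(\dist(x,\Gamma_{p,1}))$ does not control (iii)--(iv) in the transition region. Writing $d=\chi\tilde d+(1-\chi)|x-q|$, the term $(\tilde d-|x-q|)\nabla_x\chi$ contributes $O\bigl(\sqrt{\mbold{0}}\,|x-q|^2/\delta\bigr)$ to $\nabla_x d$, where $\delta$ is the cutoff width. In the transition region one has $\dist(x,\Gamma_{p,1})\sim\delta$ but $|x-q|$ can be of order $1$, so this error is $O(\sqrt{\mbold{0}}/\delta)$, which is not of the required form $O(\sqrt{\mbold{0}}\,|x-q|)$ unless $|x-q|\lesssim\delta$. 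A cutoff in the ratio $\dist(x,\Gamma_{p,1})/|x-q|$ (so that $|\nabla\chi|\lesssim|x-q|^{-1}$ and $|D^2\chi|\lesssim|x-q|^{-2}$) repairs this and respects the scale-invariant form of (ii)--(iv); alternatively, following \cite{DDHM} and constructing $d$ globally by pulling back the Euclidean distance through a flattening map avoids the interpolation altogether.
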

\begin{proof}
The proof is given in \cite[Lemma 4.25]{DDHM}. Even though slightly worse estimates are stated, i.e., there is no $\mbold{0}$ in \cite[Lemma 4.25]{DDHM}, their proof is robust enough and immediately gives the improvements that we claim in this lemma. 
\end{proof}

We first fix a cut-off function as follows:
\begin{equation*}
\phi (t) :=
\left\{
\begin{array}{ll}
1, &\mbox{for $0\leq t \leq \frac{1}{2}$,}\\
2 (1-t), &\mbox{for $\frac{1}{2}\leq t \leq 1$,}\\
0, &\mbox{for $t\geq 1$.}
\end{array}\right.
\end{equation*}

Having fixed a good distorted distance function $d$, a cut-off $\phi$, and the intervals of flattening, for any $q\in\Gamma_{p,j}$ and $r\in (t_{j+1}, t_j]$, we define the following three quantities:
\begin{align*}
    \bD_p(q,r)=\bD_p(\phi,d,q,r) & := \hphantom{-}\int_{\cM_{p,j}} \phi\left(\frac{d(x,q)}{r}\right)|D\cN_{p,j}|^2(x)\operatorname{d}\cH^m(x),\\
    \bH_p(q,r) = \bH_p(\phi,d,q,r) &:=-\int_{\cM_{p,j}} \phi'\left(\frac{d(x,q)}{r}\right) |\nabla_{\cM_{p,j}} d (x,q)|^2 \frac{|\cN_{p,j}(x)|^2}{d(x,q)}\operatorname{d}\cH^m(x),\\
    \bS_p(q,r) = \bS_p(\phi,d,q,r) &:=\int_{\cM_{p,j}} \phi\left(\frac{d(x,q)}{r}\right) |\cN_{p,j}(x)|^2\operatorname{d}\cH^m(x).
\end{align*}

Next, we finally define the \emph{universal frequency function} as follows:
\begin{equation*}
    \bI_p(q,r) = \bI_p(\phi,d,q,r) :=\frac{r\bD_p(q,r)}{\bH_p(q,r)}, \quad \mbox{ for any }r\in(0,2], q\in\oball{2}.
\end{equation*}

We will also fix $d_j (x,q) := t_j^{-1}d(t_jx, t_jq)$ and $q\in\oball{2}$ to define the rescaled functions:
\begin{align*}
    \obD_{p,j}(q,r)=\obD_{p,j}(\phi,d,q,r) & := \hphantom{-}\int_{\ocM_{p,j}} \phi\left(\frac{d_j(x,q)}{r}\right)|D\ocN_{p,j}|^2(x)\operatorname{d}\cH^m(x),\\
    \obH_{p,j}(q,r) = \obH_{p,j}(\phi,d,q,r) &:=-\int_{\ocM_{p,j}} \phi'\left(\frac{d_j(x,q)}{r}\right) |\nabla_{\ocM_{p,j}} d_j (x,q)|^2 \frac{|\ocN_{p,j}(x)|^2}{d_j(x,q)}\operatorname{d}\cH^m(x),\\
    \obS_{p,j}(q,r) = \obS_{p,j}(\phi,d,q,r) &:=\int_{\ocM_{p,j}} \phi\left(\frac{d_j(x,q)}{r}\right) |\ocN_{p,j}(x)|^2\operatorname{d}\cH^m(x).
\end{align*}

We can then straightforwardly check that:
\begin{align}
    \obD_{p,j}\left(t_j^{-1}q,t_j^{-1}r\right) = t_j^{-m}\bD_p(q,r), \qquad \partial_r\obD_{p,j}\left(t_j^{-1}q,t_j^{-1}r\right) = t_j^{-m+1}\partial_r\bD_p(q,r), \label{E:rescalingD}\\
    \obH_{p,j}\left(t_j^{-1}q,t_j^{-1}r\right) = t_j^{-m-1}\bH_p(q,r), \qquad \partial_r\obH_{p,j}\left(t_j^{-1}q,t_j^{-}r\right) = t_j^{-m}\partial_r\bH_p(q,r),\label{E:rescalingH}\\
    \obS_{p,j}\left(t_j^{-1}q,t_j^{-1}r\right) = t_j^{-m-2}\bS_p(q,r), \qquad \partial_r\obS_{p,j}\left(t_j^{-1}q,t_j^{-}r\right) = t_j^{-m-1}\partial_r\bS_p(q,r). \label{E:rescalingS}
\end{align}

We also define the following quantities:
\begin{align*}
    \bE_p(q,r) &:= \Scale[0.999]{-\frac1r\int_{\cM_j}\phi^\prime\left(\frac{d(x,q)}{r}\right)\sum_i\langle (\cN_{p,j})_i(x), D(\cN_{p,j})_i(x)\nabla_{\cM_{p,j}} d(x,q)\rangle\operatorname{d}\cH^m(x),} \\
    \bG_{p,j}(q,r) &:= \Scale[0.999]{-\frac{1}{r^2}\int_{\cM_{p,j}}\phi\left(\frac{d(x,q)}{r}\right)\frac{d(x,q)}{|\nabla_{\cM_{p,j}} d(x,q)|^2}|D\cN_{p,j}\cdot\nabla_{\cM_{p,j}}d(x,q)|^2\operatorname{d}\cH^m(x),}\\
    \obE_{p,j}(q,r) &:= \Scale[0.999]{ -\frac1r\int_{\ocM_{p,j}}\phi^\prime\left(\frac{d_j(x,q)}{r}\right)\sum_i\langle (\ocN_{p,j})_i(x), D(\ocN_{p,j})_i(x)\cdot\nabla_{\ocM_{p,j}} d_j(x,q)\rangle\operatorname{d}\cH^m(x),}\\
    \obG_{p,j}(q,r) &:= \Scale[0.999]{-\frac{1}{r^2}\int_{\ocM_{p,j}}\phi\left(\frac{d_j(x,q)}{r}\right)\frac{d_j(x,q)}{|\nabla_{\ocM_{p,j}} d(x,q)|^2}|D\ocN_{p,j}\cdot\nabla_{\ocM_{p,j}}d_j(x,q)|^2\operatorname{d}\cH^m(x),}
\end{align*}
then, we also have the following rescalings:
\begin{equation}\label{E:rescalingE-G}
    \obE_{p,j}(t_j^{-1}q,t_j^{-1}r) = t_j^{-m}\bE_p(q,r) \mbox{ and }\obG_{p,j}(t_j^{-1}q,t_j^{-1}r) = t_j^{-m+1}\bG_p(q,r).
\end{equation}

\subsection{Radial derivatives in the realm of the frequency function}

The main goal of this subsection is to derive a series of inequalities involving the radial derivatives of the various quantities defined above. These inequalities will be central in computing the radial derivative of $\bI$ and in establishing further properties of $\bI$ (such as its almost monotonicity), which in turn encapsulates significant information about the current $T$.

\begin{lemma}[Radial variations and list of bounds]\label{L:FirstEqsFreqFnc}
Let $T, \Gamma$, and $\Sigma$, satisfy Assumptions \ref{A:CM}, \ref{A:ParametersCM}, and \ref{A:NA}, with $p$ in place of $0$. Then either $T = Q\a{p+\cM_{p,j}}$ in a neighborhood of $p$, for some $j\in\mathbb{N}$, or $\bH_p(p,s) >0 $ and $\bD_p(p,s)>0$, for every $s\in (0,2]$. In the latter case, there exist $C>0$ and $\tau>0$ (which are not dependant on $p, j$, $T$, nor $\varepsilon_{na}$) such that, for all $r>0$, we have
\begin{equation}\label{E:L:FirstEqsFreqFnc:1}
\begin{aligned}
    \bH_p(p,r) &\leq C r\bD_p(p,r), \quad \bI_p(p,r) \geq C^{-1},  \\ \partial_r\bS_p(p,r) &\leq Cr\bD_p(p,r),
    \mbox{ and } \bS_p(p,r)\leq C r^2\bD_p(p,r),    
\end{aligned}    
\end{equation}

Additionally, we obtain that
\begin{equation}\label{E:Dbound}
    \bD_p(p,r) \leq C r^{m+2\kappa}.
\end{equation}

We also have the following formulas for any $r\in (t_{j+1},t_j)$:
\begin{align}
    \partial_r\bD_p(p,r) = -\int \phi^\prime\left(\frac{d(x,p)}{r}\right)\frac{d(x,p)}{r^2}|D\cN_{p,j}(x)|^2\operatorname{d}\cH^m(x),\label{E:Dprime}\\
    \left| \partial_r\bH_p(p,r)- \frac{m-1}{r}\bH_p(p,r) + 2 \bE_p(p,r) \right| \leq C \mbold{p,j} \bH_p(p,r).\label{E:Hprime}
\end{align}

Moreover, for any $r\in (t_{j+1},t_j)$, we have:
\begin{align}
    |\partial_r\bD_p(p,r)| &\leq Cr^{-1}\bD_p(p,r),\label{E:D'bound}\\
    |\bD_p(p,r) - \bE_p(p,r) | &\leq  C\mbold{p} ^{\tau}\bD_p(p,r)^{1 + \tau} + C\mbold{p} t_j^{-2+2\kappa}\bS_p(p,r) \label{E:D-E},\\
    |\partial_r\bD_p(p,r) -\frac{m-2}{r}\bD_p(p,r)-2\bG_p(p,r)| &\leq C\mbold{p} ^{\tau}\bD^{\tau}_p(p,r)\left( \frac{\bD_p(p,r)}{r}+ \partial_r\bD_p(p,r)\right) \nonumber \\
    &\quad + C\mbold{p} t_j^{2\kappa-1}\bD_p(p,r), \label{E:Dprime-D-G}\\
    \biggl |\frac{1}{\bD_p(p,r)} - \frac{1}{\bE_p(p,r)} \biggr| &\leq C\mbold{p}^{\tau}\bD_p(p,r)^{\tau - 1} + C\mbold{p} t_j^{-2+2\kappa}\frac{\bS_p(p,r)}{\bD_p(p,r)^2}.\label{E:1/D-1/Ebound}
\end{align}
\end{lemma}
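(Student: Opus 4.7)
The strategy is to establish each inequality by the standard multi-valued frequency analysis, adapted to the boundary setting following \cite{delellis2021allardtype,DDHM} (in the two-dimensional case) and mimicking the interior counterpart \cite{de2023fineII}. By the rescaling identities \eqref{E:rescalingD}, \eqref{E:rescalingH}, \eqref{E:rescalingS}, \eqref{E:rescalingE-G}, I will work with $\overline{T}_{p,j}, \overline{\cM}_{p,j}, \overline{\cN}_{p,j}$ on scale $\sim 1$, so that all error constants depend only on universal parameters and the errors in \Cref{L:DistFnc} are genuinely small.

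First I would handle the dichotomy. If $\bH_p(p,s_0)=0$ for some $s_0\in(0,2]$, then $\cN_{p,j}\equiv Q\a{0}$ on an open subset of $\cM_{p,j}\cap\ball{p}{s_0}$ intersected with the support of $\phi'(d(\cdot,p)/s_0)$; combining with \Cref{D:NA}(iii) (which gives $\cN_{p,j}\equiv Q\a{0}$ on $\bPhi(\bS)\cap\Gamma$) and the almost Dir-minimality of $\cN_{p,j}$ away from the bad set (coming from the almost minimality of $T$ and \Cref{T:LocalNA}), unique continuation for $Q$-valued Dir-minimizers at a flat boundary (cf.\ \cite{delellis2021allardtype} and \Cref{l:twovariables}) forces $\cN_{p,j}\equiv Q\a{0}$ on the whole $\cM_{p,j}$; by \Cref{C:2.2} this means $T=Q\a{p+\cM_{p,j}}$ in a neighborhood of $p$. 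From now on assume $\bH_p(p,s),\bD_p(p,s)>0$ for every $s\in (0,2]$.

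Second, I would prove the Poincaré-type inequalities in \eqref{E:L:FirstEqsFreqFnc:1}. Since $\cN_{p,j}\equiv Q\a{0}$ on $\bPhi(\bS)\cap\Gamma_{p,j}$ and $0\in\Gamma_{p,j}$, a multi-valued Poincaré inequality on the half-ball (with vanishing boundary trace on $\Gamma_{p,j}$, as in \cite[Proposition 4.9]{DDHM}) gives $\int |\cN_{p,j}|^2\le Cr^2\int |D\cN_{p,j}|^2$, which combined with the cut-off structure yields both $\bH_p(p,r)\le Cr\bD_p(p,r)$ and $\bS_p(p,r)\le Cr^2\bD_p(p,r)$; the bound $\bI_p(p,r)\ge C^{-1}$ is then immediate, and $\partial_r\bS_p\le Cr\bD_p$ follows by differentiating under the integral sign and using $|\phi'|\le 2$. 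The bound \eqref{E:Dbound} combines the excess decay \Cref{T:excessdecay} at the flat one-sided point $p$ with the splitting estimate \eqref{E:SplittingNA2} of \Cref{P:SplitingNA}, after summing the contribution of the Whitney cubes meeting $\ball{p}{r}$; the exponent $2\kappa$ is chosen (reducing $\kappa$ if needed and absorbing it against $\alpha$ from \Cref{T:excessdecay}) to convert the excess decay rate into a Dirichlet energy decay.

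Third, I would derive the radial derivative formulas. Formula \eqref{E:Dprime} is just differentiation under the integral sign, using $\partial_r\phi(d(x,p)/r)=-\phi'(d/r)\,d/r^2$. For \eqref{E:Hprime}, I rewrite
\[
\bH_p(p,r)=\int_{\cM_{p,j}}\operatorname{div}_{\cM_{p,j}}\!\Bigl(\phi(d/r)\,\tfrac{|\cN_{p,j}|^2}{d}\,\nabla_{\cM_{p,j}}d\Bigr)-\text{error},
\]
and apply the divergence theorem together with the asymptotics of $d$ from \Cref{L:DistFnc}: the leading term $D^2d\sim d^{-1}(I-\hat r\otimes\hat r)$ produces the factor $(m-1)/r$, the pairing of $\nabla d$ with $\nabla |\cN_{p,j}|^2$ yields $-2\bE_p(p,r)$, and every deviation of $\cM_{p,j}$ from a flat plane and of $d$ from the Euclidean distance contributes errors of order $\mbold{p,j}\bH_p$ thanks to items (ii)--(iv) of \Cref{L:DistFnc} and the center manifold estimate \Cref{T:CM}(i); tangency of $\nabla d$ to $\Gamma_{p,j}$ guarantees no boundary term appears since $\cN_{p,j}\equiv Q\a{0}$ on $\Gamma_{p,j}$.

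Fourth, I would establish the four bounds \eqref{E:D'bound}--\eqref{E:1/D-1/Ebound}. For \eqref{E:D'bound}, one combines the definition \eqref{E:Dprime} with the observation that $\bD_p(p,r)$ is equivalent (up to universal constants) to the Dirichlet energy over $\cM_{p,j}\cap\halfbball{p}{r}$; see \cite[Section 8]{delellis2021allardtype}. The comparison estimates \eqref{E:D-E} and \eqref{E:Dprime-D-G} are the \emph{outer} and \emph{inner} variation inequalities: I plug, respectively, the vector field $\phi(d(x,p)/r)\cN_{p,j}$ and the vector field $\phi(d(x,p)/r)\nabla_{\cM_{p,j}}d\,$ into the first variation of mass for $\mathbf{T}_{F}$, convert to inequalities on $\cN_{p,j}$ by the almost minimality of $T$ and the area Taylor expansion (as in \cite[Theorem 4.2]{DS4} and its boundary counterpart in \cite{delellis2021allardtype}, \cite{DDHM}), and absorb the errors coming from the mass of $T-\mathbf{T}_F$ on $\cM_{p,j}\setminus\cK$ using the estimates of \Cref{T:LocalNA} and \Cref{T:GlobalNA}; this produces an error polynomial in the Dirichlet energy of the form $C\mbold{p}^\tau\bD^{1+\tau}$, plus the curvature term $C\mbold{p}\,t_j^{-2+2\kappa}\bS_p$ coming from the ambient second fundamental form of $\Sigma$ and the curvature of $\cM_{p,j}$. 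Finally, \eqref{E:1/D-1/Ebound} is a purely algebraic consequence of \eqref{E:D-E} together with the lower bound $\bE_p(p,r)\ge c\bD_p(p,r)$ (which itself follows from \eqref{E:D-E} once $\varepsilon_{na}$ is small enough).

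The main obstacle will be the careful bookkeeping of the error terms in the outer/inner variation comparisons, specifically the two different scales in \eqref{E:D-E}: a superlinear-in-$\bD$ term from the Taylor expansion of the area, and a curvature-driven term proportional to $\bS_p$. Both must be kept separate to feed correctly into the almost monotonicity of the frequency (\Cref{T:AlmMonotFreq}), and both depend delicately on whether $r$ lies in the interior of an interval of flattening (where one has the normal approximation estimates) or close to its endpoints (where the stopping criterion of \Cref{S:Stopping-condition} forces excess decay). The construction of the intervals of flattening in \Cref{S:IntervalsFlattening}, combined with \Cref{T:excessdecay}, is precisely what allows us to carry these errors uniformly across all scales and for all choices of center manifold.
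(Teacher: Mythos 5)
Your proposal follows the paper's overall blueprint (outer and inner variations with a cutoff vector field pointing along $\nabla d$, Poincar\'e inequality, rescaling to the normalized objects $\overline{\cN}_{p,j}$) and most of the individual steps are correct: the dichotomy, \eqref{E:Dprime}, \eqref{E:Hprime}, \eqref{E:D'bound}, and the algebraic derivation of \eqref{E:1/D-1/Ebound} from \eqref{E:D-E} are all handled as in the paper. However, there is a genuine gap in the treatment of \eqref{E:D-E} and \eqref{E:Dprime-D-G}. You say you would ``work with $\overline{\cN}_{p,j}$ on scale $\sim 1$, so that all error constants depend only on universal parameters,'' and then claim the outer/inner variations ``produce an error polynomial in the Dirichlet energy of the form $C\mbold{p}^\tau\bD^{1+\tau}$.'' That is not what happens. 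At the rescaled scale the error terms of \cite[Section 9]{DDHM} indeed give $|\obD_j(r)-\obE_j(r)|\leq C\mbold{0}^{\tau_1}\obD_j(r)^{1+\tau_1}+C\mbold{0} t_j^{2\kappa}\obS_j(r)$; but when one undoes the rescaling via \eqref{E:rescalingD}--\eqref{E:rescalingS} the superlinear term acquires an unfavorable factor $t_j^{-m\tau_1}$, which diverges as $j\to\infty$. The paper's resolution — and the whole reason \eqref{E:Dbound} appears in the lemma — is the absorption step
\begin{equation*}
t_j^{-m}\bD_p(p,r)\ \overset{\eqref{E:Dbound},\ r\leq t_j}{\leq}\ C\,\bD_p(p,r)^{1-\frac{m}{m+2\kappa}},
\end{equation*}
which trades the bad power of $t_j$ for a slightly smaller positive power of $\bD_p(p,r)$, yielding \eqref{E:D-E} with a reduced (but still positive) exponent $\tau<\tau_1\bigl(1-\tfrac{m}{m+2\kappa}\bigr)$. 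Your closing paragraph gestures at ``careful bookkeeping'' and says the intervals of flattening plus excess decay let you ``carry these errors uniformly across all scales,'' but you never identify this algebraic mechanism, without which the outer/inner variation estimates do not close in the original (unrescaled) variables.

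A secondary, smaller point: your derivation of \eqref{E:Dbound} ``by summing the splitting estimate over Whitney cubes and absorbing $\kappa$ against $\alpha$ from \Cref{T:excessdecay}'' misattributes where the exponent $2\kappa$ comes from. The paper instead invokes the precise estimate $\obD_j(r)\leq C_0 t_j^{2\kappa} r^{m+2-2\alpha_\be}$ from \cite[Eq.\ (16.29)]{delellis2021allardtype}, where $\kappa$ is the H\"older exponent coming from the $C^{3,\kappa}$-regularity of the center manifold in \Cref{T:CM}(i), not from the excess decay rate $\alpha$; the rescaling $\bD_p(p,r)=t_j^{m}\obD_j(t_j^{-1}r)$ then gives $\bD_p(p,r)\leq C r^{m+2\kappa}$ using $r<t_j$ and $\kappa\leq 1-\alpha_\be$. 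This matters because \eqref{E:Dbound} is precisely the a priori bound used in the absorption step above, so getting its source and exponent right is essential, not cosmetic.
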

\begin{proof}
These estimates depend on the outer variation and an inner variation on specific vector fields as is well-known. We denote by $\bp_j$ the projection onto the center manifold $\cM_{p,j}$.
The vector fields are the following:
\begin{align*}
    X_o^j(q):=\phi\left(\frac{d_j(\bp_j(q))}{r}\right) (q- \bp_j(q)), X_i^j(q):=-Y_j(\bp_j(q))  \mbox{ with } Y_j:= \frac{1}{2}\phi\left(\frac{d_j}{r}\right)\frac{\nabla d_j}{|\nabla d_j|^2},
\end{align*}
where in the formula above, $\nabla$ denotes the tangent gradient to $\cM_{p,j}$. The vector field $X_o^j$ vanishes at $\Gamma$ and the vector field $X_i^j$ is tangent to $\cM_{p,j}$ and to $\Gamma_{p,j}$ by properties of $d$.
Let us drop the dependence on $p$ in the notation and assume without loss of generality that $p=0$. The proof of \eqref{E:L:FirstEqsFreqFnc:1} for the rescaled quantities is given in \cite[Proposition 9.4]{DDHM} from which, together with the rescalings above, i.e., \eqref{E:rescalingD}, \eqref{E:rescalingS}, and \eqref{E:rescalingH}, we get \eqref{E:L:FirstEqsFreqFnc:1}. In order to prove \eqref{E:Dbound}, we note that \cite[Eq. (16.29)]{delellis2021allardtype} gives $\obD_j(r) \leq C_0t_j^{2\kappa}r^{m+2-2\alpha_\be}$, thus
\begin{equation*}
    \bD(r) \overset{\eqref{E:rescalingD}}{=} t_j^{m}\obD_j(t_j^{-1} r)\leq C_0t_j^{m+2\kappa}(t_j^{-1}r)^{m+2-2\alpha_\be} \overset{r<t_j}{<} C_0r^{m+2\kappa}.
\end{equation*}

The proof of \eqref{E:Dprime} is a direct computation. For \eqref{E:Hprime}, we can mimic the proof of \cite[Proposition 16.2]{delellis2021allardtype} using the slightly improved estimates for $d$ given by \Cref{L:DistFnc}. Inequality \eqref{E:D'bound} is a consequence of the definition of $\bD$ and \eqref{E:Dprime}. To prove \eqref{E:D-E} we will do as in \cite[Section 9]{DDHM}. Indeed,
\begin{equation*}
    |\obD_j(r) - \obE_j(r)| \leq \sum_{j=1}^5 |\mathrm{Err}_j^o|,
\end{equation*}
where $\mathrm{Err}_k^o$ are as in \cite[Section 9]{DDHM} and 
\begin{equation*}
    |\obD_j^\prime(r) - O(t_j^{\tau_1})\obD_j(r) - 2\obG_j(r)| \leq \frac{2}{r}\sum_{k=1}^5 |\mathrm{Err}_k^i|,
\end{equation*}
where $\mathrm{Err}_k^i$ are as in \cite[Section 9]{DDHM}. Now, revisiting the argument of \cite[Proposition 9.14]{DDHM}, one can easily keep track that $\mbold{0}$ can be carried throughout the proof to ensure 
\begin{equation}\label{E:D-E:with-bars}
    |\obD_j(r) - \obE_j(r)| \leq C\mbold{0}^{\tau_1}\obD_j(r)^{1+\tau_1} + C\mbold{0} t_j^{2\kappa}\obS_j(r), \mbox{ and }
\end{equation}
\begin{equation}\label{E:Dprime-D-G:with-bars}
    |\obD_j^\prime(r) - \frac{m-2}{r}\obD_j(r) - 2\obG_j(r)| \leq C\mbold{0}^{\tau_1}\obD_j(r)^{\tau_1}\left( \frac{\obD_j(r)}{r}+ \obD_j^\prime(r)\right) + C\mbold{0} t_j^{2\kappa}\obD_j(r).
\end{equation}

We use now the rescalings \eqref{E:rescalingD}, \eqref{E:rescalingS}, and \eqref{E:rescalingE-G}, in \eqref{E:D-E:with-bars} to get
\begin{equation*}
\begin{aligned}
    t_j^{-m}|\bD(r) - \bE(r)| &= |\obD_j(t_j^{-1}r) - \obE_j(t_j^{-1}r)| \leq C\mbold{0}^{\tau_1}\obD_j(t_j^{-1}r)^{1+\tau_1} + C\mbold{0} t_j^{2\kappa}\obS_j(t_j^{-1}r)\\
    &= C\mbold{0}^{\tau_1}t_j^{-m(1+\tau_1)}\bD(r)^{1+\tau_1} + C\mbold{0} t_j^{-m-2+2\kappa}\bS(r).
\end{aligned}
\end{equation*}

Using \eqref{E:Dbound} and $r\leq t_j$, we get $t_j^{-m}\bD(r) \leq Ct_j^{-m}r^m\bD(r)^{1-\frac{m}{m+2\kappa}} \leq C\bD(r)^{1-\frac{m}{m+2\kappa}}$ which together with the last displayed inequality provide
\begin{equation}\label{E:bound|D-E|}
\begin{aligned}
    |\bD(r) - \bE(r) | &\leq C\mbold{0}^{\tau_1}t_j^{-m\tau_1}\bD(r)^{1+\tau_1} + C\mbold{0} t_j^{-2+2\kappa}\bS(r)\\
    &= C\mbold{0} ^{\tau_1}\left(t_j^{-m}\bD(r)\right)^{\tau_1}\bD(r) + C\mbold{0} t_j^{-2+2\kappa}\bS(r)\\
    &\leq C\mbold{0} ^{\tau_1}\bD(r)^{1 + \tau_1\left(1-\frac{m}{m+2\kappa}\right)} + C\mbold{0} t_j^{-2+2\kappa}\bS(r) .
\end{aligned}
\end{equation}

We can choose choose $\tau_1$ slightly smaller accordingly with the exponent $\tau_1\left(1-\frac{m}{m+2\kappa}\right)$ to verify \eqref{E:D-E}. We argue analogously to get \eqref{E:Dprime-D-G} from \eqref{E:Dprime-D-G:with-bars}, we use \eqref{E:Dprime-D-G:with-bars} and the rescalings \eqref{E:rescalingD}, \eqref{E:rescalingE-G} to obtain that
\begin{equation*}
\begin{aligned}
    t_j^{-m+1}|\bD^\prime(r) &- \frac{m-2}{r}\bD(r) - 2\bG(r)| = |\obD_j^\prime(t_j^{-1}r) - \frac{(m-2)t_j^{2\kappa}}{r}\obD_j(t_j^{-1}r) - 2\obG_j(t_j^{-1}r)| \\
    &\leq C\mbold{0} ^{\tau_1}\obD(t_j^{-1}r)^{\tau_1}\left( \frac{\obD(t_j^{-1}r)}{t_j^{-1}r}+ \obD^\prime(t_j^{-1}r)\right) + C\mbold{0} t_j^{2\kappa}\obD(t_j^{-1}r)\\
    &= t_j^{-m+1}\left(C\mbold{0} ^{\tau_1}t_j^{-m\tau}\bD(r)^{\tau_1}\left( \frac{\bD(r)}{r}+ \bD^\prime(r)\right) + C\mbold{0} t_j^{2\kappa-1}\bD(r)\right).
\end{aligned}
\end{equation*}

We argue as in \eqref{E:bound|D-E|} to obtain \eqref{E:Dprime-D-G}, for a possibly smaller $\tau_1>0$. To prove \eqref{E:1/D-1/Ebound}, we apply \eqref{E:L:FirstEqsFreqFnc:1} and the fact that $r\leq t_j$ to achieve, for any $\sigma\in (0,1)$, that
\begin{equation*}
    t_j^{2\sigma -2}\bS(r) \leq t_j^{2\sigma-2} Cr^2\bD(r) \leq Cr^{2\sigma}\bD(r).
\end{equation*}

The last displayed inequality guarantees
\begin{equation*}
\begin{aligned}
    |\bD(r) - \bE(r) | &\overset{\eqref{E:bound|D-E|}}{\leq} C\mbold{0} ^{\tau}\bD(r)^{1 + \tau} + C\mbold{0} t_j^{-2+2\kappa}\bS(r) \\
    &\leq C\mbold{0} ^{\tau}\bD(r)^{1 + \tau}+C\mbold{0} r^{\tau}\bD(r)\\
    &\overset{\eqref{E:Dbound}}{\leq} C\mbold{0} ^{\tau}r^{\tau(m+2\kappa)}\bD(r)+C\mbold{0} r^{\tau}\bD(r)\\
    &\leq C\mbold{0} ^{\tau}r^\tau \bD(r),
\end{aligned}
\end{equation*}
which in turn ensure that $\bD(r)/2 \leq \bE(r) \leq 2\bD(r)$ provided $r$ is small enough. Hence, we get that \eqref{E:bound|D-E|} can be turned into
\begin{equation*}
    \biggl |\frac{1}{\bD(r)} - \frac{1}{\bE(r)} \biggr| \leq C\mbold{0} ^{\tau}\bD(r)^{\tau - 1} + C\mbold{0} t_j^{-2+2\kappa}\frac{\bS(r)}{\bD(r)^2} .
\end{equation*}
\end{proof}

\subsection{Almost monotonicity of \texorpdfstring{$\bI$}{Lg} at one-sided flat singular points}

We are now in a position to prove the almost monotonicity of the universal frequency function $\bI$, with improved bounds compared to \cite{delellis2021allardtype} and in arbitrary dimensions.

\begin{theorem}[Almost monotonicity and jumps of $\bI$]\label{T:AlmMonotFreq}
Let $T, \Gamma$, and $\Sigma$, satisfy Assumptions \ref{A:CM}, \ref{A:ParametersCM}, and \ref{A:NA}, with $p$ in place of $0$. Then either $T = Q\a{p+\cM_{p,j}}$ in a neighborhood of $q$, for some $j\in\mathbb{N}$, or $\bH_p(p,r) >0 $ and $\bD_p(p,r)>0$, for every $r>0$. In the latter case, $\bI$ is absolutely continuous on each $(t_{j+1}, t_j)$ and there exist $C,\tau>0$ (not depending on $p,j,T$, nor $\varepsilon_{na}$) such that for a.e. $r\in (t_{j+1}, t_j)$ it holds
\begin{equation}\label{E:LowerBoundDerivI}
\frac{\mathrm{d}}{\mathrm{d}r}\left(
    \log \bI_p(p,r) + C\mbold{p} ^\tau\bD_p(p,r)^\tau -C\mbold{p} t_j^{-2+2\kappa}\frac{\bS_p(p,r)}{\bD_p(p,r)} \right) \geq - C\mbold{p} ^{\tau}r^{-1+\tau}.
\end{equation}

Morever, we have a bound on the jumps of $\bI$ given by
\begin{align}
    \biggl| \lim_{s\downarrow t_j}\bI_p(p,s) - &\lim_{s\uparrow t_j}\bI_p(p,s)\biggr| \leq C\mbold{p}^{\tau}t_j^{\tau}, \quad \sum_j \biggl| \lim_{s\downarrow t_j}\bI_p(p,s) - \lim_{s\uparrow t_j}\bI_p(p,s)\biggr| \leq C\mbold{p}^{\tau} ,\label{E:jumpsI}\\
    &\mbox{ and }\bI_p(p,r) \geq e^{- C\mbold{p} ^{\tau}t_j^\tau} \bI_p(p,a),\label{E:Almost-Monot-I}
\end{align}
where the last inequality holds for $r\in (t_{j+1}, t_j)$ and any $a\in (0,r)$.
\end{theorem}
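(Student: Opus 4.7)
First I would dispose of the degenerate alternative: if $\bH_p(p,r_0)=0$ for some $r_0>0$, then $\cN_{p,j}\equiv Q\a{0}$ on $\cM_{p,j}\cap\bp^{-1}(\ball{p}{r_0/2})$ for the corresponding $j$, and the contact-set properties in \Cref{T:LocalNA} and \Cref{C:2.2} force $T\res \ball{p}{r_0/2}=Q\a{p+\cM_{p,j}}$. If this does not occur, then \eqref{E:L:FirstEqsFreqFnc:1} gives $\bH_p(p,r),\bD_p(p,r)>0$ for every $r\in(0,2]$, and absolute continuity of $\bI_p(p,\cdot)$ on each open interval $(t_{j+1},t_j)$ is immediate from the explicit formulas \eqref{E:Dprime} and \eqref{E:Hprime} together with the smoothness of the center manifold and its normal approximation on that interval.

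\textbf{The key differential inequality.} The plan is to differentiate $\log \bI_p(p,r)=\log r+\log \bD_p(p,r)-\log \bH_p(p,r)$ on a given interval of flattening. From \eqref{E:Hprime} we read $\bH_p'/\bH_p=(m-1)/r-2\bE_p/\bH_p+O(\mbold{p})$, while \eqref{E:Dprime-D-G} together with \eqref{E:D-E} and \eqref{E:1/D-1/Ebound} let us write $\bD_p'/\bD_p=(m-2)/r+2\bG_p/\bE_p+\mathcal{E}_D(r)$. Here $\mathcal{E}_D(r)$ is an error of the form $\mbold{p}^\tau\bD_p^\tau(\bD_p/r+\bD_p')/\bD_p$ plus $\mbold{p}\, t_j^{-2+2\kappa}(\cdots)$; using \eqref{E:Dbound}, \eqref{E:D'bound} and $\partial_r\bS_p\le Cr\bD_p$ from \eqref{E:L:FirstEqsFreqFnc:1}, these errors can be reorganized as $-\tfrac{d}{dr}(C\mbold{p}^\tau\bD_p^\tau)+\tfrac{d}{dr}\bigl(C\mbold{p}\,t_j^{-2+2\kappa}\bS_p/\bD_p\bigr)+O(\mbold{p}^\tau r^{-1+\tau})$. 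The $1/r$ terms then cancel and we are left with
\begin{equation*}
\frac{d}{dr}\log \bI_p(p,r)\ \ge\ 2\left(\frac{\bG_p(p,r)}{\bE_p(p,r)}-\frac{\bE_p(p,r)}{\bH_p(p,r)}\right)-\frac{d}{dr}\!\bigl(C\mbold{p}^\tau\bD_p^\tau\bigr)+\frac{d}{dr}\!\Bigl(C\mbold{p}\,t_j^{-2+2\kappa}\tfrac{\bS_p}{\bD_p}\Bigr)-C\mbold{p}^\tau r^{-1+\tau}.
\end{equation*}
The Cauchy--Schwarz inequality $\bE_p(p,r)^2\le \bG_p(p,r)\bH_p(p,r)$ (applied pointwise to $\langle \cN_{p,j},D\cN_{p,j}\cdot\nabla d\rangle$ and integrated against $-\phi'/r$) makes the first bracket nonnegative, and rearranging yields \eqref{E:LowerBoundDerivI}.

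\textbf{Jumps across $t_j$ and almost monotonicity.} For the jump at $t_j$, the maximality condition defining $t_j$ in \Cref{S:IntervalsFlattening} ensures that at scale $\sim t_j$ the current $T$ is simultaneously well-approximated by \emph{both} adjacent center manifolds, hence the two normal approximations $\cN_{p,j}$ and $\cN_{p,j+1}$ compare on the relevant annulus with an error of size $\mbold{p}^\tau t_j^\tau$ in $L^2$ and Dirichlet norms, thanks to the global estimates in \Cref{T:GlobalNA}. After passing through the rescalings \eqref{E:rescalingD}--\eqref{E:rescalingE-G}, this produces a multiplicative mismatch of at most $1+C\mbold{p}^\tau t_j^\tau$ in each of $\bD_p$ and $\bH_p$, and hence the single-jump bound in \eqref{E:jumpsI}. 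Since $t_{j+1}\le 2^{-5}t_j$, the sequence $(t_j^\tau)_j$ decays geometrically, giving the summed bound in \eqref{E:jumpsI}. Finally, \eqref{E:Almost-Monot-I} follows by integrating \eqref{E:LowerBoundDerivI} on each sub-interval of $(a,r)$ contained in an interval of flattening, exponentiating, chaining the finitely many jumps met in $(a,r)$, and controlling the two correction terms inside the derivative by \eqref{E:L:FirstEqsFreqFnc:1} and \eqref{E:Dbound}.

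\textbf{Main obstacle.} The delicate point is that the errors in \eqref{E:D-E} and \eqref{E:Dprime-D-G} must be absorbed as genuine $r$-derivatives of the two correction terms appearing inside the log in \eqref{E:LowerBoundDerivI}, while paying only an integrable remainder $C\mbold{p}^\tau r^{-1+\tau}$. This prevents a naive pointwise bound and requires keeping the full factor $\mbold{p}$ (rather than $\mbold{p}^\tau$) in front of the $\bS$-error, exploiting the cancellation $t_j^{-2+2\kappa}\cdot r^{2-2\kappa}\le 1$ valid because $r\le t_j$. The second delicate step is the jump estimate across $t_j$: one must compare two distinct center manifolds built from independent Whitney stopping procedures, and the argument relies crucially on the precise stopping rule forcing a boundary cube to be the $\be$-type at scale $t_j$.
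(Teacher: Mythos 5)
Your treatment of the differential inequality follows the paper's route closely: differentiate $\log\bI$, substitute the radial-variation identities from \Cref{L:FirstEqsFreqFnc}, invoke Cauchy--Schwarz in the form $\bE_p^2\le \bG_p\bH_p$, and absorb the error terms into exact $r$-derivatives of $\bD_p^\tau$ and $\bS_p/\bD_p$ by the product-rule manipulations you describe, leaving an integrable remainder $\mbold{p}^\tau r^{-1+\tau}$. That part is correct and matches the paper.

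The genuine gap is in the jump estimate \eqref{E:jumpsI}. You assert that the two normal approximations on either side of $t_j$ compare with an error of size $\mbold{p}^\tau t_j^\tau$ ``thanks to the global estimates in \Cref{T:GlobalNA}'' and the ``maximality condition defining $t_j$''. Neither of these actually produces the factor $t_j^\tau$. The generic change-of-center-manifold argument (the boundary analogue of Propositions 17.1--17.2 in \cite{delellis2021allardtype}, which is what you are implicitly invoking) bounds the jump $|\bI_p(p,t_j^+)-\bI_p(p,t_j^-)|$ by a power of the excess of $T$ at scale $\sim t_j$ plus a power of the rescaled second fundamental form of $\Sigma$; this is of the form $\mathbf{E}(T,\ball{p}{6t_j})^\tau + (t_j\bA_\Sigma)^{2\tau}$, \emph{not} directly $\mbold{p}^\tau t_j^\tau$. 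The second term decays in $t_j$ for free, but the excess term does not: to conclude $\mathbf{E}(T,\ball{p}{6t_j})^\tau \le C\mbold{p}^\tau t_j^{\tau\alpha}$ one must invoke the excess decay theorem (\Cref{T:excessdecay}, proved in \cite{ian2024uniqueness}) at the one-sided flat point $p$. Without this input, the individual jump estimate carries no power of $t_j$, the sum $\sum_j|\bI_p(p,t_j^+)-\bI_p(p,t_j^-)|$ need not converge, and \eqref{E:Almost-Monot-I} fails. This is precisely the point the paper stresses: the availability of an \emph{a priori} excess decay at one-sided boundary points is what makes the jump sum finite, in sharp contrast to the interior case treated in \cite{de2023fineI}, where no such decay is known a priori and a much more delicate BV estimate on $\log(\bI+1)$ is needed. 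The ``precise stopping rule forcing a boundary cube to be $\be$-type'' you mention plays a different role and does not substitute for the excess decay here.
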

\begin{remark}
Due to the excess decay (\cite{ian2024uniqueness}) at one-sided points, we obtain excellent estimates for the jumps of $\bI$, as stated in \eqref{E:jumpsI}, which mitigate several difficulties compared to the interior case.  
In the interior setting, where the uniqueness of tangent cones and excess decay are not known \emph{a priori}, the authors must establish a significantly more delicate and labor-intensive BV-estimate for $\log(\bI + 1)$, ensuring similar bounds on the jumps of $\bI$ (cf. \cite[Proposition 6.2]{de2023fineI}). Still, in the interior case, the authors in \cite{krummel2023analysisI} introduced the \emph{planar frequency}, for which they are able to establish a lower bound strictly greater than one. This, in turn, yields an excess decay estimate (cf. \cite[Corollary 4.3]{krummel2023analysisI}), and represents a key distinction arising from the use of the planar frequency.
\end{remark}
\begin{proof}
We drop the dependences on $p$ and reduce to $p=0$ as in \Cref{L:FirstEqsFreqFnc}. Applying now \eqref{E:Hprime}, we get
\begin{equation*}
    \frac{\mathrm{d}}{\mathrm{d}r}
    \log \bI(r) = \frac{1}{r} + \frac{\bD^\prime(r)}{\bD(r)} - \frac{\bH^\prime(r)}{\bH(r)} = \frac{m-2}{r}\frac{\bD^\prime(r)}{\bD(r)} - \frac{2\bE(r)}{\bH(r)} - C\mbold{0}.
\end{equation*}

Plugging the displayed equality above to \eqref{E:1/D-1/Ebound} and \eqref{E:Dprime-D-G}, we get
\begin{equation*}
\begin{aligned}
    \frac{\mathrm{d}}{\mathrm{d}r}
    \log \bI(r) &\overset{\eqref{E:1/D-1/Ebound}}{\geq} \frac{\bD^\prime(r)}{\bE(r)} - C\mbold{0} ^{\tau}\bD(r)^{\tau - 1}\bD^\prime(r) - C\mbold{0} t_j^{-2+2\kappa}\frac{\bS(r)\bD^\prime(r)}{\bD(r)^2} \\
    &\quad - \frac{2\bE(r)}{\bH(r)} - C\mbold{0} \\
    &\overset{\eqref{E:Dprime-D-G}}{\geq} \frac{2\bG(r) - C\mbold{0} ^{\tau}\bD(r)^{\tau}\left( \frac{\bD(r)}{r}+ \bD^\prime(r)\right) - C\mbold{0} t_j^{2\kappa-1}\bD(r)}{\bE(r)} \\
    &\quad - C\mbold{0} ^{\tau}\bD(r)^{\tau - 1}\bD^\prime(r) - C\mbold{0} t_j^{-2+2\kappa}\frac{\bS(r)\bD^\prime(r)}{\bD(r)^2} \\
    &\quad - \frac{2\bE(r)}{\bH(r)} - O(\mbold{0} ) \\
    &\geq \frac{- C\mbold{0} ^{\tau}\bD(r)^{\tau}\left( \frac{\bD(r)}{r}+ \bD^\prime(r)\right) - C\mbold{0} t_j^{2\kappa-1}\bD(r)}{\bE(r)} \\
    &\quad - C\mbold{0} ^{\tau}\bD(r)^{\tau - 1}\bD^\prime(r)  - C\mbold{0} t_j^{-2+2\kappa}\frac{\bS(r)\bD^\prime(r)}{\bD(r)^2} - C\mbold{0},
\end{aligned}
\end{equation*}
where in the last inequality we applied Cauchy-Schwarz to have $\bG(r)\bH(r) \geq \bE(r)^2$, and hence $2\bG(r)/\bE(r) - 2\bE(r)/\bH(r)\geq 0$. By \eqref{E:D-E}, we have $\bD(r)/\bE(r) \leq C_0$ which ensures, by the last displayed inequality, that 
\begin{equation*}
\begin{aligned}
    \frac{\mathrm{d}}{\mathrm{d}r}
    \log \bI(r) &\geq - C\mbold{0} ^{\tau}\biggl( \frac{\bD(r)^\tau}{r}+ \frac{\bD^\prime(r)}{\bD(r)^{1-\tau}}\biggr) - C\mbold{0} t_j^{2\kappa-1} - C\mbold{0} ^{\tau}\frac{\bD^\prime(r)}{\bD(r)^{1-\tau}} \\
    &\quad - C\mbold{0} t_j^{-2+2\kappa}\frac{\bS(r)\bD^\prime(r)}{\bD(r)^2} - C\mbold{0}.
\end{aligned}
\end{equation*}

We absorb the $C\mbold{0}$ term in the $\mbold{0}t_j^{2\kappa-1}$ term, $-\frac{\bS(r)\bD^\prime(r)}{\bD(r)^2} = \frac{\mathrm{d}}{\mathrm{d}r}\frac{\bS(r)}{\bD(r)} - \frac{\bS^\prime(r)}{\bD(r)}$ and $\frac{\mathrm{d}}{\mathrm{d}r}\bD(r)^\tau = \tau\frac{\bD^\prime(r)}{\bD(r)^{1-\tau}}$, we derive
\begin{equation*}
\begin{aligned}
    \frac{\mathrm{d}}{\mathrm{d}r}\biggl(
    \log \bI(r) + C\mbold{0} ^\tau\bD(r)^\tau - C\mbold{0} t_j^{-2+2\kappa}\frac{\bS(r)}{\bD(r)} \biggr) &\geq - C\mbold{0} ^{\tau}\frac{\bD(r)^\tau}{r} - C\mbold{0} t_j^{-2+2\kappa}\frac{\bS^\prime(r)}{\bD(r)} \\
    &\quad - C\mbold{0}t_j^{2\kappa-1}.
\end{aligned}
\end{equation*}

Using \eqref{E:Dbound} and \eqref{E:L:FirstEqsFreqFnc:1}, possibly taking $\tau$ smaller, we get 
\begin{equation*}
\begin{aligned}
    \frac{\mathrm{d}}{\mathrm{d}r}\left(
    \log \bI(r) + C\mbold{0} ^\tau\bD(r)^\tau -C\mbold{0} t_j^{-2+2\kappa}\frac{\bS(r)}{\bD(r)} \right) &\geq - C\mbold{0} ^{\tau}r^{-1+\tau} - C\mbold{0}t_j^{2\kappa-1}\\
    &\quad - C\mbold{0} t_j^{-2+2\kappa}r \\
    &\overset{t_j > r}{\geq }  - C\mbold{0} ^{\tau}r^{-1+\tau}.
\end{aligned}
\end{equation*}

This finishes the proof of \eqref{E:LowerBoundDerivI}.
The proof of the first estimate in \eqref{E:jumpsI} is given in \cite[Theorem 15.5, Part II]{delellis2021allardtype}, where the authors do not use $m=2$ in their proof. 
More precisely the desired estimate is \cite[Equation 17.7]{delellis2021allardtype}, which is a consequence of \cite[Proposition 17.1]{delellis2021allardtype} and \cite[Proposition 17.2]{delellis2021allardtype}. The only difference in our case is that we have the additional term coming from $\Sigma$, precisely $|\bI(t_j^+)-\bI(t_j^-)|$ is controlled by $\mathbf{E}(T,6t_j)^{\tau}+ \mathbf{A}_{\Sigma}^{2\tau}$ which is bounded by $\mbold{0}^{\tau}t_j^{\tau\alpha}$ due to the excess decay in \cite{ian2024uniqueness}. Here, the superscripts $+$ and $-$ denote the right- and left-hand side limits, respectively. 

The second estimate of the jumps in \eqref{E:jumpsI} follows immediately from $t_{j+1}/t_{j}\leq \frac{1}{2}$.

We now prove \eqref{E:Almost-Monot-I} using \eqref{E:jumpsI}. Fix $r\in (t_{{j_r}+1}, t_{j_r}]$, $a\in (t_{j_a+1},t_{j_a}]$, $r>a$, and integrate \eqref{E:jumpsI} as follows
\begin{equation}\label{E:proof-of-small-improvements-monoton-I}
\begin{aligned}
    \int_a^r\frac{\mathrm{d}}{\mathrm{d}s}\left(
    \log \bI(s) + C\mbold{0} ^\tau\bD(s)^\tau -C\mbold{0} t_{i(s)}^{-2+2\kappa}\frac{\bS(s)}{\bD(s)} \right) \mathrm{d}s\geq  - C\mbold{0} ^{\tau}(r^\tau-a^\tau),
\end{aligned}
\end{equation}
where $i(s)$ denote the index for which $s\in (t_{j(s)+1}, t_{j(s)})$. We now notice that
\begin{equation*}
\begin{aligned}
    \int_a^r\frac{\mathrm{d}}{\mathrm{d}s}\left(-t_{i(s)}^{-2+2\kappa}\frac{\bS(s)}{\bD(s)} \right) \mathrm{d}s &= - \sum_{i= j_r}^{j_a-1}\left( - t_{i}^{-2+2\kappa}\frac{\bS(t_i^-)}{\bD(t_i^-)} + t_{i+1}^{-2+2\kappa}\frac{\bS(t_{i+1}^+)}{\bD(t_{i+1}^+)}\right)\\
    &\quad - t_{j_r}^{-2+2\kappa}\frac{\bS(r)}{\bD(r)} + t_{j_r}^{-2+2\kappa}\frac{\bS(t_{j_r}^+)}{\bD(t_{j_r}^+)}\\
    &\quad - t_{j_a}^{-2+2\kappa}\frac{\bS(t_{j_a}^-)}{\bD(t_{j_a}^-)} + t_{j_a}^{-2+2\kappa}\frac{\bS(a)}{\bD(a)}.
\end{aligned}
\end{equation*}

We discard the negative terms and we use \eqref{E:L:FirstEqsFreqFnc:1} to turn the last displayed equation into
\begin{equation}\label{E:proof-of-small-improvements-monoton-II}
\begin{aligned}
    \int_a^r\frac{\mathrm{d}}{\mathrm{d}s}\left(-C\mbold{0} t_{i(s)}^{-2+2\kappa}\frac{\bS(s)}{\bD(s)} \right) \mathrm{d}s \leq C\mbold{0} \left(\sum_{i= j_r}^{j_a-1}t_{i}^{2\kappa} +t_{j_r}^{2\kappa} +t_{j_a}^{2\kappa}\right) \leq C\mbold{0}t_{j_r}^{2\kappa},
\end{aligned}
\end{equation}
where we used that $t_{j_a} \leq t_i < r \leq t_{j_r}, i\in\{j_r+1, \ldots, j_a\}$ and $t_{i+1}/t_i < 2^{-5}$ (by construction of the intervals of flattening). We argue analogously, this time using \eqref{E:Dbound}, to obtain 
\begin{equation}\label{E:proof-of-small-improvements-monoton-III}
\begin{aligned}
    \int_a^r\frac{\mathrm{d}}{\mathrm{d}s}\left(C\mbold{0} ^\tau\bD(s)^\tau \right) \mathrm{d}s &= C\mbold{0}^\tau \left(\bD(r)^\tau - \left(\bD(t_{j_r}^+)^{\tau}\right) + \sum_{i= j_r}^{j_a-1}(\bD(t_i^-)^\tau) -(\bD(t_{i+1}^+)^\tau)\right.\\
    &\quad \biggl. (\bD(t_{j_a}^-)^\tau) - \bD(a)^\tau \biggr)\\
    &\leq C\mbold{0}^\tau \left(r^{m+2\kappa} + \sum_{i= j_r}^{j_a} t_{i}^{m+2\kappa}\right) \leq C\mbold{0}^\tau t_{j_r}^{m+2\kappa}.
\end{aligned}
\end{equation}

We plug \eqref{E:proof-of-small-improvements-monoton-II} and \eqref{E:proof-of-small-improvements-monoton-III} into \eqref{E:proof-of-small-improvements-monoton-I} to obtain (possibly reducing $\tau>0$) that 
\begin{equation*}
\begin{aligned}
    \int_a^r\frac{\mathrm{d}}{\mathrm{d}s}\left(
    \log \bI(s) \right) \mathrm{d}s \geq  - C\mbold{0} ^{\tau}(r^\tau-a^\tau) - C\mbold{0}^\tau t_{j_r}^{2\kappa} \geq - C\mbold{0} ^{\tau}t_{j_r}^{\tau}.
\end{aligned}
\end{equation*}

Finally, we have that
\begin{equation*}
    \log \bI(r) - \log \bI(a) =  \int_a^r\frac{\mathrm{d}}{\mathrm{d}s}\left(
    \log \bI(s) \right) \mathrm{d}s + \sum_{i=j_r}^{j_a} \left( \log \bI(t_i^+)- \log \bI(t_i^-) \right).
\end{equation*}

Since $|\log(x)-\log(y)| \leq C\frac{|x-y|}{\min\{|x|,|y|\}}$ and, by \eqref{E:L:FirstEqsFreqFnc:1}, $\bI(s) \geq C^{-1}$, we derive that 
\begin{equation*}
\begin{aligned}
    \sum_{i=j_r}^{j_a} \log \bI(t_i^+)- \log \bI(t_i^-) &\geq - C \sum_{i=j_r}^{j_a} |\bI(t_i^+)- \bI(t_i^-)| \\
    \overset{\eqref{E:jumpsI}}&{\geq} -C \mbold{0}^{\tau}  \sum_{i=j_r}^{j_a} t_i^{\tau} \geq - C\mbold{0}^{\tau}t_{j_r}^{\tau}.    
\end{aligned}
\end{equation*}

Putting the last three displayed inequalities together, we guarantee that $\log \bI(r) - \log \bI(a) \geq - C\mbold{0}^{\tau}t_j^{\tau}$ which in turn ensures \eqref{E:Almost-Monot-I}.
\end{proof}

\section{Uniform lower bound and the frequency regimes}

\subsection{Uniform lower bound for \texorpdfstring{$\bI$}{Lg} at one-sided flat singular points}

In this section, the main result is \Cref{T:UniversalLowerBound-FreqFnc}, which, among other implications, establishes a \emph{uniform} lower bound of $1+\alpha/2$ for the frequency values, where $\alpha = \alpha(m,n,\bar{n},Q) > 0$ is the constant from the excess decay result in \cite{ian2024uniqueness}. This lower bound marks a significant distinction between the interior setting studied in \cite{de2023fineI,de2023fineII,de2023fineIII} and the boundary setting considered here.  

We now lay the groundwork for the proof of this crucial lower bound. For a small constant $\bar{\varepsilon} > 0$ (to be specified later) satisfying $\bar{\varepsilon} > \max\{\bA_{\Sigma}, \bA_{\Gamma}\}$, and adopting the notation from \Cref{S:UniversalFreqDef}, we set  
\begin{equation*}
    \mbold{p,j} := \max\left\{\bE^\flat(T_{p,j}, \oball{2}), \bar{\varepsilon}t_j^{2-2{\alpha_\be}}\right\}.
\end{equation*}

\begin{assumption}\label{A:AreaMin+0FlatSing}
Let $T,\Sigma$ and $\Gamma$ be as in \Cref{A:general} and $p$ be a flat one-sided singular point for $T$.
\end{assumption}
\begin{remark}\label{R:1sided-flat-assumption} Due to \cite{ian2024uniqueness}, we can guarantee for a sufficiently small scale $\bar{r}$ all Assumptions \ref{A:CM}, \ref{A:ParametersCM}, and \ref{A:NA} are satisfies for $r\leq \bar{r}$. We can also ensure that $\bar{\varepsilon}\bar{r}^{2-2{\alpha_\be}} < \varepsilon_{na}$. 
\end{remark}

\subsubsection{Definition of fine blowups}

Let $T$ be as in \Cref{A:AreaMin+0FlatSing} and let $\{t_{j}\}_{k\in\N}$ be the endpoints of the intervals of flattening as constructed in \Cref{S:UniversalFreqDef}. Now, assume that $r_k\leq \bar{r}$, $r_k\in [t_{j(k)+1},t_{j(k)}]$. 
We will also use all notation for the rescaled objects of \Cref{S:UniversalFreqDef}. 

Next, we define the following sequence of $Q$-valued functions: 
\begin{equation*}
    u_k:= \frac{\overline{N}_k \circ \textbf{exp}_{0,\overline{\cM_k}}}{\mathbf{h}_k}, \mbox{ where }\mathbf{h}_k:=\left\|\overline{N}_k \right\|_{L^2\left(\textbf{exp}_{0,\overline{M_k}}\left(\oball{3/2}\cap\pi_k^+\right)\right)},
\end{equation*}
where $\textbf{exp}_{0,\overline{M_k}}$ is the exponential map and $\pi_k$ is the optimal $m$-plane for $\bE^\flat(T_{p,j(k)}, \oball{2})$. 

Under these conditions, there exists a subsequence of $\{r_k\}_k$ and a Dirichlet minimizing map $u \in W^{1,2}(\halfobball{3/2})$ with $\etaa\circ u=0$ and $\left\|u \right\|_{L^2(\halfobball{3/2})}=1$ satisfying
\begin{equation*}
    u_k \rightarrow u \mbox{ strongly on } W^{1,2}_{loc}\left(\halfobball{3/2},\cA_{Q}(\R^n)\right) \cap L^2\left(\halfobball{3/2},\cA_{Q}(\R^n)\right).
\end{equation*}

Here, convergence is understood after applying a suitable rotation that maps the domains of $u_k$ into $\pi_0$.

We also have that $\Gamma_k$ converges to a flat boundary and the function $u$ vanishes on that flat boundary (indeed, this is a direct consequence of the convergence of $\Gamma_k$ and \Cref{D:NA}(iii)). 

\begin{definition}[Fine blowups]\label{D:fine-blowups}
Any function $u$ generated by the procedure above applied to $T_{p,r_k}$ will be called \emph{a fine blowup with respect to $r_k\downarrow 0$ at $p$}. Moreover, we define \emph{the singularity degree of $T$ at the flat singular point $p$} as 
\begin{equation*}
    \bI(T,p) := \inf\left\{ I_u(0):u\mbox{ is a fine blowup at }p\mbox{ w.r.t. to some }r_k\downarrow 0 \right\}.
\end{equation*}
\end{definition}

In the definition above, $I_u(0):=\lim_{r\downarrow 0}I_u(r)$ and $I_u(r)$ is the value of the smoothed frequency function of $u$ at $0$, i.e., 
\begin{equation*}
\begin{aligned}
    I_u(r) := \frac{rD_u(r)}{H_u(r)},& \ D_u(r):= \int |Du(y)|^2\phi\left(\frac{d(y)}{r}\right), \\
    &\mbox{ and } H_u(r) := - \int \frac{|u(y)|^2}{d(y)}\phi^\prime\left(\frac{d(y)}{r}\right)|\nabla d(y)|^2,
\end{aligned}
\end{equation*}
where the cut-off $\phi$ is chosen as before, i.e.,
\begin{equation*}
    \phi(t) := \begin{cases}
        1, & t\in [0,1/2],\\
        2-2t, & t\in [1/2,1],\\
        0, &\mbox{otherwise}.
    \end{cases}
\end{equation*}

\begin{remark}
We also consider a sequence of currents $T_k$ that does not consist solely of rescalings of the same current, as required in some compactness arguments in \cite{de2023fineII}. For our purposes (and also for \cite{de2023fineII}), it suffices that the limit $u$ is a solution to the corresponding linear problem and satisfies the following properties: it is Dirichlet minimizer, average-free (i.e., $\etaa \circ u \equiv 0$), nontrivial, and has a flat boundary with zero boundary data.
\end{remark}

\subsubsection{Main lower bound for the universal frequency function}

We now provide a necessary and sufficient condition for having finitely many intervals of flattening, based on the frequency value. We also prove the uniform lower bound for $\bI(T,p)$ and demonstrate how to derive the decay rate of the excess, depending on $\bI(T,p)$.

\begin{theorem}[Universal lower bound for the frequency function]\label{T:UniversalLowerBound-FreqFnc}
Let $T,\Gamma, \Sigma$, and $p$, satisfy \Cref{A:AreaMin+0FlatSing}. Then, the following holds:
\begin{enumerate}[\upshape (i)]
    \item\label{I:HomogFineBU} for any fine blowup $u$ at $p$, $u$ is $\bI(T,p)$-homogeneous and $\lim_{r\downarrow 0}\bI_p(p,r) = \bI(T,p) = I_u(0)$,
    
    \item\label{I:FreqFnc:HighFreqIFFfiniteIntervalsFlattening} if $\bI(T,p) > 2-{\alpha_\be}$, then $t_{J+1} = 0$ for some $J\in\N$. Conversely, if $t_{J+1} = 0$ for some $J\in\N$, then $\bI(T,p) \geq 2 -\alpha_\be$.
    
    \item\label{I:FreqFnc:UniversalLowerBound} $\bI(T,p) \geq 1 + \alpha/2$, where $\alpha = \alpha(m,n,\bar{n}, Q)>0$ is the excess decay exponent in \cite{ian2024uniqueness}, and $\inf_j\frac{t_{j+1}}{t_j} > 0$ unless $\{t_j\}_j$ is finite.
\end{enumerate}
\end{theorem}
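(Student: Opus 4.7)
The plan is to prove the three parts in order, building on the almost monotonicity from \Cref{T:AlmMonotFreq}, the splitting estimate in \Cref{P:SplitingNA} together with the global Dirichlet bound in \Cref{T:GlobalNA}, and most crucially the excess decay from \cite{ian2024uniqueness} recorded as \Cref{T:excessdecay}. For (i), the existence of $\bI_\infty(T,p) := \lim_{r\downarrow 0}\bI_p(p,r)$ is immediate from the almost monotonicity \eqref{E:Almost-Monot-I} coupled with the summability of the jumps \eqref{E:jumpsI}. To identify $\bI_\infty(T,p)$ with $I_u(0)$ for any fine blowup $u$, I would compare $\bI_p(p, t_{j(k)}\rho)$ with $I_{u_k}(\rho)$ via the rescaling formulas \eqref{E:rescalingD}--\eqref{E:rescalingS}, passing to the limit using the $W^{1,2}_{loc}$ convergence $u_k \to u$ on $\halfobball{3/2}$ together with the vanishing of the error terms controlled by $\mbold{p,j(k)}$. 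This gives $I_u(\rho) = \bI_\infty(T,p)$ for every admissible $\rho$; since $I_u$ is thus constant in $\rho$, \Cref{L:constantfrequencyannulus} provides $\bI_\infty(T,p)$-homogeneity of $u$. Taking the infimum over fine blowups confirms $\bI(T,p) = \bI_\infty(T,p) = I_u(0)$ unconditionally on the choice of sequence.

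For (ii), the key is that a large frequency translates, via \Cref{P:SplitingNA}, into excess decay faster than the stopping threshold, and vice versa. If $\bI(T,p) > 2 - \alpha_\be$, then by (i) one can integrate \eqref{E:Hprime} together with $\bE \approx (\bI/r)\bH$ to get $\bH_p(p,r) \leq C r^{m-1 + 2\bI(T,p) - \varepsilon}$ and hence $\bD_p(p,r) \leq C r^{m - 2 + 2\bI(T,p) - \varepsilon}$ with $\bI(T,p) - \varepsilon > 2 - \alpha_\be$; this is incompatible, via \eqref{E:SplittingNA1}, with the existence of an excess-stopping cube touching the origin at arbitrarily small scales, forcing the construction in \Cref{S:IntervalsFlattening} to terminate. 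Conversely, if $t_{J+1} = 0$, then the non-stopping condition past $\cM_{p,J}$ combined with \eqref{E:LocalNA:Dir} yields $\bD_p(p,r) \leq C r^{m + 2 - 2\alpha_\be}$, from which a fine blowup $u$ satisfies $D_u(r) \lesssim r^{m - 2 + 2(2 - \alpha_\be)}$; power-counting of its homogeneity degree obtained in (i) then forces $\bI(T,p) = I_u(0) \geq 2 - \alpha_\be$.

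For (iii), which I expect to be the main obstacle and the genuine novelty of this section, I argue by contradiction. Suppose $\bI(T,p) < 1 + \alpha/2$ and set $\bI_\infty := \bI(T,p)$; by (i), $\bI_p(p,r) \to \bI_\infty$ as $r \downarrow 0$. Integrating \eqref{E:Hprime} with $\bE \approx (\bI/r)\bH$ produces the lower bound $\bH_p(p,r) \geq c\, r^{m - 1 + 2\bI_\infty + \varepsilon}$ for $\varepsilon > 0$ arbitrarily small and $r$ correspondingly small. On the other hand, \Cref{T:excessdecay} gives $\mathbf{E}^\flat(T, \ball{p}{r}) \leq C r^\alpha$, which via \Cref{T:GlobalNA} applied on $\cM_{p, j(r)}$ yields $\bD_p(p,r) \leq C r^{m + \alpha - \varepsilon}$ after absorbing the lower-order $\mathbf{A}^2$ errors. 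Substituting into $\bI_p(p,r) = r\bD_p(p,r)/\bH_p(p,r)$ gives $\bI_p(p,r) \leq C r^{2 + \alpha - 2\bI_\infty - 2\varepsilon}$; since $\bI_\infty < 1 + \alpha/2$ one can arrange the exponent to be strictly positive, forcing $\bI_p(p,r) \to 0$ and contradicting the uniform lower bound $\bI_p(p,r) \geq C^{-1} > 0$ from \eqref{E:L:FirstEqsFreqFnc:1}. The claim $\inf_j t_{j+1}/t_j > 0$ in the infinite-intervals case follows from the same mechanism: were $t_{j+1}/t_j \to 0$ along a subsequence, the rescalings at scale $t_j$ would generate a fine blowup of homogeneity $\bI(T,p) \geq 1 + \alpha/2$ that nevertheless exhibits an excess-stopping cube at a vanishing scale near the origin, contradicting the polynomial decay of the Dirichlet energy of $u$ forced by its homogeneity together with \eqref{E:SplittingNA1}.
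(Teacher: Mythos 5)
Your part (i) is essentially the paper's argument (compactness of fine blow-ups, constancy of the limiting frequency over a nontrivial annulus of ratios, \Cref{L:constantfrequencyannulus}). Your parts (ii) and (iii), however, deviate from the paper in a way that leaves a genuine gap in (iii).

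The paper does not prove (iii) by integrating the ODE for $\bH$. It proves the compactness Lemma \Cref{L:LowerBound_DirNj}, which asserts that for any $\beta>\bI(T,p)$ and $\rho<r$ in the \emph{same} interval of flattening,
\[
\left(\tfrac{\rho}{r}\right)^{m+2(\beta-1)}\int_{\cM_{p,j}\cap\oball{r}}|D\cN_{p,j}|^2 \leq \int_{\cM_{p,j}\cap\oball{\rho}}|D\cN_{p,j}|^2,
\]
and combines this purely at the two scales $t_j$ and $t_{j+1}$ with \eqref{E:SplittingNA1}--\eqref{E:SplittingNA2} and the excess decay of \cite{ian2024uniqueness}. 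Everything is done within one interval of flattening, so no comparison of $\bD$ or $\bH$ across a change of center manifold is ever needed.

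Your argument, by contrast, derives $\bH_p(p,s)\geq c\,s^{m-1+2\bI_\infty+\varepsilon}$ by integrating $\bH'/\bH$ from a fixed $r$ down to $s\to 0$. This integration crosses all the endpoints $t_j$, where the center manifold and the normal approximation both change. The only jump control established in the paper (\eqref{E:jumpsI}) is for $\bI=r\bD/\bH$: the jumps of $\bD$ and $\bH$ \emph{individually} across $t_j$ are not estimated anywhere, and there is no a priori reason for $\log\bH(t_j^+)-\log\bH(t_j^-)$ to be summable. Without such a jump estimate the integrated lower bound on $\bH$ does not follow, and the chain $\bI_p(p,r)\leq C r^{2+\alpha-2\bI_\infty-2\varepsilon}\to 0$ is unjustified. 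To close this gap you would either have to prove a separate jump estimate for $\log\bH$ (plausible but nontrivial, and not in the paper), or switch to the paper's intrinsic argument which avoids the issue altogether.

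A secondary imprecision: you claim $\bD_p(p,r)\leq Cr^{m+\alpha-\varepsilon}$ "via \Cref{T:GlobalNA}." That theorem only bounds $\int_{\cM'}|D\cN|^2\leq C\mbold{0}$ at unit scale, i.e.\ $\bD_p(p,t_j)\lesssim \mbold{p,j}t_j^m$; for $r$ well inside $(t_{j+1},t_j)$ this gives $\bD_p(p,r)\lesssim t_j^{m+\alpha}$, which is not comparable to $r^{m+\alpha}$ unless you already know $t_j/t_{j+1}$ is bounded — exactly part of what (iii) is to prove. The correct route, as in the paper's proof of (ii) and \eqref{E:Dbound}, is the local estimate \eqref{E:LocalNA:Dir} summed over the Whitney cubes, giving $\obD_j(\rho)\leq C\mbold{0,j}\rho^{m+2-2\alpha_\be}$ for all $\rho\in(t_{j+1}/t_j,1]$; rescaling this and using $\mbold{0,j}\lesssim t_j^\alpha$ then yields $\bD_p(p,r)\leq Cr^{m+\alpha}$ without circularity.

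For part (ii) your sketch is in the right spirit, but the implication "$\bI(T,p)>2-\alpha_\be$ implies finite intervals of flattening" is carried out in the paper via \Cref{L:Freq-geq-than-1} (improved excess decay at rate $\beta>2-2\alpha_\be$) together with \eqref{E:SplittingNA1}--\eqref{E:SplittingNA2}, not by integrating $\bH'$, for the same reason: working within one interval avoids the uncontrolled jumps.
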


In order to prove the above theorem, we first establish an improved excess decay estimate and an intermediate lower bound (by $1$) for the frequency values.
\begin{lemma}\label{L:Freq-geq-than-1}
Let $T,\Gamma, \Sigma$, and $p$, satisfy \Cref{A:AreaMin+0FlatSing}. For any fine blow-up $u$ at $p$, we have $\lim_{r\to 0}\bI_p(p,r) = I_u(0) = \bI(T,p) \geq 1$ and $u$ being $\bI(T,p)$-homogeneous. Moreover, if $\bI(T,p) > 1$, then
\begin{equation*}
    \bE(T,\oball{r})\leq C t_j ^{\beta}, \mbox{ for any }\beta < \min\{2(\bI(T,p) - 1), 2-2{\alpha_\be}\}\mbox{ and }r\in (t_{j+1}, t_j).
\end{equation*}
\end{lemma}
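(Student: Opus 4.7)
The approach is to first extract a well-defined limiting frequency $I_\infty:=\lim_{r\downarrow 0}\bI_p(p,r)$ from \Cref{T:AlmMonotFreq}: the almost monotonicity \eqref{E:LowerBoundDerivI} of the corrected log-frequency, together with the summable jump bound \eqref{E:jumpsI} across the $\{t_j\}$ and the universal lower bound $\bI_p(p,r)\geq C^{-1}$ from \eqref{E:L:FirstEqsFreqFnc:1}, gives the existence of $I_\infty\in[C^{-1},\infty)$. For a fine blowup $u$ from a sequence $r_k\downarrow 0$ with $r_k\in(t_{j(k)+1},t_{j(k)}]$, the rescaling identities \eqref{E:rescalingD}--\eqref{E:rescalingS} combined with the strong $W^{1,2}_{\mathrm{loc}}$ convergence $u_k\to u$ yield, for every fixed $\rho\in(0,1)$, that $I_u(\rho)=\lim_k I_{u_k}(\rho)=\lim_k \bI_p(p,\rho r_k)=I_\infty$. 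Since $I_u$ is then constant on $(0,1)$, \Cref{L:constantfrequencyannulus} forces $u$ to be $I_\infty$-homogeneous in $\halfobball{1}$; in particular $I_u(0)=I_\infty=\bI(T,p)$ independently of the fine blowup.

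\textbf{Lower bound $\bI(T,p)\geq 1$.} By the previous step, $u$ is a nontrivial $I_\infty$-homogeneous $Q$-valued Dirichlet minimizer on $\halfobball{3/2}$ with zero trace on $\{x_m=0\}$ (inherited from \Cref{D:NA}(iii) applied at each scale $t_{j(k)}$) and $\etaa\circ u\equiv 0$ (from the average-freeness of $\cN_{p,j(k)}$ on the contact set). For $m=2$, \Cref{t:homogeneous2d} immediately gives $I_\infty=1$. For $m\geq 3$, one performs an iterated tangent-map reduction: tangent maps at a generic $p'\in\{x_m=0\}\setminus\{0\}$ inherit the average-free and zero-boundary properties, and the homogeneity of $u$ at $0$ forces translation invariance of each iterate along an additional direction of $\{x_m=0\}$ in the spirit of \Cref{l:twovariables}. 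After $m-2$ such reductions, one reaches a nontrivial 2D homogeneous average-free Dir-minimizer with zero trace on a flat segment, whose frequency is $1$ by \Cref{t:homogeneous2d}; matching this limiting frequency with $I_\infty$ then gives $I_\infty\geq 1$.

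\textbf{Excess decay when $I_\infty>1$.} Fix $\beta<\min\{2(I_\infty-1),\,2-2\alpha_\be\}$ and choose $\epsilon>0$ with $I_\infty-\epsilon>1+\beta/2$. By the almost monotonicity \eqref{E:Almost-Monot-I}, $\bI_p(p,r)\geq I_\infty-\epsilon$ for all $r$ below some small scale. Using \eqref{E:Hprime} and \eqref{E:D-E} to track $\partial_r\log\bH_p$ in terms of $\bI_p/r$, and integrating across the intervals of flattening while absorbing the summable jumps \eqref{E:jumpsI}, one obtains $\bD_p(p,r)\leq C\,r^{m-2+2(I_\infty-\epsilon)}$ for $r$ small. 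The splitting estimate \eqref{E:SplittingNA1} in \Cref{P:SplitingNA} (applied to a cube $L$ of side $\ell(L)\sim r$ whose satellite ball sits near $p$) converts this into $\bE(T,\oball{r})\leq C\,r^{2(I_\infty-\epsilon-1)}\leq C\,t_j^{\beta}$ for $r\in(t_{j+1},t_j]$; on stopping boundary cubes the built-in bound $\bE(T,\bB^\flat_L)\leq C\mbold{p,j}\ell(L)^{2-2\alpha_\be}$ from the stopping criteria of \Cref{S:Stopping-condition} supplies the alternative rate $2-2\alpha_\be$, completing the claim.

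\textbf{Principal obstacle.} The most delicate part is the last step: integrating the almost monotonicity of $\bI_p$ across countably many intervals of flattening requires careful tracking of the error terms of \Cref{L:FirstEqsFreqFnc} (which involve $\mbold{p,j}$, $t_j^{2\kappa}$, and the auxiliary ratio $\bS_p/\bD_p$), and the conversion of the intrinsic decay of $\bD_p$ into a decay of the extrinsic cylindrical excess via splitting must respect the cap $2-2\alpha_\be$ imposed by the Whitney stopping criteria of the center manifold construction.
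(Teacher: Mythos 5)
Your overall skeleton matches the paper's, but there is a genuine gap in the homogeneity step, and the lower bound $\geq 1$ takes a different (and under-justified) route than the paper.

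\textbf{The gap.} Your claim that $I_u(\rho)=\lim_k I_{u_k}(\rho)=\lim_k \bI_p(p,\rho r_k)=I_\infty$ holds for \emph{every} fixed $\rho\in(0,1)$ is not justified. The function $u_k$ is the rescaled normal approximation $\overline{\cN}_{p,j(k)}$ attached to the single center manifold $\cM_{p,j(k)}$, so the identity $I_{u_k}(\rho)=\bI_p(p,\rho r_k)$ only holds when $\rho r_k$ lies in the same interval of flattening $(t_{j(k)+1},t_{j(k)}]$. Once $\rho r_k<t_{j(k)+1}$, the universal frequency $\bI_p(p,\rho r_k)$ switches to a \emph{different} normal approximation $\cN_{p,j'}$ with $j'>j(k)$, and these are not related to $u_k$ by a simple rescaling. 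Thus one can only conclude constancy of $I_u$ on the limiting annulus $[s^-,s^+]$ with $s^-:=\liminf_k t_{j(k)+1}/r_k$, $s^+:=\limsup_k t_{j(k)}/r_k$; since the construction of the intervals of flattening forces $s^-/s^+\leq 1/2$, this is a nontrivial annulus, and then \Cref{L:constantfrequencyannulus} upgrades constancy-on-an-annulus to global homogeneity. The paper argues exactly this way, whereas your direct claim of constancy on all of $(0,1)$ is false in general.

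\textbf{The lower bound $\bI(T,p)\geq 1$.} The paper obtains this by simply citing \cite[Lemma 8.16]{ian2024uniqueness}, which already provides $I_u(0)\geq 1$ for any fine blowup. Your iterated-tangent-map reduction is a genuinely different route. It is plausible in outline but the sketch is substantially incomplete: one must choose generic points where the tangent map is nontrivial, verify that the zero-boundary and average-free conditions survive each blowup, and invoke the standard monotonicity of the homogeneity degree along tangent maps (namely, the frequency at the blowup point is $\leq$ the frequency at the original point) before matching against the 2D classification. Given that the paper already has the external lemma available, reproving it this way would require a full write-up of a dimension-reduction argument that \Cref{l:twovariables} alone does not carry.

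\textbf{The moreover part.} Your approach (integrating the almost monotonicity of $\bI_p$ to get intrinsic decay of $\bD_p$, then converting to excess decay via \eqref{E:SplittingNA1}) is consistent with what the paper does; the paper reduces the statement to the key estimate $\bE(T,\oball{2t_{j+\kappa}})\leq(t_{j+\kappa}/t_j)^\beta\mbold{0,j}$ and refers to \cite[Eq.~(88)]{de2023fineI} for its proof. Your outline is coarser but points in the same direction.
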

\begin{remark}
The moreover part of \Cref{L:Freq-geq-than-1} is not used to prove \eqref{I:FreqFnc:UniversalLowerBound} in \Cref{T:UniversalLowerBound-FreqFnc}, we only use \cite{ian2024uniqueness} instead. Hence, \emph{a fortiori}, using \eqref{I:FreqFnc:UniversalLowerBound} in \Cref{T:UniversalLowerBound-FreqFnc}, we are always under the assumption $\bI(T,p) > 1$.
\end{remark}
\begin{proof}
Assume $p=0$. Due to \cite[Lemma 8.16]{ian2024uniqueness}, for any fine blow-up $u$, we have $I_u(0) \geq 1$, thus $\bI(T,0) \geq 1$. 
Notice that $I_0:=\lim_{r\to 0}\bI_p(p,r)$ exists due to \eqref{E:Almost-Monot-I} in \Cref{T:AlmMonotFreq}. We take any sequence $s_k \in (\frac{t_{j(k)+1}}{r_k},\frac{t_{j(k)}}{r_k}]$, if $s_k \rightarrow s$, we must have $\bI_p(p,s_kr_k) \rightarrow I_{u}(s)$ and thus, since $s_kr_k\to 0$, we have $I_{u}(s)=I_0$. This implies that if
\begin{equation*}
s^{+}:=\limsup_{k \rightarrow \infty}\frac{t_{j(k)}}{r_k}, \: \:  s^{-}:=\liminf_{k \rightarrow \infty}\frac{t_{j(k)+1}}{r_k}.
\end{equation*}
then $I_{u}(s)=I_0$ for any $s \in [s^-,s^+]$. By construction of the intervals of flattening, we have that $\frac{s^-}{s^+} \leq \frac{1}{2}.$ By \Cref{L:constantfrequencyannulus}, we must have that $u$ is $I_0$-homogeneous, since its frequency is constant on the radii in $[s^-,s^+]$.
For the moreover part, we fix $0< \beta < \min\{2(I_0 - 1), 2-2{\alpha_\be}\}$. We show that it is enough to prove
\begin{equation}\label{E:improved-decay-I>1}
    \bE(T,\oball{2t_{j+\kappa }}) \leq \left(\frac{t_{j+\kappa}}{t_j}\right)^{\beta} \mbold{0,j}.
\end{equation}

Indeed, if $\mbold{0,j} = \bar{\varepsilon}^2t_j^{2-2{\alpha_\be}}$, since $2-2{\alpha_\be} > \beta$, we obtain
\begin{equation*}
    \mbold{0,j+\kappa} = \max\{\bE(T,\oball{2t_{j+\kappa}}), \bar{\varepsilon}^2t_{j+\kappa}^{2-2{\alpha_\be}}\} \overset{\eqref{E:improved-decay-I>1}}{\leq} \left(\frac{t_{j+\kappa}}{t_j}\right)^{\beta}\bar{\varepsilon}^2t_{j}^{2-2{\alpha_\be}} = \left(\frac{t_{j+\kappa}}{t_j}\right)^{\beta} \mbold{0,j}. 
\end{equation*}

If $\mbold{0,j} = \bE(T,\oball{2t_{j}})$, then $\bE(T,\oball{2t_j})\geq \bar{\varepsilon}^2t_j^{2-2{\alpha_\be}}$ and, since $2-2{\alpha_\be} > \beta$, thus
\begin{equation*}
    \bar{\varepsilon}^2t_{j+\kappa}^{2-2{\alpha_\be}} = \bar{\varepsilon}^2\left(\frac{t_{j+\kappa}}{t_j}\right)^{\beta}\left(\frac{t_{j+\kappa}}{t_j}\right)^{2-2{\alpha_\be}-\beta}t_j^{2-2{\alpha_\be}} \leq \left(\frac{t_{j+\kappa}}{t_j}\right)^{\beta}\bE(T,\oball{2t_j}) = \left(\frac{t_{j+\kappa}}{t_j}\right)^{\beta}\mbold{0,j}.
\end{equation*}

The proof of \eqref{E:improved-decay-I>1} goes along the same lines as the proof of \cite[Equation (88)]{de2023fineI}. 
\end{proof}
We now state the following inequality that will be crucial to prove \Cref{T:UniversalLowerBound-FreqFnc} and is a consequence of a compactness argument demonstrated below.

\begin{lemma}\label{L:LowerBound_DirNj}
Let $T,\Gamma, \Sigma$, and $p$, satisfy \Cref{A:AreaMin+0FlatSing}. For any $\beta >\bI(T,p)$, we have, for $j$ sufficiently large and any $\rho,r\in (t_{j+1}, t_j]$ with $\rho < r$, that
\begin{equation}\label{E:UandL-bound-DirNj}
    \left(\frac{\rho}{r}\right)^{m+2(\beta - 1)}\int_{\cM_{p,j}\cap\oball{r}}|D \cN_{p,j}|^2 \leq \int_{\cM_{p,j}\cap\oball{\rho}}|D \cN_{p,j}|^2.
\end{equation}   
\end{lemma}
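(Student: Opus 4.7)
The argument is by contradiction and compactness via fine blowups. Assume the inequality fails for some $\beta > I_0 := \bI(T,p)$: along a subsequence (still denoted $j$) there exist $\rho_j < r_j$ in $(t_{j+1}, t_j]$ with
\[
(\rho_j/r_j)^{m-2+2\beta} \int_{\cM_{p,j}\cap\oball{r_j}}|D\cN_{p,j}|^2 > \int_{\cM_{p,j}\cap \oball{\rho_j}}|D\cN_{p,j}|^2.
\]
Rescale and normalize: set $v_j(y) := \cN_{p,j}(r_j y)/a_j$ with $a_j$ chosen so that $\int_{\obball{1}^+}|Dv_j|^2 = 1$. Letting $\lambda_j := \rho_j/r_j$, the hypothesis becomes $\int_{\obball{\lambda_j}^+}|Dv_j|^2 < \lambda_j^{m-2+2\beta}$. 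Pass to a subsequence so $\lambda_j \to \lambda \in [0, 1]$.

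I would first identify the limit of $v_j$: after a further subsequence, $v_j \to v$ strongly in $W^{1,2}_{loc}(\obball{1}^+)$, where $v$ is a nontrivial $I_0$-homogeneous Dirichlet minimizer on $\obball{1}^+$ with zero boundary data on the flat portion and $\int_{\obball{1}^+}|Dv|^2 = 1$. This uses that $v_j$ is an explicit rescaling/renormalization of the fine blowup sequence $u_j$ from \Cref{D:fine-blowups}, and that for every $s \in (0, 1]$ the smoothed frequency $I_{v_j}(s)$ coincides up to lower-order terms with $\bI_p(p, r_j s)$, which converges uniformly to $I_0$ on $(0, 1]$ by combining \Cref{L:Freq-geq-than-1} with the almost monotonicity and small jumps from \Cref{T:AlmMonotFreq}. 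Hence the frequency of $v$ is identically $I_0$ on $(0, 1]$, and by Almgren's classical characterization $v$ is $I_0$-homogeneous.

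For $\lambda \in (0, 1)$, strong convergence and $I_0$-homogeneity give $\int_{\obball{\lambda}^+}|Dv|^2 = \lambda^{m-2+2I_0}$; passing to the limit in the hypothesis yields $\lambda^{m-2+2I_0} \leq \lambda^{m-2+2\beta}$, which is absurd for $\lambda < 1$ and $\beta > I_0$. For $\lambda = 0$, integrating the derivative formula for $\bH_p$ from \Cref{L:FirstEqsFreqFnc} over $[s, 1]$ together with the uniform upper bound $I_{v_j}(s) \leq I_0 + \eta_j$, $\eta_j \to 0$, yields the uniform lower bound $\int_{\obball{s}^+}|Dv_j|^2 \geq c\,s^{m-2+2(I_0+\eta_j)}$ for $s \in (0, 1]$ and $j$ large, with $c > 0$ independent of $j$ (positivity of $c$ follows from $\int_{\partial\obball{1}^+}|v|^2 > 0$, which holds by nontriviality and $I_0$-homogeneity of $v$). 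Specializing to $s = \lambda_j \to 0$ contradicts the hypothesis since it would force $c \leq \lambda_j^{2(\beta - I_0 - \eta_j)} \to 0$. The degenerate case $\lambda = 1$ is ruled out by a first-order Taylor expansion combined with the strong $W^{1,2}_{loc}$ convergence to the $I_0$-homogeneous $v$, which forces a ``leading-order frequency at $r_j$'' equal to $m-2+2I_0 < m-2+2\beta$, incompatible with the strict inequality in the hypothesis.

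The main obstacle is the identification of $v$ as $I_0$-homogeneous, requiring careful bookkeeping across possibly distinct intervals of flattening to secure the uniform convergence $\bI_p(p, r_j s) \to I_0$ for $s \in (0, 1]$. The uniform lower bound on $\int_{\obball{s}^+}|Dv_j|^2$ in the $\lambda = 0$ case and the Taylor analysis in the $\lambda = 1$ case also demand a delicate control on the error terms provided by \Cref{L:FirstEqsFreqFnc} and \Cref{T:AlmMonotFreq}.
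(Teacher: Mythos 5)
Your mechanism—pass to a contradiction sequence, rescale to the normal approximation on the unit scale, extract a fine blowup which by \Cref{L:Freq-geq-than-1} is $\bI(T,p)$-homogeneous, and derive $\beta\le \bI(T,p)$ from the scaling of Dirichlet energy—is precisely the paper's mechanism. The difference is structural: you keep $\rho_j$ and $r_j$ free and run a trichotomy on $\lambda=\lim\rho_j/r_j\in[0,1]$, whereas the paper opens by reducing to the fixed ratio $\rho=r/2$ (proving the doubling inequality for a slightly smaller $\beta_0\in(\bI(T,p),\beta)$ and then iterating dyadically, the slack $\beta-\beta_0$ absorbing the geometric constant). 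That reduction is not a cosmetic convenience: it is exactly what eliminates both of your degenerate endpoint cases, so the paper never has to confront $\lambda\in\{0,1\}$.

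Your $\lambda\in(0,1)$ case coincides with the paper's argument and is fine (modulo the minor point that the Dirichlet-energy normalization on $\obball1^+$ needs strong convergence up to $\partial\obball1$, which is available because the blowup converges strongly in $W^{1,2}$ on $\obball{3/2}^+$). Your $\lambda=0$ case is plausible but more labor than necessary, and note it already requires importing the almost-monotonicity/jump bounds of \Cref{T:AlmMonotFreq} in a quantitative form; the situation $\lambda=0$ only arises when $t_{j+1}/t_j\to 0$, which is later ruled out by \Cref{T:UniversalLowerBound-FreqFnc}, but that theorem cannot be invoked here since it uses this lemma.

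The genuine gap is the $\lambda=1$ case. Strong $W^{1,2}_{loc}$ convergence of $v_j$ to the homogeneous $v$ gives $\int_{\obball1^+\setminus\obball{\lambda_j}^+}|Dv_j|^2\to0$ but provides \emph{no rate} on the vanishingly thin annuli $\obball1^+\setminus\obball{\lambda_j}^+$. The claimed ``first-order Taylor expansion'' would need the ratio $\bigl(1-\int_{\obball{\lambda_j}^+}|Dv_j|^2\bigr)/(1-\lambda_j)$ to converge to $m-2+2\bI(T,p)$, i.e.\ convergence of the logarithmic derivative of the Dirichlet energy at scale $r_j$; that is a second-order statement that cannot be extracted from an $L^2$ limit. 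To close it you would have to invoke the radial derivative identity \eqref{E:Dprime-D-G} together with the frequency bounds to control $\partial_r\bD_p$ uniformly on the annulus, and relate the smoothed $\bD_p$ to the sharp Dirichlet energy appearing in the statement. Much simpler is to notice that after a dyadic adjustment one may assume $\rho_j/r_j\in[1/2,1)$ from the start and then, as in your $\lambda\in(0,1)$ case, pass to a limit along $\lambda_j\to\lambda\in[1/2,1)$; this is exactly the paper's $\rho=r/2$ reduction and makes the trichotomy unnecessary.
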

\begin{proof} 
Assume $p=0$. It is enough to show that this inequality holds for $r$ small enough with $t_{j+1}<\rho=r/2 < r \leq t_j$. Assume by contradiction that there exists $\beta_0>\bI(T,0)$ such that for every $k\in\N$, there exists $j(k)\geq k$ and $r_{j(k)}\in (t_{{j(k)}+1}, t_{j(k)}]$ such that
\begin{equation*}
    \left(\frac12\right)^{m+2\left(\beta_0 - 1\right)}\int_{\cM_{j(k)}\cap\oball{r_{j(k)}}}|D \cN_{j(k)}|^2 > \int_{\cM_{j(k)}\cap\oball{r_{j(k)}/2}}|D \cN_{j(k)}|^2.
\end{equation*}

Take the fine blowup $u_0$ with respect to $r_{j(k)}$, by \Cref{L:Freq-geq-than-1}, we obtain that $u_0$ is $I_{u_0}(0)$-homogeneous and $I_{u_0}(0) = \bI(T,0)$. By the natural rescaling of the Dirichlet energy, see \cite{DS1,ian2024uniqueness}, and taking the limit in the last displayed inequality, we would force that $m+2(\beta_0-1) \leq m+2(I_{u_0}(0)-1)$ which gives $\beta_0 \leq I_{u_0}(0)=  \bI(T,0) < \beta_0$ which gives the contradiction.
\end{proof}

We are now ready to apply the estimates from the construction of the center manifolds and normal approximations in \Cref{S:1sided-CM-NA}, and combine them with the estimates from the previous lemma to prove the main theorem of this subsection (\Cref{T:UniversalLowerBound-FreqFnc}).

\begin{proof}[Proof of \Cref{T:UniversalLowerBound-FreqFnc}]
Assume $p=0$. The item \eqref{I:HomogFineBU} is settled in \Cref{L:Freq-geq-than-1}. We first prove \eqref{I:FreqFnc:HighFreqIFFfiniteIntervalsFlattening} assuming that the intervals of flattening are finitely many. So, take $J\in\N$ such that $t_{J+1} = 0$. We argue as in \cite[Remark 3.4]{DS5} (using \Cref{T:LocalNA}) to obtain that
\begin{equation*}
    \int_{\oball{\rho}\cap\cM_J}|D\cN_J|^2 \leq C \rho^{m+2-2{\alpha_\be}}\mbold{0,J}, \mbox{ for every }\rho\in \left(0,t_J\right].
\end{equation*}

By \Cref{L:LowerBound_DirNj}, we get $2 - 2{\alpha_\be} < 2(\beta -1)$, for any $\beta > \bI(T,0)$. This then gives $\bI(T,0) \geq 2-{\alpha_\be}$. We prove the opposite implication to conclude the proof of \eqref{I:FreqFnc:HighFreqIFFfiniteIntervalsFlattening}. We assume $\bI(T,0)> 2-{\alpha_\be}$ and that the intervals of flattening are infinitely many. By \eqref{E:SplittingNA1}, we obtain
\begin{equation}\label{E:LowerBound-DirNj-HighFrequency}
    \int_{\oball{t_{j+1}}\cap\cM_j}|D \cN_j|^2 \geq C \mbold{0,j} \left ( \frac{t_{j+1}}{t_j}\right) ^{m+2-2{\alpha_\be}}.
\end{equation}

By \Cref{L:Freq-geq-than-1}, we have $\bE(T,\oball{r})\leq Cr^{\beta}$ and, since $\bI(T,0) > 2-{\alpha_\be}$, we can choose $\beta > 2-2{\alpha_\be}$. This in turn guarantees that, for a sufficiently large $j$, $\mbold{0,j} = \bar{\varepsilon}^2t_j^{2-2{\alpha_\be}}$. Then, we derive
\begin{equation*}
    \bE(T,\oball{r}) \leq C r^{\beta- (2-2{\alpha_\be})}\frac{r^{2-{\alpha_\be}}}{(\bar{\varepsilon}^2t_j)^{2-2{\alpha_\be}}}\mbold{0,j}  \leq C r^{\beta- (2-2{\alpha_\be})}\left(\frac{t_{j+1}}{t_j}\right)^{2-2{\alpha_\be}}\mbold{0,j}, \forall r\in (t_{j+1},t_j].
\end{equation*}

Additionally, the last displayed inequality implies
\begin{equation*}
\begin{aligned}
    \int_{\cM_j\cap\oball{t_{j+1}}}|D\cN_j|^2 \overset{\eqref{E:SplittingNA2}}{\leq} \tilde{C}\left(\frac{t_{j+1}}{t_j}\right)^{m} \bE(T_j, \oball{L}) \leq C t_{j+1}^{\beta- (2-2{\alpha_\be})}\left(\frac{t_{j+1}}{t_j}\right)^{m+2-2{\alpha_\be}}\mbold{0,j} .
\end{aligned}
\end{equation*}

By the last displayed inequality and \eqref{E:LowerBound-DirNj-HighFrequency}, we get that
\begin{equation*}
    C \mbold{0,j} \left ( \frac{t_{j+1}}{t_j}\right) ^{m+2-2{\alpha_\be}} \leq \int_{\oball{t_{j+1}}\cap\cM_j}|D \cN_j|^2 \leq C r^{\beta- (2-2{\alpha_\be})}\left(\frac{t_{j+1}}{t_j}\right)^{m+2-2{\alpha_\be}}\mbold{0,j},
\end{equation*}
for any $r\in (t_{j+1},t_j]$. Due to the choice of $\beta$, which ensures $\beta - (2-2{\alpha_\be})>0$, we obtain a contradiction from the last displayed inequality for $j$ sufficiently large. Thus, $\{t_j\}_j$ has to be finite, and this concludes the proof of \eqref{I:FreqFnc:HighFreqIFFfiniteIntervalsFlattening}.

Let us prove \eqref{I:FreqFnc:UniversalLowerBound}, note that if we have finitely many intervals of flattening, \eqref{I:FreqFnc:HighFreqIFFfiniteIntervalsFlattening} implies \eqref{I:FreqFnc:UniversalLowerBound}. Assume then that $t_j \neq 0$ for all $j$. We apply \eqref{E:SplittingNA1} and \cite{ian2024uniqueness} to get
\begin{equation*}
    \int_{\cM_j\cap\oball{t_{j+1}}} |D\cN_j|^2 \leq \int_{\cM_j\cap\oball{\frac{t_{j+1}}{t_j}}} |D\cN|^2 \leq C\left(\frac{t_{j+1}}{t_j}\right)^m\bE(T,\oball{L}^{\flat}) \leq \tilde{C}\mbold{0,j}\left(\frac{t_{j+1}}{t_j}\right)^{m+\alpha}.
\end{equation*}

We apply \eqref{E:SplittingNA2} and \Cref{L:LowerBound_DirNj} to obtain, for any $\rho\in (t_{j+1}, t_j]$ and any $\beta > \bI(T,0)$, that
\begin{equation}\label{E:lowerbound-beta-dirNj}
    C\mbold{0,j} \leq \int_{\cM_j\cap\oball{t_j}} |D\cN|^2 \leq \left(\frac{\rho}{t_j}\right)^{-m-2(\beta-1)}\int_{\cM_j\cap\oball{\rho}} |D\cN|^2.
\end{equation}

Combining the last two displayed inequalities, we arrive at
\begin{equation*}
    C\mbold{0,j}\left(\frac{\rho}{t_j}\right)^{m+2(\beta-1)} \leq \tilde{C}\mbold{0,j}\left(\frac{t_{j+1}}{t_j}\right)^{m+\alpha}, \mbox{ for any }\rho \in (t_{j+1},t_j]\mbox{ and any }\beta > \bI(T,0), 
\end{equation*}
which, for $j$ large enough, guarantees that $\beta \geq 1+ \alpha/2$ for any $\beta>\bI(T,0)$. Thus, $\bI(T,0) \geq 1 +\alpha/2$. We now need to show $\inf_j\frac{t_{j+1}}{t_j}>0$ assuming that $\{t_j\}_j$ is infinite. In light of \eqref{I:FreqFnc:HighFreqIFFfiniteIntervalsFlattening} and the first part of \eqref{I:FreqFnc:UniversalLowerBound}, we have $1+\alpha/2 \leq \bI(T,0) < 2-{\alpha_\be}$. Let us assume by contradiction that $\inf_j\frac{t_{j+1}}{t_j}=0$. Arguing as in \eqref{E:lowerbound-beta-dirNj} and \cite[Remark 3.4]{DS5} (same argument performed above), we obtain that
\begin{equation*}
    C\mbold{0,j}\left(\frac{t_{j+1}}{t_j}\right)^{m+2(\beta-1)} \leq \int_{\oball{t_{j+1}}\cap\cM_j}|D\cN_j|^2 \leq \tilde{C}\mbold{0,j}\left(\frac{t_{j+1}}{t_j}\right)^{m+2-2{\alpha_\be}},
\end{equation*}
for any $\beta>\bI(T,0)$, hence $\bI(T,0) \geq 2-\alpha_\be$. This is a contradiction for $j$ sufficiently large, since $\inf_j\frac{t_{j+1}}{t_j}=0$ and $\bI(T,0) < 2 -\alpha_\be$. Thus, we conclude the proof of \eqref{I:FreqFnc:UniversalLowerBound}. 
\end{proof}

\subsection{High-frequency regime}

This subsection addresses the high-frequency regime, where the frequency is at least almost quadratic and, by \Cref{L:Freq-geq-than-1}, an almost-quadratic excess decay holds. In this regime, we define the frequency pinching and prove the main lemma of this subsection (\Cref{L:BigPinching:HighFreq}).

To this end, we introduce a new exponent, denoted by $\delta_\be$ (which corresponds to $\delta_3$ in the notation of \cite{de2023fineII}), and require that $2-2\delta_\be < 2-2\alpha_\be$. This exponent $\delta_\be$ will replace $\alpha_\be$ in the ``stopping by the excess condition" (see \Cref{S:Stopping-condition}), while $\alpha_\bh$ in the ``stopping by the height condition" remains unchanged. 

The exponent $\delta_\be$ will then be used to construct the center manifold and the normal approximation of \Cref{S:1sided-CM-NA}, namely $\cM_{p,J}^{\delta_\be}$ and $\cN_{p,J}^{\delta_\be}$. For convenience, we will drop the dependence on $\delta_\be$ in the notation. 

\begin{assumption}\label{A:High-Frequency}
Let $T,\Gamma, \Sigma$, and $p$, satisfy \Cref{A:AreaMin+0FlatSing}, $\bI(T,p) > 2-\delta_\be$, and let $J+1$ be the index of \Cref{T:UniversalLowerBound-FreqFnc} \eqref{I:FreqFnc:HighFreqIFFfiniteIntervalsFlattening} for which $t_{J+1}=0$. Assume also that $q\in\Gamma_{p,J}$.    
\end{assumption}

In the high-frequency regime, given the quasi-quadratic excess decay in \Cref{L:Freq-geq-than-1}, there are no boundary-stopping cubes. In other words, the contact set contains the boundary, as stated below. 

\begin{lemma}[Boundary cubes never stop in the high-frequency regime]\label{L:BdrCubesDontStop:HighFreq}
 $\mathscr{C}^\flat_j \cap \sW_i = \varnothing$ for every $i, j\geq N_0$ and, in particular, $\Gamma_{p,J} \subset \bPhi_{p,J}(\bS_{p,J})$. 
\end{lemma}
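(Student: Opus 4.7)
The plan is to induct on the generation $j\ge N_0$, showing that no boundary cube enters any of the three stopping families $\sW_j^{\be}, \sW_j^{\bh}, \sW_j^{\mathbf n}$. The key input is the almost-quadratic excess decay at $p$ afforded by the high-frequency assumption: since $\bI(T,p) > 2-\delta_\be$ and $\delta_\be > \alpha_\be$, \Cref{L:Freq-geq-than-1} furnishes some exponent $\beta \in (2-2\delta_\be,\, 2-2\alpha_\be)$ with
\[
\bE^\flat(T_{p,J},\,\oball{r}(0)) \;\le\; C\,r^{\beta} \qquad \mbox{for every small } r>0.
\]
Here I read \Cref{L:Freq-geq-than-1} as providing decay inside each interval of flattening and then use \Cref{T:UniversalLowerBound-FreqFnc} (iii) to collate these into a single decay on $(0,t_J]$; in the high-frequency regime we moreover have $t_{J+1}=0$ by \Cref{T:UniversalLowerBound-FreqFnc} (ii), so the collation is trivial.

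Fix $L\in\mathscr{C}^\flat_j$ with boundary satellite ball $\bB_L^\flat=\ball{p_L^\flat}{2^7\cdot 64\,r_L}$. Using the $C^{3,\kappa}$ smoothness of $\Gamma_{p,J}$ and $\Sigma_{p,J}$, a standard change-of-base-point comparison for the boundary excess, where swapping the optimal half-plane at $p_L^\flat$ for the one at $0$ costs only an $O(\ell(L)^2\,\mbold{p,J})$ error, combined with the decay displayed above, gives
\[
\bE^\flat(T_{p,J},\bB_L^\flat) \;\le\; C\,\ell(L)^{\beta}.
\]
Since $\beta > 2-2\delta_\be$ and $\mbold{p,J}\ge \bar\varepsilon^{2}t_{J}^{2-2\alpha_\be}$ is bounded below, choosing $N_0$ large enough after all the remaining parameters have been fixed (legal by \Cref{A:ParametersCM} (iii)) makes the right-hand side strictly smaller than $C_\be\,\mbold{p,J}\,\ell(L)^{2-2\delta_\be}$, so $L\notin \sW_j^{\be}$. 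The standard height bound then yields $\bh(T_{p,J},\bB_L^\flat,\pi_L)\le C\,\ell(L)^{1+\beta/(2m)}$, which beats the threshold $C_\bh\, \mbold{p,J}^{\sfrac{1}{2m}}\ell(L)^{1+\alpha_\bh}$ because $\alpha_\bh\le \sfrac{1}{2m}$ by \Cref{A:ParametersCM} (i) and $\beta/(2m) > \alpha_\bh$ once $\beta$ is chosen close enough to $2-2\alpha_\be$. Thus $L\notin \sW_j^{\bh}$ either.

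Neighbor stopping is eliminated inductively. Suppose $\mathscr{C}^\flat_i\cap\sW_i = \varnothing$ for all $i\le j-1$. If $L\in \mathscr{C}^\flat_j$ were in $\sW_j^{\mathbf n}$, there would exist $L'\in\sW_{j-1}$ with $\ell(L') = 2\ell(L)$ and $L\cap L'\neq\varnothing$; \Cref{l:child_parent} then forces $L'\in\mathscr{C}^\flat_{j-1}$, yielding $L'\in \mathscr{C}^\flat_{j-1}\cap\sW_{j-1}$, a contradiction. This closes the induction and delivers $\mathscr{C}^\flat_j\cap \sW_i = \varnothing$ for every $i,j\ge N_0$.

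The inclusion $\Gamma_{p,J}\subset \bPhi_{p,J}(\bS_{p,J})$ follows at once: every non-boundary cube $L\in\mathscr{C}^\natural$ satisfies $\dist(c_L,\bp_{\pi_0}(\Gamma_{p,J}))\ge 64\,r_L > \sqrt{m}\,\ell(L)$ and therefore has interior disjoint from $\bp_{\pi_0}(\Gamma_{p,J})$; together with the fact that no boundary cube belongs to $\sW$, this forces $\bp_{\pi_0}(\Gamma_{p,J})\subset \bS_{p,J}$, and the graph parametrization $\bPhi_{p,J}$, which by \Cref{T:CM} (iv) sends $\bp_{\pi_0}(\Gamma_{p,J})$ onto $\Gamma_{p,J}$, yields the claim. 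The main obstacle is the change-of-base-point excess comparison, where one must propagate the almost-quadratic decay at the origin to uniform bounds at arbitrary boundary points appearing in the Whitney decomposition; this combines the high-frequency decay with the general boundary excess decay \Cref{T:excessdecay}, valid at every one-sided flat boundary point, and relies essentially on the uniform $C^{3,\kappa}$ bounds for $\Gamma$ and $\Sigma$.
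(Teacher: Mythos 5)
Your overall structure is sound and tracks the paper's intent — the paper itself disposes of this lemma in one line, citing that it is ``entirely analogous to \cite[Lemma 6.5]{nardulli2022density}'' once one plugs in the almost-quadratic excess decay of \Cref{L:Freq-geq-than-1} — and your treatment of neighbor-stopping via \Cref{l:child_parent} and the final inclusion $\Gamma_{p,J}\subset\bPhi_{p,J}(\bS_{p,J})$ are both correct. The problem is in the one step you flag yourself as the ``main obstacle''.

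The decay provided by \Cref{L:Freq-geq-than-1} is $\bE^\flat(T_{p,J},\oball{r})\leq C\,r^{\beta}\,\mbold{p,J}$ for balls centered \emph{at the origin}. A boundary cube $L$ at level $j$ sits at $c_L$, which can be at distance of order $1$ from $0$, while its satellite ball $\bB_L^\flat=\ball{p_L^\flat}{2^7\cdot 64\,r_L}$ has radius $\sim 2^{-j}$. For such an off-center cube, the naive comparison
\[
\bE^\flat\left(T_{p,J},\bB_L^\flat\right)\;\leq\;\left(\frac{|p_L^\flat|+C\ell(L)}{C\ell(L)}\right)^{m}\bE^\flat\left(T_{p,J},\oball{|p_L^\flat|+C\ell(L)}\right)
\]
produces a factor $\ell(L)^{-m}$ that wipes out the decay, and the ``change-of-base-point'' you invoke does not resolve this: swapping the optimal half-plane at $p_L^\flat$ for the one at $0$ is a tilt-of-plane estimate, not an estimate that transfers an $L^2$ excess decay from a big ball around $0$ to a tiny ball around a distant $p_L^\flat$. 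What you actually need is an almost-quadratic decay \emph{at every boundary point $p_L^\flat$ on $\Gamma_{p,J}$} at the scale $\ell(L)$ — this is exactly the data available in the collapsed two-sided setting of \cite{nardulli2022density}, where their Theorem 4.3 (analogous to \Cref{P:2sided-excessdecay+uniqueness}) holds uniformly along the boundary, and it is what makes their Lemma 6.5 go through. In the present one-sided high-frequency setting, \Cref{L:Freq-geq-than-1} only gives you this at $p$ itself, and transferring it to arbitrary $p_L^\flat$ is precisely the content that must be supplied (one cannot use the spatial frequency comparison \Cref{L:High-Freq:ComparingFreqBetweenPoints} here, since that lemma depends on the center manifold being constructed, i.e., on the lemma you are trying to prove). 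Your appeal to \Cref{T:excessdecay} at the end does hold at every nearby one-sided flat boundary point, but that theorem only produces the universal exponent $\alpha$, which need not satisfy $\alpha>2-2\delta_\be$ and so cannot beat the stopping threshold by itself.

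A smaller issue: you state that one may ``choose $N_0$ large enough after all the remaining parameters have been fixed (legal by \Cref{A:ParametersCM} (iii))''. This is backwards. In \Cref{A:ParametersCM}, $N_0$ is fixed in item (iii) \emph{before} $C_\be$ (item (iv)) and $C_\bh$ (item (v)) are chosen; one cannot re-choose $N_0$ after fixing $C_\be$. The parameter that should be taken large to close the estimate for the initial cubes at level $N_0$ is $C_\be$ (which is explicitly allowed to depend on $N_0$), not $N_0$ itself.
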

\begin{proof}
This proof is entirely analogous to \cite[Lemma 6.5]{nardulli2022density} using $\bI(T,p) > 2-\delta_\be$ and the almost quadratic excess decay provided by \Cref{L:Freq-geq-than-1}.
\end{proof}

\begin{lemma}[Radial variations]\label{L:FirstEqsFreqFnc-HighFreq}
Under \Cref{A:High-Frequency}, all the consequences stated in \Cref{L:FirstEqsFreqFnc} are valid for $p$, $q$, and every radius $r\in (0,t_J)$.
\end{lemma}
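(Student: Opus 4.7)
The plan is to reduce to \Cref{L:FirstEqsFreqFnc} essentially mechanically, exploiting two simplifications afforded by the high-frequency regime: the existence of a single governing pair (center manifold, normal approximation), and the fact that the boundary lies in the contact set.

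First, I would invoke \Cref{T:UniversalLowerBound-FreqFnc} \eqref{I:FreqFnc:HighFreqIFFfiniteIntervalsFlattening}: since $\bI(T,p) > 2 - \delta_\be$, we have $t_{J+1}=0$, so every $r \in (0, t_J)$ lies in the single (final) interval of flattening. Hence all quantities $\bD_p(q,r), \bH_p(q,r), \bE_p(q,r), \bG_p(q,r), \bS_p(q,r)$ are computed using the \emph{same} pair $(\cM_{p,J}, \cN_{p,J})$, removing the need to track transitions between distinct center manifolds.

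Second, I would verify that the inner and outer variation vector fields used in the proof of \Cref{L:FirstEqsFreqFnc} remain admissible when recentered at $q \in \Gamma_{p,J}$ in place of $p$. For the outer field $X_o(q') := \phi(d(\bp_J(q'),q)/r)\,(q' - \bp_J(q'))$, admissibility requires its vanishing on $\Gamma_{p,J}$: this follows from \Cref{L:BdrCubesDontStop:HighFreq}, which gives $\Gamma_{p,J} \subset \bPhi_{p,J}(\bS_{p,J})$, so by \Cref{D:NA}(iii) the normal approximation $\cN_{p,J}$ vanishes on $\Gamma_{p,J}$ and hence $q' - \bp_J(q') = 0$ there. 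For the inner field $X_i(q') := -\tfrac{1}{2}\phi(d_J(\bp_J(q'),q)/r)\,\nabla d(\bp_J(q'),q)/|\nabla d|^2$, the required tangency to $\Gamma_{p,J}$ is precisely property (v) of \Cref{L:DistFnc} applied at the base point $q$, which is legitimate since $q \in \Gamma_{p,J} \subset \Gamma_{p,1}$.

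Third, with admissibility in place, the computation of \Cref{L:FirstEqsFreqFnc} transfers verbatim: the bounds (i)--(iv) of \Cref{L:DistFnc} on $d(\cdot,q)$ are uniform in $q \in \Gamma_{p,1}$; the rescalings \eqref{E:rescalingD}--\eqref{E:rescalingE-G} (with $q$ in place of $0$) retain the same scaling factors; the decay bound \eqref{E:Dbound} follows from the same estimate \cite[Eq.~(16.29)]{delellis2021allardtype} applied at $q$; and the error estimates imported from \cite[Sec.~9]{DDHM} that yield \eqref{E:D-E:with-bars}--\eqref{E:Dprime-D-G:with-bars} depend on the center manifold, the boundary geometry, and the smallness parameter $\mbold{p,J}$, none of which depends on the choice of base point within $\Gamma_{p,J}$. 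Repeating the algebraic manipulations that pass from \eqref{E:D-E:with-bars}--\eqref{E:Dprime-D-G:with-bars} to \eqref{E:D-E}, \eqref{E:Dprime-D-G}, and \eqref{E:1/D-1/Ebound} then yields every assertion \eqref{E:L:FirstEqsFreqFnc:1}--\eqref{E:1/D-1/Ebound} for the pair $(p,q)$ at every $r \in (0, t_J)$.

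I do not anticipate a serious obstacle: the only bookkeeping check is that for $q \in \Gamma_{p,J}$ and $r < t_J$, the support of $\phi(d(\cdot,q)/r)$ stays inside the domain where $\cM_{p,J}$ and $\cN_{p,J}$ are defined, which is immediate from $\spt\phi(d(\cdot,q)/r) \subset \bball{q}{r}(1+O(\mbold{p,J}))$ and the fact that $\cM_{p,J}$ is constructed on a domain containing $\halfobball{3/2}$ after rescaling by $t_J$. This lemma is therefore, structurally, a transfer of \Cref{L:FirstEqsFreqFnc} from the base point $p$ to an arbitrary boundary point $q \in \Gamma_{p,J}$, unified across scales by the collapse of the intervals of flattening.
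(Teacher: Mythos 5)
Your proposal correctly identifies that the high-frequency regime collapses the intervals of flattening to a single one, and your admissibility observations for the inner variation field are fine. But the heart of the transfer from $p$ to an arbitrary $q \in \Gamma_{p,J}$ is missing, and the paper's proof explicitly flags what that heart is.

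The gap is in the sentence claiming that the error estimates ``depend on the center manifold, the boundary geometry, and the smallness parameter $\mbold{p,J}$, none of which depends on the choice of base point within $\Gamma_{p,J}$.'' That is not true. All of the quantitative estimates in \Cref{L:FirstEqsFreqFnc} — in particular the crucial bound \eqref{E:Dbound}, and the inner/outer variation error estimates that feed into \eqref{E:D-E}, \eqref{E:Dprime-D-G}, and \eqref{E:1/D-1/Ebound} — are obtained by summing the \emph{local} Whitney-region estimates of \Cref{T:LocalNA} over the cubes $L \in \sW$ that meet $\bball{\bp(q)}{r}$. This summation only produces the asserted powers of $r$ if every such cube satisfies $\ell(L) \lesssim r$. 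For the base point $p = 0$ and $r$ in the interval of flattening, this is automatic by construction of $t_{j+1}$. For an arbitrary $q \in \Gamma_{p,J}$ and arbitrary $r < t_J$, it is a new claim that needs justification, and it is precisely the property the paper's proof isolates: since boundary cubes never stop in this regime (\Cref{L:BdrCubesDontStop:HighFreq}), $\bp(q)$ lies in the contact set $\bS_{p,J}$, and the separation estimate \eqref{E:separation-Whitney} then forces any stopping cube $L$ intersecting $\bball{\bp(q)}{r}$ to satisfy $2\ell(L) \leq \mathrm{sep}(\bS,L) < r$. Without this chain, the local-to-global step breaks and \eqref{E:Dbound}–\eqref{E:1/D-1/Ebound} are not justified at $q$.

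Two smaller remarks. First, you invoke \Cref{L:BdrCubesDontStop:HighFreq} only to argue the vanishing of the outer field on $\Gamma_{p,J}$, but that vanishing is a consequence of $\Gamma \cap \bC_{3/2} = \partial\cM \cap \bC_{3/2}$ (\Cref{T:CM}(iv)), which holds in any regime; the reasoning via ``$\cN$ vanishes hence $q' - \bp_J(q') = 0$'' conflates the normal approximation vanishing with the point lying on $\cM$. Second, the ``only bookkeeping check'' you flag (that the support of the cut-off sits inside the domain of $\cM_{p,J}$) is genuine but secondary; the Whitney-size bound is the check that actually carries the estimates.
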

\begin{proof}
The proof will be omitted here, since it consists of a mere repetition of the arguments in \Cref{L:FirstEqsFreqFnc}, \Cref{T:AlmMonotFreq}, and \cite{delellis2021allardtype}, taking into consideration that they only depend on the following property: every cube $L\in\sW$ such that $L\cap \bball{\bp(q)}{r}\neq\varnothing$ cannot have side length larger than $c_0 r$ for a dimensional constant $c_0 >0$. This property in turn follows directly from \Cref{L:BdrCubesDontStop:HighFreq} and \eqref{E:separation-Whitney}.
\end{proof}

We define frequency pinching, which estimates the error in the frequency when measured at two different scales at the same boundary point $q$.

\begin{definition}[Frequency pinching]
Under \Cref{A:High-Frequency}. Let $\rho, r$ be two radii which satisfy the inequalities $0 < \rho \leqslant r\leq t_J$.
We then define the \emph{frequency pinching $W_\rho^r(p,q)$ around $q$ between the scales $\rho$ and $r$} to be the following quantity
\begin{equation*}
    W_\rho^r(p, q):=\left|\bI_{p}\left(q, r\right)-\bI_{p}\left(q, \rho\right)\right|, \mbox{ for any }q\in\Gamma_{p,J}.
\end{equation*}
\end{definition}

With the very same argument of \cite[Lemma 4.1]{de2023fineII}, we can ensure the existence of $\Lambda$ depending on $T$ such that
\begin{equation}\label{E:UpperBound-Freq}
    C^{-1} \leq \bI_p(q,r) \leq \Lambda, \quad \forall q\in \Gamma_{p,J}, \quad \forall r \in (0, t_J]. 
\end{equation}

We will now prove a lemma that will be useful in establishing the dimensional bound for the set of one-sided flat singular boundary points in \Cref{S:DimBound} and also for proving the rectifiability results in \Cref{S:Rectfiability}. To do this, we define the following.

\begin{definition}
We say that a set $S=\left\{x_0, x_1, \ldots, x_k\right\} \subset \ball{x}{r}$ is \emph{$\rho r$-linearly independent} if
\begin{equation*}
    d\left(x_i, \mathrm{span}\left(\left\{x_0, \ldots, x_{i-1}\right\}\right)\right) \geqslant \rho r, \quad \text { for all } i=1, \ldots, k.
\end{equation*}

We say that a set $F \subset \ball{x}{r}$ \emph{$\rho r$-spans an $k$-dimensional affine subspace $V$} if there is a $\rho r$-linearly independent set of points $S=\left\{x_i\right\}_{i=0}^k \subset F$ such that $V=\mathrm{span}(S)$.
\end{definition}

We now demonstrate that it is not possible to have $m-1$ quantitatively linearly independent points that exhibit small pinching simultaneously.

\begin{lemma}\label{L:BigPinching:HighFreq}
Assume \Cref{A:High-Frequency} and $\rho \in (0,1]$. There exists a suitable threshold $\delta_1 = \delta_1(m, n, Q, \rho)>0$, with the following property. Take $r\in (0,\bar{r})$ (where $\bar{r}$ is from \Cref{R:1sided-flat-assumption}) and an $\rho r$-linearly independent set 
\begin{equation*}
    S = \left\{x_i\right\}_{i=0}^{m-2} \subset \ball{p}{r}\cap\Singb(T)\cap\left\{q: \Theta^m(T,q) = Q/2\mbox{ and }q\mbox{ flat}\right\}.
\end{equation*}

Then, there exist $i\in\{0,\ldots, m-2\}$ and $r_i\in (0,r)$ such that, for any $y\in \ball{x_i}{r_i}\cap\Singb(T)\cap\left\{q: \Theta^m(T,q) = Q/2\mbox{ and }q\mbox{ flat}\right\}$, we have
\begin{equation*}
    W_{r_i}^{2 r_i}\left(p, y\right) \geq \delta_1.
\end{equation*}
\end{lemma}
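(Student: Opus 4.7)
The plan is a Naber-Valtorta-style cone-splitting contradiction, carried out at the fine blowup level and closed by the strictly positive derivative lower bound of \Cref{l:linearproblemderivativebound}.

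Suppose the lemma fails. For every $k\geq 1$ I obtain a current $T_k$ satisfying \Cref{A:High-Frequency} with base point $p^k$ and structural scale $\bar r_k$, a radius $r_k\in(0,\bar r_k)$, and a $\rho r_k$-linearly independent set $\{x_i^k\}_{i=0}^{m-2}\subset \ball{p^k}{r_k}$ consisting of flat density-$Q/2$ singular boundary points, such that for every $i$ and every $r_i\in(0,r_k)$ there is a point $y_{i,r_i}^k\in \ball{x_i^k}{r_i}$ in the same singular set with $W_{r_i}^{2r_i}(p^k, y_{i,r_i}^k)<1/k$. I fix an auxiliary $s\in (0,\rho/100)$, apply the failure at $r_i=sr_k$ to obtain $y_i^k\in \ball{x_i^k}{sr_k}$, and rescale around $p^k$ by $r_k$. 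After passing to subsequences, $\tilde x_i^k:=(x_i^k-p^k)/r_k\to \bar x_i$ is $\rho$-linearly independent in $\overline{\ball{0}{1}}$, while $\tilde y_i^k\to \bar y_i$ with $|\bar y_i-\bar x_i|\leq s$, so $\{\bar y_i\}$ is $(\rho/2)$-linearly independent; each $\bar y_i$ lies on the limit flat boundary, which after a rotation I identify with $\R^{m-1}\times\{0\}$.

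Via the fine blowup construction (\Cref{D:fine-blowups}), the normalized normal approximations converge to a nontrivial Dirichlet minimizer $u:\obball{3/2}^+\to \cA_Q(\R^n)$ with $\etaa\circ u\equiv 0$, zero boundary data on $\{x_m=0\}$, $\|u\|_{L^2}=1$, and $I_u\leq \Lambda$ by \eqref{E:UpperBound-Freq}. The high-frequency hypothesis $\bI(T_k,p^k)>2-\delta_\be$ combined with the almost-monotonicity of \Cref{T:AlmMonotFreq} passes to the limit and forces $I_u(0)\geq 2-\delta_\be>1$; in particular $I_u(0)\geq 1+\alpha/2$ for some universal $\alpha>0$. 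Moreover, the rescaled pinching associated with $y_i^k$ tends to $0$ and converges to the pinching of $u$ centered at $\bar y_i$ between scales $s$ and $2s$; thus the centered frequency of $u$ at $\bar y_i$ is constant on $[s,2s]$, and \Cref{L:constantfrequencyannulus} (applied after translating $\bar y_i$ to the origin) forces $u$ to be homogeneous about each $\bar y_i$.

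By cone-splitting, $u$ is invariant under translations along the $(m-2)$-dimensional linear subspace $V$ generated by $\{\bar y_i-\bar y_0\}_{i=1}^{m-2}\subset \R^{m-1}\times\{0\}$. Completing an orthonormal basis $\{v_1,\ldots,v_{m-2}\}$ of $V$ to an orthonormal basis $\{v_1,\ldots,v_{m-1}\}$ of $\R^{m-1}$, the invariance yields $\partial_{v_i}u\equiv 0$ for $i\leq m-2$, hence $\sum_{i=1}^{m-2}\int_{\obball{s'}^+\setminus \overline{\mathrm{B}_{r'}^+}}|\partial_{v_i}u|^2=0$ for any $0<r'<s'<1$, contradicting the positive lower bound $c(\Lambda)>0$ of \Cref{l:linearproblemderivativebound}. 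The main technical obstacle is ensuring that the pinching of the universal frequency on $T_k$ translates cleanly to the pinching of the linear frequency on $u$: for $k$ large, both scales $sr_k$ and $2sr_k$ must lie inside a common interval of flattening at $p^k$, so that $W_{sr_k}^{2sr_k}(p^k,y_i^k)$ is computed from a single rescaled center manifold whose normal approximation converges in the blowup; this is enabled by the high-frequency structure of \Cref{L:BdrCubesDontStop:HighFreq} and \Cref{L:FirstEqsFreqFnc-HighFreq}, together with the splitting estimate \Cref{P:SplitingNA}.
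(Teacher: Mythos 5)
Your proof is correct and follows essentially the same approach as the paper: a compactness/contradiction argument in the style of \cite[Lemma 6.2]{de2023fineII}, producing a fine blowup $u$ with $I_u(0)\geq 1+\alpha/2$ that is homogeneous about $m-1$ quantitatively linearly independent boundary points, and then deriving a contradiction with the classification \Cref{t:homogeneous2d} of homogeneous two-variable Dirichlet minimizers vanishing on a flat boundary. The only cosmetic difference is that you close the argument by cone-splitting to translation invariance and then invoking \Cref{l:linearproblemderivativebound}, whereas the paper applies \Cref{l:twovariables} directly to build the two-variable minimizer and then contradicts \Cref{t:homogeneous2d}; since \Cref{l:linearproblemderivativebound} is itself proved via \Cref{l:twovariables} and \Cref{t:homogeneous2d}, the two routes are equivalent in substance.
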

\begin{proof}
The proof of the lemma follows the same lines as the contradiction argument in \cite[Lemma 6.2]{de2023fineII}. The main difference being that the authors contradict the \emph{interior} regularity for the linear problem (in \cite{DS1,de2018rectifiability}) with their argument, we instead run the very same contradiction argument in order to get a fine blow-up $u$ that is $I_0$-homogeneous with respect to every point in $S$ and $I_0\geq 1+\alpha/2$. Then \Cref{l:twovariables} gives us a function $f$ which is $I_0$-homogeneous $2$d Dirichlet minimizer with zero boundary data on a flat boundary and $I_0\geq 1 + \alpha/2$. This contradicts \Cref{t:homogeneous2d}, which says that the frequency of $f$ should be exactly $1$.
\end{proof}

\subsection{Low-frequency regime}

We now address the case of low-frequency, where we do not have an almost-quadratic decay. The proofs in the low-frequency regime follow a similar approach to those in the high-frequency regime from the previous section. However, one of the main challenges in this regime is that the intervals of flattening are infinitely many (\Cref{T:UniversalLowerBound-FreqFnc}\eqref{I:FreqFnc:HighFreqIFFfiniteIntervalsFlattening}). Although this introduces several technical difficulties, which will be explained and addressed in the following, it does not fundamentally alter the conceptual approach for proving rectifiability.

\subsubsection{Setting up uniform bounds in a suitable partition} 

We define the following sets for $K\in\N\setminus \{0\}$:
\begin{equation*}
    \mathfrak{G}_K := \Singb(T) \cap \left\{q: \Theta^m(T,q)=Q/2, q\mbox{ is flat, and }\bI_q(T,q) \leq 2 - \alpha_\be  - \frac{1}{2^{K}}\right \}\cap\ball{p}{2}.
\end{equation*}

Since the main goal of this work is to prove the rectifiability of the set of one-sided flat singular boundary points (\Cref{P:Rectf-LowFreq}), which is equivalent to proving the rectifiability of $\cup_{1\leq K\in\N}\mathfrak{G}_K$, we will begin by studying each $\mathfrak{G}_K$ individually.

\begin{assumption}\label{A:Low-Frequency}
Let $T,\Gamma$, and $\Sigma$ satisfy \Cref{A:general}, $p\in\mathfrak{G}_K$, and $\eta$ be a given small number to be specified later.
\end{assumption}

We take $\gamma \in (0,1/2]$ to be a geometric constant, depending on $m,n,Q$, as in \cite[Section 10.1]{de2023fineII}, and denote by $\cM_{p,\gamma^j}$ and $\cN_{p,\gamma^j}$ the center manifold and normal approximation, respectively, with respect to $T_{p,\gamma^j}, \Gamma_{p,\gamma^j}$, and the intervals $(\gamma^{j+1},\gamma^j]$ (which replace the intervals of flattening; see also \cite[Section 10.1]{de2023fineII}). 

This constant $\gamma$ ensures the uniformity of certain estimates used below. For example, an important feature now is that $\inf_j \frac{\gamma^{j+1}}{\gamma^j} = \gamma > 0$, as opposed to \Cref{T:UniversalLowerBound-FreqFnc}\eqref{I:FreqFnc:UniversalLowerBound}, which only guarantees that $\inf_j \frac{s_{j+1}}{s_j} > 0$ with no uniform lower bound.

\begin{lemma}[Radial variations]\label{L:RadialVariations:LowFreq}
Under \Cref{A:Low-Frequency}. All assertions stated in \Cref{L:FirstEqsFreqFnc} are valid for $p$, $q$, and 
\begin{equation*}
    4\eta \gamma^{j+1} < \rho \leq r \leq 2\gamma^j \mbox{ and } q\in \Gamma_{p,\gamma^j}.
\end{equation*}
\end{lemma}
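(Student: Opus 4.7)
The plan is to reduce the proof to verifying a uniform cube-size bound, in the same spirit as was done in the proof of Lemma \ref{L:FirstEqsFreqFnc-HighFreq}. All the integral identities \eqref{E:Dprime} and \eqref{E:Hprime}, together with the estimates \eqref{E:L:FirstEqsFreqFnc:1}, \eqref{E:Dbound}, \eqref{E:D'bound}, \eqref{E:D-E}, \eqref{E:Dprime-D-G}, and \eqref{E:1/D-1/Ebound}, are proved by taking outer and inner variations along vector fields of the form $X_o(x) = \phi(d(x,q)/r)(x-\bp(x))$ and $X_i(x) = -\tfrac{1}{2}\phi(d(\bp(x),q)/r)\frac{\nabla d}{|\nabla d|^2}$, and then estimating the error terms arising from the mismatch between $T$ and the graph of $F = \mathrm{id} \oplus \cN_{p,\gamma^j}$ via the local estimates of Theorem \ref{T:LocalNA} and Proposition \ref{P:SplitingNA}. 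A careful inspection shows that the only property of the Whitney decomposition actually used is: every cube $L \in \sW^{p,\gamma^j}$ with $L \cap \bball{\bp(q)}{s} \neq \varnothing$ satisfies $\ell(L) \leq c_0\, s$ for a dimensional constant $c_0$.

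First, I would rescale by a factor of $\gamma^{-j}$, turning the radii into $\rho/\gamma^j, r/\gamma^j \in (4\eta\gamma, 2]$ and $q$ into a point of $\overline{\Gamma}_{p,\gamma^j}$, and work with the normalized center manifold $\overline{\cM}_{p,\gamma^j}$ and its normal approximation $\overline{\cN}_{p,\gamma^j}$, together with the Whitney decomposition $\sW^{p,\gamma^j}$ of $\halfobball{3/2}$ described in \Cref{S:1sided-CM-NA}. Since the rescalings \eqref{E:rescalingD}, \eqref{E:rescalingH}, \eqref{E:rescalingS}, and \eqref{E:rescalingE-G} are linear, transferring the estimates back to the original scale is purely algebraic.

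The main step, and the main obstacle, is to establish the cube-size bound for $s \in (4\eta\gamma, 2]$. Unlike in the high-frequency case (Lemma \ref{L:BdrCubesDontStop:HighFreq}), boundary cubes \emph{can} stop here because the almost-quadratic excess decay of \Cref{L:Freq-geq-than-1} fails, so one must quantitatively prevent such stopping from occurring at scales much larger than $s$. The argument would combine three ingredients: (i) the Whitney separation property \eqref{E:separation-Whitney}, which forces any stopping cube $L$ with $L \cap \bball{\bp(q)}{s} \neq \varnothing$ and $\ell(L) > c_0 s$ to satisfy $\dist(0,L) \leq C \ell(L)$; (ii) the stopping criteria defined in \Cref{S:Stopping-condition}, which would yield a lower bound on the excess or height at the satellite ball $\bB_L^\flat$; and (iii) the hypothesis $p \in \mathfrak{G}_K$, which—through the almost-monotonicity \eqref{E:Almost-Monot-I} and the upper bound \eqref{E:UpperBound-Freq} together with the universal frequency lower bound $\bI(T,p) \geq 1 + \alpha/2$ of \Cref{T:UniversalLowerBound-FreqFnc}(\ref{I:FreqFnc:UniversalLowerBound})—provides a quantitative lower bound on the Dirichlet energy at scale $\gamma^j$. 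Choosing $\eta = \eta(K,\gamma,C_\be,C_\bh,m,n,Q)$ small enough would force any such large stopping cube to violate either the excess or the height threshold, giving a contradiction.

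Once the cube-size bound is in hand, the remainder of the proof is a verbatim transcription of the arguments in Lemma \ref{L:FirstEqsFreqFnc}: the outer variation identity yields \eqref{E:Dprime} and \eqref{E:D-E}; the inner variation along $X_i$ yields \eqref{E:Dprime-D-G}; the radial differentiation formula \eqref{E:Hprime} follows from direct computation using the asymptotics of the good distorted distance in Lemma \ref{L:DistFnc}; and \eqref{E:L:FirstEqsFreqFnc:1}, \eqref{E:Dbound}, \eqref{E:D'bound}, \eqref{E:1/D-1/Ebound} are purely algebraic consequences, exactly as in \cite{delellis2021allardtype} and \cite{DDHM}. The bulk of the work is therefore localized in the cube-size bound, which is where the specific low-frequency regime enters in an essential way.
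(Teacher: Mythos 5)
You correctly identify that the proof reduces to establishing a uniform cube-size bound (every $L\in\sW$ meeting $\bball{\bp(q)}{s}$ has $\ell(L)\leq c_0 s$), and that once this is in hand the remaining estimates are a verbatim repetition of \Cref{L:FirstEqsFreqFnc}. This matches the paper's structure and its high-frequency analogue \Cref{L:FirstEqsFreqFnc-HighFreq}.

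However, your proposed contradiction argument for the cube-size bound does not hold up. Point (i) is incorrect: the Whitney separation property \eqref{E:separation-Whitney} asserts $\mathrm{sep}(\bS,L)\geq 2\ell(L)$, which gives a \emph{lower} bound on the distance from a stopping cube to the contact set; it does not and cannot yield $\dist(0,L)\leq C\ell(L)$ for a large stopping cube near $\bp(q)$ (if $0\in\bS$ it would actually give the reverse inequality, and $\bp(q)$ need not be near $0$ in the first place). Point (iii) also fails: the hypothesis $p\in\mathfrak{G}_K$, combined with \Cref{T:UniversalLowerBound-FreqFnc}, pins the frequency into a compact range and (via the almost-monotonicity) controls the rate at which the Dirichlet energy decays, but a \emph{lower} bound on $\bD_p$ cannot force a stopping cube to "violate the excess or height threshold." The stopping criteria in \Cref{S:Stopping-condition} are conditions of the form $\bE(T,\bB_L)>C_\be\mbold{0}\ell(L)^{2-2\alpha_\be}$; ruling out a large stopping cube requires an \emph{upper} bound on excess/height on the satellite ball, and in the low-frequency regime you do not have the almost-quadratic decay needed for that (indeed this is precisely what fails here and why \Cref{L:BdrCubesDontStop:HighFreq} is unavailable). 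Finally, the stopping thresholds do not involve $\eta$ at all, so "choosing $\eta$ small" does not affect which cubes stop—it only restricts the range of radii.

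The paper's (omitted) argument bypasses all of this: the cube-size bound is obtained directly from the restriction $r>4\eta\gamma^{j+1}$ and \eqref{E:separation-Whitney}, because the modified geometric intervals of flattening $(\gamma^{j+1},\gamma^j]$ and the parameter $\eta$ are calibrated (following the construction of \cite[Section 10.1]{de2023fineII}) so that any stopping cube meeting $\bball{\bp(q)}{r}$ at a scale $r>4\eta\gamma^{j+1}$ automatically has side length $\lesssim r$ by the Whitney separation alone. No frequency machinery, no Dirichlet-energy bound, and no contradiction argument are required for this step.
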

\begin{proof}
The proof will be omitted here, since it consists of a mere repetition of the arguments in \Cref{L:FirstEqsFreqFnc}, \Cref{T:AlmMonotFreq}, and \cite{delellis2021allardtype}, taking into consideration that they only depend on the following property: every cube $L\in\sW$ such that $L\cap \bball{\bp(q)}{r}\neq\varnothing$ cannot have side length larger than $c_0 r$ for a dimensional constant $c_0 >0$. This property in turn follows directly from the assumption that $r > 4\eta \gamma^{j+1}$ and \eqref{E:separation-Whitney}.
\end{proof}

We state here the counterpart, for the low-frequency regime, of \Cref{L:BigPinching:HighFreq}. In order to do this, we need to define the frequency pinching in this regime.

\begin{definition}[Frequency pinching]
Under \Cref{A:Low-Frequency}. Let $\rho, r$ be two radii which satisfy the inequalities $4\eta\gamma^{j+1} < \rho \leqslant r\leq \gamma^j$.
We then define the \emph{frequency pinching $W_\rho^r(p,j,q)$ around $q$ between the scales $\rho$ and $r$} to be the following quantity
\begin{equation*}
    W_\rho^r(p, j, q):=\left|\bI_{p}\left(q, r\right)-\bI_{p}\left(q, \rho\right)\right|, \mbox{ for any }q\in\Gamma_{p,\gamma^j}.
\end{equation*}
\end{definition}

The proof of the following lemma is entirely analogous to \Cref{L:BigPinching:HighFreq}, this time following the contradiction argument in \cite[Lemma 12.2]{de2023fineII}.

\begin{lemma}\label{L:BigPinching:LowFreq}
Assume \Cref{A:High-Frequency} and $\rho \in (0,1]$. There exists a suitable threshold $\delta_2 = \delta_2(m, n, Q, \gamma, K, \rho)>0$, with the following property. Take $r\in (\gamma^{j+1},\gamma^j)$ and an $\rho r$-linearly independent set $S = \left\{x_i\right\}_{i=0}^{m-2} \subset \ball{p}{r}\cap\mathfrak{G}_K$. Then, there exist $i\in\{0,\ldots, m-2\}$ and $r_i\in (0,r)$ such that, for any $y\in \ball{x_i}{r_i}\cap\mathfrak{G}_K$, we have
\begin{equation*}
    W_{r_i}^{2 r_i}\left(p, y\right) \geq \delta_1.
\end{equation*}
\end{lemma}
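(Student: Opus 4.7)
The plan is to argue by contradiction, following the same scheme as in \Cref{L:BigPinching:HighFreq} but now adapted to the $\gamma$-scales that replace the intervals of flattening. Assume the conclusion fails. Then there exist $\delta_n\downarrow 0$, points $p_n\in\mathfrak{G}_K$, indices $j_n$, radii $r_n\in(\gamma^{j_n+1},\gamma^{j_n})$, and $\rho r_n$-linearly independent sets $S_n=\{x_0^n,\dots,x_{m-2}^n\}\subset\ball{p_n}{r_n}\cap\mathfrak{G}_K$ such that, for every $i\in\{0,\dots,m-2\}$ and every $s\in(0,r_n)$, there is some point $y_{i,s}^n\in\ball{x_i^n}{s}\cap\mathfrak{G}_K$ with
\[
W_{s}^{2s}(p_n,y_{i,s}^n)<\delta_n.
\]

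I would then rescale at $p_n$ by the factor $r_n$, i.e.~consider $T_n:=(\iota_{p_n,r_n})_\sharp T$ restricted to a fixed ball, together with the corresponding rescaled boundary $\Gamma_n$ and ambient $\Sigma_n$. Since $p_n\in\mathfrak{G}_K\subset \Singb^1(T)$ with flat tangent cone, the rescaled boundaries and manifolds flatten, and the fine blowup procedure from \Cref{D:fine-blowups} (using the center manifolds $\cM_{p_n,\gamma^{j_n}}$ and normal approximations $\cN_{p_n,\gamma^{j_n}}$ constructed at the right scale) produces, up to subsequence, a nontrivial Dirichlet minimizer $u$ on $\halfobball{3/2}$ with zero boundary data on $\{x_m=0\}$ and $\etaa\circ u\equiv 0$. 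Along the subsequence, the rescaled points $\tilde x_i^n:=r_n^{-1}(x_i^n-p_n)$ converge to points $\tilde x_0,\dots,\tilde x_{m-2}\in\obball{1}\cap\{x_m=0\}$ that are still $\rho$-linearly independent (so they span an $(m-2)$-plane $V\subset\{x_m=0\}$). The key stability facts for this transfer are the radial-variation bounds and the uniform two-sided bounds on the frequency \eqref{E:UpperBound-Freq}, which hold in the low-frequency regime thanks to \Cref{L:RadialVariations:LowFreq} together with the uniform lower bound $\inf_j\gamma^{j+1}/\gamma^j=\gamma>0$.

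Next, I would pass the smallness of the pinching to the limit. Given any $0<\sigma_1<\sigma_2$, pick a sequence of approximating points $y_{i,s_n}^n$ with $s_n$ chosen so that the rescaled radii land in $[\sigma_1,\sigma_2]$; the hypothesis $W_{s_n}^{2s_n}(p_n,y_{i,s_n}^n)<\delta_n\to 0$, combined with the convergence of the rescaled universal frequency to the smoothed frequency $I_u$ (as in \Cref{T:UniversalLowerBound-FreqFnc}\eqref{I:HomogFineBU}), yields $I_u(\tilde x_i,\sigma_1)=I_u(\tilde x_i,\sigma_2)$ for each $i$. By \Cref{L:constantfrequencyannulus} applied with center $\tilde x_i$ (and using that $\tilde x_i$ lies on the flat boundary), $u$ is $I_u(\tilde x_i,\cdot)$-homogeneous with respect to each $\tilde x_i$. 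A standard argument (simultaneous homogeneity at several collinearly independent boundary points forces translation invariance in the directions spanned by the differences) then yields that $u$ is invariant along $V$, so on a suitable half-ball $u$ depends only on the two variables $\mathrm{span}(v,x_m)$ for some unit $v\perp V$. By \Cref{l:twovariables}, $u$ is a function of only these two variables on the whole of $\halfobball{3/2}$.

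Finally, blowing $u$ down at the origin yields a nontrivial homogeneous two-dimensional Dirichlet minimizer with zero boundary data on $\{x_m=0\}$. By \Cref{t:homogeneous2d}, its frequency at the origin equals $1$. On the other hand, the universal lower bound \Cref{T:UniversalLowerBound-FreqFnc}\eqref{I:FreqFnc:UniversalLowerBound} together with item \eqref{I:HomogFineBU} forces the frequency of any fine blowup at a one-sided flat singular point to be at least $1+\alpha/2>1$. This contradiction gives the desired $\delta_2=\delta_2(m,n,Q,\gamma,K,\rho)>0$.

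I expect the main obstacle to be the first step: ensuring that the fine blowup constructed from the sequence of center manifolds $\cM_{p_n,\gamma^{j_n}}$ has the correct uniform behavior, and that the pinching smallness can be simultaneously transferred along a sequence of radii to \emph{distinct} approximating points $\tilde x_i$. Unlike in the high-frequency regime (where $t_{J+1}=0$ provides a single, global center manifold), here one must track the fact that the $y_{i,s_n}^n$ and the $x_i^n$ may live in different intervals $(\gamma^{j+1},\gamma^j]$, so the comparability of frequencies across nearby scales and base points must be handled uniformly in $K$. Once the uniform bounds of \eqref{E:UpperBound-Freq} and \Cref{L:RadialVariations:LowFreq} are verified in this regime, the rest of the contradiction argument is parallel to that of \Cref{L:BigPinching:HighFreq}.
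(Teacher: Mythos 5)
Your proposal follows essentially the same contradiction scheme that the paper invokes (by reference to \Cref{L:BigPinching:HighFreq} and to \cite[Lemma 12.2]{de2023fineII}): blow up along the contradiction sequence, obtain homogeneity of the limit $u$ at $m-1$ quantitatively linearly independent boundary points, use \Cref{l:twovariables} to reduce to two variables, and contradict \Cref{t:homogeneous2d} using the universal lower bound $\geq 1+\alpha/2$ on the frequency. You also correctly flag the genuine new technical concern in the low-frequency regime, namely that the center manifolds change across the $\gamma$-intervals, which the paper handles via the uniform ratio $\gamma^{j+1}/\gamma^j=\gamma$ and \Cref{L:RadialVariations:LowFreq}.

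Two points to tighten. First, the step where you deduce $I_u(\tilde x_i,\sigma_1)=I_u(\tilde x_i,\sigma_2)$ is not quite right as written: the approximating points $y_{i,s_n}^n$ live in $\ball{x_i^n}{s_n}$ with $s_n\sim\sigma r_n$ (if $s_n/r_n\to 0$ the scales collapse and the pinching hypothesis becomes vacuous), so after rescaling they converge to some $\tilde y_i$ within distance $O(\sigma)$ of $\tilde x_i$, and the frequency constancy on an annulus — and hence homogeneity via \Cref{L:constantfrequencyannulus} — is obtained at the points $\tilde y_i$, not at $\tilde x_i$. This is harmless provided you fix $\sigma\ll\rho$ so that the $\tilde y_i$ remain, say, $\rho/2$-linearly independent, but as stated the conclusion about $\tilde x_i$ does not follow. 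Also, from the pinching you only get constancy on the specific annulus $[\sigma,2\sigma]$, not for an arbitrary pair $\sigma_1<\sigma_2$; \Cref{L:constantfrequencyannulus} of course needs only one nondegenerate annulus. Second, the final appeal to \Cref{T:UniversalLowerBound-FreqFnc}\eqref{I:HomogFineBU}–\eqref{I:FreqFnc:UniversalLowerBound} is a bit off: the limit $u$ produced by a varying sequence $(T_n,p_n,r_n,j_n)$ is not a fine blowup in the sense of \Cref{D:fine-blowups} (the paper's own proofs of \Cref{T:JonesBeta-HighFreq,T:JonesBeta-LowFreq} explicitly remark that $u_0$ ``is not necessarily a fine blow-up''). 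The bound $I_u\geq 1+\alpha/2$ must instead be obtained by strong $W^{1,2}$-convergence of the rescaled normal approximations, applying the universal lower bound and almost-monotonicity to each $T_n$ at $p_n$ (or at $x_i^n$) and passing to the limit, exactly as is done in the proof of \Cref{T:JonesBeta-LowFreq}. With those adjustments your plan is correct and lines up with the intended argument.
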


\section{Dimensional bound and countability of the singular set in 3d}\label{S:DimBound}

In this section, we show how to obtain a Hausdorff dimensional bound for the set of one-sided singular boundary points, i.e., with density $Q/2$. We will split the proof into two cases: $m>3$ (\Cref{T:DimBound}) and $m=3$ (\Cref{T:Countability}). Additionally, we note that when proving the rectifiability of the set of one-sided singular points for $m>3$ in \Cref{S:Rectfiability}, the Hausdorff dimensional bound follows as a by-product of the finer analysis conducted therein, which is entirely independent of this section.

We first recall a standard lemma in geometric measure theory regarding the weak $\epsilon$-approximation property, for which we provide a proof for the sake of completeness.

\begin{lemma}[Weak $\epsilon$-approximation property]\label{L:weak-epsilon-approx}
Let $0\in S \subset \R^{m+n}, k \in \{1,\ldots, m+n-1\}$, and $\epsilon \in\left(0, 1/4 \right)$. Assume that $S$ satisfies the weak $\epsilon$-approximation property in $\oball{1}$, meaning that for each $r \in(0,1]$ and each $x \in S \cap \oball{1}$, there exists a $k$-plane $L=L(r,x)$ containing $x$ and satisfying
\begin{equation}\label{E:weak-epsilon-approx}
S \cap \ball{x}{r} \subset(\epsilon r)\mbox{-neighborhood of } L \cap \ball{x}{r}.
\end{equation}

Then, there exists a function $g : (0, 1) \rightarrow(0, +\infty)$ with $\lim _{s \rightarrow 0} g(s)=0$ such that $\cH^{k+g(\epsilon)}\left(S \cap \oball{1}\right)=0$.
\end{lemma}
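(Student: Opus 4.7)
The plan is to carry out a standard iterated covering argument, exploiting the weak $\epsilon$-approximation property at successive dyadic scales determined by $\epsilon$. The key geometric input is the following elementary observation: if $S\cap \ball{x}{r}$ is contained in the $(\epsilon r)$-neighborhood of a $k$-plane $L$ passing through $x$, then this neighborhood, intersected with $\ball{x}{r}$, is contained in the union of at most $C_0\epsilon^{-k}$ balls of radius $\epsilon r$, where $C_0=C_0(m+n,k)$ is a purely dimensional constant (cover the $k$-disk $L\cap \ball{x}{r}$ by $C_0\epsilon^{-k}$ balls of radius $\epsilon r/2$ in $L$; then the doubled balls in $\R^{m+n}$ cover the full $(\epsilon r)$-neighborhood). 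After discarding those balls that do not meet $S$, we can also assume all centers lie in $S$, replacing $C_0$ by $2C_0$ if needed.

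Starting from the single ball $\ball{0}{1}$, I would apply the above observation to obtain a cover of $S\cap \ball{0}{1}$ by at most $C_0\epsilon^{-k}$ balls of the form $\ball{x_i^{(1)}}{\epsilon}$ with $x_i^{(1)}\in S$. Applying the weak $\epsilon$-approximation property again at each of these balls (with radius $r=\epsilon$) and iterating $N$ times, I obtain a cover of $S\cap \ball{0}{1}$ by at most $(C_0\epsilon^{-k})^N$ balls of radius $\epsilon^N$.

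For any $\delta>0$ and $s=k+\delta$, the associated Hausdorff premeasure at scale $2\epsilon^N$ therefore satisfies
\begin{equation*}
\mathcal{H}^{s}_{2\epsilon^N}\bigl(S\cap \oball{1}\bigr)\;\leq\; \omega_s\,(C_0\epsilon^{-k})^N\,(\epsilon^N)^{s}\;=\;\omega_s\, C_0^{N}\,\epsilon^{N\delta}.
\end{equation*}
This tends to $0$ as $N\to\infty$ precisely when $C_0\epsilon^{\delta}<1$, i.e.\ $\delta>\log C_0/\log(1/\epsilon)$. Consequently, setting for example
\begin{equation*}
g(\epsilon):=\frac{\log C_0 + 1}{\log(1/\epsilon)}\qquad\text{for }\epsilon\in (0,1/4),
\end{equation*}
(and extending arbitrarily by a positive function tending to $0$ on the rest of $(0,1)$), one has $\lim_{s\to 0^+}g(s)=0$ and $\mathcal{H}^{k+g(\epsilon)}(S\cap \oball{1})=0$, which is the desired conclusion.

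There is no genuine obstacle here: the proof is a textbook iterated-covering argument and does not require coherence of the approximating planes $L(r,x)$ across scales. The only mild care needed is to verify, at each iteration step, that the approximating $k$-plane through $x_i^{(N)}\in S$ exists for the radius $\epsilon^{N}$, which is guaranteed by the hypothesis for every $r\in(0,1]$ and every $x\in S\cap \oball{1}$, and to keep the successive balls centered in $S$ so that the hypothesis may be reapplied.
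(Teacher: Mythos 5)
Your proof is correct and follows essentially the same iterated-covering argument as the paper: cover at each stage by $O(\epsilon^{-k})$ balls of radius $\epsilon$ times the previous scale, compare the resulting $(k+\delta)$-premeasures, and choose $g(\epsilon)$ so that the geometric ratio is below $1$. The only (minor) extra care you take is explicitly recentering each new ball at a point of $S$ so the hypothesis can be reapplied; the paper's write-up leaves this implicit.
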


\begin{proof}
Given a ball $\ball{x}{r}$, by \eqref{E:weak-epsilon-approx}, we can cover the set $S \cap \ball{x}{r}$ with at most $C(k)\epsilon^{-k}$ balls of radius $\epsilon r$. We fix the cover $\mathfrak{C}_0= { \oball{1}}$. The cover $\mathfrak{C}_{j+1}$ is then obtained by replacing each ball $B$ in $\mathfrak{C}_j$ with $C(k)\epsilon^{-k}$ balls of radius $\epsilon r(B)$.

For any $\alpha>0$, we observe that
\begin{equation*}
\cH^{\alpha}_{\infty}(S \cap \oball{1}) \leq \sum_{B \in \mathfrak{C}{j+1}} \omega_{\alpha} r(B)^{\alpha} \leq \sum_{B \in \mathfrak{C}{j}} \omega_{\alpha}C(k)\epsilon^{-k}(\epsilon r(B))^{\alpha}= C(k)\epsilon^{\alpha-k}\sum_{B \in \mathfrak{C}{j}} \omega_{\alpha}r(B)^{\alpha}.
\end{equation*}

By iterating on $j$, we obtain $\cH^{\alpha}(S \cap \oball{1}) \leq (C(k)\epsilon^{\alpha-k})^{j} \omega_{\alpha}$. If $C(k)\epsilon^{\alpha-k}<1$, it follows that $\cH^{\alpha}(S \cap \oball{1})=0$.

We define $g$ for any $s>0$ by setting $C(k)s^{g(s)}=2^{-1}$. It is straightforward to verify that $g$ is continuous, satisfies $\lim_{s\rightarrow 0}g(s)=0$, and ensures $\cH^{k+g(\epsilon)}\left(S \cap \oball{1}\right)=0$.
\end{proof}

We now assume that $m>3$ to prove the following dimensional bound.

\begin{theorem}\label{T:DimBound}
Let $T,\Sigma$, and $\Gamma$, be as in \Cref{A:general}. Suppose $m>3$, then
\begin{equation*}
    \mathrm{dim}_{\cH}\left( \Singb^1(T) \right) \leq m-3.
\end{equation*}
\end{theorem}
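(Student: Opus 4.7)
The plan is to combine the big pinching lemmas with the almost monotonicity and the uniform bounds on the universal frequency to produce, at every sufficiently small scale around each one-sided flat singular point, a quantitative $(m-3)$-dimensional approximation of the singular set, and then to invoke \Cref{L:weak-epsilon-approx} to conclude.

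First, following the induction on $Q$ already carried out in the proof of \Cref{T:Rectifiability-1sided} (which reduces the problem via the decomposition \Cref{T:decomposition-1sided}), it suffices to prove the bound for the set $E$ of one-sided \emph{flat} singular points of $T$. I would then further decompose $E \subset E_H \cup \bigcup_{K \geq 1} \mathfrak{G}_K$, where $E_H := \{p \in E : \bI(T,p) > 2 - \delta_\be\}$ is the high-frequency stratum and $\mathfrak{G}_K$ are the low-frequency strata. Since the Hausdorff dimension is stable under countable unions, it suffices to bound $\dim_{\cH}$ of each of these pieces separately.

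The central step is to establish the following weak approximation property: for every $\epsilon > 0$ and every $p$ in the fixed stratum, there exists $r_0 = r_0(\epsilon, p) > 0$ such that for every $x$ in that stratum lying in $\ball{p}{r_0}$ and every $r \in (0, r_0]$, the set $E \cap \ball{x}{r}$ is contained in the $\epsilon r$-neighborhood of some $(m-3)$-dimensional affine plane. Arguing by contradiction, if $E \cap \ball{x}{r}$ $\epsilon r$-spans an $(m-2)$-dimensional affine subspace, one extracts an $\epsilon r$-linearly independent set of $m-1$ points $\{x_i\}_{i=0}^{m-2} \subset E \cap \ball{x}{r}$. Applying \Cref{L:BigPinching:HighFreq} (for $E_H$) or \Cref{L:BigPinching:LowFreq} (for $\mathfrak{G}_K$) then produces an index $i$ and a subradius $r_i \in (0, r)$ such that every $y \in \ball{x_i}{r_i} \cap E$ satisfies the uniform lower bound $W_{r_i}^{2 r_i}(x, y) \geq \delta_j(\epsilon)$, with $j \in \{1, 2\}$. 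On the other hand, the almost monotonicity \eqref{E:Almost-Monot-I} combined with the upper bound \eqref{E:UpperBound-Freq} and the universal lower bound $\bI(T, \cdot) \geq 1 + \alpha/2$ from \Cref{T:UniversalLowerBound-FreqFnc}\eqref{I:FreqFnc:UniversalLowerBound} forces the total variation of $\log \bI_x(y, \cdot)$ along any dyadic chain of scales to be uniformly bounded, so that only finitely many dyadic scales can carry pinching exceeding $\delta_j(\epsilon)$. A standard quantitative stratification/covering argument (in the spirit of \cite[Sections 10 and 12]{de2023fineII}) then converts this summability into a contradiction with the uniform pinching bound, provided $r_0$ is chosen small enough.

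Once the weak $\epsilon$-approximation property is in place, \Cref{L:weak-epsilon-approx} yields $\cH^{m-3 + g(\epsilon)}(E \cap \ball{p}{r_0}) = 0$ for a function $g$ with $g(\epsilon) \to 0$; letting $\epsilon \downarrow 0$ and covering by countably many such balls gives $\dim_{\cH}(E) \leq m-3$. The main obstacle I anticipate is the book-keeping in the low-frequency regime, where the infinitely many intervals of flattening force a delicate choice of the radius $r_i$ supplied by \Cref{L:BigPinching:LowFreq} so that it falls inside a single interval $(\gamma^{j+1}, \gamma^j]$ on which the radial variation estimates of \Cref{L:RadialVariations:LowFreq} apply, and the jump bound \eqref{E:jumpsI} must then be carefully summed across flattening transitions in order to close the contradiction argument.
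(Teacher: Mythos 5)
The ingredients you name are the right ones (induction/decomposition to flat points, frequency-regime dichotomy, big pinching lemmas, bounded frequency variation, and \Cref{L:weak-epsilon-approx}), but there is a genuine gap in the mechanism. You aim to establish the weak $\epsilon$-approximation property \emph{at all scales} $r\leq r_0$ around each $p$ in a fixed stratum, and close this with a ``standard quantitative stratification/covering argument'' citing Sections 10 and 12 of \cite{de2023fineII}. Those sections implement the full Naber--Valtorta rectifiability machinery, which needs the $\beta_2$-control of \Cref{T:JonesBeta-HighFreq,T:JonesBeta-LowFreq}; the paper's dimension bound in \Cref{S:DimBound} is deliberately much more elementary and does not invoke any of that. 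The paper's actual proof argues by contradiction: if $\dim_{\cH}(S)=M>m-3$, remove a $\cH^M$-null set so $\Theta^M(S,\cdot)>0$ everywhere; by the contrapositive of \Cref{L:weak-epsilon-approx} the $\epsilon_0$-approximation then fails in every ball $\ball{q}{r}$, which yields $(r_0\epsilon_0)$-linearly independent configurations and lets one \emph{iterate} \Cref{L:BigPinching:HighFreq}/\Cref{L:BigPinching:LowFreq} inside a nested chain of balls. Each step produces a fresh scale at which all remaining singular points carry pinching $\geq\delta$, so after $k$ steps the innermost ball consists of points with total pinching $\geq k\delta$; since $\bI_p$ is trapped between a uniform positive lower bound and the upper bound $\Lambda$ of \eqref{E:UpperBound-Freq}, this must terminate after $O(\Lambda/\delta)$ steps, contradicting the fact that positive density lets the iteration continue indefinitely. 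This nested iteration is the crux, and it is absent from your outline.

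Two secondary points. First, you write the pinching as $W_{r_i}^{2r_i}(x,y)$ and the frequency as $\bI_x(y,\cdot)$, i.e.\ centered at the varying point $x$ where the approximation fails; but \Cref{L:BigPinching:HighFreq,L:BigPinching:LowFreq} return $W_{r_i}^{2r_i}(p,y)$, with $p$ the fixed base point whose center manifolds and intervals of flattening are used. Keeping the frequency center fixed at $p$ is precisely what makes the pinching contributions from different iteration steps add up into one bounded quantity; if the center changed each time, the totals would live on different center manifolds and would not accumulate. Second, the observation that only finitely many dyadic scales can carry pinching $\geq\delta$ does not, by itself, yield the weak approximation at all scales $\leq r_0$: the big pinching lemma produces pinching at some uncontrolled smaller scale $r_i<r$, not at the failure scale $r$ itself, so there is no direct bound on the number of failure scales. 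One really needs the nested iteration together with the positive-density assumption to turn the pinching budget $\Lambda$ into the dimension bound.
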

\begin{proof}
In the proof of \Cref{T:Rectifiability-1sided} (see \Cref{S:main-results-proofs}), we constructed a decomposition of the singular set into countable pieces, each consisting of flat boundary singularities for $T$. By applying the same induction argument as in that proof, we readily reduce the problem to proving \Cref{T:DimBound} for flat boundary singularities.

For the remainder of the proof, we consider the set $S$ (which will be a Borel set) to consist of either the one-sided flat singularities of high frequency or the one-sided flat singularities of low frequency, i.e.,
\begin{equation*}
    S:= \mathfrak{G}_K\mbox{ or } S:=\left(\Singb^1(T) \cap \left\{q\in\Gamma: q\mbox{ is flat}\right \}\right)\setminus\mathfrak{G}_K.
\end{equation*} 

Assume, for the sake of contradiction, that $\dim_{\cH}(S) =: M > m-3$. Let $\epsilon_0>0$ be sufficiently small so that if a set satisfies the weak $\epsilon_0$-approximation property, then $\cH^{M}(S)=0$. Removing a set of $\cH^{M}$-measure zero if necessary, we may assume that, for all $q \in S$, $\Theta^{M}(S,q)>0$. Consequently, for every $q\in S$ and $r>0$, the set $S$ fails to satisfy the weak $\epsilon_0$-approximation property in $\ball{q}{r}$. This, in turn, implies the existence of a point $x_0 \in S\cap \ball{q}{r}$ and a radius $r_0>0$ such that \emph{no} $(m-3)$-plane $L$ satisfies \eqref{E:weak-epsilon-approx}, i.e.,
\begin{equation*}
S\cap\ball{x_0}{r_0} \not\subset (\epsilon_0 r_0)\mbox{-neighborhood of } L, \mbox{ for any }(m-3)-\mbox{plane }L.
\end{equation*}

We now aim to construct a set of $m-1$ points in $S$ that are $(r_0\epsilon_0)$-linearly independent. Beginning with $x_0$, we proceed inductively: suppose we have already found an $(r_0\epsilon_0)$-linearly independent set $\{x_0,\dots,x_k\} \subset S$. Then, we define $x_{k+1} \in S$ as a point satisfying
\begin{equation*}
\dist\left(x_{k+1}, \textup{span}(x_0,x_1,..,x_{k}) \right)>\epsilon_0 r_0.
\end{equation*}

If this process terminates before reaching $k+1 = m-2$, i.e., we fail to find such an $x_{j+1} \in S\cap\ball{x_0}{r_0}$ for some $j<m-2$, then every $x\in S\cap\ball{x_0}{r_0}$ must satisfy
\begin{equation*}
\dist\left(x, \textup{span}(x_0,x_1,..,x_{j}) \right)\leq \epsilon_0 r_0.
\end{equation*}

Since $L_j := \textup{span}(x_0,x_1,..,x_{j})$ has dimension $j < m-2$, this contradicts our earlier assumption about $x_0$ and $r_0$, as $S\cap \ball{x_0}{r_0}$ would then satisfy \eqref{E:weak-epsilon-approx}.

Thus, we obtain an $(r_0\epsilon_0)$-linearly independent set $\{x_0,\dots, x_{m-2}\} \subset S\cap \ball{x_0}{r_0}$. To derive a contradiction, we proceed as follows. Since $\Theta^M(S,x_i)>0$ for each index $i$ given by \Cref{L:BigPinching:HighFreq} (for $S=\mathfrak{G}_K$) or \Cref{L:BigPinching:LowFreq} (otherwise), we can repeat the above process in $\ball{x_i}{r_i}$ to find another set $\{x_0^{(1)},\dots, x_{m-2}^{(1)}\} \subset S\cap\ball{x_i}{r_i}$ that is $(r_i\epsilon_0)$-linearly independent. Applying \Cref{L:BigPinching:HighFreq} (if $S=\mathfrak{G}_K$) or \Cref{L:BigPinching:LowFreq} (otherwise) with $\rho = \epsilon_0$, we obtain an index $i_1 \in {0,\dots, m-2}$ and a radius $r_{i_1} > 0$ such that
\begin{equation}\label{E:Proof:BigPinching1}
W_{r_{i_1}/2}^{2r_{i_1}}(p,y) \geq \delta(\epsilon_0) >0, \quad \forall y\in S\cap\ball{x^{(1)}_{i_1}}{r_{i_1}}.
\end{equation}

Since the frequency function is uniformly bounded above (see \eqref{E:UpperBound-Freq}) and non-negative, the iterative process must terminate after finitely many steps, contradicting the assumption that $\Theta^M(S,x) > 0$ for all $x \in S$ up to a $\cH^M$-null set. This contradiction implies that $M \leq m-3$, completing the proof.
\end{proof}

In the case $m=3$, we establish the countability of the one-sided singular set. The argument used in \Cref{T:DimBound} must be replaced with a different approach, as there exist uncountable sets with Hausdorff dimension equal to zero.

\begin{theorem}\label{T:Countability}
Let $T,\Sigma$, and $\Gamma$, be as in \Cref{A:general}. Suppose $m=3$, then
\begin{equation*}
    \Singb^1(T) \mbox{ is a countable set}.
\end{equation*}\end{theorem}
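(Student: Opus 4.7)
As in the proof of \Cref{T:Rectifiability-1sided} carried out in \Cref{S:main-results-proofs}, the decomposition at one-sided points (\Cref{T:decomposition-1sided}) combined with downward induction on the multiplicity $Q$ reduces the statement to proving countability of the subset of $\Singb^1(T)$ consisting of \emph{flat} one-sided singular points. That subset is covered by the high-frequency piece $\mathfrak{H}:=\{p:\bI(T,p)>2-\delta_\be\}$ and the countable family of low-frequency pieces $\mathfrak{G}_K$, $K\in\N$, so it is enough to show that each such piece $S$ is at most countable.

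Fix such an $S$ and suppose, toward a contradiction, that $S$ is uncountable. Since $S$ is Borel in $\R^{m+n}$, it admits a condensation point $p_0\in S$, meaning that every neighborhood of $p_0$ meets $S$ in an uncountable set. One can then choose sequences $r_n\downarrow 0$ and $q_n\in S$ with $r_n/4\le |q_n-p_0|\le r_n$. The pair $\{p_0,q_n\}$ is $\tfrac{1}{4}r_n$-linearly independent inside $\ball{p_0}{r_n}\cap S$, and since $m=3$ one has $m-2=1$, so this is exactly the $(m-1)$-element configuration required by \Cref{L:BigPinching:HighFreq} (when $S=\mathfrak{H}$) or \Cref{L:BigPinching:LowFreq} (when $S=\mathfrak{G}_K$). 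Applying the relevant lemma with base point $p_0$ and parameter $\rho=1/4$ produces, for each $n$, an index $i_n\in\{0,1\}$, a scale $\rho_n\in(0,r_n)$, and a dimensional pinching constant $\delta_1=\delta_1(m,n,Q,1/4)>0$ for which
\begin{equation*}
    W_{\rho_n}^{2\rho_n}(p_0,y)\ge\delta_1 \quad \text{for every } y\in \ball{x_{i_n}}{\rho_n}\cap S,
\end{equation*}
where $x_{0,n}=p_0$ and $x_{1,n}=q_n$. After extraction we may take $i_n$ constant.

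If $i_n=0$ for all $n$, then $x_{i_n}=p_0$ and choosing $y=p_0$ yields the jump estimate $|\bI_{p_0}(p_0,\rho_n)-\bI_{p_0}(p_0,2\rho_n)|\ge\delta_1$. Extracting further so that $\rho_{n+1}<\rho_n/4$, the intervals $(\rho_n,2\rho_n)$ are pairwise disjoint, and summing the jumps across them forces infinite total variation of $r\mapsto\bI_{p_0}(p_0,r)$ on $(0,r_1]$. This contradicts the almost-monotonicity and summable-jumps bounds of \Cref{T:AlmMonotFreq} combined with the uniform upper bound recorded in \eqref{E:UpperBound-Freq}.

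The remaining case $i_n=1$ localizes the pinching at the \emph{moving} point $q_n$, measured in the fixed center manifolds built over $p_0$, and this is the main obstacle. To close it, I plan to transfer the bound $W_{\rho_n}^{2\rho_n}(p_0,q_n)\ge\delta_1$ into a genuine pinching at $p_0$ by invoking the comparison between frequency values at nearby base points at the same scale---flagged in the Outline of proofs as one of the novel tools of this paper---together with the condensation of $S$ at $p_0$, which allows iterating the construction by successively relocating the base point among the sequence $\{q_n\}$. The delicate part is that $\rho_n$ may be significantly smaller than $|q_n-p_0|$, so the cross-center-manifold comparison must be applied carefully to keep the pinching constants uniform across iterations; a uniform loss smaller than $\delta_1/2$ per step, combined with the uniform upper bound $\bI\le\Lambda$, then forces the iteration to terminate in Case A after at most $O(\Lambda/\delta_1)$ steps, closing the contradiction.
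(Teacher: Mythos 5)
Your reduction to the set of flat one-sided singular points, the split into the high-frequency piece and the pieces $\mathfrak{G}_K$, and the appeal to \Cref{L:BigPinching:HighFreq}/\Cref{L:BigPinching:LowFreq} all match the paper's approach. Your Case $i_n=0$ is also fine: the pinching lands at $p_0$ directly, the scales $(\rho_n,2\rho_n)$ can be made pairwise disjoint, and the almost-monotonicity of \Cref{T:AlmMonotFreq} together with \eqref{E:UpperBound-Freq} forces termination.

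The gap is exactly the one you flag in Case $i_n=1$, and the ``transfer to $p_0$'' plan you sketch is not the right fix. As you observe yourself, $\rho_n$ may be far smaller than $|q_n-p_0|$, so \Cref{L:High-Freq:ComparingFreqBetweenPoints} (which requires $d(q_1,q_2)\le r/8$) cannot relate $\bI_{p_0}(q_n,\rho_n)$ to $\bI_{p_0}(p_0,\rho_n)$; no uniform loss ``smaller than $\delta_1/2$ per step'' is available, and trying to engineer one would essentially require a separate cross-scale pinching estimate that the paper never proves. The paper sidesteps this entirely by \emph{not} insisting that the pinching accumulate at the fixed point $p_0$. The pinching lemmas give the lower bound $W_{r_i}^{2r_i}(p,\cdot)\ge\delta_1$ for \emph{every} point of $S$ in the ball $\ball{x_i}{r_i}$, not merely at $x_i$. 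After discarding the countably many isolated points of $S$ one may assume every point of $S$ is an accumulation point of $S$; thus $\ball{x_i}{r_i}\cap S$ contains a new pair of points from which the lemma may be applied again at a much smaller scale, all the while keeping the \emph{same} base point $p$ for the frequency $\bI_p$. Choosing the new pair well inside the previous ball makes the balls nest, so the final chosen point (or any point in the nested intersection, which is in $\overline{S}$) inherits a pinching $\ge\delta_1$ on each of a family of pairwise disjoint scale intervals $(\rho_k,2\rho_k)$, and the uniform bound \eqref{E:UpperBound-Freq} on $\bI_p$ together with \Cref{T:AlmMonotFreq} caps the number of such intervals at $O(\Lambda/\delta_1)$, giving the contradiction. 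In other words: follow the moving lucky point inward rather than pulling its pinching back to $p_0$. Once you adopt this re-centering, both cases $i_n=0$ and $i_n=1$ are handled by the identical argument and the asymmetry in your proposal disappears.
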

\begin{proof}
In the proof of \Cref{T:Rectifiability-1sided} (see \Cref{S:main-results-proofs}), we provide a decomposition of the boundary singular set into countable pieces, each consisting of flat boundary singularities for $T$. By applying the same induction argument used therein, we can reduce the proof of \Cref{T:Countability} to establishing countability for flat boundary singularities. Specifically, by demonstrating that each piece in the decomposition is countable, we conclude \Cref{T:Countability}.

For the remainder of the proof, we consider the set $S$ to be either the one-sided flat singularities of high frequency or those of low frequency, i.e.,
\begin{equation*}
    S:= \mathfrak{G}_K\mbox{ or } S:=\left(\Singb^1(T) \cap \left\{q\in\Gamma: q\mbox{ is flat}\right \}\right)\setminus\mathfrak{G}_K.
\end{equation*} 

If the set of one-sided flat singularities were uncountable, then either the high-frequency or the low-frequency points would form an uncountable set. Without loss of generality, we may assume that every point in $S$ is an accumulation point of $S$. Indeed, the set of isolated points in $S$ is at most countable, and removing them does not affect our argument.

Next, take two points $x_0, x_1\in S$ and set $|x_0-x_1|=: \epsilon_0 >0$. We can now apply either \Cref{L:BigPinching:HighFreq} ($S=\mathfrak{G}_K$) or \Cref{L:BigPinching:LowFreq} (otherwise) with $\rho = \epsilon_0$, to obtain that, without loss of generality,
\begin{equation*}
    W_{r_0}^{2r_0}(p,y) \geq \delta_1(\epsilon_0) > 0, \ \forall y\in S\cap\ball{x_0}{r_0},\mbox{ for some }r_0 \in (0, \epsilon_0/4).
\end{equation*}

Since $x_0$ is an accumulation point of $S$, we can now reapply this procedure to obtain two points $x_0^{(1)},x_1^{(1)}\in S\cap \ball{x_0}{r_0}$ with $|x_0^{(1)} - x_1^{(1)}| =: \epsilon_0 \rho_1 >0 $. Again, without loss of generality, we have
\begin{equation}\label{E:Proof:BigPinching2}
    W_{r_1}^{2r_1}(p,y) \geq \delta(\epsilon_0) > 0, \ \forall y\in S\cap\ball{x_0}{r_1},\mbox{ for some }r_1 \in (0, \rho_1/4).
\end{equation}

Since the frequency function is uniformly bounded from above (see \eqref{E:UpperBound-Freq}) and is non-negative, this process can only be iterated finitely many times. However, this contradicts \eqref{E:Proof:BigPinching2}, as the assumption that all points in $S$ are accumulation points would allow us to repeat the procedure indefinitely. Hence, $S$ must be countable.
\end{proof}

\section{Rectifiability for the nonlinear problem}\label{S:Rectfiability}

We are now aiming at proving the main rectifiability results of this paper. We mention that the results in this section are independent of the dimensional bound in \Cref{S:DimBound} when $m>3$. In the case that $m=3$, the problem is already settled in \Cref{T:Countability}.

\subsection{Spatial variations}

In previous sections, we computed several different quantities related to $\bI$ in which estimates for their radial derivatives were achieved, see \Cref{L:FirstEqsFreqFnc}. These estimates give us the almost monotonicity of the frequency. However, in order to compare the frequency amongst different points, we will need a better understanding of spatial derivatives.

\subsubsection{Spatial variations in the high-frequency regime}

The relevant quantities and their properties are stated in the following lemma. 

\begin{lemma}[Spatial variations]\label{L:spatial-variations-HighFreq}
Under \Cref{A:High-Frequency}. Then either $T_{p,J} = Q\a{\cM_{p,J}}$ in a neighborhood of $q$, or 
\begin{equation*}
    \bH_p(q,r) >0 \mbox{ and }\bD_p(q,r)>0,\mbox{ for every }r\in (0,t_J].
\end{equation*} 

In the latter case, there exists a constant $\gamma_1 = \gamma_1(m,\delta_\be,\alpha_\bh,\kappa,\tau)>0$ with the following property. Let $v$ be a vector field on $\cM_{p,J}$ that is tangent to $\Gamma_{p,J}$. We obtain
\begin{equation*}
\begin{aligned}
    \partial_{v} \bD_{p}(q, r)&=\frac{2t_J}{r} \int_{\cM_{p,J}} \phi^{\prime}\left(\frac{d(q,z)}{r}\right) \sum_i \partial_{\nabla_z d(q,z)}\left(\cN_{q,J}\right)_i(z) \cdot \partial_{v}\left(\cN_{q,J}\right)_i(z) \mathrm{d}z \\
    &\quad +O\left(\mbold{p,J}^{\gamma_1}\right)r^{\gamma_1}t_J^{\gamma_1-1} \bD_{p}(q, r), \\
    \partial_{v} \bH_{p}(q, r) &= -2 \sum_i \int_{\cM_{p, J}} \frac{|\nabla d(q,z)|^2}{d(q, z)} \phi^{\prime}\left(\frac{d(q,z)}{r}\right)\left\langle\partial_{v}\left(\cN_{p,J}\right)_i(z),\left(\cN_{p,J}\right)_i(z)\right\rangle \mathrm{d}z,
\end{aligned}   
\end{equation*}
for any $r\in (0,t_J)$.
\end{lemma}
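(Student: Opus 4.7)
The plan is to compute the two spatial derivatives by exploiting the approximate translation invariance of the good distance function $d$ (\Cref{L:DistFnc}) together with the approximate Dirichlet minimality of the normal approximation $\cN_{p,J}$ (\Cref{T:LocalNA}). Throughout, I would use that $\cN_{p,J}\equiv Q\a{0}$ on $\bPhi(\bS)\cap \Gamma_{p,J}$, so any tangential derivative $\partial_v \cN_{p,J}$ along $\Gamma_{p,J}$ vanishes and the boundary terms in the integration by parts below are controlled.

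For $\bH_p$, the $q$-dependence enters only through the factor $d(\cdot,q)$. By \Cref{L:DistFnc}, one has $\partial_v^q d(z,q) = -\langle v, \nabla_z d(z,q)\rangle + O(\mbold{p,J}\, d(z,q))$, so $q$-derivatives can be traded for $z$-derivatives up to a small error. Concretely, I would extend $v$ to a tangential vector field $V$ on $\cM_{p,J}$ with flow $\chi_s$, change variables $z=\chi_s(w)$ in the integral defining $\bH_p(q+sv,r)$ (the Jacobian is $1+O(s)$ and $d(\chi_s(w), q+sv)=d(w,q)+O(s\,\mbold{p,J}\, d(w,q))$), and differentiate at $s=0$. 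Since $\tfrac{d}{ds}\big|_{s=0}|\cN_{p,J}(\chi_s(w))|^2 = 2\langle \cN_{p,J}, \partial_v \cN_{p,J}\rangle$, this produces exactly the claimed identity for $\partial_v \bH_p$.

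For $\bD_p$, the same flow argument gives
\begin{equation*}
\partial_v \bD_p(q,r) = -\frac{1}{r}\int_{\cM_{p,J}} \phi'\!\left(\tfrac{d(z,q)}{r}\right) \langle v, \nabla_z d(z,q)\rangle\, |D\cN_{p,J}|^2 \, dz + (\text{errors}).
\end{equation*}
To convert this into the claimed form, I would apply the inner variation identity for $\cN_{p,J}$ with the vector field $Y:=\phi(d/r)\, V$. Because $v$ is tangent to $\Gamma_{p,J}$, $Y$ is tangential to $\Gamma_{p,J}$ as well, so $Y$ is an admissible inner-variation vector field with vanishing boundary contribution. For a genuine Dir-minimizer the inner variation identity then yields
\begin{equation*}
\int \frac{\phi'(d/r)}{r}\, |D\cN_{p,J}|^2\, \langle \nabla d, v\rangle\, dz = 2\int \frac{\phi'(d/r)}{r}\sum_i \partial_{\nabla d}(\cN_{p,J})_i\cdot\partial_v (\cN_{p,J})_i\, dz.
\end{equation*}
Substituting the two identities gives the claimed formula, with errors coming from the fact that $\cN_{p,J}$ is only an approximate Dir-minimizer, controlled via the Taylor-type estimate in \Cref{T:LocalNA} and the $C^{3,\kappa}$-bound $\|\bphi\|_{C^{3,\kappa}}\lesssim \mbold{p,J}^{1/2}$.

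The main obstacle will be the careful bookkeeping of three interacting sources of error: (i) the approximation of $d$ by the Euclidean distance, contributing $O(\mbold{p,J})$ corrections at each step; (ii) the extension of $v$ from $\Gamma_{p,J}$ to a tangential vector field on the curved manifold $\cM_{p,J}$, together with the associated non-exact translation behavior of its flow; and (iii) the failure of $\cN_{p,J}$ to be an exact Dir-minimizer, which introduces deviations controlled by the construction of the center manifold and by the curvatures of $\Sigma$ and $\Gamma$. Choosing $\gamma_1=\gamma_1(m,\delta_\be,\alpha_\bh,\kappa,\tau)$ sufficiently small to absorb all three simultaneously, and using the rescaling $\cM_{p,J}=t_J\overline{\cM}_{p,J}$ (which produces the factor $t_J$ and the scaling $r^{\gamma_1}t_J^{\gamma_1-1}\bD_p$), will yield the stated estimate. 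The argument mirrors the interior counterparts in~\cite{delellis2021allardtype} and~\cite{de2023fineII}, with the new ingredient being the role of tangentiality of $v$ to $\Gamma_{p,J}$ in ensuring the admissibility of the inner variation and the vanishing of the relevant boundary integrals.
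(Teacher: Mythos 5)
Your proposal follows essentially the same route as the paper's proof: compute the spatial derivative of $\bD_p$ by plugging the vector field $Y=\phi(d/r)\,V$ (with $V$ a tangential extension of $v$) into the inner-variation identity for the normal approximation, use tangentiality of $v$ to $\Gamma_{p,J}$ to render $Y$ admissible and kill boundary contributions, control the error terms by the approximate Dir-minimality of $\cN_{p,J}$ and the asymptotics of the good distorted distance from \Cref{L:DistFnc}, and finally rescale from $\ocM_{p,J}$ to $\cM_{p,J}$ to obtain the factor $t_J$ and the exponent $\gamma_1-1$; the derivative of $\bH_p$ is handled by the same direct computation. One small caution: since the flow of $V$ is not an exact isometry of $\cM_{p,J}$ and $d$ is not exactly translation-invariant, the change-of-variables step does not literally "produce exactly" the stated identity for $\partial_v\bH_p$ — the Jacobian and the $O(\mbold{p,J}\,d)$ corrections from \Cref{L:DistFnc} generate lower-order terms, just as in the $\bD_p$ computation; one should either note these are absorbed into later estimates (as the paper implicitly does, following de~Lellis–Marchese–Spadaro–Valtorta) or track them explicitly, and likewise write the base point motion as the flow $\chi_s(q)$ rather than the linear translation $q+sv$, which leaves $\Gamma_{p,J}$.
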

\begin{proof}
The proof is analogous to \cite[Lemma 5.5]{de2023fineII} using that $v$ is tangent to the boundary and the machinery from \cite{delellis2021allardtype} which is in turn based upon \cite{DDHM}. We briefly sketch it here to show the main differences. To prove the first claimed equality, we take $r\in (0, 2)$ and define $\bar{v}(z):= t_J^{-1}v(t_J z)$ on $\ocM_{p,J}$ which is tangent to $\Gamma_{p,J}$ and 
\begin{equation*}
    Y(z):= \phi\left(\frac{d_J(q,z)}{r}\right)\bar{v}(z),\mbox{ for }z\in\ocM_{p,J}, \mbox{ where }v\mbox{ is tangent to }\Gamma_{p,J}.
\end{equation*}

We then have that 
\begin{equation*}
\begin{aligned}
    \mathrm{div}_{\cM_{p,J}}&Y(z) = \frac{1}{r}\phi^\prime\left( \frac{d_J(q,z)}{r} \right)\left \langle \nabla d_J(q,z), \bar{v}(z)\right\rangle - \langle Y(z), H_{\cM_{p,J}}\rangle , \\
    D_{\cM_{p,J}}Y(z) &= \frac{1}{r}  \phi^\prime\left( \frac{d_J(q,z)}{r} \right) \bar{v}(z)\otimes \nabla d_J(q,z) - \phi\left( \frac{d_J(q,z)}{r} \right) \sum_k A_{\cM_{p,J}}(e_k, \bar{v}(z))\otimes e_k,
\end{aligned}
\end{equation*}
where $\{e_k\}_k$ is an orthonormal frame of $\ocM_{p,J}$ with $e_1 = \bar{v}$. By a direct computation, we also obtain that
\begin{equation*}
    \partial_{\bar{v}} \obD_p(q,r) = \frac{1}{r}\int_{\ocM_{p,J}} \phi^\prime\left( \frac{d_J(q,z)}{r} \right)\left \langle \nabla d_J(q,z), \bar{v}(z)\right\rangle|D\ocN_{p,J}(z)|^2\mathrm{d}z.
\end{equation*}

Arguing as in \cite[Lemma 5.5]{de2023fineII}, we plug the vector field $Y$ into the inner variation formulas in \cite[Proposition 16.4]{delellis2021allardtype} and using their notation for $\mathrm{Err}^i_{k}, k\in\{1,\ldots, 4\}$, we obtain that
\begin{equation*}
\begin{aligned}
    \partial_{\bar{v}}\obD_p(q,r) &- \frac{2}{r}\int_{\ocM_{p,J}} \phi^\prime\left( \frac{d_J(q,z)}{r} \right)\sum_i\left \langle \partial_{\nabla d_J(q,z)}(\ocN_{p,J})_i(z), \partial_{\bar{v}(z)}(\ocN_{p,J})_i(z)\right\rangle \mathrm{d}z \\
    &=O(\mbold{p,J}^{1/2})r^{\frac{1-\delta_\be}{2}}\obD_p(q,r) + \sum_{k=1}^4\overline{\mathrm{Err}^i_{k}}.
\end{aligned}
\end{equation*}

Now, as in \cite[Proposition 16.7]{delellis2021allardtype}, we control the error terms as follows:
\begin{equation*}
\begin{aligned}
    \sum_{k=1}^4|\overline{\mathrm{Err}^i_{k}}| \leq C\mbold{p,J}^\tau \obD_{p,J}(q,r)^\tau \left( \obD_{p,J}(q,r) + r \partial_r \obD_{p,J}(q,r)\right) + C\mbold{p,J}t_J^{2\kappa}r\obD_{p,J}(q,r).    
\end{aligned}
\end{equation*}

We now use the rescaling \eqref{E:rescalingD} to obtain 
\begin{equation*}\begin{aligned}
    \sum_{k=1}^4|\overline{\mathrm{Err}^i_{k}}| &\leq C\mbold{p,J}^\tau t_J^{-m(1+\tau)} \bD_{p}(t_J q,t_J r)^\tau \left( \bD_{p}(t_J q,t_J r) + r t_J \partial_r \bD_{p}(t_J q,t_J r)\right) \\
    &\quad + C\mbold{p,J}t_J^{-m+2\kappa}r\bD_{p}(t_J q,t_J r).   
\end{aligned}\end{equation*}

The last displayed inequalities together with $\partial_{\bar{v}(z)}\obD_p(q,r) = t_J^{-m}\partial_{v(t_J z)}\bD_p(t_J q, t_J r)$ gives 
our claimed inequality. Indeed:
\begin{equation*}\begin{aligned}
    \partial_{v(z^\prime)}&\bD_p(q^\prime,s) - \frac{2t_J^{m+1}}{s}\int_{\ocM_{p,J}} \phi^\prime\left( \frac{d_J(q,z)}{s} \right)\sum_i\left \langle \partial_{\nabla d_J(q,z)}(\ocN_{p,J})_i(z), \partial_{\bar{v}(z)}(\ocN_{p,J})_i(z)\right\rangle \mathrm{d}z \\
    &\leq O(\mbold{p,J}^{1/2})\left(\frac{s}{t_J}\right)^{\frac{1-\delta_\be}{2}}\bD_p(q^\prime,s) + C\mbold{p,J}t_J^{2\kappa - 1}s\bD_{p}(q^\prime,s)\\
    &\quad + C\mbold{p,J}^\tau t_J^{-(1+\tau)} \bD_{p}(q^\prime,s)^\tau \left( \bD_{p}(q^\prime,s) + \partial_r \bD_{p}(q^\prime,s)\right)\\
    \overset{\eqref{E:Dbound}, \eqref{E:D'bound}, s <t_J}&{\leq} O\biggl(\mbold{p,J}^{\min\left\{1/2, \tau\right\}}\biggr)s^{\min\left\{\frac{1-\delta_\be}{2}, \tau (m-2+2\kappa)\right\}}t_J^{\min\left\{-1+\tau, 2\kappa - 1, -\frac{1-\delta_\be}{2}\right\}}\bD_p(q^\prime,s),
\end{aligned}\end{equation*}
for any $q^\prime \in \Gamma_{p,J}, z^\prime\in \cM_{p,J}$, and $s\in (0,t_J)$. Rescaling the integrand above and choosing $\gamma_1$ appropriately, we conclude the proof. The spatial derivative of $\bH$ is a direct computation, as performed in \cite[Proposition 3.1]{de2018rectifiability}.
\end{proof}

It is possible to measure how much the normal approximation $\cN_{p,J}$ deviates from being homogeneous in terms of the frequency pinching. This is stated in the following lemma. We omit the proof of the lemma since it follows exactly the same argument as in \cite[Proposition 5.2]{de2023fineII}.

\begin{lemma}[Deviation from homogeneity of $\cN_{p,J}$]\label{L:High-Freq:AlmHomog} 
Under \Cref{A:High-Frequency}. There exist $C=C(m,n,Q,\Lambda)>0$ and $\gamma_2=\gamma_2(m,n,Q,\Lambda)>0$ such that the following estimate holds for every pair $\rho, r$ with $0<\rho \leq r \leq t_J$. We define the annuli 
\begin{equation*}
    \cA_{\rho/4}^{2r}(q):= \left(\ball{q}{2r} \setminus \overline{\ball{q}{\rho/4}}\right)\cap \cM_{p,J},
\end{equation*}
then 
\begin{align*}
    \int_{\cA_{\rho/4}^{2r}(q)} \sum_{i}\left | D(\cN_{p,J})_i(z) \frac{d(q,z)\nabla d(q,z)}{|\nabla d(q,z)|} - \bI_p\left(q,d(q,z)\right)(\cN_{p,J})_i(z)|\nabla d(q,z)| \right|^2 \frac{\mathrm{d}z}{d(q,z)} &\\ 
    \leq C \bH_p\left(q,2r\right) \left(W_{\rho/4}^{2r}\left(p,q\right)+\mbold{p,J}^{\gamma_2}r^{\gamma_2}\mathrm{log}\left(\frac{4r}{\rho}\right)\right).
\end{align*}
\end{lemma}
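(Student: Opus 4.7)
The strategy is to interpret the integrand as a pointwise Cauchy--Schwarz defect which vanishes identically when $\cN_{p,J}$ is exactly $\bI_p(q,\cdot)$-homogeneous about $q$ with respect to the distorted distance $d$, and to relate the integrated defect to the derivative $\frac{d}{ds}\log \bI_p(q,s)$. Integrating the latter in $s$ produces the frequency pinching $W_{\rho/4}^{2r}(p,q)$, while the almost-monotonicity of $\bI_p$ (\Cref{T:AlmMonotFreq}, applied in the high-frequency regime via \Cref{L:FirstEqsFreqFnc-HighFreq}) controls the accumulated error by powers of $\mbold{p,J}$.

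The first step is to apply the coarea formula with respect to $d(q,\cdot)$, using the nondegeneracy and $C^2$-regularity of $d$ given by \Cref{L:DistFnc}, to foliate the annulus $\cA_{\rho/4}^{2r}(q)$ by the level sets $\Sigma_s := \{z \in \cM_{p,J} : d(q,z) = s\}$, $s \in (\rho/4, 2r)$. On each $\Sigma_s$ the pointwise Cauchy--Schwarz inequality identifies the integrand, after summation in $i$ and integration over $\Sigma_s$, with the non-negative quantity
\[
s\bigl(\bG_p(q,s)\bH_p(q,s) - \bE_p(q,s)^2\bigr)/\bH_p(q,s),
\]
up to errors of order $\mbold{p,J}^\tau s^\tau$ coming from the difference between the Euclidean and distorted radial vector fields and from the curvature of $\cM_{p,J}$. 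Next, the standard Almgren computation, using the outer and inner variation identities \eqref{E:D-E}, \eqref{E:Hprime}, and \eqref{E:Dprime-D-G} (all available here by \Cref{L:FirstEqsFreqFnc-HighFreq}), gives
\[
\tfrac{d}{ds}\log \bI_p(q,s) = \tfrac{2\,(\bG_p\bH_p - \bE_p^2)}{\bH_p\bD_p}(q,s) + O\bigl(\mbold{p,J}^{\tau}s^{\tau-1}\bigr),
\]
so the $\Sigma_s$-integrand is comparable, up to the same error, to $\bH_p(q,s)\,\tfrac{d}{ds}\log\bI_p(q,s)$. A normalization $\bH_p(q,s)\le C\bH_p(q,2r)$ for all $s\in(\rho/4,2r)$ follows from \eqref{E:Hprime} combined with the uniform upper bound \eqref{E:UpperBound-Freq} on $\bI_p$.

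Finally, integration in $s$ from $\rho/4$ to $2r$ and the telescoping of $\log\bI_p$ yield
\[
\int_{\rho/4}^{2r}\tfrac{d}{ds}\log\bI_p(q,s)\,ds \le W_{\rho/4}^{2r}(p,q) + C\mbold{p,J}^{\tau}r^{\tau},
\]
after absorbing the jumps across endpoints of intervals of flattening via \eqref{E:jumpsI} (there are only finitely many such endpoints in this regime). Combining with the normalization, one obtains the claimed estimate with a constant $\gamma_2$ chosen as the minimum of $\tau$ from \Cref{L:FirstEqsFreqFnc} and of $\gamma_1$ from \Cref{L:spatial-variations-HighFreq}, possibly reduced.

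The main obstacle is the careful bookkeeping of the error terms and the origin of the $\log(4r/\rho)$ factor. The most natural way to extract it is to perform the argument dyadically on annuli $\cA_{2^{-k-1}(2r)}^{2^{-k}(2r)}(q)$ for $k=0,1,\ldots,\lfloor\log_2(8r/\rho)\rfloor$: on each such annulus the argument above produces a scale-invariant error of the form $C\bH_p(q,2r)\mbold{p,J}^{\gamma_2}r^{\gamma_2}$ (after using the almost-monotonicity of $\bH_p$ to replace $\bH_p(q,2^{-k}(2r))$ by $\bH_p(q,2r)$ up to a harmless multiplicative constant), and summing over $k$ contributes the logarithm. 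A subtle point is that one must respect the tangency of $\nabla d$ to $\Gamma_{p,J}$ (\Cref{L:DistFnc}(v)) when invoking the inner variation identities, so that no spurious boundary terms appear when passing between $D\cN_{p,J}\cdot\nabla d$ and $\bI_p\cdot\cN_{p,J}|\nabla d|^2/d$; this is exactly what is ensured by \Cref{L:spatial-variations-HighFreq} and, in the two-sided analog, by the construction in \cite[Proposition 5.2]{de2023fineII}.
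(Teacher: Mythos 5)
Your sketch captures the essential ingredients of the argument in \cite[Proposition 5.2]{de2023fineII}, which the paper cites in lieu of a written proof: the Cauchy--Schwarz-defect interpretation of the integrand, the reformulation via $\tfrac{d}{ds}\log\bI_p(q,s)$ and the inequality $\bG_p\bH_p\geq\bE_p^2$, the telescoping of $\log\bI_p$ in scale to extract the pinching, the control of jumps across endpoints of intervals of flattening via \eqref{E:jumpsI}, and---crucial here and correctly flagged by you---the boundary-specific requirement that $\nabla d$ be tangent to $\Gamma_{p,J}$ (ensured by \Cref{L:DistFnc}(v)) so that the inner variations remain free of spurious boundary terms.

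Two cautions worth recording. First, the identification of the $\Sigma_s$-level-set integral with $s\bigl(\bG_p(q,s)\bH_p(q,s)-\bE_p(q,s)^2\bigr)/\bH_p(q,s)$ is heuristic: the quantities $\bG_p$, $\bE_p$, $\bH_p$ are defined through the cut-off $\phi$ and thus live on shells or balls rather than on level sets, and the reference argument works directly with those cut-off versions via a layer-cake step. That step is also where the $\log(4r/\rho)$ factor truly originates; your dyadic picture is the right intuition, but it is the scale-invariant \emph{relative} error per unit $ds/s$ that integrates to a logarithm, and this needs to be traced through the coarea computation rather than invoked by counting annuli. Second, the normalization $\bH_p(q,s)\le C\bH_p(q,2r)$ requires $\bH_p$ to be almost monotone nondecreasing, which comes from the sign $\partial_r\bH_p\approx\tfrac{m-1}{r}\bH_p+2\bE_p$; note that the display \eqref{E:Hprime} as printed appears to carry a sign typo on the $\bE_p$ term, so this step of your argument is correct for the formula as it should read, not as it is typeset.
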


We now show how to estimate the frequency function $\bI$ evaluated at two possibly different boundary points, $p$ and $q$, in terms of the frequency pinching. We omit the proof of the following lemma since it follows entirely analogously to \cite[Lemma 5.4]{de2023fineII} using \Cref{L:spatial-variations-HighFreq}.

\begin{lemma}\label{L:High-Freq:ComparingFreqBetweenPoints}
Under \Cref{A:High-Frequency}. Let $q_1, q_2 \in \Gamma_{p,J}$ with $d(q_1,q_2) \leq r/8$, where $r\in (0,t_J]$. There exist $C=C(m,n,Q,\Lambda)>0$ and $\gamma_3=\gamma_3(m,n,Q,\Lambda)>0$ such that for any $z,w \in [q_1,q_2]$ we have
\begin{equation*}
    |\bI_p(w,r)-\bI_p(z,r)|\leq C \left[\left(W_{r/8}^{4r}(p,q_1)\right)^{1/2}+\left(W_{r/8}^{4r}(p,q_2)\right)^{1/2}+\mbold{p,J}^{\gamma_3}r^{\gamma_3}\right]\frac{d(z,w)}{r}.
\end{equation*}
\end{lemma}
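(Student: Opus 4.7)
The plan is to adapt the strategy of \cite[Lemma 5.4]{de2023fineII} to our boundary setting, using the spatial variation formulas of \Cref{L:spatial-variations-HighFreq} in combination with the almost-homogeneity estimate of \Cref{L:High-Freq:AlmHomog}. First, since $\Gamma_{p,J}$ is a $C^{3,\kappa}$ submanifold and $d(q_1,q_2)\leq r/8$, we select a smooth curve $\gamma:[0,1]\to \Gamma_{p,J}$ with $\gamma(0)=z$, $\gamma(1)=w$ whose length is comparable to $d(z,w)$ and whose velocity $\gamma'(t)=:v(t)$ is tangent to $\Gamma_{p,J}$ at each point. Then we write
\begin{equation*}
\bI_p(w,r)-\bI_p(z,r)=\int_0^1 \partial_{v(t)} \bI_p(\gamma(t),r)\,\mathrm{d}t,
\end{equation*}
so it suffices to estimate $\partial_v \bI_p(q,r)$ pointwise for $q\in\Gamma_{p,J}$ on the curve and $v$ tangent.

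Using the quotient rule and applying the formulas of \Cref{L:spatial-variations-HighFreq} to the numerator $r\bD_p$ and denominator $\bH_p$, one obtains
\begin{equation*}
\partial_{v}\bI_p(q,r)=\frac{r\,\partial_{v}\bD_p(q,r)}{\bH_p(q,r)}-\bI_p(q,r)\frac{\partial_{v}\bH_p(q,r)}{\bH_p(q,r)}.
\end{equation*}
The main term in $\partial_v \bD_p$ is
$\tfrac{2t_J}{r}\int \phi'(d/r)\sum_i \partial_{\nabla d}(\cN_{p,J})_i\cdot \partial_v(\cN_{p,J})_i$, while $\partial_v \bH_p$ equals $-2\int \frac{|\nabla d|^2}{d}\phi'(d/r)\sum_i \langle \partial_v (\cN_{p,J})_i,(\cN_{p,J})_i\rangle$. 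The key step is to use \Cref{L:High-Freq:AlmHomog} to replace, inside the first integral, the quantity $\partial_{\nabla d}(\cN_{p,J})_i$ by $\bI_p(q,d(q,\cdot))\,(\cN_{p,J})_i\,|\nabla d|^2/d$. After this substitution the leading part of $\tfrac{r\partial_v \bD_p}{\bH_p}$ cancels \emph{exactly} against $\bI_p(q,r)\tfrac{\partial_v \bH_p}{\bH_p}$ (up to the discrepancy $\bI_p(q,d)-\bI_p(q,r)$, which is controlled by the frequency pinching $W^{r}_{d}(p,q)$), leaving only controllable errors.

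To quantify the error, we apply Cauchy–Schwarz to the residual integral; the $\partial_v \cN_{p,J}$ factor is absorbed into $\bD_p(q,2r)^{1/2}$ (which, via the radial variation bounds of \Cref{L:FirstEqsFreqFnc-HighFreq} and $|v|\lesssim d(z,w)$, is comparable to $r^{-1}\bH_p(q,r)^{1/2}\cdot d(z,w)/r$), while the factor measuring deviation from homogeneity is bounded, thanks to \Cref{L:High-Freq:AlmHomog}, by $\bH_p(q,2r)^{1/2}\big(W_{r/8}^{4r}(p,q)+\mbold{p,J}^{\gamma_2}r^{\gamma_2}\big)^{1/2}$. The remainder terms $O(\mbold{p,J}^{\gamma_1})r^{\gamma_1}t_J^{\gamma_1-1}\bD_p(q,r)$ in the spatial variation of $\bD_p$ contribute, after dividing by $\bH_p$ and using the upper bound $\bI_p\leq \Lambda$ from \eqref{E:UpperBound-Freq}, the announced $\mbold{p,J}^{\gamma_3}r^{\gamma_3}$ term. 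Finally, we move the pinching at $q=\gamma(t)$ back to $q_1,q_2$: by the triangle inequality and the monotonicity \Cref{T:AlmMonotFreq} applied at both $q_1$ and $q_2$, together with the fact that $d(q_i,\gamma(t))\leq r/8$ so that the balls $\ball{\gamma(t)}{r/8}$ and $\ball{\gamma(t)}{4r}$ are sandwiched between $\ball{q_i}{r/8}$ and $\ball{q_i}{4r}$ up to universal factors, one has $W_{r/8}^{4r}(p,\gamma(t))\lesssim W_{r/8}^{4r}(p,q_1)+W_{r/8}^{4r}(p,q_2)+\mbold{p,J}^{\tau}r^{\tau}$.

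Integrating the resulting pointwise bound over $t\in[0,1]$ and choosing $\gamma_3>0$ to be the minimum of $\gamma_1,\gamma_2,\tau$ yields the claimed inequality. The main obstacle is the step involving \Cref{L:High-Freq:AlmHomog}: one must be sure that the almost-homogeneity estimate is applied on an annulus that lies in the support region of $\phi'(d(q,\cdot)/r)$, and one must carefully transfer the pinching $W^{4r}_{r/8}$ from $\gamma(t)$ to $q_1$ and $q_2$ using the almost-monotonicity with its explicit error, which is where the exponential factor $e^{-C\mbold{p,J}^{\tau}t_J^\tau}$ of \eqref{E:Almost-Monot-I} must be traded for a multiplicative constant in the high-frequency regime.
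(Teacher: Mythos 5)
Your proposal correctly reconstructs the argument the paper only references: integrate the spatial derivative $\partial_v\bI_p$ along a tangent curve, use \Cref{L:spatial-variations-HighFreq} together with the almost-homogeneity estimate \Cref{L:High-Freq:AlmHomog} to expose the cancellation up to the pinching, and transfer the pinching back to the endpoints—exactly the adaptation of \cite[Lemma 5.4]{de2023fineII} to the boundary setting that the paper indicates, with the role of \Cref{L:High-Freq:AlmHomog} correctly identified even though the paper's remark only mentions \Cref{L:spatial-variations-HighFreq}. One remark worth fixing when you write the computation out: with the coefficient $+\tfrac{2t_J}{r}$ as printed in \Cref{L:spatial-variations-HighFreq}, after the substitution the leading contributions combine to $t_J\bI_p(q,d)+\bI_p(q,r)$, which does \emph{not} cancel against $\bI_p(q,r)\partial_v\bH_p/\bH_p$; the cancellation you invoke (which is indeed the mechanism of the argument) requires the leading term of $\partial_v\bD_p$ to carry the opposite sign, as one obtains from the inner variation after integrating by parts and using that $\cN$ is almost harmonic. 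The printed sign and the stray $t_J$ factor are almost surely typos in \Cref{L:spatial-variations-HighFreq}, so your argument is sound in spirit, but you should carry out the calculation with the corrected sign so that the key cancellation becomes visible.
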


\subsubsection{Spatial variations in the low-frequency regime}

We record here the counterpart for the low-frequency regime of the results in the high-frequency regime.

\begin{lemma}
Under \Cref{A:Low-Frequency}, all the conclusions of \Cref{L:spatial-variations-HighFreq}, \Cref{L:High-Freq:AlmHomog}  and \Cref{L:High-Freq:ComparingFreqBetweenPoints} are valid for 
\begin{equation*}
    4\eta \gamma^{j+1} \leq \rho \leq r <2\gamma^j \mbox{ and } q, q_1, q_2\in \Gamma_{p,\gamma^j}.
\end{equation*}
\end{lemma}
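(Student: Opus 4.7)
The plan is to observe that the proofs of \Cref{L:spatial-variations-HighFreq,L:High-Freq:AlmHomog,L:High-Freq:ComparingFreqBetweenPoints} depend only on three ingredients: the inner/outer variation identities from \cite[Section 16]{delellis2021allardtype}, the radial variation estimates of \Cref{L:FirstEqsFreqFnc-HighFreq}, and the purely geometric fact that every Whitney cube $L \in \sW$ intersecting the disk of radius $r$ centered at the projection of $q$ has side length bounded by $c_0 r$. In the high-frequency regime the last property was automatic, since \Cref{L:BdrCubesDontStop:HighFreq} gives $\Gamma_{p,J}\subset \bPhi(\bS_{p,J})$; in the low-frequency regime the analogous bound will be enforced by the explicit restriction $4\eta\gamma^{j+1} < \rho \leq r < 2\gamma^j$.

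First, I would verify that for every $q\in\Gamma_{p,\gamma^j}$ and every $r\in (4\eta\gamma^{j+1}, 2\gamma^j]$, every Whitney cube $L$ in the decomposition associated to $\cM_{p,\gamma^j}$ which meets the disk of radius $r$ around the projection of $q$ satisfies $\ell(L) \leq c_0(m,\eta)\,r$. This is precisely the property already invoked in the proof of \Cref{L:RadialVariations:LowFreq}: the smallest cubes in the decomposition have side length comparable to $\gamma^{j+1}$, so the Whitney separation inequality \eqref{E:separation-Whitney}, combined with the lower bound $r > 4\eta\gamma^{j+1}$, rules out cubes with side length comparable to or larger than $r$ from entering the relevant region. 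The resulting constant $c_0$ depends on $\eta^{-1}$ but not on $j$ or on $T$.

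With this geometric property in hand, the three proofs can be transcribed from the high-frequency case, substituting $t_J$ by $\gamma^j$ and $\mbold{p,J}$ by $\mbold{p,\gamma^j}$. For the analogue of \Cref{L:spatial-variations-HighFreq}, I would plug the vector field $Y(z)=\phi(d_j(q,z)/r)\,\bar{v}(z)$, with $\bar{v}$ tangent to $\Gamma_{p,\gamma^j}$, into the inner variation formula of \cite[Proposition 16.4]{delellis2021allardtype} and control the four error terms $\overline{\mathrm{Err}^i_k}$ via \cite[Proposition 16.7]{delellis2021allardtype} in the same manner as in \Cref{L:spatial-variations-HighFreq}; the spatial derivative of $\bH_p$ then follows from the direct computation of \cite[Proposition 3.1]{de2018rectifiability}. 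For the analogues of \Cref{L:High-Freq:AlmHomog} and \Cref{L:High-Freq:ComparingFreqBetweenPoints}, I would replicate \cite[Proposition 5.2]{de2023fineII} and \cite[Lemma 5.4]{de2023fineII} respectively, combining the just-established spatial variations with the radial variations from \Cref{L:RadialVariations:LowFreq}.

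The only bookkeeping obstacle is checking that the exponents $\gamma_1,\gamma_2,\gamma_3$ arising from the high-frequency proofs remain \emph{uniform} in $j$ and \emph{independent} of $\eta$. Tracing through the derivations, these exponents arise only from $\kappa$, $\tau$, $\alpha_\bh$, $\delta_\be$ and powers of the rescaled radius $r/\gamma^j$, whereas the parameter $\eta$ enters only through the multiplicative constant $c_0$ and through the number of Whitney cubes contributing to each integral, both of which are absorbed into the $C$'s. Once this uniformity is verified, the lemma follows.
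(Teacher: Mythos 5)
Your proposal is correct and follows essentially the same route the paper takes: repeat the high-frequency arguments verbatim after replacing $t_J$ with $\gamma^j$ and $\mbold{p,J}$ with $\mbold{p,\gamma^j}$, using the restriction $r>4\eta\gamma^{j+1}$ together with the Whitney separation bound \eqref{E:separation-Whitney} to guarantee the cube-size control that in the high-frequency case came for free from \Cref{L:BdrCubesDontStop:HighFreq}. The paper explicitly omits the proof, describing it exactly as you did — a repetition of the high-frequency arguments combined with the observations from \cite[Section~10.1]{de2023fineII} (the uniform geometric ratio $\gamma$, the modified intervals, and the bookkeeping you outline).
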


We omit the proof of this lemma here, as it is a repetition of the arguments for the high-frequency regime, along with the observations made in \cite[Section 10.1]{de2023fineII}.

\subsection{Proof of the rectifiability result}

In this section, we prove \Cref{P:Rectifiability-FlatSingular}, which states the $\cH^{m-3}$-rectifiability of the set of one-sided flat singular points. We divide the proof into the high and low-frequency regimes.

\begin{proposition}[Rectifiability of high-frequency points]\label{P:Rectf-HighFreq}
Let $T,\Sigma$, and $\Gamma$, be as in \Cref{A:general}. Then
\begin{equation*}
        \Singb(T) \cap \left\{q\in\Gamma: \Theta^m(T,q)=Q/2, q\mbox{ is flat, and }\bI(T,q)\geq 2 - \alpha_\be \right \}
\end{equation*}
is $\cH^{m-3}$-rectifiable.
\end{proposition}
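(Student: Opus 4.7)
The plan is to reduce the proof to the Jones $\beta$-number rectifiability criterion. Set
\[
\mathcal{F} := \Singb(T) \cap \left\{q\in\Gamma: \Theta^m(T,q)=Q/2,\ q \text{ flat},\ \bI(T,q)\geq 2-\alpha_\be\right\}.
\]
By the Frostman reduction of \cite{iancamillorectifiability}, it suffices to show that every finite Borel measure $\mu$ supported on a Borel subset of $\mathcal{F}$ with $\mu(\ball{x}{r}) \leq r^{m-3}$ is $(m-3)$-rectifiable. By Theorem \ref{T:NaberValtorta-AzzaamTolsa}, this reduces in turn to verifying the Dini bound
\[
\int_0^1 \beta_{2,\mu}^{m-3}(q,r)^2\,\tfrac{dr}{r}<\infty \qquad \text{for } \mu\text{-a.e.\ } q.
\]

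Fix $p\in \mathcal{F}$; since $\bI(T,p) \geq 2-\alpha_\be$, Theorem \ref{T:UniversalLowerBound-FreqFnc}(ii) produces a final scale $t_J>0$ with $t_{J+1}=0$, so the single center manifold $\cM_{p,J}$ and normal approximation $\cN_{p,J}$ are defined on all scales in $(0,t_J]$. The radial and spatial variation machinery (Lemmas \ref{L:FirstEqsFreqFnc-HighFreq}, \ref{L:spatial-variations-HighFreq}, \ref{L:High-Freq:AlmHomog}, \ref{L:High-Freq:ComparingFreqBetweenPoints}) applies uniformly, and the frequency satisfies $C^{-1}\leq \bI_p(q,r)\leq \Lambda$ by \eqref{E:UpperBound-Freq}. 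The central step is the pointwise $\beta$-versus-pinching estimate
\[
\beta_{2,\mu}^{m-3}(q,r)^2 \leq C\bigl(W_{r/8}^{8r}(p,q) + \mbold{p,J}^{\gamma}\,r^{\gamma}\bigr)
\]
for $\mu$-a.e.\ $q$ and all sufficiently small $r$. Granting this, Fubini together with Theorem \ref{T:AlmMonotFreq}—which tells us that $r\mapsto \bI_p(q,r)$ is almost monotone with total jump bounded by $\sum_j|\bI_p(q,t_j^+)-\bI_p(q,t_j^-)|\leq C\mbold{p,J}^\tau$—telescope the pinching contributions across dyadic scales:
\[
\sum_{k\geq 0} W_{2^{-k-1}r_0}^{2^{-k+2}r_0}(p,q) \leq C(\Lambda - C^{-1}) + C\mbold{p,J}^{\tau}<\infty,
\]
which yields the required Dini bound.

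The pointwise estimate is proved by contrapositive. If $\beta_{2,\mu}^{m-3}(q,r)\geq \varepsilon$, a standard covariance/pigeonhole argument on $\mu$ furnishes $\rho=\rho(\varepsilon)>0$ and an $(\rho r)$-linearly independent set $\{x_0,\dots,x_{m-2}\}\subset \mathrm{spt}(\mu)\cap \ball{q}{r}\subset \mathcal{F}$. Lemma \ref{L:BigPinching:HighFreq} then produces some $x_i$ and $r_i\in(0,r)$ with $W_{r_i}^{2r_i}(p,y)\geq \delta_1(\varepsilon)$ for every $y\in\mathcal{F}\cap\ball{x_i}{r_i}$. To upgrade this qualitative lower bound into the quantitative $\beta$-inequality, I use Lemma \ref{L:High-Freq:AlmHomog}: small pinching forces $\cN_{p,J}$ to be close to $\bI_p(q,\cdot)$-homogeneous on the annulus $\cA^{2r}_{r/8}(q)$. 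Since $\bI_p(q,\cdot)\geq 1+\alpha/2>1$ by Theorem \ref{T:UniversalLowerBound-FreqFnc}(iii), Lemma \ref{l:twovariables} combined with Theorem \ref{t:homogeneous2d} rules out any homogeneous linear limit depending on only two variables, so the singular stratum of the homogeneous blow-up lies in an $(m-3)$-plane. A covering/averaging argument against $\mu$ then identifies the squared distance to the best $(m-3)$-plane with the frequency pinching integral, up to the polynomial error $\mbold{p,J}^\gamma r^\gamma$ produced by the construction of $\cM_{p,J}$.

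The main obstacle is the quantitative passage from the qualitative ``large pinching somewhere'' statement of Lemma \ref{L:BigPinching:HighFreq} to the pointwise bound $\beta^2\lesssim W + \text{error}$; this is the standard Naber-Valtorta packing step. In the present one-sided boundary setting, the crucial ingredient that makes the scheme go through is the strict lower bound $\bI(T,p)\geq 1+\alpha/2>1$, which, via Lemma \ref{l:twovariables} and the two-dimensional classification of Theorem \ref{t:homogeneous2d}, provides the necessary non-degeneracy to extract an $(m-3)$-dimensional concentration from near-homogeneity of the normal approximation.
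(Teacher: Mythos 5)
Your high-level scheme — reduce to Frostman measures via \cite{iancamillorectifiability}, apply the Azzam--Tolsa criterion (\Cref{T:NaberValtorta-AzzaamTolsa}), and close with a $\beta_2$-versus-pinching bound telescoped across dyadic scales using the almost-monotonicity of $\bI$ — is the right framework, and you correctly identify the decisive geometric input: $\bI(T,p)\geq 1+\alpha/2>1$ combined with \Cref{l:twovariables} and \Cref{t:homogeneous2d}. However, your central displayed estimate
\begin{equation*}
\beta_{2,\mu}^{m-3}(q,r)^2 \leq C\bigl(W_{r/8}^{8r}(p,q) + \mbold{p,J}^{\gamma}\,r^{\gamma}\bigr)
\end{equation*}
is a \emph{pointwise} bound in $q$, and this cannot hold: the pinching at the single center $q$ controls only the almost-homogeneity of $\cN_{p,J}$ around $q$, and by itself places no constraint on how $\mu$ is spread within $\ball{q}{r}$. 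The correct form, proved in \Cref{T:JonesBeta-HighFreq}, is the \emph{averaged} bound
\begin{equation*}
\left[\beta_{2,\mu}^{m-3}(q,r/8)\right]^2 \leq \frac{C}{r^{m-3}}\int_{\ball{q}{r/8}}W_{r/8}^{4r}(p,z)\,\mathrm{d}\mu(z) + C\,\mbold{p,J}^{\gamma_4}\,\frac{\mu(\ball{q}{r/8})}{r^{m-3-\gamma_4}},
\end{equation*}
in which the pinching of every $z\in\spt(\mu)\cap\ball{q}{r/8}$ enters; the integral over $z$ is essential, and the Dini bound then follows by Fubini together with the upper $(m-3)$-regularity of $\mu$, not from the pointwise telescoping you describe.

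Your route to the pointwise bound also cannot deliver a quantitative estimate. \Cref{L:BigPinching:HighFreq} takes a $\rho r$-linearly independent set and produces a single index $i$ and a \emph{new, uncontrolled} radius $r_i$ on which the pinching is large — a qualitative nonexistence statement that the paper uses only for the Hausdorff dimensional bound in \Cref{S:DimBound}, not for rectifiability, and which says nothing about the pinching at $q$ at scale $r$. The paper's proof of \Cref{T:JonesBeta-HighFreq} is instead a direct Naber--Valtorta argument: diagonalize the $\mu$-covariance bilinear form $b_{q,r}$ to express $\beta_{2,\mu}^{m-3}(q,r/8)$ through the eigenvalue $\lambda_{m-2}$; establish the quantitative lower bound \eqref{E:beta_2-Upperbound-Justification} for the directional Dirichlet energy of $\cN_{p,J}$ by the same compactness/contradiction against \Cref{t:homogeneous2d} that you invoke (this is what replaces \Cref{L:BigPinching:HighFreq}); and convert it into the averaged pinching bound via the spatial-variation estimate (\Cref{L:spatial-variations-HighFreq}) and the deviation-from-homogeneity estimate (\Cref{L:High-Freq:AlmHomog}). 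Your proposal captures the spirit but misses this quantitative core.
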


\begin{proposition}[Rectifiability of low-frequency points]\label{P:Rectf-LowFreq}
Let $T,\Sigma$, and $\Gamma$, be as in \Cref{A:general}. Then
\begin{equation*}
        \Singb(T) \cap \left\{q\in\Gamma: \Theta^m(T,q)=Q/2, q\mbox{ is flat, and }\bI(T,q) < 2 - \alpha_\be \right \}
\end{equation*}
is $\cH^{m-3}$-rectifiable.
\end{proposition}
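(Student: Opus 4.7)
The plan is to decompose the low-frequency set into the countable union $\bigcup_{K\geq 1} \mathfrak{G}_K$, so it is enough to prove $\cH^{m-3}$-rectifiability of each $\mathfrak{G}_K$ separately. Fix $K$. Using \cite{iancamillorectifiability}, the rectifiability of the Borel set $\mathfrak{G}_K$ follows once we prove the $(m-3)$-rectifiability of every finite Borel measure $\mu$ supported on a compact piece of $\mathfrak{G}_K$ which is Frostman, i.e. $\mu(\ball{x}{r}) \leq C r^{m-3}$ for all $x,r$. By \Cref{T:NaberValtorta-AzzaamTolsa}, this in turn reduces to the Dini-type estimate
\begin{equation*}
    \int_0^1 \beta_{2,\mu}^{m-3}(x,r)^2 \, \frac{\mathrm{d}r}{r} < \infty \qquad \text{for $\mu$-a.e. $x$}.
\end{equation*}

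The core of the proof is to compare $\beta_{2,\mu}^{m-3}(x,r)$ with frequency pinching. Fix $p\in\mathfrak{G}_K$ and work at dyadic scales $r \sim \gamma^j$, with $\cM_{p,\gamma^j}$ and $\cN_{p,\gamma^j}$ as in \Cref{A:Low-Frequency}. For $x\in\mathfrak{G}_K\cap\ball{p}{r}$ one defines the pinching $W_{r/8}^{8r}(p,j,x)$ (extending the definition slightly outside $(\gamma^{j+1},\gamma^j]$ via the almost-monotonicity of $\bI$). The key geometric lemma is \Cref{L:BigPinching:LowFreq} together with its quantitative spatial counterparts (the low-frequency analogs of \Cref{L:High-Freq:AlmHomog} and \Cref{L:High-Freq:ComparingFreqBetweenPoints}): if $\beta_{2,\mu}^{m-3}(x,r)^2 \gg \text{(integrated pinching)} + \text{(error)}$, then one can find an $(\varepsilon r)$-linearly independent collection of $m-2$ points in $\spt\mu\cap\ball{x}{r}$ on which the frequency pinching is simultaneously small. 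Running a contradiction/blowup argument at these $m-2$ independent points produces a fine blowup $u$ which is $I_0$-homogeneous with respect to $m-2$ affinely independent points; by \Cref{l:twovariables} it only depends on two variables, and then \Cref{t:homogeneous2d} forces $I_0 = 1$, contradicting the universal bound $\bI(T,p) \geq 1+\alpha/2$ of \Cref{T:UniversalLowerBound-FreqFnc}. This yields a pointwise-in-scale inequality of the form
\begin{equation*}
    \beta_{2,\mu}^{m-3}(x,r)^2 \;\leq\; \frac{C(K,\gamma)}{\mu(\ball{x}{r})}\int_{\ball{x}{r}} W_{c r}^{C r}(p,j,y)\,\mathrm{d}\mu(y) \;+\; C\mbold{p,j}^{\gamma_4}\,r^{\gamma_4}.
\end{equation*}

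The final step is to integrate this bound in $r$ and sum over $j$. Multiplying by $\frac{\mathrm dr}{r}$, integrating over $r\in(\gamma^{j+1},\gamma^j]$, and applying Fubini, the right-hand side telescopes: each term $W_{cr}^{Cr}(p,j,y)$ is essentially $|\bI_p(y,Cr)-\bI_p(y,cr)|$, and the almost-monotonicity of $\bI$ (\Cref{T:AlmMonotFreq}) plus the summability of jumps \eqref{E:jumpsI} give, for $\mu$-a.e.\ $y$,
\begin{equation*}
    \sum_{j\geq 0} \int_{\gamma^{j+1}}^{\gamma^j} W_{cr}^{Cr}(p,j,y)\,\frac{\mathrm{d}r}{r} \;\leq\; C\,\bigl(\Lambda - \tfrac{\alpha}{2} - 1\bigr) + C\mbold{p}^\tau < \infty,
\end{equation*}
while the error terms $\mbold{p,j}^{\gamma_4}\gamma^{j\gamma_4}$ sum geometrically. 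Frostman upper regularity ensures that the $\mu$-integration does not destroy summability, and hence the Dini condition holds for $\mu$-a.e.\ $x$.

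The hard part is establishing the pointwise $\beta_2$-versus-pinching bound in the low-frequency setting, because intervals of flattening are infinitely many and the center manifold changes from scale to scale: one must carry out the blowup/compactness argument using the ratio $\gamma$ to enforce uniformity (as in \cite[Section 10.1]{de2023fineII}), carefully handle the errors coming from $\bA_\Sigma$, $\bA_\Gamma$, and the Taylor errors in $\bD,\bH,\bE,\bG$ in \Cref{L:FirstEqsFreqFnc}, and verify that the spatial-variation estimates are valid uniformly across changes of center manifold. Once these are in place, the rectifiability of $\mathfrak{G}_K$ follows, and taking the countable union in $K$ completes the proof.
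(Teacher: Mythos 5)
Your proposal follows the same overall architecture as the paper: decompose the low‑frequency set into the countable union $\bigcup_K \mathfrak{G}_K$, reduce via \cite{iancamillorectifiability} to upper‑regular Frostman measures, establish a pointwise $\beta_2$‑versus‑pinching inequality, feed it into \Cref{T:NaberValtorta-AzzaamTolsa}, and close by telescoping the integrated pinching using the almost monotonicity and jump summability of $\bI$ from \Cref{T:AlmMonotFreq}. The heart of the contradiction argument is also the same: a limiting two‑variable Dirichlet minimizer with frequency at least $1+\alpha/2$ via \Cref{l:twovariables}, contradicting \Cref{t:homogeneous2d} and the lower bound of \Cref{T:UniversalLowerBound-FreqFnc}.

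The place where your account drifts from the paper is the precise mechanism for the $\beta_2$ bound. You name \Cref{L:BigPinching:LowFreq} as the key geometric lemma and reason about $(\varepsilon r)$‑linearly independent points in $\spt\mu$ with simultaneously small pinching. That lemma is indeed proved by the very same compactness/2D-classification contradiction, but in the paper it drives the dimensional bound and countability in \Cref{S:DimBound}, not the rectifiability. The proof of \Cref{T:JonesBeta-LowFreq} instead diagonalizes the moment bilinear form $b_{q,r}$ built from $\mu$, identifies $\beta^{m-3}_{2,\mu}(q,r)^2$ with the eigenvalue $\lambda_{m-2}$, and then, using the spatial‑variation and almost‑homogeneity lemmas (the low‑frequency counterparts of \Cref{L:spatial-variations-HighFreq} and \Cref{L:High-Freq:AlmHomog}), reduces everything to the Dirichlet‑energy lower bound \eqref{e:dircontrol}; that inequality is what is proved by the blow‑up contradiction. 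As written, your ``$\beta_2$ large implies $m-2$ linearly independent points with small pinching, hence contradiction'' gives only a qualitative statement and does not by itself yield the quantitative inequality $\beta_2^2 \lesssim (\text{integrated pinching}) + (\text{error})$ that \Cref{T:NaberValtorta-AzzaamTolsa} requires; you would have to recover exactly the moment‑diagonalization computation to quantify it. So the underlying engine is identical, but the paper's route through $b_{q,r}$ and \eqref{e:dircontrol} is the piece that actually produces the needed quantitative bound, and your sketch should be amended to pass through it.
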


\begin{remark} We note that the Hausdorff dimensional bound obtained in \Cref{S:DimBound} is not needed to prove either of the two theorems above. In fact, it can be derived as a consequence of the proof. \end{remark}

Paralleling the setting of interior points from \cite{de2023fineII}, we encounter a crucial difference. Singular points for the boundary linear problem are not necessarily points where the multi-valued function vanishes. In contrast, in the interior setting, after applying an argument involving the average of the multi-valued function, we reduce the problem to studying points where the function vanishes.

Instead, we focus on the set of points with frequency greater than $1+\alpha/2$. To prove the bound on $\beta_2$, we must be able to preserve a frequency bound of $1+\alpha/2$ in the limit. While this is straightforward in the high-frequency regime, as the almost monotonicity formula for the frequency holds without changing the center manifold, it is more challenging in the low-frequency regime. In contrast, in the low-frequency regime in \cite{de2023fineII}, the authors only need to preserve the existence of $Q$ points without controlling their frequency.

\subsubsection{Bound on the Jones' $\beta_2$ coefficient in the high-frequency regime}

We now prove \Cref{T:JonesBeta-HighFreq} which gives an upper bound for the $(m-3)$-dimensional Jones' $\beta_2$ coefficient in terms of the frequency pinching.

\begin{theorem}[Frequency pinching controls the Jones' $\beta_2$ coefficient]\label{T:JonesBeta-HighFreq}
Assume $m>3$. There exist constants $\eta_0=\eta_0(m)>0$, $\epsilon_0 = \epsilon_0(m,n,Q,\eta_0,\Lambda)>0$, $\gamma_4=\gamma_4(m,n,Q,\Lambda)>0$, and $C=C(m,n,Q,\Lambda)>0$, such that the following holds. Under \Cref{A:High-Frequency}, and assume further that $\eta < \eta_0$ and $\mbold{p,J} < \varepsilon_0^2$. If $\mu$ is a finite non-negative Radon measure with $\spt(\mu) \subset \Gamma_{p,J}$. Then for all $r \in (0,t_J]$ and every $q \in \oball{r/8}\cap\Gamma_{p,J}$ we have
\begin{equation*}
    \left[\beta_{2,\mu}^{m-3}\left(q,r/8\right)\right]^2 \leq \frac{C}{r^{m-3}}\int_{\ball{q}{r/8}}W_{r/8}^{4r}(p,z)\mathrm{d}\mu(z)+ C\mbold{p,J}^{\gamma_4}\frac{\mu(\ball{q}{r/8})}{ r^{m-3-\gamma_4}}.
\end{equation*}
\end{theorem}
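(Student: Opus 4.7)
The plan is to prove the bound by a compactness--contradiction argument in the Naber--Valtorta style, adapted to the boundary setting, exploiting the two-dimensional classification of Dirichlet minimizers (\Cref{t:homogeneous2d}) and the universal lower bound on the frequency (\Cref{T:UniversalLowerBound-FreqFnc}). Consider the symmetric bilinear form
\begin{equation*}
B(v,w) := \int_{\ball{q}{r/8}} \langle v, z - z_\mu\rangle \langle w, z - z_\mu\rangle \,\mathrm{d}\mu(z),
\end{equation*}
where $z_\mu$ is the barycenter of $\mu$ restricted to $\ball{q}{r/8}$. If $\lambda_1 \leq \cdots \leq \lambda_{m+n}$ are its eigenvalues, the infimum defining $\beta_{2,\mu}^{m-3}(q, r/8)$ is attained by the $(m-3)$-plane through $z_\mu$ spanned by the top $m-3$ eigenvectors, so that $[\beta_{2,\mu}^{m-3}(q,r/8)]^2 (r/8)^{m-1} = \sum_{i=1}^{n+3}\lambda_i$. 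Since $\spt(\mu)\subset\Gamma_{p,J}$ is a $C^{3,\kappa}$-graph over its $(m-1)$-dimensional tangent plane at $p$ with second fundamental form bounded by $\mbold{p,J}^{1/2}$, the $n+1$ eigenvalues associated to normal directions are each controlled by $C\mbold{p,J}r^{2+2\kappa}\mu(\ball{q}{r/8})$ and absorbed into the error term. The task thus reduces to bounding $\lambda_{n+2}+\lambda_{n+3}$, the two smallest \emph{tangential} eigenvalues, by $Cr^2\int W_{r/8}^{4r}(p,z)\,\mathrm{d}\mu(z)+C\mbold{p,J}^{\gamma_4}r^{2+\gamma_4}\mu(\ball{q}{r/8})$.

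Suppose this fails. Then along a sequence of currents $T_k$ satisfying \Cref{A:High-Frequency} with data $q_k,r_k,\mu_k$, the two smallest tangential eigenvalues of $B_k$ exceed $\epsilon_k^{-1}$ times the right-hand side for some $\epsilon_k\downarrow 0$. By the spectral theorem, $\mu_k$ spreads along at least $m-2$ linearly independent tangent directions at scale $r_k$. A greedy extraction analogous to that in the proof of \Cref{T:DimBound}, coupled with a Chebyshev-type selection exploiting the smallness of the averaged pinching, produces $m-1$ points $z_k^0,\dots,z_k^{m-2}\in\spt(\mu_k)\cap\ball{q_k}{r_k/8}$ that are $\rho r_k$-affinely independent for some uniform $\rho=\rho(m)>0$, and at which $W_{r_k/8}^{4r_k}(p_k,z_k^i)\to 0$ as $k\to\infty$. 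This joint extraction---securing quantitative independence \emph{together with} uniform pinching smallness---is the central quantitative difficulty of the argument.

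Passing to a fine blowup $u\colon\halfobball{3/2}\to\cA_Q(\R^n)$ in the sense of \Cref{D:fine-blowups}, the rescaled centers $r_k^{-1}(z_k^i-p_k)$ converge, along a subsequence, to $m-1$ affinely independent points $\zeta_0,\dots,\zeta_{m-2}$ lying in the flat boundary $\{x_m=0\}\subset\R^m$. The almost-homogeneity estimate of \Cref{L:High-Freq:AlmHomog}, combined with the vanishing of the pinching at each $z_k^i$, yields that $u$ is homogeneous about each $\zeta_i$ in the limit. The equality of the homogeneity exponents across the $\zeta_i$'s follows from \Cref{L:High-Freq:ComparingFreqBetweenPoints} applied along the segments connecting them (chained if necessary to respect the lemma's scale restriction), together with the vanishing pinching, producing a common exponent $I_0$. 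Moreover, by \Cref{T:UniversalLowerBound-FreqFnc} and the almost-monotonicity of \Cref{T:AlmMonotFreq}, the frequency of $u$ at the origin is bounded below by $\bI(T_k,p_k)-o(1)\geq 1+\alpha/2-o(1)$; the same lower bound passes to the common value $I_0\geq 1+\alpha/2$ by propagation via \Cref{L:High-Freq:ComparingFreqBetweenPoints} from the origin to the $\zeta_i$'s.

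Differentiating at $\lambda=1$ the identities $u(\zeta_i+\lambda(x-\zeta_i))=\lambda^{I_0}u(x)$ and taking differences yields $\nabla u(x)\cdot(\zeta_i-\zeta_0)=0$ for all $x$ and every $i=1,\dots,m-2$, so $u$ is translation invariant along the $(m-2)$-dimensional subspace $V:=\mathrm{span}\{\zeta_i-\zeta_0:1\leq i\leq m-2\}\subset\{x_m=0\}$. Consequently, on any interior subball, $u$ depends only on the two variables $\mathrm{span}(v,x_m)$ where $v$ is any unit vector in $V^\perp\cap\{x_m=0\}$, and by \Cref{l:twovariables} this reduction globalizes to $\halfobball{3/2}$. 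The reduced function is then a nontrivial, average-free, $I_0$-homogeneous $Q$-valued Dirichlet minimizer on a two-dimensional half-ball with zero boundary data on the flat one-dimensional boundary; \Cref{t:homogeneous2d} forces $I_0=1$, in contradiction with $I_0\geq 1+\alpha/2$. The main obstacle of the proof is the quantitative extraction step described above: the universal lower bound from \Cref{T:UniversalLowerBound-FreqFnc} is precisely what obstructs the two-variable reduction from collapsing into the trivial linear case, and this is the feature that distinguishes the boundary argument from its interior analog in \cite{de2023fineII}.
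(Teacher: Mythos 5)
Your proposal takes a genuinely different route from the paper: you run a global compactness--contradiction argument with a Reifenberg-style point extraction, whereas the paper follows \cite[Prop.\ 4.2 and 7.2]{de2023fineII} and bounds $\lambda_{m-2}$ \emph{directly} using the eigenvectors $v_1,\dots,v_{m-2}$ together with the spatial variation formulas of \Cref{L:spatial-variations-HighFreq}. In the paper, the only compactness step is the proof of the lower bound \eqref{E:beta_2-Upperbound-Justification}: if that estimate fails along a sequence with $\mbold{p,J}\to 0$, one passes to a strong $W^{1,2}$ limit $u_0$ of the (rescaled, energy-normalized) normal approximations --- explicitly \emph{not} a fine blowup --- and gets $\nabla_{v_h}u_0\equiv 0$ for $m-2$ directions directly from the contradiction hypothesis, with no point extraction; the reduction to two variables and the contradiction with \Cref{t:homogeneous2d} then follow as in \Cref{l:linearproblemderivativebound}. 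You do reach the same final contradiction, but your extraction replaces the eigenvector/spatial-variation mechanism.

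The extraction step is where your argument has a genuine gap. After normalizing $r_k=1$ and $\mu_k(\ball{q_k}{1/8})=1$, the failure of the inequality only gives $\lambda^{(k)}_{n+2}+\lambda^{(k)}_{n+3}>\epsilon_k^{-1}\bigl(\int W\,\mathrm{d}\mu_k+\mbold{k}^{\gamma_4}\bigr)$ together with the trivial bound $\lambda^{(k)}_{n+2}+\lambda^{(k)}_{n+3}\le 2$; this forces the right-hand side to vanish but does \emph{not} keep the eigenvalues bounded away from zero (e.g.\ $\lambda^{(k)}_{n+2}+\lambda^{(k)}_{n+3}\sim\epsilon_k^{1/2}$ with right-hand side $\sim\epsilon_k^{3/2}$ is consistent with the contradiction hypothesis). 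Consequently, the claimed $\rho r_k$-affine independence with a \emph{uniform} $\rho=\rho(m)>0$ cannot be guaranteed: the $m-1$ points you extract may collapse onto an $(m-3)$-plane in the limit, the span $V$ of the limiting differences can have dimension $\le m-3$, and the fine blowup $u$ is then only translation-invariant along a subspace of dimension $\le m-3$. This leaves a homogeneous Dirichlet minimizer of $\ge 3$ variables, for which \Cref{t:homogeneous2d} gives no contradiction. Renormalizing by the eigenvalue instead of the mass does not repair this: the rescaled measure then has unbounded total mass and the per-unit-mass variance still collapses. Also note, less centrally, that the failure controls only the \emph{sum} $\lambda_{n+2}+\lambda_{n+3}$, so you only get spreading in $m-2$ directions (from $\lambda_{n+3},\dots,\lambda_{m+n}$), not the ``two smallest tangential eigenvalues both large'' stated. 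The quantitative link between eigenvalues and pinching that you need is precisely what \Cref{L:spatial-variations-HighFreq} provides, and your proposal does not invoke it; absent that mechanism or a case split that reduces to it, the argument does not close.
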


With this control, we are in a position to apply \Cref{T:NaberValtorta-AzzaamTolsa} for Frostman measures, as done in \cite{de2023fineII}, to conclude the proof of the $\cH^{m-3}$-rectifiability in the high-frequency regime, i.e., \Cref{P:Rectf-HighFreq}. We are now left with proving \Cref{T:JonesBeta-HighFreq}.

\begin{proof}[Proof of \Cref{T:JonesBeta-HighFreq}]
We assume without loss of generality that $\mu\left(\ball{q}{r / 8}\right)>0$, otherwise the inequality is trivial. As in \cite[Proposition 7.2]{de2023fineII}, denote the barycenter of $\mu$ in $\ball{q}{r/8}\cap T_{q} \cM_J$ by
\begin{equation*}
    \bar{q}_r:=\frac{1}{\mu\left(\ball{q}{r/8}\right)} \int_{\ball{q}{r/8}} z \mathrm{d} \mu(z), 
\end{equation*}
where $\bp_q$ is the orthogonal projection of $\mathbb{R}^{m+n}$ onto $T_{q} \cM_J$. Next, define the symmetric bilinear form $b_{q, r}: T_{q} \cM_J \times T_{q} \cM_J \rightarrow \R$ as
\begin{equation*}
\begin{aligned}
    b_{q, r}(v, w) & := \int_{\ball{q}{r/8}}\left(\left(z-\bar{q}_r\right) \cdot v\right)\left(\left(z-\bar{q}_r\right) \cdot w\right) \mathrm{d} \mu(z).
\end{aligned}
\end{equation*}

Diagonalizing $b_{q, r}$ yields an orthonormal basis $\left\{v_i\right\}_{i=1}^m$ of eigenvectors and a corresponding family of eigenvalues $0 \leqslant \lambda_m \leqslant \cdots \leqslant \lambda_1$ for the linear map
\begin{equation*}
\begin{aligned}
    L(v) & := \int_{\ball{q}{r/8}}\left(\left(z-\bar{q}_r\right) \cdot v\right) \bp_{q}(z) \mathrm{d} \mu(z),
\end{aligned}
\end{equation*}
which diagonalize $b_{q, r}$, i.e., $L\left(v_i\right)=\lambda_i v_i$ and $b_{q, r}\left(v_i, v_i\right)=\lambda_i$. This gives the following characterization:
\begin{equation*}
    \inf_{\overset{\textup{affine (m-3)-planes }\pi}{\pi\subset T_q\cM_j}} \left[\left(\frac{r}{8}\right)^{-m+1} \int _{\ball{q}{r/8}} \dist(y,L)^2 d\mu(y)\right]^{1/2} = \left(\frac{r}{8}\right)^{-m+1}(\lambda_{m-2}+\lambda_{m-1}+\lambda_m).
\end{equation*}

We now have the following simplification of our claimed inequality:
\begin{equation*}
    \left[\beta_{2,\mu}^{m-3}\left(q,r/8\right)\right]^2 \leq 3 \left(\frac8r\right)^{m-1}\lambda_{m-2}.
\end{equation*}

The argument differs from \cite[Proposition 4.2]{de2023fineII} for two reasons. Firstly, $\nabla d$ needs to be tangent to $\Gamma_{p,J}$, which we ensure through \Cref{L:DistFnc}. Once this is established, we can apply \Cref{L:spatial-variations-HighFreq} with any compactly supported vector field pointing in the same direction as $\nabla d$, following the approach in \cite[Proposition 4.2]{de2023fineII}. Secondly, we need to prove that:
\begin{equation}\label{E:beta_2-Upperbound-Justification}
    \int_{\cA_{r/4}^{2r}(q)}\sum_{h=1}^{m-2}|D\cN(z)\cdot \boldsymbol{\ell}_z(v_h)|^2dz \geq c(\Lambda) \frac{\bH_p(q,2r)}{r}\mbox{ for some }c(\Lambda)>0.
\end{equation}

Note that we are using the $(m-3)$-dimensional Jones' $\beta_2$ coefficient. The sum in the expression above is taken over the $m-2$ vectors derived from the bilinear form $b_{q,r}$. We proceed by contradiction to prove \eqref{E:beta_2-Upperbound-Justification}. We rescale and translate the current such that we have $q = 0$ and $r = 1$. If \eqref{E:beta_2-Upperbound-Justification} fails, we construct a contradiction sequence of currents $T_k$ with $\mbold{p,k} \leq \varepsilon_k \to 0$. From \eqref{E:L:FirstEqsFreqFnc:1} and \eqref{E:Dbound}, the right-hand side of \eqref{E:beta_2-Upperbound-Justification} is bounded above by a constant $C$. Thus, we obtain the inequality: 
\begin{equation}\label{E:contradiction:HighFreq:Beta}
    \int_{\cA_{r/4}^{2r}(0)}\sum_{h=1}^{m-2}|D\cN_k(z)\cdot \boldsymbol{\ell}^k_z(v_h^k)|^2dz < \frac1k.
\end{equation}

For the contradiction sequence, we have the corresponding center manifolds, boundaries, and normalized normal approximations, denoted by $\cM_k$, $\Gamma_k$, and $\cN_k$, respectively. Additionally, we have sequences $\boldsymbol{\ell}_z^k$ and $\{v_h^k\}_k$ that converge to $\mathrm{Id}$ and, for each $h \in \{1,\ldots, m-2\}$, to $v_h$, respectively. Moreover, $\cM_k$ converges to some $m$-dimensional half-plane $\pi_\infty^+$, $\Gamma_k$ converges to $\partial_{\pi_\infty} \pi_\infty^+$, and $\etaa \circ \cN_k \to 0$. 

From this, we can extract a weak limit of $\cN_k$ in $W^{1,2}$, denoted by $u_0 : \obball{2} \cap {x_m \geq 0} \to \cA_Q(\mathbb{R}^n)$, which is a Dirichlet minimizer with Dirichlet energy equal to 1, $\etaa \circ u_0 = 0$, and $u_0$ vanishes identically at its boundary (note that $u_0$ is not necessarily a fine blow-up). The same arguments as in \cite[Section 10.3]{DDHM} (see also \cite[Section 18.1]{delellis2021allardtype}) allow us to upgrade, up to a subsequence, the weak convergence of $\cN_k$ to $u_0$ to \emph{strong} convergence in $W^{1,2}(\obball{2})$. 

Such strong convergence ensures that $ \bI^{T_k}_p(0, s)$ converges to $I_{u_0}(s)$ for any $s \in (0,t_J]$, which implies:
\begin{equation*}
    I_{u_0}(0) \geq 2-\delta_\be \geq 1+\alpha/2.
\end{equation*}

We also obtain using \eqref{E:contradiction:HighFreq:Beta} that
\begin{equation*}
    \int_{\left(\oball{2}\setminus\overline{\oball{1/4}}\right)\cap\{x_m\geq 0\}}\sum_{h=1}^{m-2}|\nabla_{v_h}u_0|^2 = 0.
\end{equation*}

We reach a contradiction in the same manner as in the argument in \Cref{l:linearproblemderivativebound}. Specifically, we can apply \Cref{l:twovariables} to obtain a nontrivial $I_{u_0}(0)$-homogeneous Dirichlet minimizer in $2$ dimensions, with zero boundary value along a straight line. This, however, contradicts \Cref{t:homogeneous2d}, since we have $I_{u_0}(0) \geq 1 + \alpha / 2$.
\end{proof}

\subsubsection{Bound on the Jones' $\beta_2$ coefficient in the low-frequency regime}

We now establish the same control in the low-frequency regime as given in \Cref{T:JonesBeta-HighFreq}. With this control (\Cref{T:JonesBeta-LowFreq}), we are able to apply \Cref{T:NaberValtorta-AzzaamTolsa} for Frostman measures, as done in \cite{de2023fineII}, to conclude the proof of the $\cH^{m-3}$-rectifiability in the low-frequency regime, i.e., \Cref{P:Rectf-LowFreq}. This, in turn, completes the proof of \Cref{P:Rectifiability-FlatSingular}, as explained above.

\begin{theorem}[Frequency pinching controls the Jones' $\beta_2$ coefficient]\label{T:JonesBeta-LowFreq}
Assume $m>3$. There exist $\eta_1=\eta_1(m)>0$, $\epsilon_1 = \epsilon_1(m,n,Q,\eta_1,K)>0$, $\gamma_5=\gamma_5(m,n,Q,\Lambda)>0$, and $C=C(m,n,Q,\Lambda, \gamma)>0$, such that the following holds. 

Under \Cref{A:Low-Frequency}, and assume further that $\eta < \eta_0$ and $\mbold{p,j} < \epsilon_1^2$. If $\mu$ is a finite non-negative upper $(m-3)$-regular Radon measure with $\spt(\mu) \subset \mathfrak{G}_K$. Then for all $r \in (8\eta\gamma^{j+1},\gamma^j]$ and every $q \in \mathfrak{G}_K$ we have
\begin{equation*}
    \left[\beta_{2,\mu}^{m-3}\left(q,r/8\right)\right]^2 \leq \frac{C}{r^{m-3}}\int_{\ball{q}{r/8}}W_{r/8}^{4r}(p,j,z)\mathrm{d}\mu(z)+ C\mbold{p,j}^{\gamma_5}\frac{\mu(\ball{q}{r/8})}{ r^{m-3-\gamma_5}}.
\end{equation*}
\end{theorem}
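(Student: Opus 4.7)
The plan is to follow the proof of \Cref{T:JonesBeta-HighFreq} verbatim, with the \emph{high-frequency} spatial variation and deviation-from-homogeneity estimates of \Cref{L:spatial-variations-HighFreq,L:High-Freq:AlmHomog,L:High-Freq:ComparingFreqBetweenPoints} replaced by their low-frequency counterparts, which hold for scales $4\eta\gamma^{j+1}\le\rho\le r<2\gamma^j$. Concretely: fix $q\in\spt(\mu)\cap\oball{r/8}$, assume $\mu(\ball{q}{r/8})>0$, and introduce the barycenter $\bar q_r$ of $\mu$ on $\ball{q}{r/8}\cap T_q\cM_{p,\gamma^j}$, the symmetric bilinear form $b_{q,r}$, and its spectral decomposition with eigenvectors $\{v_i\}_{i=1}^m$ and eigenvalues $\lambda_1\ge\dots\ge\lambda_m$. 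The Jones' $\beta_2^{m-3}$ coefficient is controlled by $\lambda_{m-2}+\lambda_{m-1}+\lambda_m$, so it suffices to bound $\lambda_{m-2}$ from above. Testing the spatial variation formula in the low-frequency regime against a compactly supported vector field pointing in the direction $v_h$ tangent to $\Gamma_{p,\gamma^j}$ (obtained via \Cref{L:DistFnc}), summing in $h=1,\dots,m-2$, and applying Cauchy--Schwarz together with \Cref{L:High-Freq:AlmHomog} yields precisely the sought inequality, \emph{provided} the non-degeneracy lower bound
\begin{equation*}
\int_{\cA_{r/4}^{2r}(q)}\sum_{h=1}^{m-2}\bigl|D\cN_{p,\gamma^j}(z)\cdot \boldsymbol{\ell}_z(v_h)\bigr|^2\,\mathrm{d}z \;\ge\; c(K)\,\frac{\bH_{p,\gamma^j}(q,2r)}{r}
\end{equation*}
is established for an appropriate $c(K)>0$.

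This non-degeneracy estimate is the main obstacle and is the only place where the low-frequency argument departs substantively from the high-frequency one. The proof proceeds by contradiction: rescale so that $q=0$, $r=1$, and suppose we have a sequence $T_k$ with $\mbold{p,k}\to 0$ for which the left-hand side tends to zero. Since $\gamma$ is a \emph{universal} geometric constant (so $\inf_j\gamma^{j+1}/\gamma^j=\gamma>0$), the center manifold estimates and the radial/spatial variation estimates of \Cref{L:RadialVariations:LowFreq} are uniform across the intervals $(\gamma^{j+1},\gamma^j]$, and the normalized normal approximations $\cN_k/\|\cN_k\|_{L^2}^{1/2}$ converge \emph{strongly} in $W^{1,2}(\obball{2}^+\setminus \overline{\mathrm{B}^+_{1/4}})$ to a nontrivial $Q$-valued Dirichlet minimizer $u_0$ with $\etaa\circ u_0\equiv 0$ and zero boundary value along the flat limiting boundary (cf.\ \cite[Section 10.3]{DDHM}). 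Strong $W^{1,2}$-convergence transfers the frequency at the origin: the universal lower bound $\bI(T_k,0)\ge 1+\alpha/2$ from \Cref{T:UniversalLowerBound-FreqFnc}\eqref{I:FreqFnc:UniversalLowerBound} gives $I_{u_0}(0)\ge 1+\alpha/2$.

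Now the contradiction hypothesis forces $\partial_{v_h}u_0\equiv 0$ on the annulus for $h=1,\dots,m-2$. Applying \Cref{l:twovariables} in its annular version (as in the proof of \Cref{l:linearproblemderivativebound}) propagates this vanishing to all of $\obball{2}^+$, so $u_0$ depends only on the two variables in $\mathrm{span}(v_{m-1},x_m)$. Taking a blow-up of $u_0$ at $0$ produces a nontrivial homogeneous $2$d Dirichlet minimizer with zero boundary value on a flat line, still satisfying frequency $\ge 1+\alpha/2>1$. This contradicts \Cref{t:homogeneous2d}, which forces the frequency of such a minimizer to equal exactly $1$.

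The price of working in the low-frequency regime is that the threshold $\epsilon_1$ and the exponent $\gamma_5$ must be chosen depending on $K$: the upper bound $\Lambda$ on the frequency (and hence the constants appearing in \Cref{L:High-Freq:AlmHomog} and \Cref{L:High-Freq:ComparingFreqBetweenPoints}) depends on $K$ through the definition of $\mathfrak{G}_K$, while the lower bound $1+\alpha/2$ used at the end of the contradiction argument is universal. Once \Cref{T:JonesBeta-LowFreq} is established, \Cref{P:Rectf-LowFreq} follows by feeding these estimates, together with the uniform frequency bounds and the jump estimates \eqref{E:jumpsI}, into the Azzam--Tolsa rectifiability criterion \Cref{T:NaberValtorta-AzzaamTolsa} applied to Frostman measures supported on $\mathfrak{G}_K$, exactly as in \cite[Sections 8--9]{de2023fineII}.
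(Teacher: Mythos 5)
Your overall strategy---reduce to the non-degeneracy lower bound \eqref{e:dircontrol} and argue by compactness---matches the paper, and the set-up (barycenter, bilinear form, eigenvalues, frequency upper bound depending on $K$, etc.) is correct. However, the contradiction argument as you wrote it has a real gap in the low-frequency regime.

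You correctly note that the normalized normal approximations $\cN_k$ converge strongly to a Dir-minimizer $u_0$ only on the \emph{annulus} (after rescaling, something like $\obball{\theta}^+\setminus \overline{\mathrm{B}^+_{8\eta\gamma\theta}}$), because the center manifold changes at the inner scale and there are infinitely many intervals of flattening. But then you proceed as if $u_0$ were defined on the entire half-ball: you claim that "strong $W^{1,2}$-convergence transfers the frequency at the origin" to get $I_{u_0}(0)\geq 1+\alpha/2$ from \Cref{T:UniversalLowerBound-FreqFnc}\eqref{I:FreqFnc:UniversalLowerBound}, then "propagate the vanishing to all of $\obball{2}^+$" via \Cref{l:twovariables}, and then "take a blow-up of $u_0$ at $0$." None of these steps make sense because $u_0$ does not exist near the origin. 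In particular, $I_{u_0}(0)$ has no a priori meaning, and there is no blow-up of $u_0$ at $0$ to take. Moreover, $u_0$ is not a fine blow-up (it is obtained from a sequence of \emph{distinct} currents with $\mbold{p}^{(k)}\to 0$, not from rescaling one fixed current to vanishing radii), so the singularity degree $\bI(T_k,0)$ does not directly control $I_{u_0}$. This is exactly the structural difference between the two regimes that the high-frequency proof does not have to face.

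What the paper's proof actually does is first reduce, using the upper $(m-3)$-regularity of $\mu$ (the extra hypothesis that you do not use), to the case where the integrated pinching is small. In the contradiction sequence one can then select boundary points $x_k$ with $W_{1/4}^{2}(p,j(k),x_k)\to 0$, and the strong $W^{1,2}$-convergence on the annulus yields $I_{u_0}(x,8\eta\gamma\theta)=I_{u_0}(x,\theta)$ in the limit, together with the outer-scale lower bound $I_{u_0}(x,\theta)\geq 1+\alpha/2$ inherited from $\bI^{T_k}$. Then \Cref{L:constantfrequencyannulus} is invoked to upgrade the constancy of the frequency on an annulus to genuine $I_0$-homogeneity of $u_0$ on the full half-ball, with $I_0\geq 1+\alpha/2$. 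Only after this homogeneity is in hand can you apply the two-variable reduction of \Cref{l:twovariables} and contradict \Cref{t:homogeneous2d}. In summary, your proposal replaces the necessary pinching-selection-plus-\Cref{L:constantfrequencyannulus} step with an illegitimate blow-up of a function that is undefined at the center; that step needs to be supplied.
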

\begin{proof}
Since the measure $\mu$ is upper $(m-3)$- regular, the coefficient $\beta_{2,\mu}^{m-3}\left(q,r/8\right)$ is always bounded. Therefore, it is sufficient to prove the theorem under the assumption that
\begin{equation*}
    \frac{1}{r^{m-3}}\int_{\ball{q}{r/8}}W_{r/8}^{4r}(p,j,z)\mathrm{d}\mu(z) \leq C\delta_3.
\end{equation*}

This proof follows the same structure as \cite[Proposition 13.3]{de2023fineII}, with the main difference being the proof of the following inequality:
\begin{equation}\label{e:dircontrol}
    \int_{\cA_{r/4}^{2r}(q)}\sum_{h=1}^{m-2}|D\cN_j(z)\cdot \boldsymbol{\ell}_z(v_h)|^2dz \geq c(\Lambda) \frac{\bH_p(q,2r)}{r}\mbox{ for some }c(\Lambda)>0.
\end{equation}

We prove this estimate by contradiction, following the same approach as in \Cref{P:Rectf-HighFreq}, but in this case, we deal with infinitely many center manifolds and intervals of flattening. By rescaling and translating, we assume $q=0$ and $r=1$. If the inequality fails, it must fail for every $\epsilon_1 > 0$ and $\delta > 0$. We can thus take a contradiction sequence of currents $T_k$ such that $\mbold{p}^{(k)} \rightarrow 0$, where $\mbold{p}^{(k)}$ is defined with respect to $T_k$.

Up to rotation, we can assume that $T_0 \Gamma_k = \R^{m-1} \times \left\{0\right\}$ and $\pi_k = \R^m \times \{0\}$, and that
\begin{equation*}
    \int_{\oball{1/8}}W_{1/4}^{2}(p,j(k),z)d\mu_k(z) \leq \frac{1}{k}.
\end{equation*}

This allows us to select points $x_k \in \Gamma_{p,1} \cap \oball{1/8}$ such that $W_{1/4}^{2}(p,j(k),x_k) \leq k^{-1}$. Up to subsequences, $\mu_k \rightharpoonup \mu$ with $\spt(\mu)$ being a finite non-negative Radon measure supported in $\R^{m-1} \times \left\{0\right\}$, and $x_k \to x \in \textup{spt}(\mu)$.

We let $\theta_k = \gamma^{j(k)} \in [1, \frac{1}{8\eta\gamma})$ be such that the modified interval of flattening given by $(\gamma^{j(k)+1}, \gamma^{j(k)}]$ becomes $(8\eta\gamma\theta_k, \theta_k]$. Up to a subsequence, we assume $\theta_k \to \theta$.

For each $T_k$, we construct the corresponding center manifold $\cM_k$ and normal approximations $\cN_k$. Arguing as in \Cref{T:JonesBeta-HighFreq}, we obtain a limiting function $u_0 : (\obball{\theta} \setminus \overline{\obball{8\eta\gamma\theta}}) \cap \{ x_m \geq 0 \} \to \cA_Q(\R^n)$ that is a Dirichlet minimizer with Dirichlet energy equal to $1$, $\etaa \circ u_0 = 0$, and $u_0$ is identically zero at its boundary (note that $u_0$ is not necessarily a fine blow-up).

Moreover, as in \Cref{T:JonesBeta-HighFreq}, the convergence of $\cN_k$ to $u_0$ is \emph{strong} in $W^{1,2}(\obball{\theta} \setminus \overline{\obball{8\eta\gamma\theta}})$ up to subsequences. This strong convergence furnishes
\begin{equation*}
    \bI^{T_k}_p(x_k,\theta_k) \to I_{u_0}(x,\theta), \ \bI^{T_k}_p(x_k,8\eta\gamma\theta_k) \to I_{u_0}(x,8\eta\gamma\theta),\mbox{ and }I_{u_0}(x,\theta)\geq 1+\alpha/2,
\end{equation*}
where $\bI^{T_k}$ denotes the frequency defined with respect to $T_k$. 

Since $W_{1/4}^{2}(p,j(k),x_k) \to 0$, we have that $I_{u_0}(x, 8\eta\gamma\theta) = I_{u_0}(x, \theta)$. Hence, \Cref{L:constantfrequencyannulus} implies that $u_0$ is $I_0$-homogeneous on $\obball{\theta} \cap \{ x_m \geq 0 \}$ with $I_0 \geq 1 + \alpha / 2$. Since the contradiction sequence is violating \eqref{e:dircontrol}, we arrive again at the identity
\begin{equation*}
    \int_{\left(\oball{2}\setminus \oball{1/4}\right)\cap\{x_m\geq 0\}}\sum_{h=1}^{m-2}|\nabla_{v_h}u_0|^2 = 0.
\end{equation*}

Again, by the same argument given in \Cref{l:linearproblemderivativebound}, we conclude, using \Cref{l:twovariables}, that we obtain an $I_0$-homogeneous Dirichlet minimizer $u$ in $2$d with zero boundary value along a straight line and $I_0 \geq 1 + \alpha / 2$, which violates \Cref{t:homogeneous2d}. This concludes the proof of the inequality \eqref{e:dircontrol}, and, arguing as in \cite[Proposition 13.3]{de2023fineII}, we conclude the proof of the proposition.
\end{proof}

        \bibliographystyle{abbrv}
        \bibliography{main}

\begin{thebibliography}{10}

\bibitem{AllPhD}
W.~K. Allard.
\newblock {On boundary regularity for {P}lateau's problem}.
\newblock {\em Bull. Amer. Math. Soc.}, 75:522--523, 1969.

\bibitem{AllB}
W.~K. Allard.
\newblock {On the first variation of a varifold: boundary behavior}.
\newblock {\em Ann. of Math. (2)}, 101:418--446, 1975.

\bibitem{Alm}
J.~F.~J. Almgren.
\newblock {\em {Almgren's big regularity paper}}, volume~1 of {\em {World Scientific Monograph Series in Mathematics}}.
\newblock World Scientific Publishing Co. Inc., River Edge, NJ, 2000.

\bibitem{azzam2015characterization}
J.~Azzam and X.~Tolsa.
\newblock Characterization of n-rectifiability in terms of jones’ square function: Part ii.
\newblock {\em Geometric and Functional Analysis}, 25(5):1371--1412, 2015.

\bibitem{GMT_prob}
J.~E. Brothers.
\newblock Some open problems in geometric measure theory and its applications suggested by participants of the 1984 {AMS} summer institute.
\newblock In J.~E. Brothers, editor, {\em Geometric measure theory and the calculus of variations ({A}rcata, {C}alif., 1984)}, volume~44 of {\em Proc. Sympos. Pure Math.}, pages 441--464. Amer. Math. Soc., Providence, RI, 1986.

\bibitem{de2022regularity}
C.~De~Lellis.
\newblock The regularity theory for the area functional (in geometric measure theory).
\newblock In {\em International Congress of Mathematicians}, 2022.

\bibitem{DDHM}
C.~{De Lellis}, G.~{De Philippis}, J.~{Hirsch}, and A.~{Massaccesi}.
\newblock {On the boundary behavior of mass-minimizing integral currents}.
\newblock {\em Memoirs of the American Mathematical Society}, 291, 166pp, 2023.

\bibitem{iancamillorectifiability}
C.~De~Lellis and I.~Fleschler.
\newblock An elementary rectifiability lemma and some applications.
\newblock {\em Mathematical Research Letters}, 31(4):1029--1045, 2024.

\bibitem{de2018rectifiability}
C.~De~Lellis, A.~Marchese, E.~Spadaro, and D.~Valtorta.
\newblock Rectifiability and upper minkowski bounds for singularities of harmonic $ q $-valued maps.
\newblock {\em Commentarii Mathematici Helvetici}, 93(4):737--779, 2018.

\bibitem{de2023fineIII}
C.~De~Lellis, P.~Minter, and A.~Skorobogatova.
\newblock The fine structure of the singular set of area-minimizing integral currents iii: Frequency 1 flat singular points and hm-2-ae uniqueness of tangent cones.
\newblock {\em arXiv preprint arXiv:2304.11553}, 2023.

\bibitem{delellis2021uniqueness}
C.~De~Lellis, S.~Nardulli, and S.~Steinbr{\"u}chel.
\newblock Uniqueness of boundary tangent cones for 2-dimensional area-minimizing currents.
\newblock {\em Nonlinear Analysis}, 230:113235, 2023.

\bibitem{delellis2021allardtype}
C.~De~Lellis, S.~Nardulli, and S.~Steinbr{\"u}chel.
\newblock An allard-type boundary regularity theorem for $2 d $ minimizing currents at smooth curves with arbitrary multiplicity.
\newblock {\em Publications math{\'e}matiques de l'IH{\'E}S}, pages 1--118, 2024.

\bibitem{de2023fineI}
C.~De~Lellis and A.~Skorobogatova.
\newblock The fine structure of the singular set of area-minimizing integral currents i: the singularity degree of flat singular points.
\newblock {\em arXiv preprint arXiv:2304.11552}, 2023.

\bibitem{de2023fineII}
C.~De~Lellis and A.~Skorobogatova.
\newblock The fine structure of the singular set of area-minimizing integral currents ii: rectifiability of flat singular points with singularity degree larger than $1$.
\newblock {\em arXiv preprint arXiv:2304.11555}, 2023.

\bibitem{DS1}
C.~{De Lellis} and E.~Spadaro.
\newblock {{$Q$}-valued functions revisited}.
\newblock {\em Mem. Amer. Math. Soc.}, 211(991):vi+79, 2011.

\bibitem{DS3}
C.~{De Lellis} and E.~Spadaro.
\newblock {Regularity of area minimizing currents {I}: gradient {$L^p$} estimates}.
\newblock {\em Geom. Funct. Anal.}, 24(6):1831--1884, 2014.

\bibitem{DS2}
C.~{De Lellis} and E.~Spadaro.
\newblock {Multiple valued functions and integral currents}.
\newblock {\em Ann. Sc. Norm. Super. Pisa Cl. Sci. (5)}, 14(4):1239--1269, 2015.

\bibitem{DS4}
C.~{De Lellis} and E.~Spadaro.
\newblock {Regularity of area minimizing currents {II}: center manifold}.
\newblock {\em Ann. of Math. (2)}, 183(2):499--575, 2016.

\bibitem{DS5}
C.~{De Lellis} and E.~Spadaro.
\newblock {Regularity of area minimizing currents {III}: blow-up}.
\newblock {\em Ann. of Math. (2)}, 183(2):577--617, 2016.

\bibitem{DSS3}
C.~{De Lellis}, E.~Spadaro, and L.~Spolaor.
\newblock {Regularity {T}heory for 2-{D}imensional {A}lmost {M}inimal {C}urrents {II}: {B}ranched {C}enter {M}anifold}.
\newblock {\em Ann. PDE}, 3(2):3:18, 2017.

\bibitem{DSS1}
C.~{De Lellis}, E.~Spadaro, and L.~Spolaor.
\newblock {Uniqueness of tangent cones for two-dimensional almost-minimizing currents}.
\newblock {\em Comm. Pure Appl. Math.}, 70(7):1402--1421, 2017.

\bibitem{DSS2}
C.~De~Lellis, E.~Spadaro, and L.~Spolaor.
\newblock Regularity theory for 2-dimensional almost minimal currents i: Lipschitz approximation.
\newblock {\em Transactions of the American Mathematical Society}, 370(3):1783--1801, 2018.

\bibitem{DSS4}
C.~De~Lellis, E.~Spadaro, and L.~Spolaor.
\newblock Regularity theory for $2 $-dimensional almost minimal currents iii: Blowup.
\newblock {\em Journal of Differential Geometry}, 116(1):125--185, 2020.

\bibitem{Fed}
H.~Federer.
\newblock {\em {Geometric measure theory}}.
\newblock {Die Grundlehren der mathematischen Wissenschaften, Band 153}. Springer-Verlag New York Inc., New York, 1969.

\bibitem{ian2024uniqueness}
I.~Fleschler.
\newblock On the uniqueness of tangent cones to area minimizing currents at boundaries with arbitrary multiplicity.
\newblock {\em preprint on ArXiv}, 2024.

\bibitem{ian2024example}
I.~Fleschler.
\newblock An essential one sided boundary singularity for a $3$-dimensional area minimizing current in $\mathbb{R}^5$.
\newblock {\em arXiv preprint arXiv:2502.15047}, 2025.

\bibitem{HS}
R.~Hardt and L.~Simon.
\newblock {Boundary regularity and embedded solutions for the oriented {P}lateau problem}.
\newblock {\em Ann. of Math. (2)}, 110(3):439--486, 1979.

\bibitem{krummel2017fine}
B.~Krummel and N.~Wickramasekera.
\newblock Fine properties of branch point singularities: Dirichlet energy minimizing multi-valued functions.
\newblock {\em arXiv preprint arXiv:1711.06222}, 2017.

\bibitem{krummel2023analysisI}
B.~Krummel and N.~Wickramasekera.
\newblock Analysis of singularities of area minimizing currents: a uniforn height bound, estimates away from branch points of rapid decay, and uniqueness of tangent cones.
\newblock {\em arXiv preprint arXiv:2304.10272}, 2023.

\bibitem{krummel2023analysisII}
B.~Krummel and N.~Wickramasekera.
\newblock Analysis of singularities of area minimizing currents: planar frequency, branch points of rapid decay, and weak locally uniform approximation.
\newblock {\em arXiv preprint arXiv:2304.10653}, 2023.

\bibitem{NV}
A.~Naber and D.~Valtorta.
\newblock The singular structure and regularity of stationary varifolds.
\newblock {\em Journal of the European Mathematical Society}, 2015.

\bibitem{nardulli2022density}
S.~Nardulli and R.~Resende.
\newblock Density of the boundary regular set of 2d area minimizing currents with arbitrary codimension and multiplicity.
\newblock {\em Advances in Mathematics}, 455:109889, 2024.

\bibitem{nardulli2024interior}
S.~Nardulli and R.~Resende.
\newblock Interior regularity of area minimizing currents within a $c^{2,\alpha}$-submanifold.
\newblock {\em arXiv preprint arXiv:2404.17407}, 2024.

\bibitem{simon2014introduction}
L.~Simon.
\newblock Introduction to geometric measure theory.
\newblock {\em Tsinghua Lectures}, 2014.

\bibitem{white1983regularity}
B.~White.
\newblock Regularity of area-minimizing hypersurfaces at boundaries with multiplicity.
\newblock In {\em Seminar on minimal submanifolds}, volume 103, pages 293--301, 1983.

\bibitem{White97}
B.~White.
\newblock {Stratification of minimal surfaces, mean curvature flows, and harmonic maps}.
\newblock {\em J. Reine Angew. Math.}, 488:1--35, 1997.

\end{thebibliography}
\end{document}